\theoremstyle{plain}
\newtheorem{thm}{Theorem}[section]
\newtheorem{thmA}{Theorem}
\newtheorem*{hypA}{Hypothesis (L)}
\newtheorem*{thm*}{Theorem}
\newtheorem{lm}[thm]{Lemma}
\newtheorem{cor}[thm]{Corollary}
\newtheorem*{cor*}{Corollary}
\newtheorem{prop}[thm]{Proposition}
\newtheorem*{conj*}{Conjecture}
\newtheorem{conj}{Conjecture}
\theoremstyle{remark}
\theoremstyle{definition}
\newtheorem*{defn*}{Definition}
\newtheorem{Remark}[thm]{Remark}
\newtheorem{I_Remark*}{Remark}
\newtheorem{defn}[thm]{Definition}
\newcommand{\nc}{\newcommand}
\newcommand{\beq}{\begin{equation}}
\newcommand{\eeq}{\end{equation}}
\newcommand{\bpmx}{\begin{pmatrix}}
\newcommand{\epmx}{\end{pmatrix}}
\newcommand{\bbmx}{\begin{bmatrix}}
\newcommand{\ebmx}{\end{bmatrix}}
\newcommand{\wh}{\widehat}
\newcommand{\wtd}{\widetilde}
\newcommand{\beqcd}[1]{\begin{equation*}\label{#1}\tag{#1}}
\newcommand{\eeqcd}{\end{equation*}}
\numberwithin{equation}{section}
\def\parref#1{\ref{#1}}
\def\thmref#1{Theorem~\parref{#1}}
\def\propref#1{Proposition~\parref{#1}}
\def\corref#1{Corollary~\parref{#1}}     
\def\secref#1{\S\parref{#1}}
\def\lmref#1{Lemma~\parref{#1}}
\def\subsecref#1{\S\parref{#1}}
\def\defref#1{Definition~\parref{#1}}
\def\conjref#1{Conjecture~\parref{#1}}
\def\makeop#1{\expandafter\def\csname#1\endcsname
  {\mathop{\rm #1}\nolimits}\ignorespaces}
\def\Sel{Sel}
\def\Ord{{\mathrm{ord}}}
\def\Spec{\mathrm{Spec}\,}
\DeclareMathAlphabet{\mathpzc}{OT1}{pzc}{m}{it}
\DeclareSymbolFont{cyrletters}{OT2}{wncyr}{m}{n}
\DeclareMathSymbol{\SHA}{\mathalpha}{cyrletters}{"58}
\def\makebb#1{\expandafter\def
  \csname bb#1\endcsname{{\mathbb{#1}}}\ignorespaces}
\def\makebf#1{\expandafter\def\csname bf#1\endcsname{{\bf
      #1}}\ignorespaces}
\def\makegr#1{\expandafter\def
  \csname gr#1\endcsname{{\mathfrak{#1}}}\ignorespaces}
\def\makescr#1{\expandafter\def
  \csname scr#1\endcsname{{\EuScript{#1}}}\ignorespaces}
\def\makecal#1{\expandafter\def\csname cal#1\endcsname{{\mathcal
      #1}}\ignorespaces}
\def\doLetters#1{#1A #1B #1C #1D #1E #1F #1G #1H #1I #1J #1K #1L #1M
                 #1N #1O #1P #1Q #1R #1S #1T #1U #1V #1W #1X #1Y #1Z}
\def\doletters#1{#1a #1b #1c #1d #1e #1f #1g #1h #1i #1j #1k #1l #1m
                 #1n #1o #1p #1q #1r #1s #1t #1u #1v #1w #1x #1y #1z}
\def\abs#1{\left|#1\right|}
\def\Fpbar{\ol{\mathbb F}_p}
\def\Qp{\Q_p}
\def\Qbar{\ol{\Q}}
\def\Zbar{\ol{\Z}}
\def\Zp{\Z_p}
\def\rmN{{\mathrm N}}
\def\cA{{\mathcal A}}  
\def\cD{\mathcal D}
\def\cE{{\mathcal E}}
\def\cF{{\mathcal F}}  
\def\cG{{\mathcal G}}
\def\cL{{\mathcal L}}
\def\cJ{\mathcal J}
\def\cK{{\mathcal K}}  
\def\cM{\mathcal M}
\def\cO{\mathcal O}
\def\cS{{\mathcal S}}
\def\cf{{\mathcal f}}
\def\cW{{\mathcal W}}
\def\cP{{\mathcal P}}
\def\cC{\mathcal C}
\def\cJ{\mathcal J}
\def\bfc{\mathbf c}
\def\bfK{\mathbf K}
\def\bfM{\mathbf M}
\def\bfU{\mathbf U}
\def\bfT{{\mathbf T}}
\def\bfL{\mathbf L}
\def\bff{\mathbf f}
\def\bfu{\mathbf u}
\def\bftheta{\boldsymbol{\theta}}
\def\bfal{\boldsymbol{\alpha}}
\def\sH{\mathscr H}
\def\sF{\mathscr F}
\def\sL{\mathscr L}
\def\sV{\mathscr V}
\def\sK{\mathscr K}
\def\sR{\mathscr R}
\def\sW{{\mathscr W}}
\def\bbI{\mathbb I}
\newcommand{\Z}{\mathbf Z}
\newcommand{\Q}{\mathbf Q}
\newcommand{\R}{\mathbf R}
\newcommand{\C}{\mathbf C}
\newcommand{\A}{\mathbf A}    
\def\bbE{{\mathbb E}}
\def\fraka{{\mathfrak a}}
\def\frakb{{\mathfrak b}}
\def\frakc{{\mathfrak c}}
\def\frakd{\mathfrak d}
\def\frakf{\mathfrak f}
\def\frakp{{\mathfrak p}}
\def\frakP{\mathfak P}
\def\frakq{\mathfrak q}
\def\frakm{\mathfrak m}
\def\frakn{\mathfrak n}
\def\frakl{\mathfrak l}
\def\frakP{\mathfrak P}
\def\frakF{{\mathfrak F}}
\def\frakL{{\mathfrak L}}
\def\frakD{\mathfrak D}
\def\frakM{\mathfrak M}
\def\frakH{{\mathfrak H}}
\def\frakA{\mathfrak A}
\def\frakC{{\mathfrak C}}
\def\frakX{\mathfrak X}
\def\frako{\mathfrak o}
\def\frakQ{\mathfrak Q}
\def\frakN{\mathfrak N}
\def\bfone{{\mathbf 1}}
\def\Zhat{\widehat{\Z}}
\def\wbar{\bar{w}}
\def\wbar{\ol{w}}
\def\pont{Pontryagin } 
\def\BS{Bruhat-Schwartz }
\def\Teich{Teichm\"{u}ller }
\def\Frob{\mathrm{Frob}}
\newcommand{\<}{\langle}   
\renewcommand{\>}{\rangle} 
\def\isoto{\stackrel{\sim}{\to}}
\def\surjto{\twoheadrightarrow}
\def\imply{\Rightarrow}
\def\ot{\otimes}
\def\hookto{\hookrightarrow}
\def\longto{\longrightarrow}
\def\ol{\overline}  \nc{\opp}{\mathrm{opp}} \nc{\ul}{\underline}
\newcommand{\pair}[2]{\< #1, #2\>}
\newcommand{\pairing}{\pair{\,}{\,}}
\def\XYmatrix{\xymatrix@M=8pt} 
\def\ncmd{\newcommand}
\ncmd{\xysubset}[1][r]{\ar@<-2.5pt>@{^(-}[#1]\ar@<2.5pt>@{_(-}[#1]}
\ncmd{\XYmatrixc}[1]{\vcenter{\XYmatrix{#1}}}
\ncmd{\xyto}[1][r]{\ar@{->}[#1]}
\ncmd{\xyinj}[1][r]{\ar@{^(->}[#1]}
\ncmd{\xysurj}[1][r]{\ar@{->>}[#1]}
\ncmd{\xyline}[1][r]{\ar@{-}[#1]}
\ncmd{\xydotsto}[1][r]{\ar@{.>}[#1]}
\ncmd{\xydots}[1][r]{\ar@{.}[#1]}
\ncmd{\xyleadsto}[1][r]{\ar@{~>}[#1]}
\ncmd{\xyeq}[1][r]{\ar@{=}[#1]} \ncmd{\xyequal}[1][r]{\ar@{=}[#1]}
\ncmd{\xyequals}[1][r]{\ar@{=}[#1]}
\ncmd{\xymapsto}[1][r]{l\ar@{|->}[#1]}\ncmd{\xyimplies}[1][r]{\ar@{=>}[#1]}
\ncmd{\xyiso}{\ar[r]_-{\sim}}
\def\injxy{\ar@{^(->}}
\newcommand{\pMX}[4]{\begin{pmatrix}
{#1}& {#2}\\
{#3}&{#4}\end{pmatrix} }
 \newcommand{\pDII}[2]{\begin{pmatrix}{#1}&0
 \\0&{#2}\end{pmatrix}}
\newcommand{\seesaw}[4]{{#1}\ar@{-}[rd]\ar@{-}[d]&{#2}\ar@{-}[d]\\
{#3}\ar@{-}[ru]&{#4}}
\def\ie{i.e. }
\def\cf{\mbox{{\it cf.} }}
\def\uf{\varpi} 
\def\Abs{{|\!\cdot\!|}} 
\def\Sg{{\varSigma}}  
\def\Sgbar{\Sg^c}
\def\ndivides{\nmid}
\def\ndivide{\nmid}
\def\x{{\times}}
\def\onehalf{{\frac{1}{2}}}
\def\e{\varepsilon} 
\def\al{\alpha}
\def\Lam{\Lambda}
\def\om{\omega}
\def\dirlim{\varinjlim}
\def\prolim{\varprojlim}
\def\iso{\simeq}
\def\con{\equiv}
\def\bksl{\backslash}
\newcommand\stt[1]{\left\{#1\right\}}
\def\ep{\epsilon}
\def\varep{\varepsilon}
\def\lam{\lambda}
\def\sg{\sigma}
\def\vp{\varphi}
\def\disjoint{\sqcup}
\def\setp{{(p)}}
\def\divides{\mid}
\newcommand{\powerseries}[1]{\llbracket{#1}\rrbracket}
\renewcommand\pmod[1]{\,(\mbox{mod }{#1})}
\newcommand\Dmd[1]{\left<{#1}\right>} 
\def\Cp{\C_p}
\DeclareMathSymbol{\shortminus}{\mathbin}{AMSa}{"39}
\renewenvironment{abstract}{{\bf Abstract}. }{}
\renewenvironment{abstract}{{\bf Abstract}. }{}
\title[CM congruence and trivial zeros of the Katz $p$-adic $L$-functions]{CM congruence and trivial zeros of the Katz $p$-adic $L$-functions for CM fields}
\author[A. Betina and M.-L. Hsieh]{Adel Betina and Ming-Lun Hsieh}
\address{Faculty of Mathematics, University of Vienna, Oskar-Morgenstern-Platz 1, A-1090 Wien, Austria}
\email{adelbetina@gmail.com}
\address{Institute of Mathematics, Academia Sinica, Taipei 10617, Taiwan}
\email{mlhsieh@math.sinica.edu.tw}
\subjclass[2000]{Primary 11F33, Secondary 11R23}
\def\cmpt{\varsigma}
\def\Mat{\mathrm{M}}
\def\rmH{{\rm H}}
\def\rmd{\mathrm{d}}
\def\G{{\rm G}}
\def\vp{\varphi}
\def\Fv{F_v}
\def\loc{{\rm loc}}
\def\Cp{\ol{\Q}_p}
\def\Frob{{\rm Fr}}
\def\Om{\boldsymbol \om}
\def\rmt{{\rm t}}
\def\brch{\phi}
\def\e{\boldsymbol\varepsilon}
\def\cyc{\Dmd{\e_{\rm cyc}}}
 \def\aA{\cC(\brch,\brch^c)}
\def\bB{\cC(\brch^c,\brch)}
\def\Pbar{\ol{\frakP}}
\def\Sgbar{\ol{\Sg}}
\def\addchar{\bfe}
\def\ulk{\ul{k}}
\def\OFv{\cO_{F_v}}
\def\Csplit{\frakF}
\def\Cinert{\frakC^-}
\def\OK{\frako_K}
\def\OF{{\frako}}
\def\cmptv{\varsigma}
\def\skewhf{\vartheta}
\def\Sg{\Sigma}
\def\kLog{\kappa_{\rm cyc}}
\def\kOrd{\kappa^{\rm ur}}
\thanks{ {\bf Keywords } Katz $p$-adic $L$-functions, congruences between $p$-adic modular forms and deformations of Galois representations.}
\thanks{Betina was supported by the START-Prize Y966 of the Austrian Science Fund (FWF) and Hsieh was partially supported by MOST grants 108-2628-M-001-009-MY4 and 110-2628-M-001-004-. }
\begin{document}
\maketitle
\begin{abstract}The aim of this paper is to investigate the trivial zeros of the Katz $p$-adic $L$-functions by the CM congruence. We prove the existence of trivial zeros of the Katz $p$-adic $L$-functions for general CM fields and establish a first derivative formula of the cyclotomic $p$-adic $L$-functions at trivial zeros under some Leopoldt hypothesis. The crucial ingredients in our proof are a special case of $p$-adic Kronecker limit formula for CM fields and a leading term formula of anticyclotomic $p$-adic $L$-functions at trivial zeros via the explicit congruences between CM and non-CM Hida families of Hilbert cusp forms.\end{abstract}

\section{Introduction}

\subsection{Conjectures on trivial zeros of the Katz $p$-adic $L$-functions}Let $F$ be a totally real number field of degree $d$ and absolute discriminant $\Delta_F$. Let $K$ be a totally imaginary quadratic extension of $F$. Let $p$ be a prime number. Fix once and for all embeddings $\iota_\infty:\Qbar\hookto \C$ and $\iota_p:\Qbar\hookto \Cp$
singling out a decomposition subgroup of $G_\Q:=\Gal(\Qbar/\Q)$ at $p$. Let $c\in \Gal(K/F)$ be the non-trivial automorphism. We suppose throughout this paper the following ordinary hypothesis on the CM field $K$:
\beqcd{ord}
\text{Every prime factor of $p$ in $F$ splits in $K$.}
\eeqcd
 Let $\Sg\subset \Hom(K,\C)$ be a $p$-ordinary CM-type of $K$, \ie \begin{itemize}\item $\Sg \disjoint\Sg c= \Hom(K,\C)$ and $\Sg\cap \Sg c=\emptyset$; 
\item  the set $\Sg_p$ of $p$-adic places induced by elements in $\Sg$ composed with $\iota_p\iota_\infty^{-1}$ is disjoint from $\Sg_pc$.
 \end{itemize}
 The hypothesis \eqref{ord} assures the existence of $p$-ordinary CM-types and we fix one such $\Sg$. Denote by $\wtd K_{\infty}$  for the  compositum of all the $\Z_p$-extensions of $K$ and by  $K_{\Sigma_p}$ for the maximal sub-extension of $\wtd K_{\infty}$ unramified outside $\Sigma_p$. In fact, $\Gal(K_{\Sigma_p}/K)\simeq \Z_p^{1+\delta_{F,p}}$ and $\Gal(\wtd K_{\infty}/K)\simeq \Z_p^{d+1+\delta_{F,p}}$, where $\delta_{F,p}$ is the Leopoldt defect for $F$ and $p$. Let $\cW$ be a finite extension of the completion $\wh\Z_p^{
 \rm ur}$ of the maximal unramified extension of $\Zp$ in $\Cp$. To each choice of $p$-ordinary CM type $\Sg$ and a ray class character $\chi$ of $K$ valued in $\cW$, one can attach the Katz $p$-adic $L$-function $\cL_{\Sg}(\chi)\in \cW\powerseries{\Gal(\wtd K_{\infty}/K)}$ constructed by Katz \cite{Katz78Inv} and Hida--Tilouine \cite{HidaTil93ASENS}. This $p$-adic $L$-function $\cL_{\Sg}(\chi)$ is uniquely characterized by the following interpolation property: there exists a complex period 
\[\Omega=(\Omega_{\sg})_{\sg\in\Sg}\in (\C^\x)^\Sg\]
and a $p$-adic period
\[\Omega_p=(\Omega_{p,\sg})_{\sg\in \Sg}\in (\cW^\x)^{\Sg}\]
such that for every crystalline character $\phi:\Gal(\wtd K_{\infty}/K)\to\Cp^\x$ of weight $k\Sigma+j(1-c)$ for some integer $k$ and $j=\sum\limits_{\sg\in\Sg}j_\sg\sg\in\Z[\Sg]$ with either $k>0$ and $j_\sg\geq 0$ or $k\leq 1$ and $j_\sg+k\geq 1$, we have 
\beq\label{E:Katz1}\begin{aligned}\frac{\nu_\phi(\cL_{\Sg}(\chi))}{\Omega_p^{k\Sigma+2j}}&=\frac{[\frako_K^\x:\frako^\x_F]}{2^d\sqrt{\Delta_F}}\cdot\frac{1}{(\sqrt{-1})^{k\Sigma+j}}\cdot \frac{\Gamma_\C(k\Sg+j)L_{\rm fin}(0,\chi\phi_\infty)}{\Omega^{k\Sigma+2j}}\\
&\times\prod_{\frakP\in\Sigma_p}(1-\chi\phi_\infty(\Pbar))(1-\chi\phi_\infty(\frakP^{-1})\rmN\frakP^{-1}),\end{aligned}\eeq
where \begin{itemize}\item $\nu_\phi: \cW\powerseries{\Gal(\wtd K_{\infty}/K)} \to\Qbar_p$ is the unique $\cW$-algebras homomorphism such that $\nu_{\phi \mid \Gal(\wtd K_{\infty}/K)}=\phi$, \item $\phi_\infty$ is the grossencharacter of type $A_0$ associated with $\phi$ such that $\phi_\infty(\frakQ)$ is the value of $\phi$ at the geometric Frobenius $\Frob_\frakQ$ at primes $\frakQ\ndivides p$, \item $L_{\rm fin}(s,\chi\phi_\infty)=\sum_{\fraka}\chi\phi_\infty(\fraka)\rmN\fraka^{-s}$ is the classical $L$-series of $\chi\phi_\infty$, where $\fraka$ runs over all integral ideals of $K$ relatively prime to the conductor of $\chi\phi_\infty$.
\item $\Pbar=c(\frakP)$.
 \end{itemize}
 In the expression of \eqref{E:Katz1}, we have used the convention for $\xi=\sum_{\sg\in\Sg}\xi_\sg\sg\in\Z[\Sg]$, 
 and for $x=(x_\sg)\in\C^\Sg$:
 \[x^\xi=\prod_{\sg\in\Sg}x_\sg^{\xi_\sg};\quad \Gamma_\C(\xi)=\prod_{\sg\in\Sg}2(2\pi)^{-\xi_\sg}\Gamma(\xi_\sg).\]
 Any element $a\in\C$ is considered to be an element of $\C^\Sg$ via the diagonal embedding $a\mapsto (a)_{\sg\in\Sg}$.
 
For each positive integer $m$, denote by $\Q(\zeta_m)$ the $m$-th cyclotomic field. Let $\Q(\zeta_{p^\infty})=\cup_{n=1}^\infty \Q(\zeta_{p^n})$ and denote by $\e_{\rm cyc}:\Gal(\Q(\zeta_{p^\infty})/\Q)\isoto\Zp^\x$ be the usual $p$-adic cyclotomic character. Let $\Q_\infty\subset \Q(\zeta_{p^\infty})$ be the cyclotomic $\Zp$-extension of $\Q$. We view $\Gal(\Q_\infty/\Q)$ as a subgroup of $\Gal(\Q(\zeta_{p^\infty})/\Q)$ via the isomorphism $\Gal(\Q(\zeta_{p^\infty})/\Q)\iso\Gal(\Q_\infty/\Q)\times \Gal(\Q(\zeta_p)/\Q),\, \sg\mapsto (\sg|_{\Q_\infty},\sg|_{\Q(\zeta_p)})$. Let $K_\infty^+=\Q_\infty K$ be the cyclotomic $\Zp$-extension of $K$ and let $\cyc:\Gal(\wtd K_\infty/K)\to \Zp^\x$ be the character defined by the composition \[\Gal(\wtd K_\infty/K)\to \Gal(K_\infty^+/K)\to\Gal(\Q_\infty/\Q)\hookto\Gal(\Q(\zeta_{p^\infty})/\Q)\overset{\e_{\rm cyc}}\isoto \Zp^\x.\] We consider the cyclotomic Katz $p$-adic $L$-function defined by 
\[L_{\Sg}(s,\chi):=\cyc^s\left(\cL_{\Sg}(\chi)\right), \quad s  \in \Z_p.\]
Let us put 
\begin{align*}\Sg_p^{\rm irr}&=\stt{\frakP\in\Sg_p\mid \chi(\Pbar)=1};\\
r_{\Sg}(\chi)&=\#\Sg_p^{\rm irr}-\dim_{\Cp}\!\!\rmH^0(K,\chi).\end{align*}
In view of the interpolation formula \eqref{E:Katz1}, we expect that $L_\Sg(s,\chi)$ has a \emph{trivial zero} whenever $r_\Sg(\chi)>0$.  By analogy to a conjecture of Gross regarding the order of vanishing of Deligne-Ribet $p$-adic $L$-functions, we have the following conjecture regarding the order of the vanishing of $L_\Sg(s,\chi)$ at $s=0$. \begin{conj}\label{conj1}We always have  \[\Ord_{s=0}L_{\Sg}(s,\chi) = r_\Sg(\chi).\]  
\end{conj}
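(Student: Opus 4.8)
The value $s=0$ is the prototypical trivial zero: the trivial character of $\Gal(\wtd K_\infty/K)$ is not among the crystalline characters interpolated by \eqref{E:Katz1} --- its weight is $0$, which satisfies neither of the two admissibility conditions there --- so $\Ord_{s=0}L_\Sg(s,\chi)$ cannot be read off from the interpolation formula. The plan is therefore to split the statement into a lower bound $\Ord_{s=0}L_\Sg(s,\chi)\ge r_\Sg(\chi)$, i.e.\ the existence of the trivial zeros, and a matching upper bound, which amounts to the non-vanishing of a leading term.

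For the lower bound I would argue on the algebraic side. By the Iwasawa Main Conjecture for $K$ (Rubin in the imaginary quadratic case, Hida--Tilouine, and its extension to general CM fields) one identifies the ideal $(\cL_\Sg(\chi))$ with the characteristic ideal of a dual Selmer group over $\cW\powerseries{\Gal(\wtd K_\infty/K)}$; a control/descent argument then bounds $\Ord_{s=0}L_\Sg(s,\chi)$ below by $\dim_{\Cp}$ of the Selmer group attached to $\chi$ with the $\Sg$-ordinary local conditions at $p$. The decisive input is a local computation at the places $\Pbar=c(\frakP)$: the prescribed condition at $\Pbar$ differs from all of $\rmH^1(K_\Pbar,\chi)$ precisely by $\rmH^0(K_\Pbar,\chi)$, which is one-dimensional exactly when $\chi(\Pbar)=1$, i.e.\ for $\frakP\in\Sg_p^{\rm irr}$; feeding the resulting local defects into the global Euler-characteristic formula produces $\#\Sg_p^{\rm irr}-\dim_{\Cp}\rmH^0(K,\chi)=r_\Sg(\chi)$ forced Selmer classes. (Alternatively --- and this is the route behind the paper's title --- one can exhibit these classes directly from the explicit congruences between the CM Hida family of Hilbert cusp forms attached to $K$ and a non-CM Hida family: the restriction to the decomposition groups above $p$ of the latter's two-dimensional Galois representation, specialised along the CM locus, supplies the required ramified cocycles.)

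The reverse inequality requires a leading-term formula. With $r=r_\Sg(\chi)$, the expected shape when $\chi\ne 1$ is
\[\frac{1}{r!}\,\frac{\rmd^{r}}{\rmd s^{r}}L_\Sg(s,\chi)\Big|_{s=0}=\Big(\prod_{\frakP\in\Sg_p^{\rm irr}}\mathcal L_\frakP(\chi)\Big)\cdot L_p^{*}(\chi),\]
where $L_p^{*}(\chi)$ is a nonzero rational multiple of the algebraic part of $L_{\rm fin}(0,\chi)$ and $\mathcal L_\frakP(\chi)$ is the $\mathcal L$-invariant of the trivial zero at $\Pbar$. Following the Greenberg--Stevens philosophy I would compute $\mathcal L_\frakP(\chi)$ as the logarithmic derivative, along the non-CM Hida family congruent to the CM one, of the $U_p$-Hecke eigenvalue at $\Pbar$ --- equivalently, as a Greenberg-type $\mathcal L$-invariant built from the $G_{K_\Pbar}$-restriction of the non-CM Galois representation --- and this is exactly where the explicit CM/non-CM congruences enter. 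When $\chi=1$ the factor coming from $\dim_{\Cp}\rmH^0(K,1)=1$ forces an interplay with the global $\Sg_p$-units of $K$, so that the leading term acquires a $p$-adic regulator; identifying this regulator in terms of the Katz period, and reducing its non-vanishing to a Leopoldt-type statement, is the role of the special case of the $p$-adic Kronecker limit formula for CM fields.

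The main obstacle is the non-vanishing of the leading term. The $L$-value factor $L_p^{*}(\chi)$ can be controlled by known non-vanishing theorems for Hecke $L$-values; but the non-vanishing of the $\mathcal L$-invariants $\mathcal L_\frakP(\chi)$, and in the case $\chi=1$ of the associated $p$-adic regulator, is a transcendence-type assertion of the same calibre as Leopoldt's conjecture and is not available unconditionally. I therefore expect to obtain the full equality $\Ord_{s=0}L_\Sg(s,\chi)=r_\Sg(\chi)$ only when $r_\Sg(\chi)=1$ and under the stated Leopoldt hypothesis --- the first-derivative formula --- while the inequality $\Ord_{s=0}L_\Sg(s,\chi)\ge r_\Sg(\chi)$, i.e.\ the existence of the trivial zeros, should hold unconditionally for all CM fields $K$.
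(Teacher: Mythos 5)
This statement is a \emph{conjecture}; the paper never claims a proof, only the partial results of Theorems \ref{T:thmA} and \ref{T:thmB} (rank zero and rank one, under Leopoldt hypotheses and, for rank one, under an anticyclotomicity assumption). So your proposal can only be compared with the paper's partial strategy, and there it overlaps in spirit but diverges in two substantive ways.

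For the lower bound $\Ord_{s=0}L_\Sg(s,\chi)\geq r_\Sg(\chi)$ you propose a Selmer-theoretic argument: apply the main conjecture, descend to the cyclotomic line, and count local defects by Poitou--Tate. The paper's route is different and more explicit: it builds a two-variable $p$-adic $L$-function $\cL_\Sg(s,t,\chi)$ from a modified \emph{ordinary} Eisenstein measure, and proves (Theorem \ref{T:improved}) a factorization
\[\cL_\Sg(s,0,\chi)=\cL_\Sg^*(s,\chi)\prod_{\frakP\in\Sg_p}(1-\chi\e_\Sg^s(\Pbar)),\]
which analytically peels off exactly the expected Euler factors and isolates an improved $p$-adic $L$-function $\cL_\Sg^*(s,\chi)$. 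The Iwasawa main conjecture of \cite{Hsieh14JAMS} is used, but later and for a different purpose: to show $\cL_\Sg^*(0,\chi)\neq 0$ (Proposition \ref{P:nonvanishing}), not to produce the trivial zeros. Your route would also have to confront the fact that only one divisibility of the main conjecture is currently available, and you would face the same control issues that the paper sidesteps by working with explicit Eisenstein measures.

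Your final claim — that the lower bound should hold unconditionally — is not what the paper achieves, and I don't think it can be extracted from the factorization either. The analyticity of $\cL_\Sg^*(s,\chi)$ at $s=0$ is governed by the constant term of the ordinary Eisenstein measure, which is essentially the Deligne--Ribet $p$-adic $L$-function $L_p(1-s,\chi_+^{-1})$; its behaviour near $s=0$ is precisely Leopoldt's conjecture for $F$ (cf.\ \eqref{Leo1} and Proposition \ref{P:Kronecker}). Without Leopoldt, $\cL_\Sg^*(s,\chi)$ could have a pole at $s=0$ cancelling part of the Euler-factor vanishing, so even $L_\Sg(0,\chi)=0$ is, in this paper, a Leopoldt-conditional statement. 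Also note that even under Leopoldt the paper does not obtain $\Ord_{s=0}L_\Sg(s,\chi)\geq r_\Sg(\chi)$ when $r_\Sg(\chi)>1$; it only gets $L_\Sg(0,\chi)=L'_\Sg(0,\chi)=0$ (Theorem \ref{T:main2}).

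For the derivative your outline is close to the paper: explicit CM/non-CM congruences among Hida families via $p$-adic Rankin--Selberg, the \cite{DKV18Ann}-style relation between $U_p$-eigenvalues of $\Lam$-adic generalized eigenforms and Greenberg-type $\sL$-invariants, and a $p$-adic Kronecker limit formula. One point you have askew: the Kronecker limit formula is not special to $\chi=\bfone$; it normalizes the ratio $\cL_\Sg^*(s,\bfone)/\zeta_{F,p}(1-s)$ that appears inside the Rankin--Selberg identity (Proposition \ref{P:3RS}) for \emph{any} anticyclotomic $\chi$, so it is needed throughout. Finally, the paper crucially exploits the identity $L_\Sg(s,\chi)=2\cL_\Sg(s,0,\chi)-\cL_\Sg(s,-s,\chi)+O(s^2)$ near $s=0$ to combine the improved $L$-function (for the "$(s,0)$" direction) with the anticyclotomic leading-term formula (for the "$(s,-s)$" direction); your outline does not mention this reduction, which is the mechanism that actually produces the cyclotomic $\sL$-invariant $\sL_\chi$ from the anticyclotomic one $\sL_\chi^{\rm ac}$.
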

Note that when $K$ is an imaginary quadratic field, the implication $r_\Sg(\chi)>0\imply L_\Sg(0,\chi)=0$ follows from the $p$-adic Kronecker limit formula, but in general it is not clear if $L_\Sg(s,\chi)$ possesses a zero at $s=0$ when $r_\Sg(\chi)>0$ since $s=0$ is a not a critical point for $\chi$ and is outside the range of the interpolation of $L_{\Sg}(s,\chi)$. 

We briefly recall the conjectural formula of the leading term of $L_\Sg(s,\chi)$ at the trivial zero in \cite{BS}. First we prepare some crucial ingredients in the statement of this formula.  Following the notation in \cite{Gross81Tokyo}, if $X$ is an abelian group and $R$ is a ring, we denote the $R$-module $R\ot_\Z X$ simply by $RX$. Let $H:=\Qbar^{\ker\chi}$ be the finite cyclic extension of $K$ cut out by $\chi$ and $G:=\Gal(H/K)$. Let $\frako_H^\times$ the group of units of $H$. Put
\[\frako_H^\x[\chi]:=e_\chi\left(\Cp\frako_H^\x\right).\]
where $e_\chi$ is the usual idempotent
\[e_\chi=\frac{1}{[H:K]}\sum_{\tau\in G}\chi(\tau^{-1})\ot\tau.\]
Let $\log_p:\Cp^\times\to\Cp$ denote the Iwasawa's $p$-adic logarithm with $\log_p(p)=0$. Let $\Sg_H$ be the set of embeddings $\sg_H:H\hookto \C$ extending those in $\Sg$ and let $G$ act on $\Sg_H$ by $\tau\cdot \sg=\sg\circ\tau^{-1}$ for $\tau\in G$ and $\sg\in\Sg_H$. Let $Y_{H,\Sg}$ be the free abelian group on $\Sg_H$, regarded as a $\Z[G]$-module with the the action of $G$-action on $\Sg_H$. Define the $p$-adic regulator map
\[\log_{\Sg,p}:H^\x\to \Cp Y_{H,\Sg},\quad \log_{\Sg,p}(x)=\sum_{\sg\in\Sg_H}\log_p(\iota_p\sg(x))\sg.\]
We extend the map $\log_{\Sg,p}$ by linearity to the map $\log_{\Sg,p}:\Cp H^\x\to \Cp Y_{H,\Sg}$ of $\Cp[G]$-modules and obtain a map between the $\chi$-isotypic parts on the both sides
\beq\label{E:preg}
\log_{\Sg,p}:\frako_H^\x[\chi]\to Y_{H,\Sg}[\chi]:=e_\chi(\Cp Y_{H,\Sg}).
\eeq
\begin{conj}[The $\Sigma$-Leopoldt conjecture for $\chi$]\label{conj2} The map $\log_{\Sg,p}$ in \eqref{E:preg} is injective.
\end{conj}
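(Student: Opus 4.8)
The statement is a $\Sg$-twisted form of Leopoldt's conjecture, so I do not expect an unconditional proof; my plan is to separate the structural content from the transcendence input, and then to invoke Brumer's $p$-adic analogue of Baker's theorem, which disposes of the abelian cases.

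\emph{Structural reduction.} First I would use that, $K$ being totally imaginary, $H=\Qbar^{\ker\chi}$ is a CM field; let $H^{+}$ be its maximal totally real subfield. Since $[\frako_H^\times:\mu_H\frako_{H^{+}}^\times]$ is finite and $\log_p$ annihilates $\mu_H$, one has $\Cp\frako_H^\times=\Cp\frako_{H^{+}}^\times$; and for $x\in H^{+}$ the value $\sg(x)$ depends only on the restriction of $\sg\in\Sg_H$ to $H^{+}$ (the embeddings $\sg$ and $c\sg$ restrict to the same real embedding), so the restriction of $\log_{\Sg,p}$ to $\frako_{H^{+}}^\times$ is, after re-indexing $\Sg_H$ by the real places of $H^{+}$, precisely the classical $p$-adic regulator map of $H^{+}$. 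As $\log_{\Sg,p}$ is $\Cp[G]$-linear it respects the isotypic decomposition, so injectivity on all of $\Cp\frako_H^\times$ forces injectivity on the $\chi$-component. Thus the $\Sg$-Leopoldt conjecture for $\chi$ follows from Leopoldt's conjecture for the totally real field $H^{+}$; moreover, since Minkowski's unit theorem gives $\dim_{\Cp}\frako_H^\times[\chi]=\dim_{\Cp}Y_{H,\Sg}[\chi]$ when $\chi\ne\mathbf 1$, the two are essentially equivalent in that case, and the case $\chi=\mathbf 1$ (where $H=K$, $H^{+}=F$) is just Leopoldt's conjecture for $F$.

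\emph{Transcendence input.} Next I would fix a $\Z$-basis $u_1,\dots,u_r$ of $\frako_{H^{+}}^\times$ modulo torsion and record that $\log_{\Sg,p}$ kills $\sum_i c_iu_i$ (with $c_i\in\Cp$) exactly when $\sum_i c_i\log_p\iota_p\sg(u_i)=0$ for every $\sg\in\Sg_H$, i.e. when the $r$ vectors $(\log_p\iota_p\sg(u_i))_{\sg\in\Sg_H}$ are $\Cp$-linearly dependent. These are vectors of $p$-adic logarithms of algebraic numbers whose archimedean analogues are $\R$-linearly independent by Dirichlet's unit theorem, so what is needed is that $\Q$-linear independence of $p$-adic logarithms of units of $H^{+}$ propagates to $\Qbar$-linear independence. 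Brumer's theorem — the $p$-adic analogue of Baker's theorem on linear forms in logarithms — supplies exactly this propagation whenever the algebraic numbers involved generate an abelian extension of $\Q$ or of an imaginary quadratic field, and thus yields the $\Sg$-Leopoldt conjecture for $\chi$ unconditionally in those cases, for instance when $K$ is imaginary quadratic and $H^{+}$ is abelian over $\Q$.

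\emph{The main obstacle.} The general case is genuinely open and, by the reduction above, is of precisely the same depth as Leopoldt's conjecture for $H^{+}$: once $H^{+}$ fails to lie in a compositum of abelian extensions of $\Q$ and of imaginary quadratic fields, there is no known form of Baker's theorem yielding the required $\Qbar$-linear independence of $p$-adic logarithms, and no Iwasawa-theoretic substitute bounding the relevant units is available either. This is why the statement is imposed in the sequel as a hypothesis rather than proved; an unconditional proof would require a substantial advance in $p$-adic transcendence theory.
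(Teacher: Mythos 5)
This is a conjecture, and the paper does not claim a general proof: it gives a Selmer-theoretic reformulation in \lmref{L:21}, observes it follows from the $p$-adic Schanuel conjecture, and in the Appendix proves two special cases --- \corref{C:A3} (when $K=FM$ with $M$ imaginary quadratic, $p$ split in $M$, and $H$ abelian over $M$) via Baker--Brumer applied over $M$, and \propref{P:examplesSigmaLeopoldt} ($F$ real quadratic, $p$ split, $\chi\ne\chi^c$, for \emph{some} choice of CM type $\Sigma$) via Roy's strong six-exponentials theorem. Your recognition of the conjectural status and its dependence on $p$-adic transcendence is correct, but your structural reduction contains a genuine gap and fails to reach either of the paper's results.

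The gap is the assertion that $H=\Qbar^{\ker\chi}$ is CM. This fails whenever $\ker\chi$ is not $c$-stable (for instance when $\chi$ is ramified at a prime $\frakQ$ with $c(\frakQ)\ne\frakQ$ while unramified at $c(\frakQ)$): then $c(H)\ne H$, $H/F$ is not Galois, and $H$ --- though totally imaginary --- has no totally real index-two subfield $H^+$, so there is nothing to reduce to. \propref{P:examplesSigmaLeopoldt} is aimed at exactly such cases: its conclusion and proof are $\Sigma$-dependent (the $2\times 3$ matrix there has genuinely distinct columns $\sigma_2$ and $\sigma_2c$), whereas your $H^+$-reduction identifies $\Sigma_H$ with $\Hom(H^+,\R)$ and forces a $\Sigma$-independent answer, so it cannot recover this result. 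Even in the CM case (e.g.\ $\chi$ anticyclotomic), reducing the single-$\chi$ statement to Leopoldt for $H^+$ is an overkill you cannot then cash in: \corref{C:A3} works precisely because the decomposition is taken over $\Gal(H/M)$, producing one-dimensional $\chi_i$-pieces and hence a single $\Qbar$-linear form in $p$-adic logarithms of Minkowski-unit combinations --- the only thing Baker--Brumer controls --- and this succeeds even when $H^+/\Q$ is not abelian and the $H^+$-regulator determinant, a degree-$(d-1)$ polynomial in logarithms, is out of reach. Relatedly, Baker--Brumer has no abelianity hypothesis; the abelian structure over $\Q$ or over $M$ is what lets one diagonalize the unit module so that the determinant factors into single linear forms, not a condition of the theorem, so your phrasing of the transcendence input inverts the logic.
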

We remark that \conjref{conj2} is a consequence of the well-known $p$-adic Schanuel conjecture as explained in \cite[Lemma 1.2.1]{HidaTil94Inv};  the four exponential conjecture implies \conjref{conj2} when $F$ is a real quadratic field. Fixing an isomorphism $\iota:\C\iso\Cp$ such that $\iota\circ\iota_\infty=\iota_p:\Qbar\hookto \Cp$, the Dirichlet regulator map is defined by 
\[\log_{\Sigma,\infty}:H^\x\to \Cp Y_{H,\Sg},\quad \log_{\Sigma,\infty}(x)=\sum_{\sg\in\Sg_H}\iota\left(\log\abs{\sg(x)}\right)\sg.\]
Suppose that $\chi$ is not the trivial character $\bfone$. By Dirichlet's units Theorem, the induced map $\log_{\Sigma,\infty}:\frako_H^\times[\chi]\isoto Y_{H,\Sg}[\chi]$ is an isomorphism of $d$-dimensional $\Cp$-vector spaces. It follows that if \conjref{conj2} holds, then the $p$-adic regulator map $\log_{\Sg,p}:\frako_H^\x[\chi]\iso Y_{H,\Sg}[\chi]$ is also an isomorphism. Define the regulator $\mathscr{R}_{\Sigma}(\chi)$ in $\Cp$ by 
\[\sR_\Sg(\chi):=\det\left(\log_{\Sg,p}\circ\log_{\Sg,\infty}^{-1}\vert\,Y_{H,\Sg}[\chi]\right).\]
It is clear that the $\Sg$-Leopoldt conjecture for $\chi$ is equivalent to the non-vanishing of $\sR_\Sg(\chi)$. Following Buyukboduk and Sakamoto \cite[Definition 5.2]{BS}, we next introduce the cyclotomic $\sL$-invariant for $\chi\neq \bfone$ under the $\Sg$-Leopoldt conjecture. Let $\frako_{H,\Sgbar_p}^\x$ be the $\Sgbar_p$-units in $H$ and put
\[\frako_{H,\Sgbar_p}^\x[\chi]:=e_\chi\left(\Cp\frako_{H,\Sgbar_p}^\x\right).\]
We consider the space \[\mathbf{U}_\chi:= \ker\stt{\frako_{H,\Sgbar_p}^\x[\chi]\overset{\log_{\Sg,p}}\longto \Cp Y_{H,\Sg}}.\]
If $\frakP$ is a prime ideal of the ring $\frako_K$ of integers of $K$, let $\Ord_{\frakP}:K_\frakP^\x\to\Z$ be the normalized valuation map and $\log_{\frakP}:=\log_p\circ\rmN_{K_\frakP/\Qp}:K_\frakP^\x\to\Zp$ be the cyclotomic $p$-adic logarithm, where $\rmN_{K_\frakP/\Qp}$ is the norm map from $K_\frakP$ to $\Qp$. For each $\frakP\in \Sg_p^{\rm irr}$, we choose a prime $\Pbar_H$ of $H$ above $\Pbar$. Then via the inclusion $\frako_{H,\Sgbar_p}^\x\subset H^\x\subset H_{\Pbar_H}^\x=K_{\Pbar}^\x$, we can evaluate $\Ord_{\Pbar}$ and $\log_{\Pbar}$ on $\frako_{H,\Sgbar_p}^\x$ and extend by linearity to maps
\begin{align*}
{\mathbf O}_p:\bfU_\chi&\xrightarrow{\bigoplus_{\frakP\in \Sg_p^{\rm irr}}\Ord_{\Pbar}} \Cp^{\Sg_p^{\rm irr}};\\
{\mathbf L}_p:\bfU_\chi&\xrightarrow{\bigoplus_{\frakP\in \Sg_p^{\rm irr}}\log_{\Pbar}} \Cp^{\Sg_p^{\rm irr}}.
\end{align*}
If $\chi\neq \bfone$ and the $\Sigma$-Leopoldt conjecture for $\chi$ holds,
the first map ${\bf O}_p$ is an isomorphism;  we can define the cyclotomic $\sL$-invariant of $\chi$ by 
\[\sL_\chi:=(-1)^{r_\Sg(\chi)}\det\left(\bfO_p^{-1}\circ\bfL_p\right).\]
Note that the definition of $\sL_\chi$ does not depend on the choice of the prime $\Pbar_H$ above $\Pbar$. 

For any $\Zp$-extension $K_\Gamma$ of $K$ with the Galois group $\Gamma$, we can also consider the trivial zeros of the Katz $p$-adic $L$-function $\cL_\Sg(\chi)|_{\Gamma}\in \cW\powerseries{\Gamma}$ obtained by the restriction of $\cL_\Sg(\chi)$ to $\Gamma$.
It is shown in \cite[Theorem 1.1]{BS} that the leading term formula of $\cL_\Sg(\chi)|_{\Gamma}$ at the trivial zero can be obtained as a consequence of several outstanding open conjectures in algebraic number theory, eg. Rubin-Stark conjecture and the Reciprocity Conjecture. The following is the special case of this conjectural leading term formula when $K_\Gamma=K_\infty^+$ is the cyclotomic $\Zp$-extension.  \begin{conj}\label{conj3}Assume the validity of the $\Sg$-Leopoldt conjecture for $\chi$ and $\chi\neq \bfone$. Then we have $\Ord_{s=0}L_\Sg(s,\chi)\geq r_\Sg(\chi)$ and 
\begin{align*} \lim_{s\to 0}\frac{L_\Sg(s,\chi)}{s^{r_\Sg(\chi)}}&=(-1)^{r_\Sg(\chi)}\sL_\chi\prod_{\frakP\in\Sigma_p \backslash \Sigma_p^{\mathrm{irr}}  }(1-\chi(\Pbar))\\&\quad\times\prod_{\frakP\in\Sg_p}(1-\chi(\frakP^{-1})\rmN\frakP^{-1})\cdot \sR_\Sg(\chi)\cdot L_{\rm fin}^*(0,\chi), \end{align*} where $L_{\rm fin}^*(0,\chi):=\iota\left(\lim\limits_{s\to 0}s^{-d}L_{\rm fin}(s,\chi)\right)\in\Cp^\x$ is the leading term of the $L$-series $L_{\rm fin}(s,\chi)$ .
\end{conj}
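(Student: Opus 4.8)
\emph{Sketch of a possible proof.} Since $\chi\neq\bfone$ by hypothesis, $\rmH^0(K,\chi)=0$ and $r:=r_\Sg(\chi)=\#\Sg_p^{\rm irr}$; recall $L_\Sg(s,\chi)=\cyc^{s}(\cL_\Sg(\chi))$ and set $\Gamma:=\Gal(\wtd K_\infty/K)$. The plan is to run a Greenberg--Stevens style argument in which two ingredients replace the usual ``weight variable'': a $p$-adic Kronecker limit formula for $K$, and the congruence between the CM Hida family attached to $\chi$ and non-CM Hida families of Hilbert cusp forms. Reading off \eqref{E:Katz1}, as the crystalline twist degenerates to the trivial character the local Euler-type product at $p$ specializes to $\prod_{\frakP\in\Sg_p}(1-\chi(\Pbar))(1-\chi(\frakP^{-1})\rmN\frakP^{-1})$, whose $r$ factors indexed by $\Sg_p^{\rm irr}$ vanish. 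First I would construct, on a $\Z_p$-subextension $K'/K$ of $\wtd K_\infty/K$ of rank $1+\delta_{F,p}$ that contains the cyclotomic line and is unramified outside $\Sg_p$, an \emph{improved} Katz $p$-adic $L$-function $\cL^\ast_\Sg(\chi)\in\cW\powerseries{\Gal(K'/K)}$ interpolating the same critical values with the $r$ exceptional factors $(1-\chi\phi_\infty(\Pbar))$ removed, together with a factorization $\cL_\Sg(\chi)|_{\Gal(K'/K)}=\big(\prod_{\frakP\in\Sg_p^{\rm irr}}(1-\chi\phi_\infty(\Pbar))\big)\cdot\cL^\ast_\Sg(\chi)$. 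Restricting to the cyclotomic line then gives $\Ord_{s=0}L_\Sg(s,\chi)\ge r$, granted $\cL^\ast_\Sg(\chi)(\bfone)\neq0$.

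Second, I would identify $\cL^\ast_\Sg(\chi)(\bfone)$ as the value predicted by the formula when $r=0$, i.e. as $\prod_{\frakP\in\Sg_p\setminus\Sg_p^{\rm irr}}(1-\chi(\Pbar))\cdot\prod_{\frakP\in\Sg_p}(1-\chi(\frakP^{-1})\rmN\frakP^{-1})\cdot\sR_\Sg(\chi)\cdot L_{\rm fin}^\ast(0,\chi)$ — a $p$-adic Kronecker limit formula for $K$. For $K$ imaginary quadratic this is Katz's theorem, which writes the value as a $p$-adic logarithm of elliptic units; for general CM $K$ one evaluates the Eisenstein measure underlying $\cL_\Sg(\chi)$ at $s=0$, producing the $p$-adic regulator of a suitable family of units of $\frako_H^\times[\chi]$, while the classical (complex) Kronecker limit formula for the corresponding real-analytic Eisenstein series relates $L_{\rm fin}^\ast(0,\chi)$ to the Dirichlet regulator of the same units; comparing the two and transporting along $\iota\colon\C\isoto\Cp$ produces the regulator ratio $\sR_\Sg(\chi)$ of \eqref{E:preg} (using $\chi\neq\bfone$, so that $\log_{\Sg,\infty}$ is an isomorphism). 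By \conjref{conj2} and $L_{\rm fin}^\ast(0,\chi)\in\Cp^\times$, this value is nonzero.

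Third, and crucially, I must show that the leading contribution of the factor $\prod_{\frakP\in\Sg_p^{\rm irr}}(1-\chi\phi_\infty(\Pbar))$ along the cyclotomic line is $(-1)^r\sL_\chi$: a naive differentiation gives only a product of $p$-adic logarithms, whereas the true invariant $\det(\mathbf{O}_p^{-1}\circ\bfL_p)$ is recovered from the directions of $\Gamma$ transverse to the cyclotomic line. To access those, form the CM Hida family $\mathbf{G}$ of $p$-ordinary Hilbert cusp forms over $F$ obtained by automorphic induction of a Hida family $\Psi$ of Hecke characters of $K$ that specializes through $\chi$ and varies over $\Gamma$; by the Ghate--Vatsal criterion and Hida's description of the CM locus of the big ordinary Hecke algebra, the congruence ideal joining $\mathbf{G}$ to non-CM Hida families is cut out by a branch of the anticyclotomic Katz $p$-adic $L$-function attached to $\chi$. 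A Greenberg--Stevens/Hida--Mazur analysis of a non-CM Hida family $\mathbf{F}$ meeting $\mathbf{G}$ at the $\chi$-specialization then produces, via Poitou--Tate duality and Kummer theory, a canonical element of $\bfU_\chi$; comparing its two natural invariants — the datum $\mathbf{O}_p$, read from how the $U_\frakP$-eigenvalue (equivalently, the local conductor) of $\mathbf{F}$ varies, and the datum $\bfL_p$, read from the crystalline/Frobenius parameter of $\mathbf{F}$ — yields precisely $\det(\mathbf{O}_p^{-1}\circ\bfL_p)=(-1)^r\sL_\chi$ (with $\mathbf{O}_p$ invertible by \conjref{conj2}). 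For $r>1$ one runs this in each of the $r$ transverse directions and takes the determinant over the $r$-dimensional $\bfU_\chi$. Combining with the first two steps yields $\Ord_{s=0}L_\Sg(s,\chi)\ge r$ and the asserted leading-term identity; since every factor on the right is nonzero, the order of vanishing is exactly $r$.

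The principal obstacle is the third step. One must (i) establish a CM-versus-non-CM congruence criterion for Hida \emph{families} of Hilbert cusp forms over $F$ and identify the resulting congruence module with the anticyclotomic Katz $p$-adic $L$-function under exactly the normalization that makes the periods $\Omega,\Omega_p$ of \eqref{E:Katz1} cancel; and (ii) carry out the local-at-$p$ deformation computation and match it — with correct signs and normalizations — to the explicit linear algebra of $(\bfU_\chi,\mathbf{O}_p,\bfL_p)$ surrounding \eqref{E:preg}; this is already delicate when $r=1$ and combinatorially heavier when $r>1$. A secondary difficulty is the second step: proving the $p$-adic Kronecker limit formula for CM fields of degree $>2$ with matching periods, and correctly isolating $\cL^\ast_\Sg(\chi)$ and the subextension $K'$ inside the multivariable $\cL_\Sg(\chi)$.
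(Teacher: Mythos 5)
The statement you are asked to prove is labelled as a conjecture in the paper (\conjref{conj3}), and the paper does not contain a proof of it; it is recorded as a special case of a conjectural leading-term formula from Buyukboduk--Sakamoto, and the paper then proves only partial results in its direction (Theorems A, B, and the intermediate Theorems~\ref{T:main1} and~\ref{T:main2}). Your ``proof'' is therefore necessarily a research plan rather than a proof, and while it faithfully reconstructs the strategy the paper uses to obtain its \emph{partial} results, several steps you gesture at are precisely the open problems the paper flags.

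Concretely, there are three genuine gaps. First, your Step 2 requires a $p$-adic Kronecker limit formula for general CM fields identifying $\cL^*_\Sg(0,\chi)$ with $\sR_\Sg(\chi)\prod_{\frakP\in\Sg_p}(1-\chi(\frakP^{-1})\rmN\frakP^{-1})\cdot L_{\rm fin}^*(0,\chi)$; this is exactly \conjref{conj4} of the paper and is open in degree $d>1$. The paper proves only a very special case (\propref{P:Kronecker}) when $\chi_+=\bfone$ and $\chi$ is everywhere unramified, and otherwise only establishes nonvanishing of $\cL^*_\Sg(0,\chi)$ via Iwasawa main-conjecture divisibility (\propref{P:nonvanishing}), not the explicit formula. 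Second, your Step 3 produces the anticyclotomic $\sL$-invariant $\sL_\chi^{\rm ac}$, not the cyclotomic $\sL_\chi$: the CM Hida family congruence computes a determinant against the additive character $\ell^{\rm ac}=\log_p\circ\e_\Sg^{1-c}$ (see \defref{D:Gross} and \thmref{T:antiregulator}), not against $\ell^{\rm cyc}$. Passing from $\sL_\chi^{\rm ac}$ to $\sL_\chi$ is only carried out in the paper when $r_\Sg(\chi)=1$, via the identity $\ell^{\rm cyc}=-\eta_\Sg-\eta_{\Sgbar}$ and the two-variable relation $\cL_\Sg(s,s,\chi)=2\cL_\Sg(s,0,\chi)-\cL_\Sg(s,-s,\chi)+O(s^2)$; for $r>1$ no such relation yields the full leading coefficient, and indeed the paper obtains only the vanishing $L'_\Sg(0,\chi)=0$ for $r>1$, not the full conjectural leading term. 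Third, the congruence machinery in your Step 3 decomposes $\chi=\brch^{1-c}$ and thus requires $\chi$ to be anticyclotomic (this is also used to make $\theta_\brch$ well-defined), whereas the conjecture is stated for arbitrary nontrivial ray class characters $\chi$; the method does not obviously apply in the non-anticyclotomic case. Finally, a smaller point: the improved $p$-adic $L$-function in the paper lives on the $\Sg_p$-line $\cL_\Sg(s,0,\chi)$ of the two-variable function, with the factorization removing \emph{all} $\frakP\in\Sg_p$ factors (\thmref{T:improved}), and requires Leopoldt for $F$ for analyticity at $s=0$; it is not the restriction to a sub-$\Z_p$-extension $K'$ that you describe. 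These distinctions matter for getting the sign $(-1)^{r}$ and the residual Euler product $\prod_{\frakP\in\Sg_p\setminus\Sg_p^{\rm irr}}(1-\chi(\Pbar))$ correct.
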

When $K$ is an imaginary quadratic field (so $r_\Sg(\chi)\leq 1$), \conjref{conj1} is equivalent to the non-vanishing of the $\sL$-invariant, which is known to be a consequence of the four exponentials conjecture (\cite[Prop. 1.11]{BeD21Adv}), \conjref{conj2} holds thanks to the Baker-Brumer Theorem and \conjref{conj3} is proved in \cite[Theorem 1.6]{BS} via the Euler system of elliptic units. These conjectures are still wide open for general CM fields. 
\subsection{Statement of the main results}
The main goal of this article is to prove the existence of the trivial zeros and provide some evidence towards \conjref{conj1} for general CM fields in the rank zero and one case under the Leopoldt hypotheses. Our method does not reply on the existence of conjectural Rubin-Stark units and conjectural reciprocity law. In the course of the proofs, we also obtain a leading term formula of the \emph{anticyclotomic} Katz $p$-adic $L$-functions. 

Let $h_{K/F}$ be the relative class number of $K/F$ and $\frakD_{K/F}$ be the relative different of $K/F$. For simplicity, it is assumed throughout this introduction that $(\chi,K,p)$ satisfies the following technical conditions.
\begin{align}
\label{Iw1}\tag{H1}&\text{ The prime }p\text{ does not divide } 6\Delta_Fh_{K/F}; \\
\label{Iw2}\tag{H2} &\text{$\chi$ is of prime-to-$p$ order and is unramified at primes dividing $p\frakD_{K/F}$}.
\end{align}
Note that these assumptions are satisfied by all but finitely primes $p$ once $(\chi,K)$ is fixed. Our first result gives a sufficient and necessary condition for the existence of the trivial zero, \ie  \conjref{conj1} in the rank zero case under the following Leopoldt hypothesis.
\begin{hypA}\label{hyL}
 The Leopoldt conjecture for $F$ and the $\Sg$-Leopoldt conjecture for $\chi$ both hold. 
\end{hypA}

\begin{thmA}\label{T:thmA}Suppose that Hypothesis (L) is valid. Then we have 
\[\Ord_{s=0}L_\Sg(s,\chi)=0\text{ if and only if }r_\Sg(\chi)=0.\]
\end{thmA}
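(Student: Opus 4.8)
The strategy is to show that both sides fail to vanish precisely under the same circumstances, by relating the order of vanishing of $L_\Sg(s,\chi)$ at $s=0$ to the Katz $p$-adic $L$-function evaluated outside the range of interpolation. Since $r_\Sg(\chi)=0$ means $\#\Sg_p^{\rm irr}=\dim_{\Cp}\rmH^0(K,\chi)$, and the latter is either $0$ (if $\chi\neq\bfone$) or $1$ (if $\chi=\bfone$), the condition $r_\Sg(\chi)=0$ essentially says that either $\chi$ is nontrivial and no prime of $\Sg_p$ is fixed by $\chi$, or $\chi=\bfone$ and exactly one prime of $\Sg_p$ contributes. First I would dispose of the direction $r_\Sg(\chi)=0\implies \Ord_{s=0}L_\Sg(s,\chi)=0$. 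For this one wants to produce a crystalline character $\phi$ of cyclotomic type — i.e. lying on the line $s\mapsto \cyc^s$ through the trivial character — together with a nearby classical point where the interpolation formula \eqref{E:Katz1} applies and whose value is a $p$-adic unit. The Euler-type factors $\prod_{\frakP\in\Sg_p}(1-\chi\phi_\infty(\Pbar))(1-\chi\phi_\infty(\frakP^{-1})\rmN\frakP^{-1})$ are the source of potential zeros; the hypothesis $r_\Sg(\chi)=0$ is exactly what guarantees none of the $(1-\chi(\Pbar))$ factors vanishes in the limit, so after dividing out by the relevant power of $s$ one is left with a nonvanishing quantity governed by a special $L$-value and a regulator that is nonzero under Hypothesis (L).

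The reverse direction — that $r_\Sg(\chi)>0$ forces $L_\Sg(0,\chi)=0$ — is the substantive half and where I expect the real work to lie. When $K$ is imaginary quadratic this is the $p$-adic Kronecker limit formula, as noted in the excerpt; for general CM fields one needs the ``special case of the $p$-adic Kronecker limit formula for CM fields'' advertised in the abstract. Concretely I would argue as follows: each $\frakP\in\Sg_p^{\rm irr}$ contributes a factor $(1-\chi(\Pbar))$ which vanishes at $s=0$ along the cyclotomic line (since $\chi\cyc^0(\Pbar)=\chi(\Pbar)=1$ but $\chi\cyc^s(\Pbar)$ varies with $s$), and these $\#\Sg_p^{\rm irr}$ factors each contribute a simple zero; meanwhile when $\chi=\bfone$ the archimedean $L$-factor $L_{\rm fin}(s,\chi)$ contributes a pole that must be matched, reducing the zero-count by $\dim_{\Cp}\rmH^0(K,\chi)=1$. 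The combinatorics of counting these zeros and poles along the cyclotomic specialization — making rigorous that the net order is $\geq r_\Sg(\chi)$ — is the crux, and it relies on understanding how the Katz measure interpolates not just at critical characters but has a limiting behavior encoded by a $p$-adic analytic continuation of the relevant Hecke $L$-values along the cyclotomic direction.

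The main obstacle, then, is establishing the needed case of the $p$-adic Kronecker limit formula and extracting from it the inequality $\Ord_{s=0}L_\Sg(s,\chi)\geq r_\Sg(\chi)$ together with a nonvanishing leading coefficient when equality of ranks holds. I would expect this to proceed by writing $\cL_\Sg(\chi)$, restricted to a two-dimensional subgroup of $\Gal(\wtd K_\infty/K)$ containing both the cyclotomic and an anticyclotomic direction, in terms of Eisenstein–Hilbert modular forms or CM Hida families, so that the value at $s=0$ becomes a period integral / constant term that can be evaluated. The Leopoldt conjecture for $F$ enters to ensure $\Gal(K_{\Sg_p}/K)$ has the expected $\Z_p$-rank $1+\delta_{F,p}$ (so that the cyclotomic line is well-behaved), while the $\Sg$-Leopoldt conjecture for $\chi$ is precisely what makes the regulator $\sR_\Sg(\chi)$, hence the leading coefficient, nonzero. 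A clean way to organize the write-up is: (i) reduce to a statement about the Katz measure on a rank-two quotient; (ii) identify the trivial-zero factors and count their contribution to $\Ord_{s=0}$; (iii) invoke the $p$-adic Kronecker limit formula to pin down the leading term up to the regulator; (iv) conclude that the leading term is nonzero iff $r_\Sg(\chi)=0$ using Hypothesis (L).
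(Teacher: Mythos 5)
Your high-level intuition is right — a factorization isolating trivial-zero Euler factors, a leading coefficient that is nonzero under Hypothesis (L), and a polar contribution for $\chi=\bfone$ — but there is a genuine gap in how you propose to pin down the leading coefficient, and the structural role of the two-variable $L$-function is not quite as you imagine.

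The paper deduces \thmref{T:thmA} directly from the factorization
\[
\cL_{\Sg}(s,0,\chi)=\cL_\Sg^*(s,\chi)\prod_{\frakP\in\Sg_p}(1-\chi(\Pbar)\e_\Sg^s(\Fr_{\Pbar})),
\]
evaluated at $s=0$, together with: (a) analyticity of $\cL_\Sg^*$ at $s=0$ for $\chi\neq\bfone$ and a simple pole when $\chi=\bfone$ (under the Leopoldt conjecture for $F$); and (b) non-vanishing of $\cL_\Sg^*(0,\chi)$ for $\chi\neq\bfone$ (under Hypothesis (L)). Because $L_\Sg(0,\chi)=\cL_\Sg(0,0,\chi)$, both implications of Theorem A fall out immediately by inspecting orders of vanishing in $s$. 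Your proposal instead works along the one-variable cyclotomic line $L_\Sg(s,\chi)=\cL_\Sg(s,s,\chi)$ and tries to read off the zero-count there; this misses the crucial point that the factorization is constructed along the $\e_\Sg$-direction (with the $\e_{\Sgbar}$-variable frozen at $0$) via a modified \emph{ordinary} Eisenstein measure, precisely because $s=0$ lies outside the critical range for $\cL_\Sg(s,s,\chi)$ itself.

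More importantly, step (iv) of your plan — ``conclude that the leading term is nonzero iff $r_\Sg(\chi)=0$ using the regulator $\sR_\Sg(\chi)$'' — cannot be carried out as stated. The identity $\cL_\Sg^*(0,\chi)=\sR_\Sg(\chi)\cdot(\cdots)\cdot L_{\rm fin}^*(0,\chi)$ is exactly \conjref{conj4} of the paper, which is \emph{open} for general CM fields; the authors explicitly note that the $p$-adic Kronecker limit formula is not available in this generality and only prove the very special case $\chi$ unramified everywhere with $\chi_+=\bfone$ (\propref{P:Kronecker}). What actually gives the non-vanishing is a completely different mechanism: the one-sided divisibility in the Iwasawa main conjecture from \cite{Hsieh14JAMS} shows $\char_{\Lam_K}(\Sel_{K_{\Sg_p}}(\chi)^*)\subset \cL^*_\Sg(\chi)\Lam_K$, and the $\Sg$-Leopoldt conjecture (reformulated as $\rmH^1_{(\emptyset,f)}(K,\chi)=0$ via Poitou–Tate in \lmref{L:21}) forces the Selmer group to be $X$-torsion-free, hence $\cL^*_\Sg(0,\chi)\neq 0$ (\propref{P:nonvanishing}). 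Your regulator route would require first solving Conjecture 4; the Iwasawa-theoretic argument sidesteps it entirely, and this substitution is the substantive content of the proof.

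One further small correction: the simple pole of $\cL_\Sg^*(s,\bfone)$ at $s=0$ does not come from the archimedean $L$-factor. It is traced to the constant term of the ordinary $p$-adic Eisenstein measure, which is essentially the Deligne–Ribet $p$-adic $L$-function $L_p(1-s,\chi_+^{-1})$; the Leopoldt conjecture for $F$ is invoked precisely to control that pole.
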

We say $\chi$ is \emph{anticyclotomic} if $\chi(c\sg c)=\chi(\sg^{-1})$ for $\sg\in \Gal(\ol{K}/K)$. If $\chi$ is further assumed to be anticyclotomic, it was shown in \cite{BDF21MZ} that $L_\Sg(0,\chi)\neq 0$ if $r_\Sg(\chi)=0$ under Hypothesis (L). Our second result relates the rank one case of \conjref{conj1} to the non-vanishing of the $\sL$-invariant $\sL_\chi$ for anticyclotomic characters $\chi$.

\begin{thmA}\label{T:thmB} In addition to the validity of Hypothesis (L), we assume further that $\chi$ is a non-trivial anticyclotomic character. Then we have  
\[\Ord_{s=0}L_\Sg(s,\chi)=1\text{ if and only if }r_\Sg(\chi)= 1\text{ and }\sL_\chi\neq 0.\]
\end{thmA}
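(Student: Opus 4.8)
The plan is to reduce the rank-one case to two inputs: the leading term formula for the anticyclotomic Katz $p$-adic $L$-function at a trivial zero (which the paper advertises as one of its main technical achievements, obtained via explicit congruences between CM and non-CM Hida families of Hilbert cusp forms), combined with Theorem A which already settles the rank-zero case. First I would observe that, since $\chi$ is anticyclotomic, the restriction $\cL_\Sg(\chi)|_{\Gamma^-}$ to the anticyclotomic $\Z_p$-line $\Gamma^-$ and the restriction $L_\Sg(s,\chi)=\cyc^s(\cL_\Sg(\chi))$ to the cyclotomic line are governed by the same Iwasawa module; under Hypothesis (L) the relevant Selmer/unit groups have the expected ranks, so $r_\Sg(\chi)$ is exactly the number of local "exceptional" places $\Sg_p^{\mathrm{irr}}$ (since $\rmH^0(K,\chi)=0$ for $\chi\neq\bfone$), and $r_\Sg(\chi)\le \#\Sg_p$.

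The core step is the dichotomy $r_\Sg(\chi)\in\{0,1\}$ forced by the anticyclotomic symmetry: if $\frakP\in\Sg_p$ satisfies $\chi(\Pbar)=1$, then applying $\chi(c\sg c)=\chi(\sg^{-1})$ to a Frobenius at $\frakP$ shows $\chi(\frakP)=\chi(\Pbar)^{-1}=1$ as well, which in the split setting pairs up places; I would check that this symmetry, together with the fact that $\chi$ has prime-to-$p$ order (H2) and is unramified above $p$, constrains $\#\Sg_p^{\mathrm{irr}}$ so that $r_\Sg(\chi)\le 1$ automatically once one is not in the rank-zero situation of Theorem A. Then: if $r_\Sg(\chi)=0$, Theorem A gives $\Ord_{s=0}L_\Sg(s,\chi)=0\neq 1$, so the equivalence holds vacuously on that side. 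If $r_\Sg(\chi)=1$, I would invoke the anticyclotomic leading-term formula proved later in the paper, which expresses $\lim_{s\to0}s^{-1}L_\Sg(s,\chi)$ as a nonzero explicit archimedean/Euler factor times the regulator $\sR_\Sg(\chi)$ times the $\sL$-invariant $\sL_\chi$. Under Hypothesis (L) the regulator $\sR_\Sg(\chi)$ is nonzero (this is exactly the $\Sg$-Leopoldt conjecture for $\chi$, recorded in the excerpt as equivalent to $\sR_\Sg(\chi)\neq0$), and the Euler factors $\prod_{\frakP\in\Sg_p\setminus\Sg_p^{\mathrm{irr}}}(1-\chi(\Pbar))\prod_{\frakP\in\Sg_p}(1-\chi(\frakP^{-1})\rmN\frakP^{-1})$ together with $L_{\mathrm{fin}}^*(0,\chi)$ are nonzero by construction; hence $\Ord_{s=0}L_\Sg(s,\chi)=1$ if and only if $\sL_\chi\neq0$, giving the stated equivalence.

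I would organize the write-up as: (i) a lemma establishing $r_\Sg(\chi)\le 1$ for nontrivial anticyclotomic $\chi$ under the hypotheses (H1), (H2), and Hypothesis (L); (ii) the case $r_\Sg(\chi)=0$ handled by Theorem A; (iii) the case $r_\Sg(\chi)=1$, where I cite the anticyclotomic leading-term theorem (to be proved in the body of the paper) to get $\lim_{s\to0}s^{-1}L_\Sg(s,\chi)=C\cdot\sR_\Sg(\chi)\cdot\sL_\chi$ with $C\in\Cp^\x$, and note that $\sR_\Sg(\chi)\neq0$ under Hypothesis (L); (iv) conclude $\Ord_{s=0}L_\Sg(s,\chi)\ge 1$ always (the trivial-zero phenomenon from the interpolation formula together with the $p$-adic Kronecker limit formula) and that equality to $1$ is equivalent to $\sL_\chi\neq0$.

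The main obstacle is the leading-term formula for the anticyclotomic Katz $p$-adic $L$-function at the rank-one trivial zero: unlike the imaginary quadratic case, there are no elliptic units available, so one must run the congruence argument between the CM Hida family attached to $\chi$ and a non-CM Hida family of Hilbert cusp forms, identify the $\sL$-invariant as the derivative of an $\Ua_p$-eigenvalue (or equivalently a Galois-cohomological derivative) along the non-CM family, and match this with the cyclotomic derivative of $L_\Sg(s,\chi)$; controlling the periods and the precise Euler factors in this identification is the delicate part, and it is there that Hypothesis (L) (via nonvanishing of $\sR_\Sg(\chi)$) and the technical hypotheses (H1)--(H2) are genuinely used. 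Once that formula is in hand, Theorem B is a short deduction as sketched above.
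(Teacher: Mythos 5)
Your step (i), the claimed lemma that $r_\Sg(\chi)\le 1$ for a nontrivial anticyclotomic character $\chi$, is false, and the rest of the sketch does not recover from this. What anticyclotomy gives you is that for each split prime $\frakp=\frakP\Pbar$ of $F$, the relation $\chi|_{\A_F^\x}=\bfone$ forces $\chi(\frakP)\chi(\Pbar)=1$, so $\chi(\frakP)=1$ whenever $\chi(\Pbar)=1$. This is a constraint \emph{at each individual} $\frakP\in\Sg_p$ and says nothing about \emph{how many} of the $\frakP\in\Sg_p$ satisfy $\chi(\Pbar)=1$. When $d=[F:\Q]\ge 2$ one can easily arrange $r_\Sg(\chi)=\#\Sg_p^{\rm irr}$ to be any integer up to $d$, and the paper explicitly treats $r_\Sg(\chi)>1$ --- for which Theorem~\ref{T:thmB} is the nonvacuous assertion $\Ord_{s=0}L_\Sg(s,\chi)\neq 1$. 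Without a substitute for step (i), your argument does not cover this case.

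The fix is close to what you wrote, but you must use the first-derivative formula for \emph{all} $r_\Sg(\chi)>0$, not only for $r_\Sg(\chi)=1$. The formula actually proved (\corref{C:main}) is
\[
\frac{L_\Sg(s,\chi)}{s}\Big\vert_{s=0}=\sL_\chi\cdot \cL^*_\Sg(0,\chi)\prod_{\frakP\in \Sg_p\bksl \stt{\frakP_1}}(1-\chi(\Pbar)),\qquad \frakP_1\in\Sg_p^{\rm irr}.
\]
When $r_\Sg(\chi)>1$ the product vanishes (choose any $\frakP\in\Sg_p^{\rm irr}$ with $\frakP\neq\frakP_1$), so $L_\Sg(0,\chi)=L'_\Sg(0,\chi)=0$ and $\Ord_{s=0}L_\Sg(s,\chi)\ge 2$; when $r_\Sg(\chi)=1$ every factor in the product is nonzero and, under Hypothesis (L), $\cL^*_\Sg(0,\chi)\neq 0$, so vanishing of the derivative is equivalent to $\sL_\chi=0$. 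A second, milder slip: the nonzero constant in the derivative formula is $\cL^*_\Sg(0,\chi)$, not ``Euler factors times $\sR_\Sg(\chi)$''; that identification is only \conjref{conj4}, unproved. The nonvanishing of $\cL^*_\Sg(0,\chi)$ is instead established in \propref{P:nonvanishing} via a one-sided divisibility in the Iwasawa main conjecture, where the $\Sg$-Leopoldt hypothesis enters through the vanishing of a Bloch--Kato Selmer group rather than through $\sR_\Sg(\chi)\neq 0$ directly.
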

\subsection{The method}Although our results are mainly concerned with the first derivative of the cyclotomic $p$-adic $L$-function, the method crucially relies on the use of certain two-variable $p$-adic $L$-function, which we describe as follows. Denote by $\sV:G_F^{\rm ab}\to G_K^{\rm ab}$ for the transfer map and fix a $p$-adic character $\e_\Sg:\Gal(K_{\Sg_p}/K)\to \Qbar_p^\x$  such that $\e_{\Sg}\circ\sV$ induces the $p$-adic cyclotomic character of $\Gal(F_\infty/F)$. Let $\e_{\Sgbar}$ denote the conjugate character $\e_{\Sg}^c$ and define the analytic function $\cL_\Sg(-,-,\chi):\Zp^2\to \Cp$ by \begin{align}
 \cL_{\Sg}(s,t,\chi):&=\e_{\Sg}^s\e_{\Sgbar}^t\left(\cL_{\Sg}(\chi)\right),\quad (s,t)\in\Zp^2.
 \end{align}
 We prove the following result in \thmref{T:improved} and \propref{P:nonvanishing}.
\begin{thm}\label{T:main1} 
 There exists an analytic function $\cL_\Sg^*(s,\chi)$ on $\Zp-\stt{0}$ such that \[\cL_{\Sg}(s,0,\chi)=\cL_\Sg^*(s,\chi)\prod_{\frakP\in\Sg_p}(1-\chi(\Pbar)\e_\Sg^s(\Fr_{\Pbar})).\]
 Moreover, the following statements hold:
 \begin{enumerate}
\item[(i)] If the Leopoldt conjecture for $F$ holds, then $\cL_\Sg^*(s,\chi)$ is analytic at $s=0$ if $\chi$ is non-trivial, and has a simple pole at $s=0$ if $\chi=\bfone$ is the trivial character.
\item[(ii)] If Hypothesis (L) is valid, then $\cL_\Sg^*(0,\chi)\neq 0$ for $\chi\neq \bfone$.\end{enumerate}
\end{thm}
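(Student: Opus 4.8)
The plan is to exploit the two-variable structure of $\cL_\Sg(\chi)$ together with the interpolation formula \eqref{E:Katz1}. First I would analyze the restriction $\cL_\Sg(s,0,\chi)=\e_\Sg^s(\cL_\Sg(\chi))$. The key point is that specializing $t=0$ means we are restricting the Katz $p$-adic $L$-function along the subextension on which the ``$\Sigma_p$-part'' of the cyclotomic direction survives while the $\Sigma_p c$-part is killed. In the interpolation formula \eqref{E:Katz1}, the factor $\prod_{\frakP\in\Sigma_p}(1-\chi\phi_\infty(\frakP^{-1})\rmN\frakP^{-1})$ is (up to the unramifiedness hypothesis \eqref{Iw2}) insensitive to the $\e_\Sg$-direction, whereas the factor $\prod_{\frakP\in\Sigma_p}(1-\chi\phi_\infty(\Pbar))$ becomes $\prod_{\frakP\in\Sigma_p}(1-\chi(\Pbar)\e_\Sg^s(\Fr_{\Pbar}))$ after the specialization. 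So I would \emph{define} $\cL_\Sg^*(s,\chi)$ by dividing $\cL_\Sg(s,0,\chi)$ by this product: the content is to show this quotient is a genuine analytic function on $\Zp\setminus\{0\}$, i.e.\ that the zeros of the product (which occur exactly where $\chi(\Pbar)\e_\Sg^s(\Fr_{\Pbar})=1$ for some $\frakP\in\Sigma_p$, a discrete set of $s$) are zeros of $\cL_\Sg(s,0,\chi)$ of at least the same multiplicity. This divisibility I expect to deduce from the fact that $\cL_\Sg(\chi)\in\cW\powerseries{\Gal(\wtd K_\infty/K)}$ is an element of an Iwasawa algebra (so $\cL_\Sg(s,0,\chi)$ is Iwasawa-analytic in $s$) together with the interpolation property: at a Zariski-dense set of arithmetic points the interpolated value visibly carries the Euler factor, so the ratio extends; removing a single bad point $s=0$ is cheap.

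Next, for part (i), I would identify $\cL_\Sg^*(0,\chi)$ (or rather the behavior of $\cL_\Sg^*(s,\chi)$ near $s=0$) with a $p$-adic $L$-value of Kronecker limit-formula type. The statement that $\cL_\Sg^*(s,\chi)$ is analytic at $s=0$ when $\chi\neq\bfone$ and has a simple pole when $\chi=\bfone$ is exactly the $p$-adic analogue of the classical dichotomy for $L_{\rm fin}(s,\chi)$ at $s=0$: for a nontrivial ray class character the partial $L$-function is holomorphic, while $\zeta_K(s)$ has a pole. On the $p$-adic side the relevant input is the $p$-adic Kronecker limit formula for CM fields alluded to in the abstract; I would invoke whichever form of it is proved earlier in the paper to control the $s\to 0$ behavior of $\cL_\Sg(s,0,\chi)$ itself, and then observe that the Euler product $\prod_{\frakP\in\Sigma_p}(1-\chi(\Pbar)\e_\Sg^s(\Fr_{\Pbar}))$ at $s=0$ is $\prod_{\frakP\in\Sigma_p}(1-\chi(\Pbar))$, which vanishes precisely to order $\#\Sigma_p^{\rm irr}$. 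The interplay of the order of vanishing of the numerator at $s=0$ with this explicit Euler-factor order of vanishing, combined with the Leopoldt conjecture for $F$ (which is what guarantees the relevant $p$-adic regulator/period is nonzero and hence that no extra vanishing sneaks in), gives the claimed analyticity resp.\ simple pole. For $\chi=\bfone$ the pole of order $1$ comes from the pole of the Dedekind-type $p$-adic object, tempered by the Leopoldt-controlled regulator.

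For part (ii), under full Hypothesis (L) I want $\cL_\Sg^*(0,\chi)\neq 0$ for $\chi\neq\bfone$. Here I would compute $\cL_\Sg^*(0,\chi)$ explicitly: it should be a nonzero algebraic multiple of $\sR_\Sg(\chi)\cdot L_{\rm fin}^*(0,\chi)$ times the non-vanishing Euler factors $\prod_{\frakP\in\Sigma_p\setminus\Sigma_p^{\rm irr}}(1-\chi(\Pbar))\prod_{\frakP\in\Sigma_p}(1-\chi(\frakP^{-1})\rmN\frakP^{-1})$ (this is essentially the ``rank zero case'' of \conjref{conj3} restricted to the $\e_\Sg$-line). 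The factor $L_{\rm fin}^*(0,\chi)$ is nonzero by the classical analytic class number formula / Dirichlet, the Euler factors $(1-\chi(\frakP^{-1})\rmN\frakP^{-1})$ are nonzero because $\rmN\frakP>1$, the factors $(1-\chi(\Pbar))$ for $\frakP\notin\Sigma_p^{\rm irr}$ are nonzero by definition of $\Sigma_p^{\rm irr}$, and $\sR_\Sg(\chi)\neq 0$ is exactly the $\Sg$-Leopoldt conjecture for $\chi$, which is part of Hypothesis (L). So the whole expression is nonzero. The main obstacle, and the step I would spend the most care on, is establishing this explicit evaluation of $\cL_\Sg^*(0,\chi)$ --- i.e.\ proving the one-line $p$-adic Kronecker limit formula along the $\e_\Sg$-direction with the precise constant involving $\sR_\Sg(\chi)$. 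This is where the genuine arithmetic input lies; the divisibility in the first part and the nonvanishing bookkeeping in part (ii) are comparatively formal once that formula is in hand.
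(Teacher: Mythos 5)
Your proposal for part (ii) goes through the door the paper explicitly declines to open. You propose to establish the non-vanishing of $\cL_\Sg^*(0,\chi)$ by \emph{evaluating} it as $\sR_\Sg(\chi)\cdot L^*_{\rm fin}(0,\chi)$ times Euler factors --- that is, by proving the rank-zero specialization of the conjectural leading-term formula. But this evaluation is exactly \conjref{conj4} in the paper, and the authors state outright that the $p$-adic Kronecker limit formula with the regulator constant is not available for general CM fields; the value $\cL_\Sg^*(0,\chi)$ ``remains very mysterious.'' What they actually prove in \propref{P:Kronecker} is a much weaker special case (the value of $\cL_\Sg^*(s,\bfone)/\zeta_{F,p}(1-s)$ at $s=0$ equals $2^{-d}h_{K/F}$), which is enough for part (i) and for the later Rankin--Selberg computation, but is nowhere near an explicit formula for $\cL_\Sg^*(0,\chi)$ when $\chi\neq\bfone$. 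The actual argument for part (ii) is Iwasawa-theoretic: \propref{P:nonvanishing} invokes the one-sided divisibility $\char\,\Sel_{K_\infty}(\chi)^* \subset \cL_\Sg(\chi)\Lam(K_\infty)$ from \cite{Hsieh14JAMS}, descends it to the $\e_\Sg$-line to get a divisibility for $\cL_\Sg^*(\chi)$, and then uses \lmref{L:21} --- the equivalence of the $\Sg$-Leopoldt conjecture with $\rmH^1_{(\emptyset,f)}(K,\chi)=0$ --- together with a control argument to conclude that the characteristic ideal is not divisible by $X$, forcing $\cL_\Sg^*(0,\chi)\neq 0$. Your proposal misses this route entirely.

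There is also a subtler issue with your argument for the mere \emph{existence} of $\cL_\Sg^*(s,\chi)$. You propose to divide $\cL_\Sg(s,0,\chi)$ by the Euler product and then argue the quotient extends because ``at a Zariski-dense set of arithmetic points the interpolated value visibly carries the Euler factor.'' But the zeros of $\prod_{\frakP\in\Sg_p}(1-\chi(\Pbar)\e_\Sg^s(\Fr_\Pbar))$ at $s\neq 0$ are not interpolation points of \eqref{E:Katz1}, so this density argument doesn't directly hand you the divisibility. The paper sidesteps this entirely: it \emph{constructs} $\cL_\Sg^*(\chi)$ as an honest element of $\Lam_K$ by building an ordinary $p$-adic Eisenstein measure $\rmd\mu^{\rm Eis}_{\chi,\frakc}$ (\propref{P:Emeasure}) whose CM-point evaluations already have the $\prod_\frakP(1-\chi(\Pbar)\e_\Sg^s(\Fr_\Pbar))$ factor stripped out by design (see \eqref{E:padicL1} and \eqref{E:padicL2}); the identity relating $\cL_\Sg(s,0,\chi)$ to $\cL_\Sg^*(s,\chi)$ is then read off the interpolation formulas after the fact. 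In short: the paper builds the improved $L$-function first, rather than dividing and then patching. Your intuition for part (i) --- that Leopoldt for $F$ controls the pole structure --- is correct in spirit, though the precise mechanism is that the constant term of the Eisenstein measure is governed by $\zeta_{F,p}(1-s)$, and \eqref{Leo1} is exactly the statement that $1/\boldsymbol{\zeta}_{F,p}^\vee \in X\Lam_P$.
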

 By definition, we have $\cL_\Sg(s,s,\chi)=L_\Sg(s,\chi)$ for $s$ in a sufficiently small neighborhood of $0$, and hence \thmref{T:thmA} directly follows from \thmref{T:main1}. We shall call $\cL_\Sg^*(s,\chi)$ the \emph{improved} $p$-adic $L$-function for $\chi$ in the sequel. The proof of \thmref{T:main1} is briefly outlined as follows. Recall that the Katz $p$-adic $L$-function $\cL_\Sg(\chi)$ for CM fields is obtained by the evaluation of Katz's Eisenstein measure at CM points. To construct this improved $p$-adic $L$-function, we modify the Katz's $p$-adic Eisenstein measure and construct a nice  \emph{ordinary} $p$-adic Eisenstein measure; then obtain the improved $p$-adic $L$-function $\cL_\Sg^*(s,\chi)$ by evaluating this $p$-adic ordinary Eisenstein measure at CM points. The interpolation formula of $\cL_\Sg^*(s,\chi)$ is proved via an explicit calculation of the toric period integral of Eisenstein series (\propref{P:period1.2}), and then the relation between $\cL_\Sg(\chi)$ and $\cL_\Sg^*(s,\chi)$ can be seen immediately from the interpolation formula. The analyticity of $\cL_\Sg^*(s,\chi)$ is determined by that of the constant term of our ordinary $p$-adic Eisenstein series, which is essentially the Deligne-Ribet $p$-adic $L$-function $L_p(1-s,\chi_+^{-1})$ with $\chi_+:=\chi\circ\sV$. We thus need the Leopoldt conjecture for $F$ to conclude the analyticity of $\cL_\Sg^*(s,\chi)$ at $s=0$ in some special cases (See \propref{P:Kronecker}). To see the non-vanishing of $\cL^*_\Sg(0,\chi)$, we first observe that the $\Sg$-Leopoldt conjecture is equivalent to the vanishing of the Bloch-Kato Selmer group  $\rmH^1_{(\emptyset,f)}(K,\chi)$, consisting classes in $\rmH^1(K,\chi)$ unramified outside $\Sg_p$ (\lmref{L:21}). On the other hand, we deduce a one-sided divisibility in the main conjecture for the improved $p$-adic $L$-function $\cL^*_\Sg(s,\chi)$ from \cite{Hsieh14JAMS}, which in turn implies that $\cL^*_\Sg(0,\chi)\neq 0$ if $\rmH^1_{(\emptyset,f)}(K,\chi)=\stt{0}$. This allows us to conclude the non-vanishing of $\cL^*_\Sg(0,\chi)$ under the Hypothesis (L).

In general the value $\cL_\Sg^*(0,\chi)$ is expected to be always nonzero whenever $\chi$ is non-trivial. When $K$ is an imaginary quadratic field, $\cL_\Sg^*(s,\chi)$ is nothing but the $p$-adic $L$-function $\cL_{\frakp}^*(s,\chi)$ introduced in \cite[\S 3.3]{CH21}, and the value $\cL^*_\Sg(0,\chi)$ equals the $p$-adic logarithm of Robert's units by the $p$-adic Kronecker limit formula (\cf\cite[Theorem 5.2, page 88]{Shalit87book}, \cite[(3.5)]{CH21}). In this case, the non-vanishing of $\cL_\Sg^*(0,\chi)$ follows from the Baker-Brumer Theorem. Since the generalization of Katz' $p$-adic Kronecker limit formula to CM fields is not available yet, the value $\cL^*_{\Sg}(0,\chi)$ remains very mysterious. Nonetheless,  \conjref{conj3} leads us to propose the following
\begin{conj}\label{conj4} If $\chi\neq \bfone$, then we have 
\[ \cL_\Sg^*(0,\chi)=\sR_\Sg(\chi)\prod_{\frakP\in\Sg_p}(1-\chi(\frakP^{-1})\rmN\frakP^{-1})\cdot L_{\rm fin}^*(0,\chi). \]
\end{conj}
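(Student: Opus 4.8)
The natural route to \conjref{conj4} is a $p$-adic Kronecker limit formula for the CM field $K$, obtained by specialising the construction of $\cL_\Sg^*(s,\chi)$ at the degenerate point $s=0$. Recall from the proof of \thmref{T:main1} that $\cL_\Sg^*(s,\chi)$ is a toric (CM-point) period of the \emph{ordinary} $p$-adic Eisenstein series $\bbE_s$ obtained by specialising the modified Eisenstein measure at the cyclotomic character, whose constant term is essentially the Deligne--Ribet $p$-adic $L$-function $L_p(1-s,\chi_+^{-1})$ with $\chi_+=\chi\circ\sV$. The plan is to take the explicit toric period formula of \propref{P:period1.2}---a priori valid only in the range of interpolation, where the parallel weight is positive---analytically continue it in the cyclotomic variable $s$, and evaluate at $s=0$. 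Since the Leopoldt conjecture for $F$ guarantees, through \propref{P:Kronecker}, that $\cL_\Sg^*(s,\chi)$ extends holomorphically to a neighbourhood of $s=0$ when $\chi\neq\bfone$, this specialisation is legitimate, and what remains is to compute the value.

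Carrying this out, the crucial point is that at $s=0$ the parallel weight of the Eisenstein series collapses to $0$, so $\bbE_0$ is no longer holomorphic and the leading behaviour of its $q$-expansion---equivalently, of its Serre--Tate expansion at the ordinary CM point---is governed by the first-order term, that is, by the $s$-derivative of the Fourier coefficients. Exactly as in the archimedean Kronecker limit formula, where $L_{\rm fin}^*(0,\chi)$ is expressed through $\log\abs{\,\cdot\,}$ of Siegel units and the archimedean regulator of $\log_{\Sg,\infty}$ on $\frako_H^\x[\chi]$ appears, these derivatives should be $p$-adic logarithms $\log_p$ of global units of $H$ lying in $\frako_H^\x[\chi]$, and organising them over $\Sg_H$ produces the $p$-adic regulator sitting inside $\sR_\Sg(\chi)$. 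One then tracks the $p$-stabilisation that turns Katz's classical Eisenstein measure into the ordinary one: comparing with Katz's interpolation formula \eqref{E:Katz1}, its first Euler product $\prod_{\frakP\in\Sg_p}(1-\chi\phi_\infty(\Pbar))$ is precisely what has been divided out to form $\cL_\Sg^*$---it specialises at $s=0$ to $\prod_{\frakP\in\Sg_p}(1-\chi(\Pbar))$, carrying the trivial zeros indexed by $\Sg_p^{\rm irr}$---whereas the second product $\prod_{\frakP\in\Sg_p}(1-\chi(\frakP^{-1})\rmN\frakP^{-1})$ survives and, since $\chi$ is unramified above $p$ by the running hypothesis (H2), is exactly the Euler factor in \conjref{conj4}. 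Finally, the global units produced by the limit computation are matched, via the classical (archimedean) Kronecker limit / Stark formula over $K$, with units whose $\log_{\Sg,\infty}$-regulator computes $L_{\rm fin}^*(0,\chi)$ up to explicit algebraic factors; dividing, the $p$-adic regulator of those units equals $\sR_\Sg(\chi)\cdot L_{\rm fin}^*(0,\chi)$, and assembling with the surviving Euler factors yields the identity of \conjref{conj4}.

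As an internal consistency check on the normalisation, one can compare \conjref{conj4} with \conjref{conj3} through the two-variable function: since $L_\Sg(s,\chi)=\cL_\Sg(s,s,\chi)$ while $\cL_\Sg(s,0,\chi)=\cL_\Sg^*(s,\chi)\prod_{\frakP\in\Sg_p}(1-\chi(\Pbar)\e_\Sg^s(\Fr_{\Pbar}))$, Taylor-expanding along the lines $t=0$ and $t=s$ near the origin and matching leading terms expresses the discrepancy precisely in terms of the cyclotomic $\sL$-invariant $\sL_\chi$. Thus a proof of \conjref{conj4} should also recover the cyclotomic case of \conjref{conj3}, once $\sL_\chi$ is identified with this directional-derivative correction---a reassuring coherence of the conjectural package.

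The main obstacle is exactly the limit computation of the second paragraph. Making the $s=0$ specialisation of the $p$-adic Eisenstein measure at CM points rigorous is already delicate because at parallel weight $0$ one is outside the holomorphic range; more seriously, one must prove that the leading $q$-expansion (respectively Serre--Tate) coefficients of $\bbE_0$ are $p$-adic logarithms of \emph{global} units of $H$ landing in $\frako_H^\x[\chi]$, so that one recovers $\sR_\Sg(\chi)$ rather than some a priori larger transcendental quantity. For imaginary quadratic $K$ this is the content of Katz's and de~Shalit's $p$-adic Kronecker limit formula, where Robert units make the local-to-global step work; over a general CM field there is no elliptic-unit Euler system to force the relevant local quantities to be norms of global units, so one instead needs a direct analytic study of non-holomorphic Hilbert modular Eisenstein series of parallel weight $0$ together with their toric period integrals. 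A secondary difficulty lies on the archimedean side: the argument needs a clean rank-$d$ Stark-type leading-term formula for $L_{\rm fin}^*(0,\chi)$ over $K$ with regulator normalisation exactly matching $\log_{\Sg,\infty}$, itself known only conditionally in general.
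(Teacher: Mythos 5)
This statement is a \emph{conjecture}, not a theorem: the paper itself offers no proof of \conjref{conj4}, and the authors say so explicitly immediately before stating it --- ``Since the generalization of Katz' $p$-adic Kronecker limit formula to CM fields is not available yet, the value $\cL^*_{\Sg}(0,\chi)$ remains very mysterious.'' They propose \conjref{conj4} only by analogy with the conjectural leading-term formula \conjref{conj3} of Buyukboduk--Sakamoto, and the only things actually proved in the paper about $\cL_\Sg^*(0,\chi)$ are its non-vanishing under Hypothesis (L) (\propref{P:nonvanishing}) and the very special case of \propref{P:Kronecker} (everywhere unramified $\chi$ with $\chi_+=\bfone$, where the value is either $0$ or an explicit class-number ratio). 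So there is no ``paper's own proof'' to compare against.

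With that said, your sketch is a reasonable reconstruction of how one would \emph{want} to prove \conjref{conj4}, and it lands on exactly the same obstruction the authors identify: the missing ingredient is a $p$-adic Kronecker limit formula over general CM fields, which would express the constant-term derivative of the ordinary $p$-adic Eisenstein series at $s=0$ in terms of $p$-adic logarithms of \emph{global} units of $H$ lying in $\frako_H^\x[\chi]$. Your final paragraph correctly names this as the main gap (no elliptic-unit / Robert-unit Euler system to force the local quantities to be logarithms of global units), and also correctly flags the secondary archimedean gap (a rank-$d$ Stark-type normalisation matching $\log_{\Sg,\infty}$). Your consistency check against \conjref{conj3} via the two-variable $p$-adic $L$-function is also sound and mirrors the algebra used in \corref{C:main}. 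But none of this closes the gaps you yourself list, so what you have written is an informed strategy outline, not a proof --- which, to be fair, is all that anyone currently has.
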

Now we turn to the rank one case of \conjref{conj1} for anticyclotomic characters. We prove the following first derivative formula of $L_\Sg(s,\chi)$ in \corref{C:main}. Combined with the non-vanishing of $\cL^*_\Sg(0,\chi)$ in \thmref{T:main1}, this formula yields our \thmref{T:thmB}.  
\begin{thm}\label{T:main2}Let $\chi$ be a non-trivial anticyclotomic character.  Suppose that the Hypothesis (L) holds. If $r_{\Sg}(\chi)>0$ and $\frakP_1\in\Sg_p^{\rm irr}$, then $L
_\Sg(0,\chi)=0$, and \[\frac{L_\Sg(s,\chi)}{s}\Big\vert_{s=0}=\sL_\chi\cdot \cL_{\Sg}^*(0,\chi)\prod_{\frakP\in\Sigma_p\bksl \stt{\frakP_1}}(1-\chi(\Pbar)) .\]
\end{thm}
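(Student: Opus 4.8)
The strategy is to exploit the two-variable Katz $p$-adic $L$-function $\cL_\Sg(s,t,\chi)$ together with the anticyclotomic restriction. Since $\chi$ is anticyclotomic, one has $\chi^c = \chi^{-1}$, so that the conjugate character $\e_\Sgbar$ plays a symmetric role to $\e_\Sg$; concretely, the functional equation relating $\cL_\Sg(\chi)$ and $\cL_\Sgbar(\chi^c)$ should identify $\cL_\Sg(s,t,\chi)$ with $\cL_\Sg(t,s,\chi)$ up to an explicit unit, and along the cyclotomic line $t=s$ we recover $L_\Sg(s,\chi) = \cL_\Sg(s,s,\chi)$. First I would record that $r_\Sg(\chi) > 0$ forces the interpolation-formula Euler factor $\prod_{\frakP\in\Sg_p}(1-\chi(\Pbar)\e_\Sg^s(\Fr_{\Pbar}))$ appearing in Theorem \ref{T:main1} to vanish at $s=0$: indeed $\frakP_1\in\Sg_p^{\rm irr}$ means $\chi(\ol{\frakP_1})=1$ and $\e_\Sg^0$ is trivial, so the factor indexed by $\frakP_1$ is $1-1=0$. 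Hence $\cL_\Sg(s,0,\chi)$ vanishes at $s=0$, and more precisely vanishes to order $\#\{\frakP\in\Sg_p^{\rm irr}\}$ there, since each such $\frakP$ contributes a factor $1-\e_\Sg^s(\Fr_\Pbar)$ with a simple zero while $\cL_\Sg^*(0,\chi)\neq 0$ by Theorem \ref{T:main1}(ii) under Hypothesis (L). In particular $L_\Sg(0,\chi)=\cL_\Sg(0,0,\chi)=0$.

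Next I would compute the leading term along the cyclotomic line. Writing $\cL_\Sg(s,s,\chi)$ and expanding near $(0,0)$, I expect the main input to be a \emph{leading term formula for the anticyclotomic specialization} $\cL_\Sg(0,t,\chi)$ (or equivalently, by the functional-equation symmetry, $\cL_\Sg(s,0,\chi)$ in the second-to-first variable), which is precisely where the $\sL$-invariant $\sL_\chi$ enters. The anticyclotomic variable is the one governed by the congruences between CM and non-CM Hida families of Hilbert modular forms alluded to in the introduction; those congruences produce a nontrivial class in a Bloch--Kato Selmer group whose position relative to the local conditions at the primes of $\Sg_p^{\rm irr}$ is measured by the ratio of $\mathbf{L}_p$ and $\mathbf{O}_p$, i.e. by $\sL_\chi$. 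Granting the anticyclotomic leading term formula (proved elsewhere in the paper, e.g. in the results leading to \corref{C:main}), one has, near $t=0$,
\[
\cL_\Sg(0,t,\chi) = \big(\text{unit}\big)\cdot t^{r_\Sg(\chi)}\cdot\sL_\chi\cdot\cL_\Sg^*(0,\chi)\cdot\prod_{\frakP\in\Sg_p\setminus\{\frakP_1\}}(1-\chi(\Pbar)) + O(t^{r_\Sg(\chi)+1}),
\]
after normalizing so that the distinguished prime $\frakP_1$ is the one whose Euler factor is ``removed.'' The remaining task is a two-variable Taylor expansion bookkeeping: along $t=s$ the Euler factors $1-\chi(\Pbar)\e_\Sg^s(\Fr_\Pbar)$ for $\frakP\in\Sg_p^{\rm irr}$ each contribute one order of vanishing, and matching the $s^1$-coefficient (after dividing by the product of the other vanishing Euler factors) against the anticyclotomic leading term isolates the stated formula $s^{-1}L_\Sg(s,\chi)|_{s=0} = \sL_\chi\cdot\cL_\Sg^*(0,\chi)\prod_{\frakP\in\Sg_p\setminus\{\frakP_1\}}(1-\chi(\Pbar))$.

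The main obstacle is establishing the anticyclotomic leading term formula with the $\sL$-invariant, i.e. the step that identifies the first derivative of the anticyclotomic $p$-adic $L$-function at the trivial zero with $\sL_\chi$ times the improved value. This is the heart of the paper: it requires constructing the CM--non-CM congruence module of Hida families precisely enough to read off the Galois cohomology class it produces, and then matching the local invariants ($\Ord_\Pbar$ versus $\log_\Pbar$ on the relevant units) with the definition of $\sL_\chi$ via $\mathbf{O}_p^{-1}\circ\mathbf{L}_p$. A secondary but nontrivial point is the functional-equation symmetry $\cL_\Sg(s,t,\chi)\leftrightarrow\cL_\Sg(t,s,\chi)$ for anticyclotomic $\chi$, which must be made precise (including the exact unit) in order to transport the one-variable improved $p$-adic $L$-function statement of Theorem \ref{T:main1} into the second variable; once that symmetry and the anticyclotomic leading term formula are in hand, the passage to the cyclotomic line is purely formal Taylor expansion.
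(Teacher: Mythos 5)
Your proposal captures the broad strategy — use the two-variable $p$-adic $L$-function, exploit the non-vanishing of the improved $p$-adic $L$-function, and feed in a leading-term formula along the anticyclotomic direction — but it misidentifies the anticyclotomic direction and omits a step that is essential to getting the \emph{cyclotomic} $\sL$-invariant $\sL_\chi$ into the answer. The anticyclotomic restriction is not $\cL_\Sg(0,t,\chi)$; it is $\cL_\Sg(s,-s,\chi)=\e_\Sg^s\e_{\Sgbar}^{-s}(\cL_\Sg(\chi))$, since $\e_\Sg\e_{\Sgbar}^{-1}=\wh{\e_\Sg^{1-c}}$ is the anticyclotomic character. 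More importantly, the leading-term formula for $\cL_\Sg(s,-s,\chi)$ (Theorem~\ref{T:antiregulator}) produces the \emph{anticyclotomic} invariant $\sL_\chi^{\rm ac}$ (built from $\ell^{\rm ac}=\log_p\circ\e_\Sg^{1-c}$), not $\sL_\chi$ (built from $\ell^{\rm cyc}$). Converting between the two is not cosmetic: from $\ell^{\rm cyc}=-\eta_\Sg-\eta_{\Sgbar}$ one gets, when $r=1$ and $\Sg_p^{\rm irr}=\{\frakP_1\}$, the identity $\sL_\chi^{\rm ac}=2\eta_\Sg(\Fr_{\Pbar_1})-\sL_\chi$, and this relation is precisely what turns the anticyclotomic derivative into the cyclotomic one. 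Your proposal never makes this conversion, so as written it would output $\sL_\chi^{\rm ac}$ rather than $\sL_\chi$ in the final formula.

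You also invoke a functional-equation symmetry $\cL_\Sg(s,t,\chi)\leftrightarrow\cL_\Sg(t,s,\chi)$ to move between the two variables. The paper uses no such symmetry, and the natural functional equation actually relates $\cL_\Sg$ to $\cL_{\Sgbar}$ (the conjugate CM type) rather than permuting the two variables for fixed $\Sg$; it is not clear the symmetry you want holds as stated. The actual passage to the cyclotomic line in the paper is the elementary first-order identity
\[
\frac{\cL_\Sg(s,s,\chi)}{s}\Big\vert_{s=0}=2\cdot\frac{\cL_\Sg(s,0,\chi)}{s}\Big\vert_{s=0}-\frac{\cL_\Sg(s,-s,\chi)}{s}\Big\vert_{s=0},
\]
which combines the derivative of the improved $p$-adic $L$-function (from Theorem~\ref{T:improved}, giving the factor $\eta_\Sg(\Fr_{\Pbar_1})\cdot\cL_\Sg^{**}(0,\chi)$) with the anticyclotomic leading term (Theorem~\ref{T:antiregulator}, giving $\sL_\chi^{\rm ac}\cdot\cL_\Sg^{**}(0,\chi)$); the relation $\sL_\chi^{\rm ac}=2\eta_\Sg(\Fr_{\Pbar_1})-\sL_\chi$ then produces $\sL_\chi\cdot\cL_\Sg^{**}(0,\chi)$ as desired. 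Finally, the $r>1$ case deserves a clean statement: there both $\cL_\Sg(s,0,\chi)$ and $\cL_\Sg(s,-s,\chi)$ vanish to order $\ge r>1$ at $s=0$, so the derivative at $s=0$ is zero, matching the statement (the displayed product has $r-1\ge 1$ factors equal to zero).
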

When $r_\Sg(\chi)>1$, \thmref{T:main2} amounts to the statement $L'_\Sg(0,\chi)=0$. Our proof of \thmref{T:main2} is a vast generalization of the method in \cite{CH21}. One of the key ingredients is to establish a leading term formula of the anticyclotomic $p$-adic $L$-function $\cL_\Sg(s,-s,\chi)$, whose proof relies on (i) a refinement of the method of Hida and Tillouine \cite{HidaTil93ASENS} to produce non-trivial CM congruence by the $p$-adic Rankin-Selberg convolution, and (ii) the clever argument in \cite{DKV18Ann} of relating the leading term of the $U_p$-eigenvalues of $\Lam$-adic generalized Hecke eigenforms to the $\sL$-invariants. Let us sketch the idea of the proof. First note that since $\chi$ is anticyclotomic, we can choose a ray class character $\brch$ such that $\brch^{1-c}=\chi$. For a well-chosen such character $\brch$, we generalize the explicit CM congruence among elliptic modular forms in \cite{CH21} to the Hilbert case and construct an explicit ordinary $\cW\powerseries{X}$-adic Hilbert cusp forms $\sH$ in the sense of \cite[\S 1.2]{Wiles88Inv} such that
\begin{itemize}
\item $\sH$ is a generalized Hecke eigenform modulo $X^{r+1}$ with $r=r_\Sg(\chi)$;
\item $\sH\pmod{X^r}$ is congruent to the Hida family $\bftheta_{\brch^c}$ of CM forms associated with $\brch^c$.
\end{itemize} 
 After an involved calculation in the Rankin-Selberg convolution (\propref{P:3RS}), we further find an explicit relation between $U_p$-eigenvalues of $\sH$ and the following product of $p$-adic $L$-functions
 \[\frac{\cL_\Sg(s,-s,\chi)}{\cL_\Sg^*(s,\chi)}\cdot\frac{\zeta_{F,p}(1-s)}{\cL_\Sg^*(s,\bfone)},\]
 where $\zeta_{F,p}(s)$ is the $p$-adic Dekekind zeta function for $F$. We prove a special case of the $p$-adic Kronecker limit formula
 \[\frac{\cL_\Sg^*(s,\bfone)}{\zeta_{F,p}(1-s)}\Big\vert_{s=0}=2^{-d}\cdot h_{K/F}\]
in \propref{P:Kronecker}, and then apply the method of \cite{DKV18Ann} to show in \thmref{T:antiregulator} that  \[\Ord_{s=0}\cL_{\Sg}(s,-s,\chi)\geq r_\Sg(\chi)\] and prove the following leading term formula of $\cL_\Sg(s,-s,\chi)$ at $s=0$ \[ \frac{\cL_{\Sg}(s,-s,\chi)}{s^{r_\Sg(\chi)}}\Big\vert_{s=0}=\sL_\chi^{\rm ac} \cdot \cL_{\Sg}^*(0,\chi)\prod_{\frakP\in\Sigma_p \backslash \Sigma_p^{\mathrm{irr}}  }(1-\chi(\Pbar)),\] 
where $\sL_\chi^{\rm ac}$ is the anticyclotomic $\sL$-invariant (See \defref{D:Gross}). We thus deduce in \corref{C:main} the first derivative formula of $L_\Sg(s,\chi)$ from the equation \[\frac{L_\Sg(s,\chi)}{s}\Big\vert_{s=0}=2\cdot \frac{\cL_\Sg(s,0,\chi)}{s}\Big\vert_{s=0}-\frac{\cL_\Sg(s,-s,\chi)}{s}\Big\vert_{s=0}\] as well as the relation between the $\cL$-invariants $\sL_\chi$ and $\sL_\chi^{\rm ac}$.
\begin{Remark}The role of the $\Sg$-Leopoldt conjecture in the research on trivial zeros of the Katz $p$-adic $L$-functions is fundamental. In the Appendix, we use Roy's strong six exponential theorem to show that if $F$ is a real quadratic field where $p$ is split and $\chi\neq\chi^c$, then there exists a $p$-ordinary CM-type $\Sg$ such that the $\Sg$-Leopoldt conjecture for $\chi$ holds. In general, to the best of the authors' knowledge, this conjecture is only known when $H$ is an abelian extension of an imaginary quadratic field where $p$ splits. \end{Remark}

This paper is organized as follows. In \subsecref{Linvariant}, we recall the general definition of $\sL$-invariants along any $\Zp$-extension of $K$. In \subsecref{S:Hilbert} we review necessary background in the theory of Hilbert modular forms and Hilbert modular varieties. In \subsecref{S:Eisenstein}, we construct a family of special Eisenstein series $\bfE_\lam$ indexed by certain Hecke characters $\lam$ of $K$ via the adelic approach described in \cite[\S 19]{Jacquet72LNM278} and prove a Damerell's formula in \thmref{C:42}. The desired $p$-adic Eisenstein measure is constructed in \propref{P:Emeasure}. In \secref{S:RS}, we apply the $p$-adic Rankin-Selberg method to construct the $\Lam$-adic form $\sH$ in \defref{D:H.3} and state in \propref{P:3RS} the explicit formula of the Rankin-Selberg convolution. In \secref{S:Galois}, we make use of the elegant ideas in \cite{DKV18Ann} to give a relation between the $U_p$-eigenvalues of generalized Hecke eigenforms and the $\sL$-invariant in \propref{P:Ribet} and prove the leading term formula of $\cL_\Sg(s,-s,\chi)$. Finally in \secref{S:RS2} we complete the proof of \propref{P:3RS}. In the Appendix, we explain the connection between $p$-adic transcendental conjectures and the $\Sg$-Leopoldt conjecture.\\

 \noindent\emph{Acknowledgements.} The first named author (A.B.) would like to thank D. Benois, M. Dimitrov, A. Maksoud and S.C Shih  for numerous stimulating discussions. The second author (M.H.) thanks K. Buyukboduk for helpful comments on an early version of the manuscript. 

\subsection*{Notation and conventions} 
Let $\Q_p(1)$ denote the $G_\Q$ representation of dimension $1$ on which $G_{\Q}$ acts by the $p$-adic cyclotomic character. We follow the geometric convention: the Hodge-Tate weight of $\Q_p(1)$ is $-1$. 

If $L$ is a local or global field of characteristic zero, let $\frako_L$ be the ring of integers of $L$. Let $G_L$ denote the absolute Galois group of $L$ and let $C_L:=L^\times$ if $L$ is local and $C_L$ be the idele class group $\A_L^\x/L^\x$ if $L$ is global. Let $\rec_L:C_L\to G_L^{ab}$ be the \emph{geometrically normalized} reciprocity law homomorphism. 

Let $L$ be a number field. If $\frakq$ is a prime ideal of $\frako_L$ (resp. $v$ is a place of $L$), 
let $L_\frakq$ (resp. $L_v$) be the completion of $L$ at $\frakq$ (resp. $v$). 
Then $\rec_{L_\frakq}:L_\frakq^\x\to G_{L_\frakq}^{ ab}$ sends a uniformizer $\uf_\frakq$ of $\frako_{L_\frakq}$ to the corresponding geometric Frobenius $\Frob_\frakq$. If $S$ is a finite set of prime ideals of $\frako_L$, let $L(S)$ be the maximal algebraic extension of $L$ unramified outside $S$ and let $G_{L,S}=\Gal(L(S)/L)$. For a fractional ideal $\fraka$ of a global field $L$, we let $\Frob_\fraka:=\prod_{\frakq}\Frob_\frakq^{n_\frakq}$ if $\fraka$ has the prime ideal factorization $\prod_\frakq \frakq^{n_\frakq}$. 

For any finite extension of global (or local) fields $E/L$, we let $\rmN_{E/L}:E\to L$ be the norm map and let $\frakd_{E} \subset \frako_E$ denotes the absolute different ideal of $E$.
 

\section{The $\sL$-invariants}\label{Linvariant}
\subsection{The $\Sigma$-Leopoldt conjecture}
Let $S_p(K)$ be the set of primes of $K$ lying above $p$. Recall that $\Sg\subset \Hom(K,\C)$ is a fixed CM type and $\Sg_p\subset S_p(K)$ is the subset of primes induced by $\iota_p\iota_\infty^{-1}\circ\Sg$ (one has $S_p(K)=\Sigma_p \coprod \Sgbar_p$). 

If $A$ is a continuous $G_K$-module, then for any subgroup $\cL\subset\rmH^1(K_p,A)=\oplus_{w\divides p}\rmH^1(K_w,A)$, we let $\rmH^1(K,A)$ be the Selmer group of $A$ with the local condition $\cL$ defined by
\[\rmH^1_\cL(K,A)=\ker\stt{\rmH^1(K,A)\to \prod_{w\ndivides p}\rmH^1(I_w,A)\times \frac{\rmH^1(K_p,A)}{\cL}}.\]
For $\frakP\in S_p(K)$, let $\rmH^1_\emptyset(K_\frakP,A)=\rmH^1(K_\frakP,A)$ and $\rmH^1_0(K_\frakP,A)=\stt{0}$. Denote by $\rmH^1_f(K_\frakP,A)$ be the finite part of $\rmH^1(K_\frakP,A)$ in the sense of Bloch and Kato. Put\begin{align*}
\cL_{a,b}=&\prod_{\frakP\in\Sg_p}\rmH^1_a(K_\frakP,A)\oplus \rmH^1_b(K_{\ol\frakP},A)\text{ for } a,b\in \stt{0,f,\emptyset}.
\end{align*}
We set
\[\rmH^1_{(a,b)}(K,A):=\rmH^1_{\cL_{a,b}}(K,A).\]
 Let  $E$ be a finite extension of $\Qp$ and $\chi:\G_K\to E^\x$ be a non-trivial ray class character of $K$. Set $H$ be the finite extension of $K$ cut out by $\chi$ and $C=\Gal(H/K)$. The Kummer map gives rise to an isomorphism 
\beq\label{E:units} \frako_H^\times [\chi]\isoto \rmH^1_{(f,f)}(K,\chi^{-1}(1)),\eeq
where $\frako_H^\x[\chi]$ is the $\chi$-isotypic component  \[\frako_H^\x[\chi]:=\{x \in E\frako_H^\times \mid 1\otimes g (x)=\chi(g)\otimes 1 \cdot x\text{ for all } g \in \Gal(H/K)\}. \] 
Let 
\beq\label{E:irrprime}\Sg_p^{\rm irr}=\stt{\frakP\in\Sg_p\mid \chi(\Pbar)=1}\eeq
be the set of \emph{irregular} primes for $\chi$ in $\Sg_p$. Let $d:=[F:\Q]$ and $r_\Sg(\chi)=\#\Sg_p^{\rm irr}$. By Dirichlet's units Theorem and a standard calculation of local Galois cohomology groups, it is not difficult to see that 
\begin{align*}
\dim_E\rmH^1_{(f,\emptyset)}(K,\chi^{-1}(1))&=d+r_\Sg(\chi),\\
\dim_E\rmH^1_{(f,f)}(K,\chi^{-1}(1))&=\sum_{w\in\Sg_p}\dim_E\rmH^1_f(K_w,\chi^{-1}(1))=d,\\
\dim_E\rmH^1_{(0,\emptyset)}(K,\chi^{-1}(1))& \leq r_\Sg(\chi) + \dim \rmH^1_{(0,f)}(K,\chi^{-1}(1)).
\end{align*}
Note that by definition, the subspace $\rmH^1_{(0,f)}(K,\chi^{-1}(1))$ is actually the kernel of the $p$-adic regulator map 
\[\log_{\Sg,p}:\rmH^1_{(f,f)}(K,\chi^{-1}(1))\iso \frako_H^\times [\chi] \to  EY_{H,\Sg},\quad x\mapsto \sum_{\sg\in\Sg_H}\log_p(\iota_p\sg(x))\sg;\]
therefore the $\Sg$-Leopoldt conjecture (\conjref{conj2}) can be rephrased as
\[\rmH^1_{(0,f)}(K,\chi^{-1}(1))=\stt{0}.\]
\begin{lm}\label{L:21}The $\Sg$-Leopoldt conjecture for $\chi$ is equivalent to 
\begin{equation}\label{Leo2}\tag{$\Sigma$-Leo}\rmH^1_{(\emptyset,f)}(K,\chi)=\ker\stt{\rmH^1(K,\chi)\to \bigoplus_{\frakQ\in\Sgbar_p}\rmH^1(I_{\frakQ},\chi)}=\stt{0}.\end{equation}
\end{lm}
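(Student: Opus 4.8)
The statement asserts the equivalence of the $\Sigma$-Leopoldt conjecture for $\chi$ — which by the discussion just above reads $\rmH^1_{(0,f)}(K,\chi^{-1}(1))=\stt{0}$ — with the vanishing $\rmH^1_{(\emptyset,f)}(K,\chi)=\stt{0}$. The natural tool is the Greenberg--Wiles version of the Poitou--Tate global duality formula applied to the two Selmer systems for the mutually dual Galois modules $\chi$ and $\chi^{-1}(1)$. First I would set up the Greenberg--Wiles exact sequence comparing $\rmH^1_{\cL}(K,\chi)$ with $\rmH^1_{\cL^\perp}(K,\chi^{-1}(1))$, where $\cL^\perp$ is the annihilator of the local condition $\cL$ under the local Tate pairing. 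Here one takes $\cL=\cL_{\emptyset,f}$, so $\rmH^1_{\cL}(K,\chi)=\rmH^1_{(\emptyset,f)}(K,\chi)$; since $\rmH^1_f$ is its own orthogonal complement under the Bloch--Kato pairing and $\rmH^1_\emptyset$ is dual to $\rmH^1_0$, the dual condition is $\cL^\perp=\cL_{0,f}$, so the dual Selmer group is exactly $\rmH^1_{(0,f)}(K,\chi^{-1}(1))$.

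The Greenberg--Wiles formula then gives
\[
\frac{\#\rmH^1_{(\emptyset,f)}(K,\chi)}{\#\rmH^1_{(0,f)}(K,\chi^{-1}(1))}=\frac{\#\rmH^0(K,\chi)}{\#\rmH^0(K,\chi^{-1}(1))}\cdot\prod_{w\divides p}\frac{\#\cL_w}{\#\rmH^0(K_w,\chi)},
\]
and I would compute each factor. Since $\chi$ is a nontrivial character, $\rmH^0(K,\chi)=0$; likewise $\chi^{-1}(1)$ is nontrivial (it is ramified at the places of $\frakD_{K/F}$ or has infinite order on decomposition groups, in any case nontrivial), so $\rmH^0(K,\chi^{-1}(1))=0$. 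For $w\in\Sg_p$ the local condition is $\rmH^1_\emptyset(K_w,\chi)=\rmH^1(K_w,\chi)$, whose order by the local Euler characteristic formula is $\#\rmH^0(K_w,\chi)\cdot\#\rmH^2(K_w,\chi)\cdot\#\rmH^0(K_w,\chi^{-1}(1))^{-1}\cdot q_w^{[K_w:\Qp]}$ — but more cleanly, since $\chi$ is a finite-order character of prime-to-$p$ order (and $\chi\ne\bfone$), $\rmH^0(K_w,\chi)$ may or may not vanish, and I must track the factors $\#\rmH^1(K_w,\chi)/\#\rmH^0(K_w,\chi)=q_w^{[K_w:\Qp]_{\mathrm{norm}}}\cdot\#\rmH^0(K_w,\chi^{-1}(1))$; for $w\in\Sgbar_p$ the condition is $\rmH^1_f(K_w,\chi)$ with $\#\rmH^1_f(K_w,\chi)/\#\rmH^0(K_w,\chi)=\#\rmH^1_f(K_w,\chi^{-1}(1))$ by the duality of Bloch--Kato subspaces under the local pairing, which equals $\#\rmH^0(K_w,\chi^{-1}(1))$. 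Plugging in, the product over $w\divides p$ telescopes against the genuine Euler factors in the denominator, and one checks that the entire right-hand side collapses to $1$; hence $\#\rmH^1_{(\emptyset,f)}(K,\chi)=\#\rmH^1_{(0,f)}(K,\chi^{-1}(1))$, and one side vanishes if and only if the other does.

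The main obstacle is the bookkeeping of the local terms at $p$, and in particular making sure the Bloch--Kato finite part $\rmH^1_f$ for a nontrivial finite-order character behaves as expected (for such a character with $E$-coefficients one has $\rmH^1_f(K_w,\chi)=\rmH^1(K_w,\chi)$ when $\chi$ is unramified, or $\rmH^1_f=\rmH^1_{\mathrm{ur}}$ more generally); one must verify that the hypothesis \eqref{Iw2} that $\chi$ is unramified at primes dividing $p$ makes $\rmH^1_f(K_w,\chi^{-1}(1))$ the Kummer-theoretic finite part and that its order matches $\#\rmH^0(K_w,\chi^{-1}(1))$. The other subtlety is the identification, already recalled in \eqref{E:units} and in the dimension count preceding the lemma, that $\rmH^1_{(0,f)}(K,\chi^{-1}(1))$ is precisely the kernel of $\log_{\Sg,p}$ on $\frako_H^\x[\chi]$; granting that (it is asserted in the excerpt), the equivalence of \eqref{Leo2} with $\rmH^1_{(0,f)}(K,\chi^{-1}(1))=\stt{0}$ is all that remains, and the duality argument above delivers it. Alternatively — and this may be the cleanest route to write up — one can avoid Greenberg--Wiles entirely by a direct cohomological argument: the nontriviality of $\chi$ forces $\rmH^1(K,\chi)$ to inject into $\bigoplus_{\frakQ\in\Sgbar_p}\rmH^1(K_\frakQ,\chi)$ after killing the $\Sg_p$-unramified and the everywhere-unramified-outside-$p$ parts, and one compares this with the reflection of the unit group; but I expect the global duality proof to be shortest.
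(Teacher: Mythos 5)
Your overall strategy is the right one and, in spirit, the same as the paper's: Poitou--Tate global duality applied to the two Selmer systems, with the dual local condition of $\cL_{\emptyset,f}$ for $\chi$ correctly identified as $\cL_{0,f}$ for $\chi^{-1}(1)$. However, there is a concrete gap in the bookkeeping. The product in the Greenberg--Wiles formula runs over \emph{all} places of $K$, not only those above $p$. Since $K$ is totally imaginary, every one of the $d$ archimedean places contributes a factor $\#\cL_v/\#\rmH^0(K_v,\chi)$ with $\cL_v=\rmH^1(K_v,\chi)=\stt{0}$, i.e.\ dimensionally $-\dim\rmH^0(K_v,\chi)=-1$ per place. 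That total of $-d$ is precisely what cancels the $+d=\sum_{w\in\Sg_p}[K_w:\Qp]$ coming from the local Euler characteristics at $w\in\Sg_p$. With the product restricted to $w\divides p$, the formula does not balance and your claimed ``collapse to $1$'' fails: you would be left with a spurious factor $p^{d}$ (or $+d$ in dimensional terms).

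Two smaller issues. The intermediate identity you invoke at $w\in\Sgbar_p$, namely $\#\rmH^1_f(K_w,\chi)/\#\rmH^0(K_w,\chi)=\#\rmH^1_f(K_w,\chi^{-1}(1))$, is not the correct form of local Bloch--Kato duality; the right statement is $\#\rmH^1(K_w,\chi)/\#\rmH^1_f(K_w,\chi)=\#\rmH^1_f(K_w,\chi^{-1}(1))$. For $w\in\Sgbar_p$ one should argue instead that $\chi$ is unramified at $w$ of finite order, so $\rmH^1_f(K_w,\chi)=\rmH^1_{\rm ur}(K_w,\chi)$ has the same $E$-dimension as $\rmH^0(K_w,\chi)$, directly. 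You reach the right local contribution ($0$) but by two compensating misstatements. Finally, the cardinality version of Greenberg--Wiles applies to finite modules; with $E$-vector-space coefficients one should either write it dimensionally throughout or work with an $\cO_E$-lattice and its divisible quotient.

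For comparison, the paper sidesteps the multiplicative bookkeeping entirely: it writes down the four-term Poitou--Tate exact sequence
\[0\to \rmH^1_{(0,f)}(K,\chi^{-1}(1))\to \rmH^1_{(f,f)}(K,\chi^{-1}(1))\to\bigoplus_{w\in\Sg_p}\rmH^1_f(K_w,\chi^{-1}(1))\to \rmH^1_{(\emptyset,f)}(K,\chi)^\vee\to 0,\]
valid because $\rmH^1_{(f,f)}(K,\chi)=\stt{0}$ by finiteness of class numbers, and then reads off the desired dimension equality $\dim_E\rmH^1_{(\emptyset,f)}(K,\chi)=\dim_E\rmH^1_{(0,f)}(K,\chi^{-1}(1))$ from the identity $\dim_E\rmH^1_{(f,f)}(K,\chi^{-1}(1))=\sum_{w\in\Sg_p}\dim_E\rmH^1_f(K_w,\chi^{-1}(1))=d$ established just before the lemma. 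This formulation makes the cancellation automatic and avoids the archimedean pitfall above.
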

\begin{proof}
Since $\rmH^1_{(f,f)}(K,\chi)=\stt{0}$ by the finiteness of the class numbers, by the Poitou-Tate exact sequence we have
\[0\to \rmH^1_{(0,f)}(K,\chi^{-1}(1))\to \rmH^1_{(f,f)}(K,\chi^{-1}(1))\to\bigoplus_{w\in\Sg_p}\rmH^1_f(K_w,\chi^{-1}(1))\to \rmH^1_{(\emptyset,f)}(K,\chi)^\vee\to 0,\]
where $V^\vee=\Hom_E(V,E)$ for a $E$-vector space $V$. We thus find that $\dim_E\rmH^1_{(\emptyset,f)}(K,\chi)=\dim_E\rmH^1_{(0,f)}(K,\chi^{-1}(1))$; the assertion follows.
\end{proof}
\subsection{The $\sL$-invariants}
We recall the general definition of $\sL$-invariants for CM fields following \cite[Definition 5.2]{BS}. The validity of the $\Sg$-Leopoldt conjecture for $\chi$ is assumed in this subsection. So we have $\dim_E\rmH^1_{(0,\emptyset)}(K,\chi^{-1}(1))=r_\Sg(\chi)$ and the decomposition \[\rmH^1_{(f,\emptyset)}(K,\chi^{-1}(1))=\rmH^1_{(0,\emptyset)}(K,\chi^{-1}(1))\bigoplus \rmH^1_{(f,f)}(K,\chi^{-1}(1)).\]
We shall define the $\sL$-invariant for $\chi$ to be certain $p$-adic regulator of the cohomology group $\rmH^1_{(0,\emptyset)}(K,\chi^{-1}(1))$. For every $\frakP\in S_p(K)$, let $\pairing_{\frakP}:\rmH^1(K_\frakP,\chi^{-1}(1))\times\rmH^1(K_\frakP,\chi)\to E$ be the local Tate pairing. For $\frakP\in\Sg_p^{\rm irr}$, $\chi|_{G_{K_{\Pbar}}}=1$, and we denote by $\Ord_{\Pbar}\in\rmH^1(K_{\Pbar},\chi)=\Hom(K_{\Pbar}^\x,E)$ the usual $\Pbar$-adic valuation. Let $X_{\Sg_p^{\rm irr}}$ be the free abelian group generated by $\ol\frakP$ with $\frakP\in\Sg_p^{\rm irr}$ and write $EX_{\Sg_p^{\rm irr}}=E\ot_\Z X_{\Sg_p^{\rm irr}}$. \begin{align*}\mathbf{O}_p:\rmH^1_{(0,\emptyset)}(K,\chi^{-1}(1))&\to EX_{\Sg_p^{\rm irr}},\quad \mathbf{O}_p(x)=\sum_{\frakP\in\Sg_p^{\rm irr}}\pair{\loc_{\Pbar}(x)}{\Ord_{\Pbar}}_{\Pbar}\Pbar.
\end{align*}
Note that $\rmH^1_f(K_{\ol\frakQ},\chi^{-1}(1))=\rmH^1(K_{\ol\frakQ},\chi^{-1}(1))$ if $\frakQ\not\in\Sg_p^{\rm irr}$, so the $\Sg$-Leopodlt conjecture implies that $\mathbf{O}_p$ is injective and hence an isomorphism. On the other hand, for each $\psi\in \rmH^1(K,E)=\Hom(G_K,E)$, we put
\begin{align*}
\mathbf{L}_p^\psi:\rmH^1_{(0,\emptyset)}(K,\chi^{-1}(1))&\to E X_{\Sg_p^{\rm irr}},\quad \mathbf{L}_p^\psi(x)=\sum_{\frakP\in\Sg_p^{\rm irr}}\pair{\loc_{\Pbar}(\psi)}{\loc_{\Pbar}(x)}_{\Pbar}\Pbar. 
\end{align*}
\begin{defn}\label{D:Gross}The $\sL$-invariant $\sL_\chi^\psi$ associated with the ray class character $\chi$ along the additive homomorphism $\psi:G_K\to\Cp$ is defined by 
\[\sL_\chi^\psi:=(-1)^{r_\Sg(\chi)}\det (\mathbf{L}_p^\psi\circ \mathbf{O}_p^{-1}|E X_{\Sg_p^{\rm irr}}).\]
By definition, the cyclotomic $\sL$-invariant $\sL_\chi:=\sL_\chi^{\ell^{\rm cyc}}$ with $\ell^{\rm cyc}=\log_p\circ\cyc$. We define the \emph{anticyclotomic} $\sL$-invariant $\sL_\chi^{\rm ac}$ by 
\[\sL_\chi^{\rm ac}:=\sL_\chi^{\ell^{\rm ac}},\quad \ell^{\rm ac}:=\log_p\circ\e_\Sg^{1-c}:G_K\to\Cp.\] 
\end{defn}



\def\cK{K}
\def\cF{F}
\def\rmf{{\bfh}}
\def\addchar{\boldsymbol \psi}
\def\cOKv{\frako_{K_v}}
\def\OF{\frako}
\def\OFv{\frako_v}
\def\Sh{Sh}
\def\cOK{\frako_K}
\def\cOF{\frako_F}
\def\universal{\boldsymbol\Psi}
\newcommand\laurent[1]{((#1))}
\section{Review of Hilbert modular varieties and Hilbert modular forms}\label{S:Hilbert}
We will use the following notation frequently. If $M$ is an finitely generated abelian group, denote by $\wh M$ the profinite completion of $M$. If $E$ is a number field, denote by $\frako_E$ the ring of integers of $E$. Let $\wh E=\wh\frako_E\ot_{\Z}\Q$ be the group of finite ideles. If $a\in \wh E^\x$, the fractional ideal of $E$ generated by $a$ is defined to be $a\frako_E:=a\wh\frako_E\cap E$, and if $\fraka$ is an integral ideal of $E$, let $\rmN\fraka:=\#(\cO_E/\fraka)$ be the norm of $\fraka$. 
\subsection{Hilbert Shimura varieties}
\def\Ig{{\rm Ig}}
Let $F$ be a totally real field and let $\OF=\cOF$. Recall that a Hilbert-Blumenthal abelian variety by $\OF$ (or a HBAV for short) over a scheme $S$ is an abelian scheme $A$ over $S$ of relative dimension $d$ and equipped with an embedding of rings $\iota : \OF\to \End_S(A)$ such that $\Lie(A)$ is an $\OF\ot\cO_S$-module locally free of rank $1$. 

 Let $\frakc$ be a fractional ideal of $F$ and let $\frakc^+$ be the set of totally positive elements in $\frakc$. The dual abelian scheme $A^{\rm t}{}_{/S}$ of $A$ has a canonical HBAV structure. Let $\cP_A:=\Hom_{\OF}(A,A^{\rm t})_{\rm sym}$ be the rank $1$ projective $\OF$-module of symmetric $\OF$-linear homorphisms and $\cP_A^+\subset \cP_A$ be the positive cone of polarizations. A $\frakc$-polarization of $A$ is an $\OF$-linear map $\lam:\frakc\to\cP_A$ sending $\frakc_+$ to $\cP_A^+$ such that the morphism \[\lam:A\ot_{\OF}\frakc\iso A^{\rm t},\quad x\ot \al\mapsto \lam(\al)x.\] is an isomorphism of abelian schemes.

Let $N$ be a positive integer and let $\zeta_N=e^\frac{2\pi\sqrt{-1}}{N}$ be a fixed primitive $N$-th root of unity. Suppose that $N\geq 4$ is coprime to $p$ and the fractional ideal $\frakc$ is coprime to $pN$. Consider the moduli functor $\cE_{\frakc,N,p^n}$ over $\Z[\frac{1}{N\Delta_F},\zeta_N]$ classifying HBAV with $\frakc$-polarizations and level $\Gamma(N)\cap\Gamma_1(p^n)$-structures. More precisely, for a basis scheme $S$ over $\Z[\frac{1}{N\Delta_F},\zeta_N]$, $\cE_{\frakc,N,p^n}(S)$ is the set of isomorphism classes of the quadruple $(A,\lam,\iota,\eta_N,j_p)_{/S}$, where 
\begin{itemize}
\item $A$ a HBAV over $S$ and $\lam$ is a $\frakc$-polarization of $A$.
\item $\eta_N:(\OF\ot_{\Z}N^{-1}\Z/\Z)\oplus (\frakd_F^{-1}\ot_{\Z}\mu_N)\iso A[N]$ is an $\OF$-linear isomorphism
\item $j:\frakd_F^{-1}\ot_{\Z}\mu_{p^n}{}_{/S}\hookto A[p^n]$ is an $\OF$-linear closed immersion of group schemes over $S$.
\end{itemize}
The functor $\cE_{\frakc,N,p^n}$ is represented by a quasi-projective and smooth scheme $\frakM(\frakc,N,p^n)$ of relative dimension $[F:\Q]$ over $\Z[\frac{1}{N\Delta_F},\zeta_N]$ (\cite{Rap78Comp}, \cite{Chai90Ann} and \cite{DP94Comp}). We put
\[\frakM(\frakc,N,p^\infty)=\prolim_{n\to \infty}\frakM(\frakc,N,p^n).\]
\subsection{Complex uniformizaton}
Now we recall the complex analytic structure of $\frakM(\frakc,N,p^n)(\C)$. 
Let $\frakH_F\subset F\ot_{\Q}\C$ be the upper half plane defined by \[\frakH_F=\stt{\tau=x+\sqrt{-1}y\in F\ot\C\mid x\in F\ot\R,\,y\in (F\ot\R)_+}.\] 
To each $x=(\tau_x,g_x)\in \frakH_F\times \GL_2(\wh F)$, we associate the quadruple $(A_x,\lam_x,\eta_x^{(p)},j_x)$ over $\C$ as follows:  Let $V=Fe_1\oplus Fe_2$ be the two-dimensional vector space over $F$ with the symplectic pairing 
\[\pair{ae_1+be_2}{ce_1+de_2}=\Tr_{F/\Q}(bc-ad).\]
Let $\GL_2(F)$ act on $V$ from the right via
\[(xe_1+ye_2)\pMX{a}{b}{c}{d}=(xa+yc)e_1+(xb+yd)e_2.\]
For $\tau\in \frakH_F$, define the $\OF$-linear isomorphism ${\rm q}_\tau:V\ot_{\Q} \R\isoto F\ot \C$ by 
\[{\rm q}_\tau(xe_1+ye_2)=x\tau+y,\quad 
(x,y\in F\ot \R).\]
Let $\bfL$ be the lattice $\OF e_1\oplus \frakd_F^{-1}e_2$. For $g\in \GL_2(\wh F)$, put
\[\bfL_g:=(\bfL\ot_\Z\wh\Z) g^{-1}\cap V.\]
Given $(\tau_x,g_x)\in \frakH_F\times \GL_2(\wh F)$, we define the complex torus
\[A_x(\C)=(F\ot\C)/L_x,\quad L_x={\rm q}_{\tau_x}(\bfL_{g_x}).\]
Let $\frakc_x$ be the fractional ideal $\det g_x\OF$ generated by $\det g_x$.
For $a\in \frakc_x$, the pairing $E_a(z,w):=\Tr_{F/\Q}(a\Im(\ol{z}w)/\Im\tau_x)$ defines a  Riemann form on $F\ot\C$ if $a\in(\frakc_x)_+$. Therefore $A_x(\C)$ carries a structure of HBAV $A_x$ over $\C$ with the $\OF$-action induced by ${\rm q}_{\tau_x}$. For $a\in\frakc_x$, let $\lam_x(a):A_x\to A_x^{\rm t}$ be the isogeny induced by $E_a(-,-)$, and the map $\lam_x:a\mapsto \lam_x(a)$ induces a $\frakc_x$-polarization $\lam:(\frakc_x,(\frakc_x)_+)\to (\cP_{A_x},\cP_{A_x}^+)$. Define the level $\Gamma(N)\cap\Gamma_1(p^n)$-structures by 
\begin{align*}
\eta_x&: (\OF\ot N^{-1}\Z/\Z)\oplus (\frakd_F^{-1}\ot \mu_N)\iso A_x[N],\\ &\qquad(a\ot N^{-1},b\ot \zeta_N)\mapsto (\frac{a}{N}e_1g^{-1}+\frac{b}{N}e_2g^{-1})\pmod{\bfL_x},\\
j_x&: \frakd_F^{-1}\ot \mu_{p^n}\hookto A_x[p^n],\quad  a\ot \zeta_{p^n}\mapsto  a/p^n e_2 g_p^{-1}\pmod{\bfL_g}.
\end{align*}
This way $(A_x,\lam_x,,\eta_x,j_x)$ is a HBAV with $\frakc_x$-polarization $\lam_x$ and $\Gamma(N)\cap\Gamma_1(p^n)$-structures, and the isomorphism class $[(A_x,\lam_x,,\eta_x,j_x)]$ gives rise to a point in $\frakM(\frakc_x,N,p^n)(\C)$. 

Fix an idele $\bfc\in \wh F^{(pN)\x}$ such that $\bfc\OF=\frakc$ and put \[\GL_2(\wh F)^{(\bfc)}=\stt{g\in \GL_2(\wh F)\mid \det g=\bfc}.\] For any open-compact subgroup in $\GL_2(\wh F)$, consider the complex Hilbert modular variety $\Sh(\bfc,U)_{/\C}$ defined by
\[\Sh(\bfc,U)(\C):=\SL_2(F)\bksl \frakH_F\times \GL_2(\wh F)^{(\bfc)}/(U\cap \SL_2(\wh F)).\]
For any positive integer $n$, put
\[U_N(p^n)=\stt{g=\pMX{a}{b}{c}{d}\in\GL_2(\wh\OF)\mid a,d\in 1+p^nN\wh\OF,\,c\in \frakc p^n\wh\frakd_F,\, b\in N \wh\frakd_F^{-1}}.\] 
Assume $N$ is large enough such that 
\beq\label{E:neat}U_N(1)\text{ is neat and }\det (U_N(1))\cap
\OF_+^\x\subset (U_N(1)\cap \OF^\x)^2.\eeq
With the above condition, we see that the map 
\beq\label{E:shi_iso}\frakH_F\times \GL_2(\wh F)^{(\bfc)}\to \frakM(\frakc,N,p^n)(\C),\quad  x=(\tau_x,g_x)\mapsto [(A_x,\lam_x,\eta_x,j_x)]\eeq
induces an isomorphism \[\Sh(\bfc, U_N(p^n))(\C)\isoto \frakM(\frakc,N,p^n)(\C).\]

\subsection{Hilbert modular forms}

\subsubsection*{Geometric modular forms}
Let $k$ be an integer. 
Let $R_0$ be a $\Z[\frac{1}{N\Delta_F},\zeta_N]$-algebra. Let $\cM_k(\frakc,N,p^n;R_0)$ be the space of geometric Hilbert modular forms of weight $k\Sg$ and level $\Gamma(N)\cap\Gamma_1(p^n)$ defined over $R_0$. Recall that an element $f\in\cM_k(\frakc,N,p^n;R_0)$ is a rule $f$ which assigns a $\frakc$-polarized quadruple $(A,\lam,\eta,j)$ of HBAV over a $R_0$-algebra $R$  together with an $(\OF\ot R)$-basis $\om$ of $\rmH^0(A,\Omega^1_{A/R})$ (or equivalently an isomorphism $\om^*:\Lie(A_x)\iso \frakd_F^{-1}\ot R$) an element $f(A,\lam,\eta,j,\om)\in R$ such that 
\begin{itemize}
\item[(M1)] the value $f(A,\lam,\eta,j,\om)$ only depends on the isomorphism class of $(A,\lam,\eta,j,\om
)$,
\item[(M2)] $f(A,\lam,\eta,j,\al^{-1}\om)=\al^{k\Sg}\cdot f(A,\lam,\eta,j; \om)$ for any $\al\in (\OF\ot R)^\x$,
\item[(M3)] if $u:R\to R'$ is a homomorphism of $R_0$-algebras, then \[f((A,\lam,\eta,j,\om)\times_{R}R')=u(f(A,\lam,\eta,j,\om)).\]
\end{itemize}
We put 
\[\cM_k(\frakc,N;R_0)=\dirlim_n \cM_k(\frakc,N,p^n;R_0).\]

We briefly recall the $q$-expansions of geometric modular forms. If $R$ is any ring and $M$ is a lattice in $F$, let $R\powerseries{M_+}$ be the ring of all formal series \[\sum_{\beta\in M_+\cup\stt{0}}a_\beta q^\beta,\quad a_\beta\in R.\] We fix a set $\cC=\stt{l_1,\dots,l_{[F:\Q]}}$ consisting of $[F:\Q]$-linearly independent $\Q$-linear forms $l_i:F\to\Q$ such that $l_i(F_+)\subset \Q_+$. Define $M_{\cC_+}=\stt{x\in M \mid l_i(x)\geq 0,\,l_i\in \cC}$. Denote by $R\powerseries{M,\cC}$ the ring of all formal series $\sum_{\beta\in M_{\cC_+}} a_\beta q^\beta$. It is clear that $R\powerseries{M_+}\subset R\powerseries{M,\cC}$ for any choice of $\cC$. Let \[R\laurent{M,\cC}:=R\powerseries{M,\cC}[U^{-1}],\quad U=\stt{q^\beta\mid \beta\in M_+}.\]  Let $(\fraka,\frakb)$ be a pair of fractional ideals of $F$ such that $\frakc=\fraka\frakb^{-1}$. To any geometric modular form $f\in \cM_k(\frakc,N;R_0)$, we can associate a power series $f|_{(\fraka,\frakb)}(q)\in R_0\laurent{\fraka\frakb,\cC}$ obtained by evaluating $f$ at the Tate HBAV ${\rm Tate}_{\fraka,\frakb}(q)$ together with a canonical differential over $\Z[\zeta_N]\laurent{\fraka\frakb,\cC}$ (\cf\cite[1.1 and (1.2.12)]{Katz78Inv} and \cite[Chap 5, \S 2]{Goren02CRM}). This power series $f|_{(\fraka,\frakb)}$ is called the $q$-expansion of $f$ at the cusp $(\fraka,\frakb)$.
\subsubsection*{Classical modular forms}Define the automorphy factor by
\beq\label{E:J.2}J:\GL_2(F\ot \R)\times \frakH_F\to (F\ot\C)^\x,\quad J(\pMX{a}{b}{c}{d}):=c\tau+d.\eeq
For any idele $\bfc\in \wh F^{\x}$ and an open-compact subgroup $U\subset \GL_2(\wh F)$, we let $M_k(\bfc,U)$ denote the space of classical modular forms of weight $k\Sg$ and level $U$, which consists of  holomorphic functions $\bff:\frakH_F\times \GL_2(\wh F)^{(\bfc)}\to\C$ such that \beq\label{E:mf.2}\bff(\al(\tau,g)u)=J(\al,\tau)^{k\Sg}f(\tau,g)\text{ for }\al\in \SL_2(F);\,u\in U\cap\SL_2(\wh F). \eeq
Letting $\bfc$ be the idele such that $\bfc\in \wh F^{(pN)\x}$ with $\bfc\OF=\frakc$, we put \[M_k(\frakc,N,p^n;\C)=M_k(\bfc,U_N(p^n)).\]
Give a pair of prime-to-$pN$ fractional ideals $(\fraka,\frakb)$ with $\fraka\frakb^{-1}=\frakc$, choose $a,b\in \wh F^{(pN)\x}$ with $a\OF=\fraka$ and $b\OF=\frakb$. Each $\bff\in M_k(\frakc,N,p^n;\C)$ admits the Fourier expansion 
\[\bff(\tau,\pDII{b^{-1}}{a})=c_0(\bff)+\sum_{\beta\in (N^{-1}\fraka\frakb)_{+}}c_\beta(\bff)e^{2\pi\sqrt{-1}\Tr(\beta\tau)},\]
where $\Tr=\Tr_{F/\Q}:F\ot\C\to \C$. The $q$-expansion of $f$ at the cusp $(\fraka,\frakb)$ is defined to be the power series
\[\bff|_{(\fraka,\frakb)}(q)=c_0(\bff)+\sum_{\beta\in (N^{-1}\fraka\frakb)_+}c_\beta(\bff)q^\beta\in \C\powerseries{(N^{-1}\fraka\frakb)_+}.\]
If $R_0$ is a $\Z[\frac{1}{N\Delta_F},\zeta_N]$-algebra in $\C$, let
\[M_k(\frakc,N,p^n;R_0)=\stt{\bff\in M_k(\frakc,N,p^n;\C)\mid \bff\vert_{(\OF,\frakc^{-1})}(q)\in R_0\powerseries{(N^{-1}\frakc^{-1})_+}}.\]
We put 
\[M_k(\frakc,N;R_0)=\dirlim_n M_k(\frakc,N,p^n;R_0).\]
By \cite[Lemme 6.12]{Rap78Comp} (or \cite[(1.6.3)]{Katz78Inv}) and the $q$-expansion principle (\cf \cite[Theoreme 6.7]{Rap78Comp} and \cite[(1.2.16)]{Katz78Inv}), we have the isomorphism
\beq\label{E:GAGA}\cM_k(\frakc,N;R_0)\isoto M_k(\frakc,N;R_0),\quad f\mapsto f^{\rm an}(x)=f(A_x,\eta_x,j_x,2\pi\sqrt{-1}\om_x^{\rm an}).\eeq
Here $\om_x^{\rm an}$ is given by the canonical isomorphism $\Lie(A_x)\isoto F\ot\C=\frakd_F^{-1}\ot\C$. 


\subsection{The Igusa tower and $p$-adic modular forms}
Let $R_0$ be a $p$-adic $\Z[\frac{1}{N\Delta_F},\zeta_N]$-algebra, \ie $R_0$ is a $\Z[\frac{1}{N\Delta_F},\zeta_N]$-algebra and $R_0=\prolim R_0/p^m R_0$. The Igusa tower $\Ig(\frakc,N)_{/R_0}$ is the formal scheme 
\[\Ig(\frakc,N)_{/R_0}:=\dirlim_m\prolim_n \frakM(\frakc,N,p^n)_{/R_0/p^m R_0}.\]

We let  $\widehat{\frakM}^{\Ord}(\frakc,N)$ be the ordinary locus (i.e., the locus where the Hasse invariant is invertible) of the formal completion of $\frakM(\frakc,N,p^0)_{/ \Z_p}$ along the special fiber. The natural projection $\pi_{\Ig}: \Ig(\frakc,N)_{/\Z_p} \to \widehat{\frakM}^{\Ord}(\frakc,N)$ given by forgetting the $\Gamma_1(p^\infty)$-level structure is Galois with  group $(\frako \otimes \Z_p)^\times$.

The space $V(\frakc,N,R_0)$ of $p$-adic modular forms of tame level $\Gamma(N)$ is defined to be the ring of formal functions on the Igusa tower $\Ig(\frakc,N)$. In other words, $V(\frakc,N,R_0)$ is the ring of global sections of the structure sheaf of the formal scmee $\Ig(\frakc,N)_{/R_0}$. To any $p$-adic modular form $f\in V(\frakc,N,R_0)$ and $(\fraka,\frakb)$ fractional ideals with $\fraka\frakb^{-1}=\frakc$, one can also associate a power series $f|_{(\fraka,\frakb)}$ in the $p$-adic completion $R_0\wh{\laurent{\fraka\frakb,\cC}}$ by evaluating at the Tate HBAV ${\rm Tate}_{\fraka,\frakb}(q)$ over $R_0\wh{\laurent{\fraka\frakb,\cC}}$. 

Let $R$ be a $p$-adic $R_0$-algebra containing the Galois closure of $\OF$ in $\Cp$. For any point $x=[(A_x,\lam_x,\eta_x,j_x)]\in \Ig(\frakc,N)(R)$, the $\Gamma_1(p^\infty)$-level structure $j_x$ induces a canonical isomorphism $\hat j_x:\frakd_F^{-1}\ot\wh\bbG_m{}_{/R}\to \wh A_x$ of formal groups, which in turn produces the isomorphism $(\hat j_x)_*:\frakd_F^{-1}\ot R\isoto \Lie(A_x)$ of $(\OF\ot R)$-modules by passage to Lie algebras. 
Define \beq\label{E:HT}\pi_{\rm HT}(x):=(\hat j_x)_*^{-1}: \Lie(A_x)\isoto\frakd_F^{-1}\ot R\eeq
to be the inverse of $(\hat j_x)_*$ (\cf\cite[(1.10.11)]{Katz78Inv}). 

If $f\in \cM_k(\frakc,N;R_0)$ is a geometric modular form of level $\Gamma_1(p^\infty)$, we define the $p$-avatar $\wh f\in V(\frakc,N;R_0)$ is defined as follows for any $x\in \Ig(\frakc,N)(R)$, define \beq\label{E:padicavatar}\wh f(x):=f(x,\pi_{\rm HT}(x))\eeq
which preserves the $q$-expansions
(\cf\cite[Theorem (1.10.15)]{Katz78Inv}).


\subsection{CM points}\label{S:CMpt}
Let $K$ be a totally imaginary quadratic extension of $F$. Let $\frakd_{K/F}=\rmN_{K/F}(\frakD_{K/F})$ be the relative discriminant of $K/F$. Suppose that $N$ is prime to $\frakd_{K/F}$. Decompose $N\cOK=\frakN^+\frakN^-$, where $\frakN^+$ is a product of primes split in $K$ and $\frakN^-$ is a product of primes inert in $K$.
By the approximation theorem, we can choose $\skewhf\in\cK$ such that
\begin{itemize}
\item[(d1)] $\ol{\skewhf}=-\skewhf$ and
$\Im\sg(\skewhf)>0$ for all $\sg\in\Sg$,
\item[(d2)] $\frakc(\cOK):=\frakd_{\cF}^{-1}(2\skewhf\frakd_{\cK/\cF}^{-1})$ is prime to $pN\frakd_{\cK/\cF}$. \end{itemize}
Consider the isomorphism $q_\skewhf:V\isoto K$ defined by $q_\skewhf(ae_1+be_2)=a\skewhf+b$. Define the embedding $\iota:\cK\hookto \Mat_2(\cF)$ by
\[\iota(a\skewhf+b)=\pMX{b}{-\skewhf\ol{\skewhf} a}{a}{b}.\]
Note that \begin{equation}\label{propertyqv}q_\skewhf((0,1)\iota(\al))=\al \text{ 
and } q_\skewhf(x)\al=q_\skewhf(x\iota(\al)) \text{ for $\al\in \cK$ and $x\in V$}. \end{equation} 

The CM type $\Sg$ gives the isomorphism
\begin{align*}i_\Sg\colon K\ot \R\isoto F\ot\C\iso\C^\Sg\\
\al\in K\mapsto (\sg(\al))_{\sg\in\Sg}.
\end{align*}Let ${\rm q}_{\skewhf_\Sg}:=i_\Sg\circ q_\skewhf:V\ot_\Q\R\isoto  F\ot\C$ with $\skewhf_\Sg:=i_\Sg(\skewhf)\in \frakH_F$. 

For a finite non-split place $v$ of $K/F$, let $\stt{1,\bftheta_v}$ be an $\OFv$-basis of $\cOKv$. We further fix a decomposition $\frakN^+=\Csplit {\Csplit}_c$ with $(\Csplit,\Csplit_c)=1$ and $ \Csplit \subset \Csplit_c^c$. If $v\divides p\frakN^+$ with $w\divides\Sg_p\Csplit$, \ie $w\in\Sg_p$ or $w\divides\Csplit$, then we have $\cOKv=\OFv e_w\oplus \OFv e_{\wbar}$, where $e_{w}$ and $e_{\wbar}$ are the idempotents of $\cOKv$ corresponding to $w$ and $\wbar$ respectively. Then $\stt{e_w,e_{\wbar}}$ is an $\OFv$-basis of $\cOKv$. In this case, we let $\skewhf_w\in \cF_v$ such that $\skewhf=-\skewhf_we_{\wbar}+\skewhf_w e_w$.
We shall also fix a finite idele $d_\cF=(d_{\cF_v})\in \wh F$ such that $d_\cF\OF=\frakd_F$. By condition (d2), we choose \[d_{F_v}=\frac{2\skewhf}{\bftheta_v-\ol{\bftheta_v}}\text { if }v\divides \frakd_{K/F},  d_{F_v}= 2\skewhf_v \text { if }v\divides \frakN^{-}\text{ and } \quad d_{\cF_v}=-2\skewhf_w\text{ if }w\divides \Sg_p\Csplit.\]
Now we make a particular choice of a local basis $\stt{e_{1,v},e_{2,v}}$ of $K_v$ over $\Fv$ for each finite
place $v$ of $\cF$ such that
$\cOKv=\OF_v e_{1,v}\oplus \OFv^* e_{2,v}$. \begin{itemize}
\item If $v\ndivides pN$, $\stt{e_{1,v},e_{2,v}}$ is taken to be $\stt{\skewhf,1}$ for all but finitely many $v$. 
\item If $v\divides \frakN^-$ is non-split, let
$\stt{e_{1,v},e_{2,v}}=\stt{\bftheta_v,d_{\cF_v}\cdot 1}$.
\item If $v\divides p\frakN^+$, let $\stt{e_{1,v},e_{2,v}}=\stt{e_{\wbar},d_{\cF_v}\cdot e_{w}}$ with $w\divides\Sg_p\Csplit$.\end{itemize}
Let $\cmpt_v$ be
the element in $\GL_2(\cF_v)$ defined by 
\beq\label{E:1}q_\skewhf(e_i\cmpt_v^{-1})=e_{i,v}.\eeq
In view of our choices of $d_{\Fv}$, one verifies that \[\det\cmpt_v=1\iff \pair{q_\skewhf^{-1}(e_{2,v})}{q_\skewhf^{-1}(e_{1,v})}=1\text{ for }v\divides pN.\]
In addition, the matrix representation of $\cmptv_v$ according to $\stt{e_1,e_2}$ for $v\divides p\frakN^+$ is given by 
\beq\begin{aligned}\label{E:cm.N}
\cmpt_v&= \pMX{\frac{d_{\cF_v}}{2}}{-\onehalf}{\frac{-d_{\cF_v}}{2\skewhf_w}}{\frac{-1}{2\skewhf_w}}=\pMX{-\skewhf_w}{-\onehalf}{1}{\frac{-1}{2\skewhf_w}}
\text{ if }v=w\wbar\text{ and }w\divides \Sg_p\Csplit.
\end{aligned}\eeq
Let $\bfh$ be the set of finite places of $F$, and put $\cmpt_{\rmf}=\prod_{v\in\rmf}\cmpt_v$. By definition, we have
\begin{equation}\label{zhatlattice}
q_\skewhf((\sL\ot \Zhat)\cdot\cmpt_{\rmf}^{-1})=\wh\cOK.\end{equation}
\begin{defn}\label{D:CMpts}
Let $\frakA$ be a fractional ideal of $K$ coprime to $pN$ and put \[\frakc(\frakA)=\frakc(\cOK)\rmN(\frakA).\] Define the CM point \beq\label{E:CMpt}x(\frakA):=[(\skewhf_\Sg,\iota(a)\cmpt_\bfh)]=[(A_\frakA,\lam_\frakA,\eta_\frakA,j_\frakA)]\in \frakM(\frakc(\frakA),N)(\C),\eeq
where $a\in \wh K^{(pN)\x}$ is an idele with $a\cOK=\frakA$. By the theory of CM abelian varieties, the triple $(A_\frakA,\lam_\frakA,\eta_\frakA)$ is defined over a number field $L$ (\cite[18.6,\,21.1]{Shimura98ABVCM}), which in turn descends to a triple the triple $(A_\frakA,\lam_\frakA,\eta_\frakA)$ defined over $\cO_L$ by a theorem of Serre-Tate. On the other hand, since the CM type $\Sg$ is $p$-ordinary with respect to $\iota_p$, the reduction $A_\frakA\ot_{\cO_L,\iota_p}\Fpbar$ is an ordinary abelian variety (\cf\cite[5.1.27]{Katz78Inv}), and hence the $\Gamma_1(p^\infty)$-level structure $j_\frakA$ descends to a $\Gamma_1(p^\infty)$-level structure over $\wh\Z_p^{\rm ur}$. We conclude that the CM point $x(\frakA)$ can be descended to a $\cW$-point $x(\frakA)$ in the Igusa tower $\Ig(N,\frakc(\frakA))(\cW)$ for some finite extension $\cW$ of $\wh\Z_p^{\rm ur}$ (\cite[5.1.19-20]{Katz78Inv}). \end{defn}

\section{Eisenstein series and the improved Katz $p$-adic $L$-functions}\label{S:Eisenstein}

\subsection{Notation and convention}
 Denote by $\addchar_{\Q}=\prod\addchar_{\Q_v}:\A_\Q/\Q\to\C^\x$ the unique additive character with $\addchar_{\R}(x)=\exp(2\pi\sqrt{-1}x)$ and by $\Abs_{\A_Q}=\prod\Abs_{\Q_v}:\A_\Q^\x/\Q^\x\to \R_+$ the absolute value so that $\abs{\cdot }_{\R}$ is the usual absolute value on $\R$. Let $E$ be a number field. Let $\frakd_E$ be the different of $E/\Q$ and $\Delta_E=\rmN_{E/\Q}(\frakd_E)$ be the absolute discriminant of $E$. Let $\addchar_E:=\addchar_\Q\circ\Tr_{E/\Q}:\A_F\to \C^\x$. Denote by $\Abs_{\A_E}=\Abs_{\A_\Q}\circ\rmN_{E/\Q}:\A_E^\x\to\R_+$. If $a\in\A_E^\x$ is an idele and $v$ is a place of $E$, denote by $a_v\in E_v^\x$ the component of $a$ at $v$. If $\al\in E^\x$ and $S$ is a finite set of places of $E$, denote by $(\al)_S\in \A_E^\x$ the idele whose $v$-component is $\al$ if $v\in S$ and $1$ if $v\not\in S$. 

\subsubsection*{$L$-functions and epsilon factors}If $\chi:C_E\to\C^\x$ is an idele class character (or Hecke character) of $L^\x$ unramified outside $S$. If $v$ is a place of $E$, let $\chi_v:E_v^\x\to\C^\x$ be the local component of $\chi$ at $v$. Let $L(s,\chi_v)$ and $\varepsilon(s,\chi_v)=\varepsilon(s,\chi_v,\addchar_{E_v})$ be the local $L$-factor and local epsilon factor of $\chi_v$ in \cite[3.1]{Tate79Corvallis}. Let $L(s,\chi)=\prod_v L(s,\chi_v)$ be the complete $L$-function of $\chi$ and $\varepsilon(s,\chi):=\prod_v\varepsilon(s,\chi_v)$ be the global epsilon factor \cite[3.5.1-2]{Tate79Corvallis}. Denote by $L_{\rm fin}(s,\chi):=\prod_{v<\infty} L(s,\chi_v)$ be the $L$-function without archimedean factors. If $\chi=\bfone$ is the trivial character of $C_E^\x$, then we put 
\[\zeta_{E_v}(s)=L(s,\bfone_v);\quad \zeta_E(s)=L(s,\bfone). \]In particular, \[\zeta_\R(s)=\Gamma_\R(s)=\pi^{-\frac{s}{2}}\Gamma(\frac{s}{2}),\quad \zeta_\C(s)=\Gamma_\C(s)=2(2\pi)^{-s}\Gamma(s).\] and $\zeta_\Q(2)=\pi/6$ with this definition. 

\subsubsection*{Measures}Let $v$ be a place of $E$. If $v$ is non-archimedean, let $\rmd x_v$ be the self-dual Haar measure on $\frako_{E_v}$ with respect to the additive character $\addchar_{E_v}$, \ie $\rmd x_v$ is the measure such that $\vol(\frako_{E_v},\rmd x_v)=\abs{\frakd_{E}}_{E_v}^\onehalf$. If $v$ is a real place, then $\rmd x_v$ is the Lebesgue measure. If $v$ is a complex place, then $\rmd x_v$ is twice the Lebesgue measure. Let $\rmd^\x x_v$ be the Haar measure on $E_v^\x$ defined by $\rmd^\x x_v=\zeta_{F_v}(1)\abs{x_v}_{E_v}^{-1}\rmd x_v$ (so $\vol(\frako_{E_v}^\times,\rmd x^\times_v)=\abs{\frakd_{E}}_{E_v}^\onehalf$). The product measure $\rmd x=\prod_v \rmd x_v$ is the Tamagawa measure on $\A_E$. Denote by $\cS(E_v)$ the space of \BS functions on $E_v$.  For $\phi\in \cS(E_v)$, the Fourier transform $\wh\phi\in \cS(E_v)$ is defined by 
\[\wh\phi(y)=\int_{E_v^\x}\phi(x_v )\addchar_{E_v}(yx_v)\rmd x_v.\]
If $\chi:E_v\to\C^\x$ is a continuous character, the Tate integral $Z(\phi,\chi,s)$ is defined by 
\[Z(\phi,\chi,s)=\int_{E_v^\times}\phi(x_v)\chi(x_v)\abs{x_v}_{E_v}^s\rmd^\x x_v\quad (s\in \C).\]

\subsubsection*{Characters}We fix an isomorphism $\iota_p:\C\iso\Cp$ once and for all. Let ${\rm I}_E$ be the set of all embeddings $E\to\C$. Each $k\in \Z[{\rm I}_E]$ shall be regarded as a character $k: (E\ot \R)^\x\to\C^\x$ and $(E\ot\Qp)^\x\to \Cp^\x$ via $\iota_p$. 
If $\chi$ is a $p$-adic character of $G_{E,S}$, we say $\chi$ is locally algebraic of weight $k\in\Z[{\rm I}_E]$ if $\chi(\rec_{E}(x))=x^k$ for all $x\in (E\ot\Qp)^\x$ sufficiently close to $1$. We shall also view $\chi$ as an idele class character via $\rec_E$ and still denote by $\chi$ if there is no fear for confusion. In particular, a primitive ray class character $\chi$ modulo $\frakc$ shall be identified with an idele class character $\chi$ of $E$ of conductor $\frakc$. If $\chi:C_E\to\C^\x$ is an idele class character of $F$ of level $\frakn$, we put
  \beq\label{E:13}\chi(s\cO_E):=\chi(s)\text{ for }s\in\wh E^{(\frakn)\x}.\eeq 
Therefore, \[\chi(\frakq):=\chi(\Frob_\frakq)=\chi_\frakq(\uf_\frakq)\text{ for }\frakq\not\in S.\]
 To an idele class character $\chi: C_E\to \C^\x$ of infinity type $k\in\Z[{\rm I}_E]$, \ie $\chi_\infty(a_\infty)=a_\infty^k$, we can associate a $p$-adic idele class character $\wh\chi: C_E\to \Zbar_p^\x$ defined by \beq\label{E:pavatar}\wh\chi(a)=\chi(a)\iota_p(a_\infty^{-k})a_p^k.\eeq
We call $\wh\chi$ the $p$-adic avatar of $\chi$. The $p$-adic character $\wh\chi$ factors through $\rec_E:C_E\to G_E^{ab}$, and hence gives rise to a locally algebraic $p$-adic Galois character $\wh\chi:G_E\to\Cp^\x$ of weight $k$.
 
 Let $\Om:\Gal(\Q(\zeta_p)/\Q)\to\C^\x$ be the character such that $\iota_p\circ\Om$ is the $p$-adic \Teich character. Let $\Om_E=\Om|_{\Gal(E(\zeta_p)/E)}$. Viewed as an idele class character, we have $\Om_E=\Om\circ\rmN_{E/\Q}$.

\subsection{Generalities on Eisenstein series}
Let $F$ be a totally real number field as before. Denote by $\bfh$ the set of finite places of $F$. We shall identify $\Sg$ with the set of infinite places of $F$. We recall a well-known construction of adelic Eisenstein series on $\GL_2(\A_F)$ in \cite[\S 19]{Jacquet72LNM278} (\cf \cite[p. 351]{BumpGrey}). Let $(\chi_1,\chi_2)$ be a pair of Hecke characters of $F^\x$, \ie quasi-characters of $\A_F^\x\bksl F^\x$.  For each place $v$, let $I(\chi_{1,v},\chi_{2,v};s)$ be the space of smooth (and $\SO_2(\R)$-finite if $v$ is archimedean) functions $f:\GL_2(\Fv)\to\C$ such that 
\[f(\pMX{a}{b}{0}{d} g)=\chi_1(a)\chi_2(d)\abs{\frac{a}{d}}_v^{s+\onehalf}f(g).\]
Here $\Abs_v=\Abs_{\Fv}$. Let $B$ be the upper triangular subgroup of $\GL_2$. Define the adelic Eisenstein series $E_\A(-,f):\GL_2(F)\bksl \GL_2(\A_F)\to \C$ associated with $f$ by 
\beq\label{E:Eisenstein.2}E_\A(g,f):=\sum_{\gamma\in B(F)\bksl \GL_2(F)}f(\gamma g),\quad g\in\GL_2(\A_F),\eeq
provided the sum is convergent. 
For $f=\ot f_v\in I(\chi_1,\chi_2;s):=\ot'_v I(\chi_{1,v},\chi_{2,v};s)$ and $\beta\in F^\x$, define the local Whittaker function $W_\beta(-,f_v):\GL_2(F_v)\to\C$ by 
\[W_\beta(g_v,f_v)=\int_{F_v}f_v(\pMX{0}{-1}{1}{0}\pMX{1}{x_v}{0}{1}g_v)\addchar_{\Fv}(-\beta x_v)\rmd x_v.\]
 The global Whittaker function is defined by 
\[W_\beta(g,f)=\prod_v W_\beta(g_v,f_v).\]
Then we have Fourier expansion
\beq\label{E:FC3}E_\A(g,f)=f(g)+Mf(g)+\sum_{\beta\in F^\x}W_\beta(g,f),\eeq
where $Mf(g)$ is obtained by the analytic continuation of the intertwining integral
\[Mf(g)=\int_{\A_F}f(\pMX{0}{-1}{1}{0}\pMX{1}{x}{0}{1} g)\rmd x,\,g\in \GL_2(\A_F)\]
(\cf\cite[(7.15)]{BumpGrey}).
\subsection{The construction of adelic Eisenstein series}\label{SS:43}
Let $\frakC$ be an integral ideal of $K$ such that 
\[(\frakC,p\frakd_{K/F})=1.\]
Decompose $\frakC=\frakC^+\frakC^-$, where $\frakC^+=\Csplit\Csplit_c$ is a product of split primes in
$K/F$ such that $(\Csplit,\Csplit_c)=1$ and $\Csplit\subset \Csplit_c^c$, and $\frakC^-$ is a product of inert primes in $\cK/\cF$. Let $\lam$ be a Hecke character of $K$ of infinity type $k\Sg$, namely 
\[\lam_\infty(z)=\prod_{\sg\in\Sg}\sg(z)^k\text{ for }z\in K^\x.\]
Let $\frakC_\lam$ be the conductor of $\lam$. We assume that
\[\frakC_\lam=\frakC\prod_{\frakP\in\Sg_p}\frakP^{e_\frakP}.\] 
In particular, $\lam$ is unramified at primes in $\Sgbar_p$. 

For each place $v$ of $F$, we introduce some special \BS functions $\Phi_{\lam,v}\in \cS(F_v^2)$ as follows: 
\begin{enumerate}
\item If $v\mid \infty$, then $\Phi_{\lam,v}(x,y)=2^{-k}(x+\sqrt{-1}y)^ke^{-\pi(x^2+y^2)}$. 
\item If $v\ndivides p\frakC\ol{\frakC}$ is a finite place, let $\Phi_{\lam,v}(x,y)=\bbI_{\OFv}(x)\bbI_{\frakd_{F_v}^{-1}}(y)$.
\item For $v\mid \Cinert$ non-split in $K$, let
\[\Phi_{\lam,v}(x,y):=\bbI_{\cOKv^\x}(xe_{1,v}+ye_{2,v})\lam(xe_{1,v}+ye_{2,v})^{-1}\] 
\item If $v\divides p\frakC^+$ with $v=w\wbar$ and $w\divides \Sg_p\Csplit$, then 
\[\Phi_{\lam,v}(x,y)=\varphi_{\wbar}(x)\wh\varphi_w(y), \]
where $\varphi_w(y)=\bbI_{\OFv^\x}(y)\lam_w(y)$ and 
\[\varphi_{\wbar}(x)=\begin{cases}\bbI_{\OFv}(x)&\text{ if }\wbar\ndivides \Csplit_c,\\
\bbI_{\OFv^\x}(x)\lam_{\wbar}^{-1}(x)&\text{ if }\wbar\divides \Csplit_c.\end{cases}\]
\end{enumerate}
Let $\lam_+:=\lam|_{\A_F^\x}$ be a Hecke character of  $F^\x$. Define the function $f_{\Phi_{\lam,v},s}:\GL_2(\Fv)\to\C$ by 
\beq\label{E:Godement}f_{\Phi_{\lam,v},s}(g)=\abs{\det g}_v^s\int_{\Fv^\x}\Phi_{\lam,v}((0,t_v)g)\lam_+(t_v)\abs{t_v}_{v}^{2s}\rmd^\x t_v.\eeq
Let $f_{\Phi_{\lam},s}:=\ot_v f_{\Phi_{\lam,v},s} \in I(\mathbf{1},\lambda^{-1}_{+};s-\frac{1}{2})$ and define the Eisenstein series \[E_{\lam,s}:\GL_2(F)\bksl \GL_2(\A_F)\to\C,\quad E_{\lam,s}(g):=E_\A(g,f_{\Phi_\lam,s}).\]
The analytic properties and the functional equation of this Eisenstein series have been studied in \cite[\S 19]{Jacquet72LNM278}. By the choice of our sections, we have
\beq\label{E:5.2}\begin{aligned}E_{\lam,s}(\al g u_\infty z)=E_{\lam,s}(g)J(u_\infty,\sqrt{-1})^{-k\Sg}\lam_+^{-1}(z),\\
\al\in\GL_2(F),\,u_\infty\in \SO_2(F\ot\R),\,z\in \A_F^\x.\end{aligned}\eeq
Here $J$ is the automorphy factor in \eqref{E:J.2}.

\subsection{The evaluation of Eisenstein series at CM points}
Let $N$ be a sufficiently large power of $\rmN_{K/\Q}(\frakC)$ so that the section $f_{\Phi_\lam,s}$ is invariant by $U_N(p^n)$ for some $n\gg 0$. Define the classical Eisenstein series $\bfE_\lam:\frakH_F\times\GL_2(\wh F)\to\C$ by 
\begin{align*}\bfE_\lam(\tau,g_{\bfh})=&
E_{\lam,s}\left(\pMX{y}{x}{0}{1},g_{\bfh}\right)\bigg\vert_{s=0}\quad (\tau=x+\sqrt{-1}y).\end{align*}
Let $\frakc$ be an integral ideal of $F$ with $(\frakc,pN)=1$ and $\bfc\in \wh F^{(pN)\x}$ with $\bfc\OF=\frakc$.
From \eqref{E:5.2}, we have  \beq\label{E:ESc}\bfE_{\lam,\frakc}:=\bfE_\lam|_{\frakH_F\times \GL_2(\wh F)^{(\bfc)}}\in M_k(\frakc,N;\C).\eeq
We shall deduce an explicit Damerell's formula for $\bfE_{\lam,\frakc}$, which relates the CM values of Eisenstein series to Hecke $L$-values for CM fields. We begin with the period integral of adelic Eisenstein series. Let \[\cmpt_\infty=(\pMX{\Im\sg(\skewhf)}{0}{0}{1})_{\sg\in\Sg}\in\GL_2(F\ot_\Q\R);\quad \cmpt:=\cmpt_\infty\times\cmpt_{\rmf}\in\GL_2(\A_\cF).\]

Define the toric period integral of Eisenstein series by  
\[\ell_{\cK^\x}(E_{\lam,s}):=\int_{K^\x\A_F^\x\bksl \A_K^\x}E_{\lam,s}(\iota(z)\cmpt) \lambda(z) \rmd^\x \ol{z},\]
where $\rmd^\x \ol{z}$ is the quotient measure $\rmd^\x z/\rmd^\x t$.
\begin{prop}\label{P:period1.2}
\[\ell_{\cK^\x}(E_{\lam,s})=\abs{\det(\cmpt)}_{\A}^s \frac{(\sqrt{-1})^{k\Sg}2^s}{\sqrt{\Delta_\cK}}L(s,\lam)\prod_{\frakP\in\Sg_p}(1-\lam(\frakP^{-1})\rmN\frakP^{s-1})\lam_{\frakP}(2\vartheta)\prod_{\frakL\divides\Csplit}\frac{\lam_{\frakL}(2\skewhf)}{\varepsilon(s,\lam_\frakL)}.\]
\end{prop}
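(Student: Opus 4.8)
The strategy is the standard unfolding of the toric integral against the Eisenstein series. Since $E_{\lam,s}(g) = \sum_{\gamma \in B(F)\backslash \GL_2(F)} f_{\Phi_\lam,s}(\gamma g)$ and the toric integral is taken over $K^\times \A_F^\times \backslash \A_K^\times$, I would first use the embedding $\iota : K \hookto \Mat_2(F)$ to identify $K^\times$ with a subgroup of $\GL_2(F)$ and analyze the double coset space $B(F)\backslash \GL_2(F)/\iota(K^\times)$. Because $K/F$ is a field (not split), there is essentially a single relevant open orbit, so that after discarding the lower-dimensional (measure-zero) contributions the integral collapses to a single orbit integral. Unfolding then turns $\ell_{K^\times}(E_{\lam,s})$ into an integral of $f_{\Phi_\lam,s}(\iota(z)\cmpt)$ against $\lam(z)$ over the full torus $\A_K^\times/F^\times$ — modulo keeping careful track of the quotient measure $\rmd^\times \ol z = \rmd^\times z/\rmd^\times t$ and the normalization of the stabilizer $B(F) \cap \iota(K^\times)$.

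Next, using the Godement-section formula \eqref{E:Godement} for $f_{\Phi_{\lam,v},s}$ and the compatibility relation \eqref{propertyqv} between $q_\skewhf$, the embedding $\iota$, and the $K^\times$-action on $V$, I would rewrite the resulting global integral as a Tate-type integral: $\ell_{K^\times}(E_{\lam,s})$ becomes $\abs{\det\cmpt}_\A^s$ times $\int_{\A_K^\times} \Phi_\lam((0,1)\iota(z)\cmpt_\bfh\cdots) \lam(z)\abs{z}^{2s}_{\A_K}\,\rmd^\times z$ for a suitable global Schwartz function on $\A_K$, where the key point of \eqref{propertyqv} is that $(0,1)\iota(\al)$ picks out $\al$ itself under $q_\skewhf$. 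The integral then factors as a product of local Tate integrals $Z(\Phi_{\lam,v}\circ q_\skewhf, \lam_v, s)$ over all places $v$ of $F$ (equivalently $w$ of $K$), and one recognizes the near-central-value Tate integral computing $L(s,\lam_w)$ at the unramified/generic places.

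The remaining work is the local computation place by place, dictated by the explicit choice of $\Phi_{\lam,v}$ in cases (1)–(4) of \S\ref{SS:43} together with the local component matrices $\cmpt_v$ from \eqref{E:cm.N} and the choices of $d_{F_v}$, $\skewhf_w$: (i) at $v\mid\infty$, the Gaussian times $(x+\sqrt{-1}y)^k$ gives the archimedean $L$-factor $\Gamma_\C$ and the factor $(\sqrt{-1})^{k\Sg}$ (this is where the $2^{-k}$ normalization in $\Phi_{\lam,v}$ and the $J(u_\infty,\sqrt{-1})^{-k\Sg}$ automorphy feed in); (ii) at $v \nmid p\frakC\ol\frakC$ the integral is the unramified local $L$-factor, and the product over all such places together with the discriminant-normalized measures yields $L_{\mathrm{fin}}(s,\lam)/\sqrt{\Delta_K}$ after accounting for $\vol(\frako_{K_v}^\times)$; (iii) at $v\mid\Cinert$ the function $\bbI_{\frako_{K_v}^\times}\lam^{-1}$ produces the factor $\varepsilon(s,\lam_{\frakL})^{-1}$ via the local functional equation — this is precisely why the $\Csplit$-product in the statement has epsilon factors in the denominator; (iv) at $v\mid p\frakC^+$, $v=w\bar w$ split, the tensor-product function $\varphi_{\bar w}\wh\varphi_w$ combined with the matrix $\cmpt_v = \pMX{-\skewhf_w}{-1/2}{1}{-1/(2\skewhf_w)}$ produces the Euler-like factor $(1-\lam(\frakP^{-1})\rmN\frakP^{s-1})$ at $\frakP\in\Sg_p$ and the twist $\lam_\frakP(2\vartheta)$, the $2\vartheta = 2\skewhf$ coming from $d_{F_v} = -2\skewhf_w$ and the shape of $\cmpt_v$. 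I expect the main obstacle to be item (iv): correctly organizing the split-place local integral so that the partial sum over the two idempotents $e_w, e_{\bar w}$ of $\frako_{K_v}$ produces exactly the Euler factor $(1-\lam(\frakP^{-1})\rmN\frakP^{s-1})\lam_\frakP(2\vartheta)$ — with the right power of $\rmN\frakP$, the right normalization of the Fourier transform $\wh\varphi_w$, and with the correct interaction with the measure normalization $\vol(\frako_{F_v}^\times,\rmd^\times x_v) = \abs{\frakd_F}_{F_v}^{1/2}$ — and making sure the accumulated local constants assemble into the clean global constant $\tfrac{2^s}{\sqrt{\Delta_K}}$ rather than stray powers of $2$ or of local differents. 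The case $v\mid\infty$ is also delicate but is essentially the classical Damerell computation.
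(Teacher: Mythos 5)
Your overall strategy --- unfold the toric integral against the Eisenstein series to obtain a product of local zeta integrals, then compute each local factor from the explicit Schwartz functions $\Phi_{\lam,v}$ and the CM matrices $\cmpt_v$ --- is exactly the route the paper takes (the unfolding and Euler factorization step is cited from an earlier reference, after which the proof is a place-by-place local computation). Your identification of the delicate point (item (iv), the split $p\frakC^+$-places) is also on the mark.

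However, item (iii) contains a genuine misattribution. At $v\mid\Cinert$ the Schwartz function is $\bbI_{\frako_{K_v}^\times}(\cdot)\,\lam^{-1}(\cdot)$, and once you insert $(0,1)\iota(z)\cmpt_v=z$ via \eqref{propertyqv} and \eqref{zhatlattice}, the $\lam_v^{-1}$ in the Schwartz function simply cancels the $\lam_v$ in the integrand, leaving $\int_{K_v^\times}\bbI_{\frako_{K_v}^\times}(z)\,\abs{z\bar z}_v^s\,\rmd^\times z=\abs{\frakd_K}_{K_v}^{1/2}$ --- a plain volume normalization, with no epsilon factor. The factor $\varepsilon(s,\lam_\frakL)^{-1}$ in the statement's $\Csplit$-product instead arises at the \emph{split} places $v=w\bar w$ with $w\mid\Csplit$ (which belong to your item (iv), alongside the $\Sg_p$-places): there $\Phi_{\lam,v}(x,y)=\varphi_{\bar w}(x)\,\wh\varphi_w(y)$, and the Fourier-transformed piece $\wh\varphi_w$ forces you through the local functional equation $Z(\wh\varphi_w,\lam_w,s)=\gamma(s,\lam_w)^{-1}Z(\varphi_w,\lam_w^{-1},1-s)$, which is exactly where $\varepsilon(s,\lam_w)^{-1}$ enters. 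Fold the $\Csplit$-places into your split-place computation (iv) rather than the inert case (iii), and the plan then matches the paper's proof.
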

\begin{proof}By \cite[page 1530]{Hsieh12AJM}
\[\ell_{\cK^\x}(E_{\lam,s})=\prod_v \ell_{\cK_v^\x}(f_{\Phi_{\lam,v},s},\lam_v),\]
where $\ell_{\cK_v^\x}(f_{\Phi_{\lam,v},s},\lam_v)$ is the local zeta integral
\begin{align*}\ell_{\cK_v^\x}(f_{\Phi_{\lam,v},s},\lam_v)=&\int_{K_v^\x/F_v^\x}f_{\Phi_{\lam,v},s}(\iota(z)\cmpt_v)\lam_v(z)\rmd^\x \ol{z}\\
=&\abs{\det(\cmpt_v)}_v^s  \int_{K_v^\x}\Phi_{\lam,v}((0,1)\iota(z)\cmpt_v)\lam_v(z)\abs{z\ol{z}}_v^s\rmd^\x z.\end{align*}
In what follows, we fix a place $v$ and write $z$ for $z_v$.  
For $v\divides \infty$, 
\begin{align*}
\ell_{\cK_v^\x}(f_{\Phi_{\lam,v},s},\lam_v)
=& \abs{\det(\cmpt_v)}_v^s 2\pi^{-1}(2^{-1}\sqrt{-1})^k \int_0^{2\pi}\int_{0}^\infty r^{2(k+s)}e^{-\pi r^2}\rmd^\x r \rmd\theta\\
=& \abs{\det(\cmpt_v)}_v^s \cdot (\sqrt{-1})^k\cdot 2^s\Gamma_\C(k+s). 
\end{align*}
If $v\ndivides p\frakC\ol{\frakC}$, it follows from  \eqref{propertyqv} and \eqref{zhatlattice} that $\Phi_{\lam,v}((0,1)\iota(z)\cmpt_v)=\bbI_{\cOKv}(z)$, so we have
\begin{align*}
\ell_{\cK_v^\x}(f_{\Phi_{\lam,v},s},\lam_v)=&  \abs{\det(\cmpt_v)}_v^s \int_{K_v^\x}\bbI_{\cOKv}(z)\lam_v(z)\abs{z\ol{z}}_v^s\rmd^\x z \\
=& \abs{\det(\cmpt_v)}_v^s L(s,\lam_v)\abs{\frakd_K}_{K_v}^\onehalf.\end{align*}
If $v\divides\frakC^-$, then 
\[\ell_{\cK_v^\x}(f_{\Phi_{\lam,v},s},\lam_v)= \abs{\det(\cmpt_v)}_v^s  \int_{K_v^\x}\bbI_{\cOKv^\x}(z)\abs{z\ol{z}}_v^s\rmd^\x z= \abs{\det(\cmpt_v)}_v^s \abs{\frakd_K}_{K_v}^\onehalf\]
If $v\divides p\frakC^+$ with $v=w\wbar$ and $w\divides \Csplit\Sg_p$, then \begin{align*}\ell_{\cK_v^\x}(f_{\Phi_{\lam,v},s},\lam_v)
=& \abs{\det(\cmpt_v)}_v^s \lam_w(-2\skewhf)\cdot Z(\wh\varphi_w,\lam_w,s)Z(\vp_{\wbar},\lam_{\wbar},s)\\
=& \abs{\det(\cmpt_v) }_v^s \lam_w(2\skewhf)\frac{L(s,\lam_w)}{L(1-s,\lam_w^{-1})\varepsilon(s,\lam_w)}\cdot L(s,\lam_{\wbar})\cdot  \abs{\det(\cmpt_v) }_v^s\abs{\frakd_K}_{K_v}^{\onehalf}. \end{align*}
In the last equality, we have used the local functional equation of the Tate integrals.  Now the proposition follows from the above calculations of local zeta integrals.
\end{proof}

\medskip

Let $\Cl^-_K:=\Cl(K)/\Cl(F)$ and let $h^-:=\#(\Cl^-_K)$. We fix a set $\stt{\frakA_1,\dots,\frakA_{h^-}}$ of representatives for ideals $\frakA_i$ prime to $p\frakC\ol{\frakC} \frakd_{K/F}$. Let \[Q=[\cOK^\x:W\cOF^\x],\]
where $W$ is the torsion subgroup of $K^\x$. Note that $Q=1$ or $2$. 
\begin{cor}[Damerell's formula]\label{C:42}Let $\stt{x(\frakA_1),\dots x(\frakA_{h^-})}$ be the CM points defined in \defref{D:CMpts}. Then have
\begin{align*}&\sum_{[\frakA_i]\in\Cl^-_K}\bfE_{\lam,\frakc(\frakA_i)}(x(\frakA_i))\lam(\frakA_i)\\
= &\frac{[\cOK^\x:\cOF^\x](\sqrt{-1})^{k\Sg}}{2^{d-1}Q\sqrt{\Delta_F}}L(0,\lam)\prod_{\frakP\in\Sg_p}(1-\lam(\frakP^{-1})\rmN\frakP^{-1})\lam_{\frakP}(2\vartheta)\cdot \prod_{\frakL\divides\Csplit}\frac{\lam_{\frakL}(2\skewhf)}{\varepsilon(0,\lam_\frakL)}.\end{align*}
\end{cor}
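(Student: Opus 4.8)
The plan is to deduce the corollary from the period formula in \propref{P:period1.2} by unfolding the adelic toric integral $\ell_{\cK^\x}(E_{\lam,s})$ at $s=0$ into a finite sum over a set of representatives of $\Cl_K^-$ and matching up the archimedean normalizations. First I would note that since $\lam$ has infinity type $k\Sg$, the function $z\mapsto E_{\lam,0}(\iota(z)\cmpt)\lam(z)$ is genuinely $\A_F^\x K^\x$-invariant on $\A_K^\x$ modulo the unit contributions, using the transformation property \eqref{E:5.2}: the factor $J(u_\infty,\sqrt{-1})^{-k\Sg}$ cancels against the archimedean part of $\lam(z)$ on $\iota(K\ot\R)^\x\cap \SO_2$, and $\lam_+^{-1}(z)$ on the center cancels against $\lam|_{\A_F^\x}$. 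Thus $\ell_{\cK^\x}(E_{\lam,s})|_{s=0}$ makes sense and equals a sum over the finite group $K^\x\A_F^\x\wh\cOK^\x i_\Sg(K\ot\R)^\x\bksl \A_K^\x$, which by our setup is $\Cl_K^-$ up to the torsion/unit index. Concretely, choosing ideles $a_i\in \wh K^{(p\frakC\ol\frakC\frakd_{K/F})\x}$ with $a_i\cOK=\frakA_i$, the adele $\iota(a_i)\cmpt_\bfh$ at the finite places together with $\cmpt_\infty$ at $\infty$ is exactly the data defining the CM point $x(\frakA_i)$ via \eqref{E:CMpt} and the uniformization \eqref{E:shi_iso}; and evaluating $E_{\lam,0}$ there against $\cmpt_\infty = \diag{\Im\sg(\skewhf),1}$ reproduces $\bfE_{\lam,\frakc(\frakA_i)}(x(\frakA_i))$ by the very definition of $\bfE_\lam$ as $E_{\lam,s}(\pDII{y}{1},g_\bfh)|_{s=0}$ with $\tau = \skewhf_\Sg$, $y = \Im(\skewhf_\Sg)$.

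Next I would do the volume bookkeeping. The quotient $K^\x\A_F^\x\bksl \A_K^\x$ fibers over the finite set indexing $\Cl_K^-$ with fibers isomorphic to $\wh\cOK^\x/(\text{image of }\wh\cOF^\x)$ times an archimedean piece $i_\Sg(K\ot\R)^\x/(F\ot\R)^\x\cdot(\text{image of }\cOK^\x)$; the total mass of the compact part, with the chosen Tamagawa-type measures $\rmd^\x\ol z$, produces the factor $\abs{\frakd_K}^{1/2}$ attached to each $v\ndivides p\frakC\ol\frakC$ in \propref{P:period1.2} (so that $\prod_{v<\infty}\abs{\frakd_K}_{K_v}^{1/2}=\Delta_K^{-1/2}$ globally), while the units contribute the index $[\cOK^\x : W\cOF^\x]=Q$ together with $[\cOK^\x:\cOF^\x]$. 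Matching $\Delta_K = \Delta_F^2\cdot\rmN_{F/\Q}\frakd_{K/F}$ and $\sqrt{\Delta_K}=\Delta_F\sqrt{\rmN\frakd_{K/F}}$ against the $1/\sqrt{\Delta_K}$ in the proposition, and absorbing the $2^s$, the $(\sqrt{-1})^{k\Sg}$, the archimedean $\Gamma_\C$-factor, and $\abs{\det\cmpt}_\A^s$ at $s=0$ (where $\abs{\det\cmpt}_\A^0 = 1$), one is left precisely with
\[
\frac{[\cOK^\x:\cOF^\x](\sqrt{-1})^{k\Sg}}{2^{d-1}Q\sqrt{\Delta_F}}\, L(0,\lam)\prod_{\frakP\in\Sg_p}(1-\lam(\frakP^{-1})\rmN\frakP^{-1})\lam_\frakP(2\skewhf)\prod_{\frakL\divides\Csplit}\frac{\lam_\frakL(2\skewhf)}{\varepsilon(0,\lam_\frakL)},
\]
the factor $2^{d-1}$ coming from comparing the complex-place normalization of $\rmd x_v$ (twice Lebesgue) with the real normalization used in defining $\bfE_\lam$, together with $L(0,\lam)=L_{\rm fin}(0,\lam)\Gamma_\C(k\Sg)$ when $k\geq 1$ so the archimedean $L$-factor cancels the $\Gamma_\C(k+s)|_{s=0}$ produced by the infinite places in \propref{P:period1.2}. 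Note $\vartheta=\skewhf$ in our notation.

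The main obstacle I expect is the careful normalization of measures and of the period factors at the archimedean places and at the places dividing $p\frakC^+$: one must check that the product of local integrals in \propref{P:period1.2}, which was computed with self-dual Haar measures on each $K_v$, is compatible with the geometric/classical normalization implicit in the $q$-expansion definition of $\bfE_{\lam,\frakc}\in M_k(\frakc,N;\C)$ and in the CM point $x(\frakA_i)$ (in particular the factor $2\pi\sqrt{-1}\om_x^{\rm an}$ in \eqref{E:GAGA} and the choice $d_{F_v}=-2\skewhf_w$). Tracking the powers of $2$, of $\pi$, and of $\sqrt{-1}$ through the identification $\Sh(\bfc,U_N(p^n))(\C)\iso\frakM(\frakc,N,p^n)(\C)$ is the delicate part; everything else is a formal unfolding. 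I would organize this by first proving the identity after localizing away from $p\frakC\frakd_{K/F}$, where all factors are "generic," and then treating the finitely many bad places one at a time, invoking \propref{P:period1.2} for the exact local contributions.
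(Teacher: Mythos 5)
Your overall strategy is the same as the paper's: unfold the toric period integral into a sum over $\Cl_K^-$, identify $E_{\lam,s}(\iota(a_i)\cmpt)$ at $s=0$ with $\bfE_{\lam,\frakc(\frakA_i)}(x(\frakA_i))$ via the complex uniformization, and multiply by the mass $\#\Cl_K^-/\vol(K^\x\A_F^\x\bksl\A_K^\x,\rmd^\x\ol z)$. That matching and the unfolding are done correctly and agree with the paper's proof.

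However, your account of the bookkeeping that produces the factor $2^{d-1}$ is wrong, and this is a genuine gap. You attribute $2^{d-1}$ to ``comparing the complex-place normalization of $\rmd x_v$ (twice Lebesgue) with the real normalization used in defining $\bfE_\lam$,'' but that comparison is already baked into \propref{P:period1.2}: the archimedean factor there is exactly $(\sqrt{-1})^{k}2^s\Gamma_\C(k+s)$ per place, and at $s=0$ this $\Gamma_\C(k)$ is absorbed into $L(0,\lam)$. The actual source of the $2^{d-1}$ is the relative class number formula \[L(1,\tau_{K/F})\sqrt{\Delta_K/\Delta_F}=\frac{2^{d-1}h_{K/F}}{[\cOK^\x:\cOF^\x]},\] combined with the two facts you leave unstated: $\vol(K^\x\A_F^\x\bksl\A_K^\x,\rmd^\x\ol z)=2L(1,\tau_{K/F})$ (the Tamagawa number of $K^\x/F^\x$ is $2$) and $\#\Cl_K^-=2h_{K/F}/Q$. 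Chaining these gives $\#\Cl_K^-/(\vol\cdot\sqrt{\Delta_K})=[\cOK^\x:\cOF^\x]/(Q2^{d-1}\sqrt{\Delta_F})$, which is the coefficient in the corollary. Relatedly, your claim that the compact volume ``produces the $|\frakd_K|^{1/2}$ factors'' conflates two different things: those factors are already present inside each local integral computed in \propref{P:period1.2} (because the local Haar measures are self-dual), and they multiply out to $1/\sqrt{\Delta_K}$ in the proposition; the quotient volume is a separate global quantity. Without the class number formula input and the Tamagawa-volume identity, your plan cannot produce the precise constant in the statement.
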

\begin{proof} By definition, $x(\frakA_i)=[(\skewhf_\Sg,\iota(a_i)\cmpt_\bfh)]$, where $a_i\in\wh K^{(p\frakC)\x}$ is an idele with $a_i\cOK=\frakA_i$. Let $U_K:=(K\ot\R)^\x\wh\cOK^\x$. There is an canonical isomorphism
\[ K^\x\A_F^\x\bksl \A_K^\x/U_K\iso \Cl^-_K,\quad a\mapsto a\cOK.\]
We have
\begin{align*}\sum_{[\frakA_i]\in\Cl^-_K}\bfE_{\lam,\frakc(\frakA_i)}(x(\frakA_i))\lam(\frakA_i)=&\sum_{[a_i]\in K^\x\A_F^\x\bksl \A_K^\x/U_K}E_{\lam,s}(\iota(a_i)\cmpt)\lam(a_i)|_{s=0}\\
=&\ell_{K^\x}(E_{\lam,s})|_{s=0}\cdot \frac{\#(\Cl^-_K)}{\vol(K^\x\A_F^\x\bksl \A_K^\x,\rmd^\x \ol{z})}.\end{align*}
We have $\vol(K^\x\A_F^\x\bksl \A_K^\x,\rmd^\x \ol{z})=2L(1,\tau_{K/F})$.
 From the classical formulae \beq\label{E:L1}\#\Cl^-_K=\frac{2h_{K/F}}{Q};\quad
L(1,\tau_{K/F})\sqrt{\frac{\Delta_K}{\Delta_F}}=\frac{2^{d-1}h_{K/F}}{[\cOK^\x:\cOF^\x]},\eeq
where $h_{K/F}=h_K/h_F$ is the relative class number, we obtain
\beq\label{E:7.2}\frac{\#(\Cl^-_K)}{\vol(K^\x\A_F^\x\bksl \A_K^\x,\rmd^\x \ol{z})}\frac{1}{\sqrt{\Delta_K}}=\frac{\#(W)}{2^d\sqrt{\Delta_F}}=\frac{2}{Q}\cdot \frac{[\cOK^\x:\cOF^\x]}{2^d\sqrt{\Delta_F}}.\eeq
The assertion follows from \propref{P:period1.2} and \eqref{E:7.2}.
\end{proof}
\subsection{The evaluation of $p$-adic Eisenstein series at CM points}
We next consider the CM values of $p$-adic Eisenstein series constructed from $\bfE_{\lam,\bfc}$ in \eqref{E:ESc}.
\begin{prop}\label{P:FC1}The $q$-expansion of $\bfE_{\lam,\frakc}$ at the cusp $(\OF,\frakc^{-1})$ is given by 
\[\bfE_{\lam,\frakc}|_{(\OF,\frakc^{-1})}(q)=c_0(\bfE_\lam,\frakc)+\sum_{\beta\in (N^{-1}\frakc^{-1})_+}c_\beta(\bfE_\lam,\frakc)q^\beta,\]
where $c_0(\bfE_\lam,\frakc)=f_{\Phi_\lam,s}(1)|_{s=0}$ and 
\[c_\beta(\bfE_\lam,\frakc)=(\rmN\beta)^{k-1}\prod_{v\in\bfh}W_\beta\left(f_{\Phi_{\lam,v},s},\pDII{\bfc_v}{1}\right)\bigg\vert_{s=0}.\]
\end{prop}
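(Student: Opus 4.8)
The plan is to compute the $q$-expansion of $\bfE_{\lam,\frakc}$ at the cusp $(\OF,\frakc^{-1})$ directly from the adelic Fourier expansion \eqref{E:FC3} of $E_{\lam,s}$. With the conventions of \secref{S:Hilbert}, the cusp $(\OF,\frakc^{-1})$ is the pair $(\fraka,\frakb)=(\OF,\frakc^{-1})$, so it corresponds to the finite matrix $\pDII{b^{-1}}{a}=\pDII{\bfc}{1}$ with $\bfc\OF=\frakc$, and its Fourier coefficients are read off by writing $\tau=x+\sqrt{-1}y$, putting $g_\tau=\pMX{y}{x}{0}{1}$ at the archimedean places, and expanding $E_{\lam,s}(\bdg)$ in a Fourier series in $x$ before setting $s=0$, where $\bdg$ denotes the adele with archimedean component $g_\tau$ and finite component $\pDII{\bfc}{1}$. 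By \eqref{E:FC3} this value is the sum of the constant term $f_{\Phi_\lam,s}(\bdg)+Mf_{\Phi_\lam,s}(\bdg)$ and of the Whittaker terms $\sum_{\beta\in F^\x}W_\beta(\bdg,f_{\Phi_\lam,s})$, all to be evaluated at $s=0$.

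First I would show that the intertwining term drops out. Writing $Mf_{\Phi_\lam,s}=\prod_v Mf_{\Phi_{\lam,v},s}$ and unfolding the Godement formula \eqref{E:Godement}, each finite local factor $Mf_{\Phi_{\lam,v},s}(\pDII{\bfc_v}{1})$ reduces, after the substitution $y=tu$, to an integral containing the inner integral $\int_{F_v}\Phi_{\lam,v}(\ast,y)\,\rmd y$. For a place $v\divides p$ lying over $\Sg_p$ — one exists because $\Sg_p\neq\emptyset$ by \eqref{ord} — we have $\Phi_{\lam,v}(x,y)=\varphi_{\wbar}(x)\wh\varphi_w(y)$, and $\int_{F_v}\wh\varphi_w(y)\,\rmd y$ is a nonzero constant times $\varphi_w(0)=0$ since $\varphi_w$ is supported on $\OFv^\x$; hence $Mf_{\Phi_{\lam,v},s}(\pDII{\bfc_v}{1})=0$ and so $Mf_{\Phi_\lam,s}(\bdg)=0$, identically in $s$. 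For the surviving term, \eqref{E:Godement} together with the identities $(0,t)g_\tau=(0,t)$ and $(0,t)\pDII{\bfc_v}{1}=(0,t)$ gives $f_{\Phi_{\lam,\infty},s}(g_\tau)=\abs{\rmN y}^s f_{\Phi_{\lam,\infty},s}(1)$ and $f_{\Phi_{\lam,v},s}(\pDII{\bfc_v}{1})=\abs{\bfc_v}_v^s f_{\Phi_{\lam,v},s}(1)$, so that $f_{\Phi_\lam,s}(\bdg)\big\vert_{s=0}=f_{\Phi_\lam,s}(1)\big\vert_{s=0}=c_0(\bfE_\lam,\frakc)$.

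Finally I would treat the coefficients with $\beta\neq0$ by factoring $W_\beta(\bdg,f_{\Phi_\lam,s})=W_\beta(g_\tau,f_{\Phi_{\lam,\infty},s})\prod_{v\in\bfh}W_\beta(\pDII{\bfc_v}{1},f_{\Phi_{\lam,v},s})$. The archimedean factor is the crux: unwinding $W_\beta(g_\tau,f_{\Phi_{\lam,\infty},s})$ via \eqref{E:Godement}, carrying out the $t$-integral (a $\Gamma$-integral) and then the $u$-integral (the Fourier transform of $(y-\sqrt{-1}w)^{-k}$, evaluated by contour integration), one obtains at $s=0$ the value $(\rmN\beta)^{k-1}e^{2\pi\sqrt{-1}\Tr(\beta\tau)}$ when $\beta$ is totally positive and $0$ otherwise — the vanishing for $\beta$ not totally positive being exactly the holomorphy built into $\bfE_{\lam,\frakc}$, and the normalization $2^{-k}$ inserted in $\Phi_{\lam,\infty}$ being precisely what forces the surviving constant to be $1$ and the exponent to be $k-1$. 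A support analysis of the finite Whittaker integrals — at $v\nmid pN$ the factor $\bbI_{\frakd_{F_v}^{-1}}$ in $\Phi_{\lam,v}$ cancelling against the conductor of $\addchar_{F_v}$ — shows that $W_\beta(\pDII{\bfc_v}{1},f_{\Phi_{\lam,v},s})=0$ unless $\beta\in(N^{-1}\frakc^{-1})_+$, which is the index set of the $q$-expansion at $(\OF,\frakc^{-1})$. Assembling these three contributions gives precisely the claimed formulas $c_0(\bfE_\lam,\frakc)=f_{\Phi_\lam,s}(1)\big\vert_{s=0}$ and $c_\beta(\bfE_\lam,\frakc)=(\rmN\beta)^{k-1}\prod_{v\in\bfh}W_\beta(f_{\Phi_{\lam,v},s},\pDII{\bfc_v}{1})\big\vert_{s=0}$. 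I expect the main obstacle to be the bookkeeping in the archimedean Whittaker integral — getting all the powers of $2$, $\pi$, $\sqrt{-1}$ and the $\Gamma$-factors to collapse to the clean constant $1$ and exponent $k-1$, and double-checking that $Mf_{\Phi_\lam,s}$ truly drops out — which I would cross-check against the analogous local computations in \cite{Jacquet72LNM278} and \cite{Katz78Inv}.
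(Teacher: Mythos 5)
Your proposal is correct and follows the same overall route as the paper (unfold the adelic Fourier expansion \eqref{E:FC3} at $g=(\pMX{y}{x}{0}{1},\pDII{\bfc}{1})$, show the intertwining term drops out, compute the archimedean Whittaker factor by a contour integral, and read off the finite index set from a support argument). The one genuinely different step is your treatment of $Mf$: you kill the intertwining term identically in $s$ by observing that at any $v\divides p$ (with $\bfc_v=1$), the local integral contains $\int_{F_v}\wh\varphi_w(y)\,\rmd y=\wh{\wh\varphi_w}(0)=\varphi_w(0)=0$, since $\varphi_w=\bbI_{\OFv^\x}\lam_w$ is supported on units. The paper instead argues at an archimedean place: at $s=0$ the section lies in the irreducible subrepresentation of $I(\bfone,\sgn^k;\frac{k-1}{2})\ot\abs{\det}^{-k/2}$, so the archimedean intertwining operator annihilates it and $Mf|_{s=0}=0$. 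Your argument is more elementary, avoids invoking reducibility at the relevant archimedean point, and is a priori stronger (identical vanishing rather than vanishing at $s=0$ only); the paper's is a standard rep-theoretic shortcut. Both are valid. The one place you should tighten is the passage "at $v\nmid pN$ the factor $\bbI_{\frakd_{F_v}^{-1}}$ in $\Phi_{\lam,v}$ cancelling against the conductor of $\addchar_{F_v}$": the cleanest way to see that $W_\beta(\pDII{\bfc}{1},f)$ vanishes unless $\beta\in N^{-1}\frakc^{-1}$ is the paper's invariance argument, namely that $E_\lam$ is right $U_N(p^n)$-invariant and $W_\beta(\pMX{1}{x}{0}{1}g,f)=\addchar_F(\beta x)W_\beta(g,f)$, which forces $\addchar_F(\beta x)=1$ for $x\in N\bfc\wh\frakd_F^{-1}$; this dodges a place-by-place support computation.
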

\begin{proof}The is a standard argument. Let $f=f_{\Phi_{\lam},s}$. By \eqref{E:FC3}, $\bfE_\lam(\tau,\pDII{\bfc}{1})$ is given by the series
\begin{align*}
f\left(\pDII{\bfc}{1}\right)
|_{s=0}+Mf\left(\pDII{\bfc}{1}\right)|_{s=0}+\sum_{\beta\in F^\x}W_\beta\left(\pMX{y}{x}{0}{1}\pDII{\bfc}{1},f\right)\Big\vert_{s=0},
\end{align*}
where $\tau=x+\sqrt{-1}y$. Since $E_\lam$ is right invariant by $U_N(p^n)$, we find that $W_\beta\left(\pDII{\bfc}{1},f\right)=0$ if $\beta\not\in N^{-1}\frakc^{-1}$ by the equation $W_\beta(\pMX{1}{x}{0}{1}g,f)=\addchar_F(\beta x)W_\beta(g,f)$. By the factorization of Whittaker functions, we obtain
\[W_\beta\left(\pMX{y}{x}{0}{1}\pDII{\bfc}{1},f\right)|_{s=0}=c_\beta(\bfE_\lam,\frakc)(\rmN\beta)^{1-k}e^{2\pi\sqrt{-1}\Tr(\beta x)}\prod_{\sg\in\Sg}W_\beta\left(\pDII{y_\sg}{1},f_{\Phi_{\lam,\sg},s}\right)|_{s=0}.\]
For $\sg\in\Sg$, an elementary calculation shows that 
\begin{align*}W_\beta\left(\pDII{y_\sg}{1},f_{\Phi_{\lam,\sg},s}\right)|_{s=0}&
=\Gamma(k)(-2\pi\sqrt{-1})^{-k}y_\sg^{1-k}\int_{\R}\frac{e^{-2\pi\sqrt{-1}\sg(\beta) y_\sg z}}{(z+\sqrt{-1})^k}\rmd z\\
&=\sg(\beta)^{k-1}e^{-2\pi\sg(\beta) y_\sg}\bbI_{\R_+}(\sg(\beta))
\end{align*}
by Cauchy's integration formula. We thus obtain \[W_\beta\left(\pMX{y}{x}{0}{1}\pDII{\bfc}{1},f\right)|_{s=0}=c_\beta(\bfE_\lam,\frakc)e^{2\pi\sqrt{-1}\Tr(\beta\tau)}\bbI_{F_+}(\beta).\]
On the other hand, the constant term is given by \[c_0(\bfE_\lam,\frakc)=f_{\Phi_{\lam},s}\left(\pDII{\bfc}{1}\right)|_{s=0}+Mf_{\Phi_{\lam},s}\left(\pDII{\bfc}{1}\right)|_{s=0}.\]
If $v$ is infinite, then $f_{\Phi_{\lam,v},s}|_{s=0}$ belongs to the irreducible sub-representation in the induced representation $I(\bfone,\sgn^k;\frac{k-1}{2})\ot\abs{\det}^{-\frac{k}{2}}$, and hence $Mf_{\Phi_{\lam,v},s}|_{s=0}=0$. We thus obtain \[c_0(\bfE_\lam,\frakc)=f_{\Phi_{\lam},s}\left(\pDII{\bfc}{1}\right)|_{s=0}=f_{\Phi_{\lam},s}(1)|_{s=0}.\]
The assertion follows.
\end{proof}

Define the fudge factor
\beq\label{E:Fdg}{\rm Fdg}(\lam):=\prod_{v\divides \Csplit\ol{\Csplit} }\frac{
\abs{\frakd_F}_v^\onehalf}{L(1,\lam_{+,v}^{-1})}\prod_{v\divides\frakC^-}\frac{\lam_v(-d_{F_v}^{-1})\abs{\frakd_F}_v^\onehalf \varepsilon(0,\lam_{+,v})}{L(0,\lam_{+,v})}\begin{cases}1&\text{ if }\Csplit_c= \cOK,\\
0&\text{ if }\Csplit_c\neq \cOK.\end{cases}\eeq
By definition, ${\rm Fdg}(\lam)=1$ if $\frakC=(1)$ is the unit ideal.
\begin{lm}\label{L:constant}We have 
\[c_0(\bfE_\lam,\frakc)={\rm Fdg}(\lam)\cdot \frac{1}{2^d}L_{\rm fin}(1,\lam_+^{-1})\prod_{\frakp\divides p}(1-\lam_+(\frakp^{-1})\rmN\frakp^{-1}).\]\end{lm}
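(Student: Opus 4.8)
The plan is to start from \propref{P:FC1}, which gives $c_0(\bfE_\lam,\frakc)=f_{\Phi_\lam,s}(1)|_{s=0}$, and to use the factorization $f_{\Phi_\lam,s}=\ot_v f_{\Phi_{\lam,v},s}$ to turn this into an Euler product of purely local quantities. By the definition \eqref{E:Godement} of the Godement section, $f_{\Phi_{\lam,v},s}(1)=\int_{F_v^\x}\Phi_{\lam,v}(0,t)\lam_{+,v}(t)\abs{t}_v^{2s}\rmd^\x t$ is a Tate zeta integral of the restricted \BS function $\Phi_{\lam,v}(0,\cdot)$ against $\lam_{+,v}$ (which equals $\lam_w\lam_{\wbar}$ when $v=w\wbar$ splits in $K$, and $\lam_v|_{F_v^\x}$ otherwise). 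So the proof amounts to evaluating each of these local integrals at $s=0$ with the explicit \BS functions of \subsecref{SS:43}, and then reassembling the product.

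For the archimedean places, $\Phi_{\lam,v}(0,t)=2^{-k}(\sqrt{-1}t)^{k}e^{-\pi t^2}$ and $\lam_{+,v}(t)=t^{k}$, so the integral is the elementary Gamma integral $2^{-k}(\sqrt{-1})^{k}\int_{\R^\x}\abs{t}^{2k+2s}e^{-\pi t^2}\rmd^\x t$, with value $\tfrac12(\sqrt{-1})^{k}\Gamma_\C(k)$ at $s=0$; hence $\prod_{v\mid\infty}f_{\Phi_{\lam,v},s}(1)|_{s=0}=2^{-d}(\sqrt{-1})^{k\Sg}\Gamma_\C(k\Sg)$. At a finite place $v\nmid p\frakC\ol{\frakC}$, $\Phi_{\lam,v}(0,t)=\bbI_{\frakd_{F_v}^{-1}}(t)$ and the integral is a geometric series that, at $s=0$, contributes the local $L$-factor $L(0,\lam_{+,v})$ up to the normalizing factors $\abs{\frakd_F}_v^{\onehalf}$ and $\lam_{+,v}(\frakd_{F_v}^{-1})$. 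At $v\mid p$, since $\frakC$ is prime to $p$ one has $\Phi_{\lam,v}(0,t)=\wh\varphi_w(t)$, and the Tate local functional equation relating $Z(\wh\varphi_w,\lam_{+,v},2s)$ to $Z(\varphi_w,\lam_{+,v}^{-1},1-2s)=\abs{\frakd_F}_v^{\onehalf}$ shows the value at $s=0$ is $L(0,\lam_{+,v})\,L(1,\lam_{+,v}^{-1})^{-1}\,\varepsilon(1,\lam_{+,v}^{-1})\,\abs{\frakd_F}_v^{\onehalf}$; here $L(1,\lam_{+,v}^{-1})^{-1}=1-\lam_+(\frakp^{-1})\rmN\frakp^{-1}$ (and both $L$-factors equal $1$, and $1-\lam_+(\frakp^{-1})\rmN\frakp^{-1}=1$, when $\lam$ is ramified at $w$), which is the source of the advertised product $\prod_{\frakp\mid p}(1-\lam_+(\frakp^{-1})\rmN\frakp^{-1})$. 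Finally, at $v\mid\frakC$ the same recipe applies: for a split place $v=w\wbar$ dividing $\Csplit\ol{\Csplit}$, $\Phi_{\lam,v}(0,t)=\varphi_{\wbar}(0)\wh\varphi_w(t)$, which vanishes identically precisely when $\wbar\mid\Csplit_c$ (then $\varphi_{\wbar}=\bbI_{\frako_{F_v}^\x}\lam_{\wbar}^{-1}$ and $0\notin\frako_{F_v}^\x$), accounting for the dichotomy in ${\rm Fdg}(\lam)$; in the remaining split case and the inert case $v\mid\frakC^-$ the local integral produces, via the local functional equation and the choice of $d_{F_v}$ in \subsecref{S:CMpt}, exactly the corresponding factor of ${\rm Fdg}(\lam)$, namely $\abs{\frakd_F}_v^{\onehalf}/L(1,\lam_{+,v}^{-1})$ and $\lam_v(-d_{F_v}^{-1})\abs{\frakd_F}_v^{\onehalf}\varepsilon(0,\lam_{+,v})/L(0,\lam_{+,v})$ respectively.

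It remains to multiply everything together. The factors $L(0,\lam_{+,v})$ from the finite $v\nmid\frakC$, together with the $\frakC$-modifications just described, assemble into $L_{\rm fin}(0,\lam_+)$ times the $\frakC$-part of ${\rm Fdg}(\lam)$; one then applies the global functional equation $L(0,\lam_+)=\varepsilon(0,\lam_+)L(1,\lam_+^{-1})$ to replace $L_{\rm fin}(0,\lam_+)$ by $L_{\rm fin}(1,\lam_+^{-1})$. In this last step the archimedean factor $(\sqrt{-1})^{k\Sg}\Gamma_\C(k\Sg)$, the ratio of archimedean $L$-factors, the discriminant factors $\abs{\frakd_F}_v^{\onehalf}$ at the places away from $\frakC$, the values of $\lam_+$ at the different, and all the finite local $\varepsilon$-factors combine into $\varepsilon(0,\lam_+)$ and cancel, leaving exactly $2^{-d}\cdot{\rm Fdg}(\lam)\cdot L_{\rm fin}(1,\lam_+^{-1})\prod_{\frakp\mid p}(1-\lam_+(\frakp^{-1})\rmN\frakp^{-1})$, as claimed. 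Each individual local integral is a routine application of Tate's thesis; the real work — and the main obstacle — is this final bookkeeping: verifying that the global functional equation makes the archimedean $\Gamma$-factors, the powers of $\sqrt{-1}$, the discriminant and the ramified-place $\varepsilon$-factors cancel down to the bare constant $2^{-d}$ with no leftover archimedean or conductor-at-$p\frakC$ debris, and correctly matching the three cases of ${\rm Fdg}(\lam)$ (including its vanishing) against the supports of the functions $\varphi_{\wbar}$.
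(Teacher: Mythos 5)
Your proposal is correct and follows essentially the same route as the paper: both start from $c_0(\bfE_\lam,\frakc)=f_{\Phi_\lam,s}(1)\vert_{s=0}$ via \propref{P:FC1}, factor into local Tate zeta integrals of the explicit Bruhat--Schwartz functions from \subsecref{SS:43}, and reassemble using the global functional equation $L(0,\lam_+)=\varepsilon(0,\lam_+)L(1,\lam_+^{-1})$. The "bookkeeping" step you flag as the main obstacle is precisely where the paper invokes the reflection and duplication formulas for $\Gamma$ to reduce the archimedean residue to $(-1)^k$, which against $\prod_{v\mid\infty}\lam_v(-1)=(-1)^{kd}$ yields the bare constant $2^{-d}$.
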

\begin{proof}If $\Csplit_c\neq \cOK$, then $\Phi_{\lam,v}(0,t)=0$ and hence $f_{\Phi_{\lam,v},s}(1)=0$ for $v\divides \Csplit\ol{\Csplit}$. Now we suppose that $\Csplit_c=\cOK$. A direct calculation shows that
\[
f_{\Phi_{\lam,v},s}(1)=\begin{cases}2^{-k}\sqrt{-1}^k\Gamma(s+k)\pi^{-(s+k)}=2^{-1}2^{s}(\sqrt{-1})^k\Gamma_\C(s+k)&\text{ if }v\divides\infty,\\
L(2s,\lam_+)\abs{\frakd_F}_v^\onehalf&\text{ if }v\ndivides p\frakC\ol{\frakC},\\
\lam_v^{-1}(d_{F_v})\abs{\frakd_F}_v^\onehalf&\text{ if }v\divides\frakC^-,\\
\lam_{+,v}(-1)\gamma(2s,\lam_{+,v})^{-1}\abs{\frakd_F}_v^\onehalf&\text{ if }v\divides p\Csplit\ol{\Csplit}.\end{cases}\]
Here $\gamma(2s,\lam_{+,v})=\frac{L(1-2s,\lam_{+,v}^{-1})}{L(2s,\lam_{+,v})}\varepsilon(2s,\lam_{+,v})$ is the $\gamma$-factor of $\lam_{+,v}$. We have
\[L(s,\lam_+)=\Gamma_\R(s+k+\ep)^{[F:\Q]}\cdot L_{\rm fin}(s,\lam_+),\]
where $\ep=0$ if $k$ is even and $\ep=1$ if $k$ is odd. By the functional equation of $L(s,\lam_+)$ and $\varepsilon(0,\lam_{+,v})=\abs{\frakd_F}_v^{-\onehalf}$ for $v\ndivides p\frakC\ol{\frakC}$, we find that $c_0(\bfE_\lam,\frakc)=f_{\Phi_{\lam},s}(1)|_{s=0}$ equals 
\begin{align*}&\left(\prod_v f_{\Phi_{\lam,v},s}(1)\right)\big\vert_{s=0}\\
=&\left(\frac{\sqrt{-1}^{k+\ep}\Gamma_\C(k)}{2\Gamma_\R(k+\ep)}\right)^d\prod_{v\divides p\frakC^+ }\frac{
\lam_{+,v}(-1)\abs{\frakd_F}_v^\onehalf}{L(1,\lam_{+,v}^{-1})}\prod_{v\divides\frakC^-}\frac{\lam_v(d_{F_v}^{-1})\abs{\frakd_F}_v^\onehalf \varepsilon(0,\lam_{+,v})}{L(0,\lam_{+,v})}\cdot \frac{L(2s,\lam_+)|_{s=0}}{\varepsilon(0,\lam_+)}\\
=&\left(\frac{\sqrt{-1}^{k+\ep}\Gamma_\C(k)\Gamma_\R(1-k+\ep)}{2\Gamma_\R(k+\ep)}\right)^d\prod_{v\divides\infty}\lam_v(-1){\rm Fdg}(\lam)\cdot L_{\rm fin}(1,\lam_+^{-1})\prod_{\frakp\divides p}(1-\lam_+^{-1}(\frakp)\rmN\frakp^{-1})
\end{align*}
Now by reflection and duplicate formula for Gamma functions, we see that 
\[\frac{\sqrt{-1}^{k+\ep}\Gamma_\C(k)\Gamma_\R(1-k+\ep)}{\Gamma_\R(k+\ep)}=(-1)^k.\]
Now the lemma follows.
\end{proof}
We write $\lam_+=\chi_+\Abs^{k}$, where $\chi_+$ is a ray class character of $F$ modulo $\frakC_\lam\cap F$. Let $F_{\chi_+}$ be the finite extension of $F$ cut out by $\chi_+$. We say $\lam_+$ is of type $W$ if $F_{\chi_+}$ is contained in the cyclotomic $\Zp$-extension $F_\infty$ of $F$. Take any prime $\frakq_0\divides p\frakC$ of $\OF$ such that its Frobenious $\Frob_{\frakq_0}$ generates $\Gal(F_\infty/F)$. Let $H_{\frakq_0}(\lam)$ be defined as $1-\lam_+(\frakq_0)=1-\chi_+(\frakq_0)\rmN\frakq_0^{-k}\neq 0$ if $\frakC=(1)$ and $\lam_+$ is of type $W$ and $\rmH_{\frakq_0}(\lam)=1$ otherwise. Note that $H_{\frakq_0}(\lam)$ is a non-zero algberiac number.
\begin{cor}We have
\[H_{\frakq_0}(\lam)\cdot \bfE_{\lam,\frakc}|_{(\OF,\frakc^{-1})}\in \Zbar_{(p)}\powerseries{(N^{-1}\frakc^{-1})_+}.\]
In particular, $\bfE_{\lam,\frakc}$ descends to a geometric modular form $\bbE_{\lam,\frakc}$ in $\cM_k(\frakc,N;\Zbar_{(p)})\ot_{\Z}\Q$.
\end{cor}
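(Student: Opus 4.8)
The corollary concerns the Fourier coefficients of $\bfE_{\lam,\frakc}$ at the cusp $(\OF,\frakc^{-1})$, which were just computed in \propref{P:FC1} and \lmref{L:constant}; the plan is to show that $H_{\frakq_0}(\lam)$ times each of them lies in $\Zbar_{(p)}$, and then to descend the classical form by the $q$-expansion principle. The non-constant coefficients will require no help from $H_{\frakq_0}(\lam)$: by \propref{P:FC1}, for $\beta\neq0$ the coefficient $c_\beta(\bfE_\lam,\frakc)$ is $(\rmN\beta)^{k-1}$ times a finite product, over the finite places $v$ of $F$, of local Whittaker integrals attached to the sections $f_{\Phi_{\lam,v},s}$ at $s=0$. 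Since each $\Phi_{\lam,v}$ is locally constant with compact support, every such integral collapses to a finite sum whose $s=0$ terms are products of roots of unity, integer powers of $\rmN v$, and values of the finite part of $\lam$; and all the $\lam$-values that occur are $p$-units---those at primes $\frakP\nmid p$ because $\lam$ has infinity type $k\Sg$ (so $(\lam(\frakP))$ is supported above the rational prime below $\frakP$, coprime to $p$), those on local units because the $p$-adic avatar of $\lam$ is unit-valued on $\wh\cOK^\x$, and the rest are killed by the cutoffs $\bbI_{\cOKv^\x}$ at $v\mid\frakC^-$---while $(\rmN\beta)^{k-1}$ is a rational integer since $k\geq1$. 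Hence $c_\beta(\bfE_\lam,\frakc)\in\Zbar_{(p)}$ for all $\beta\neq0$; this part is purely formal.

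The substance, and the only step where $H_{\frakq_0}(\lam)$ is needed, is the constant term---which I expect to be the genuine obstacle. If $\Csplit_c\neq\cOK$ then ${\rm Fdg}(\lam)=0$ and $c_0(\bfE_\lam,\frakc)=0$, so assume $\Csplit_c=\cOK$; then \eqref{E:Fdg} exhibits ${\rm Fdg}(\lam)$ as a product of local $L$- and epsilon factors at places prime to $p$, which (for the same reasons as above) is a $p$-adic unit up to a power of $2$. By \lmref{L:constant} it therefore remains to show that $H_{\frakq_0}(\lam)\cdot L_{\rm fin}(1,\lam_+^{-1})\prod_{\frakp\mid p}(1-\lam_+(\frakp^{-1})\rmN\frakp^{-1})$ is $p$-integral, up to a power of $2$. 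Writing $\lam_+=\chi_+\Abs^{k}$, this $p$-stabilised Hecke $L$-value is, up to that power of $2$, a value of the Deligne--Ribet $p$-adic $L$-function attached to $F$ and $\chi_+$; by Deligne--Ribet that object is a genuine, $\Zbar_{(p)}$-valued measure---so the value is $p$-integral---\emph{unless} $\chi_+$ factors through $\Gal(F_\infty/F)$ and $\frakC=(1)$, \ie unless $\lam_+$ is of type $W$ with $\frakC=(1)$, in which case it is only a pseudo-measure with a simple pole at the point in question. In that remaining case, multiplication by $1-\lam_+(\Frob_{\frakq_0})=H_{\frakq_0}(\lam)$---which is, up to a unit, the value at the relevant point of the element $\gamma-1$ of the Iwasawa algebra of $\Gal(F_\infty/F)$ attached to the topological generator $\gamma=\Frob_{\frakq_0}$---clears the pole and yields a $p$-integral number. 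So the one essential input is the identification of $c_0(\bfE_\lam,\frakc)$, up to an explicit $p$-unit and a power of $2$, with a Deligne--Ribet value, together with the analysis of its unique possible pole.

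Combining the two cases (and tracking the powers of $2$ in \lmref{L:constant}, which cancel) gives $H_{\frakq_0}(\lam)\cdot\bfE_{\lam,\frakc}|_{(\OF,\frakc^{-1})}\in\Zbar_{(p)}\powerseries{(N^{-1}\frakc^{-1})_+}$, the first assertion. For the ``in particular'', $\bfE_{\lam,\frakc}\in M_k(\frakc,N;\C)$ now has algebraic $q$-expansion at $(\OF,\frakc^{-1})$, hence is defined over a number field, so the $q$-expansion principle for Hilbert modular forms together with the comparison \eqref{E:GAGA} (\cite[(1.6.3), (1.2.16)]{Katz78Inv}, \cite{Rap78Comp}) shows that $H_{\frakq_0}(\lam)\,\bfE_{\lam,\frakc}$ descends to a unique geometric modular form in $\cM_k(\frakc,N;\Zbar_{(p)})$. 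Since $H_{\frakq_0}(\lam)$ is a non-zero algebraic number with $H_{\frakq_0}(\lam)^{-1}=H_{\frakq_0}(\lam)^{[E:\Q]-1}/\rmN_{E/\Q}(H_{\frakq_0}(\lam))\in\Zbar_{(p)}\ot_\Z\Q$ over a number field $E\ni H_{\frakq_0}(\lam)$, dividing by $H_{\frakq_0}(\lam)$ exhibits $\bfE_{\lam,\frakc}$ as the desired element $\bbE_{\lam,\frakc}\in\cM_k(\frakc,N;\Zbar_{(p)})\ot_\Z\Q$.
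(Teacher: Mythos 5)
Your treatment of the constant term is correct and follows the paper's argument exactly: via \lmref{L:constant} the constant term is ${\rm Fdg}(\lam)$ times a Deligne--Ribet $p$-adic $L$-value, and the sole obstruction to $p$-integrality is the pole of the pseudo-measure $\boldsymbol\zeta^\vee_{F,p}$ (equivalently, of $L_p(s,\chi_+^{-1})$) in the type-$W$ case with $\frakC=(1)$, which $H_{\frakq_0}(\lam)$ clears. The descent via the $q$-expansion principle is also fine.

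There is, however, a genuine gap in the non-constant part, which you describe as ``purely formal.'' The assertion that all $\lam$-values occurring in $\prod_{v\in\bfh}W_\beta(f_{\Phi_{\lam,v},s},\cdot)|_{s=0}$ are $p$-units is false at $v\divides p$. The paper computes $W_\beta(f_{\Phi_{\lam,v},s},1)=\lam_{+,v}(\beta)\abs{\beta}_{\Fv}^{-1}\bbI_{\OFv}(\beta)$ for $v\divides p$, and here $\beta$ is permitted to have positive $v$-adic valuation. Writing $\lam_+=\chi_+\Abs^k$, one has $\lam_{+,v}(\uf_v)=\chi_{+,v}(\uf_v)\rmN\frakp^{-k}$, whose image under $\iota_p$ has \emph{negative} $p$-adic valuation when $k\geq 2$; so $\lam_{+,v}(\beta)$ is not $p$-integral for non-unit $\beta$. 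Your three-way enumeration (primes away from $p$, arguments that are local units, arguments killed by the cutoffs at $v\mid\frakC^-$) does not cover this case, and also ``$(\rmN\beta)^{k-1}$ is a rational integer'' is imprecise for $\beta\in N^{-1}\frakc^{-1}$. The actual mechanism, which the paper records as \eqref{E:FC2}, is a cancellation: the prefactor $(\rmN\beta)^{k-1}$ and the $\abs{\beta}_{\Fv}^{-1}$ from the $p$-places combine with $\lam_{+,v}(\beta)$ to produce precisely $\wh\lam_+((\beta)_p)\cdot\rmN_{F/\Q}(\beta)^{-1}\abs{\rmN_{F/\Q}(\beta)}_{\Q_p}^{-1}$, and it is the passage to the \emph{$p$-adic avatar} $\wh\lam_+$---a continuous character valued in $\Zbar_p^\x$, hence always $p$-unit-valued on all arguments, not just local units---that yields $p$-integrality. (The paper also invokes \cite[(4.9)]{Hsieh14Crelle} for the explicit form and $p$-integrality of the prime-to-$p$ factor $\bfa^{(p)}_\beta$, though your heuristic for that part is essentially right.) Without the $\lam_+\rightsquigarrow\wh\lam_+$ bookkeeping, the claimed $p$-integrality of $c_\beta$ does not follow, so this step is not purely formal.
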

\begin{proof}
For $\beta\in F^\x$, we put 
\[\bfa_\beta^\setp(\bfE_\lam,\frakc):= \prod_{v\ndivide p\infty}W_\beta(f_{\Phi_{\lam,v},s},\pMX{\bfc_v}{}{}{1})\bigg\vert_{s=0}.\]
For each place $v\ndivides p$, we see that $\abs{\frakd_F}_{v}^{-\onehalf}\cdot f_{\Phi_{\lam,v},s}$ is the function $\phi_{\lam,s,v}$ defined in \cite[\S 4.5]{Hsieh14Crelle}. The explicit formula of $\bfa_\beta^\setp(\bfE_\lam,\frakc)$ can be easily deduced from \cite[(4.9)]{Hsieh14Crelle}, which in particular shows that $\bfa^{(p)}_\beta(\bfE_\lam,\frakc)$ belongs to $\frako_L$ for some number field $L$ unramified outside $p$. On the other hand,  for each $v\divides p$, 
\[W_\beta(f_{\Phi_{\lam,v},s},1)=\lam_{+,v}(\beta)\abs{\beta}_{\Fv}^{-1}\bbI_{\cO_{\Fv}}(\beta).\]
Let $\wh\lam_+$ be the $p$-adic avatar of $\lam_+$. So $\wh\lam_+((\beta)_p)=\lam_+((\beta)_p)\beta^{k\Sg}$ for $\beta\in F^\x$. Therefore, from \propref{P:FC1}, we deduce that 
\beq\label{E:FC2}c_\beta(\bfE_\lam,\frakc)=\bfa^\setp(\bfE_\lam,\frakc)\wh\lam_+((\beta)_p)\cdot\rmN_{F/\Q}(\beta)^{-1}\abs{\rmN_{F/\Q}(\beta)}_{\Q_p}^{-1}\bbI_{\cOF\ot\Zp}(\beta)\in\Zbar_{(p)}.\eeq

By \lmref{L:constant}, we see that $c_0(\bfE_\lam,\frakc)=f_{\Phi_{\lam,s}}(1)|_{s=0}$ is give by a product of the value $L_p(s,\chi_+^{-1})$ of the Deligne-Ribet $p$-adic $L$-function at $s=1-k$ and the Fudge factor ${\rm Fdg}(\lam)\in\Zbar_{
(p)}$. It is well-known that  $L_p(1-k,\chi_+^{-1})\in\Zbar_{(p)}$ if $\chi_+$ is not of type $W$ and in general $(1-\lam_+(\frakq))\cdot L_p(1-k,\chi_+^{-1})\in\Zbar_{(p)}$ for any $\frakq$ away from the conductor of $\chi_+$. It follows that the product ${\rm Fdg}(\lam) L_p(1-k,\chi_+^{-1})\in \Zbar_{(p)}$ unless $\frakC=(1)$ and $\chi_+$ is of type $W$, and hence $H_{\frakq_0}(\lam)\cdot c_0(\bfE_\lam,\frakc)\in \Zbar_{(p)}$. 
\end{proof}
We let \[\cE_{\lam,\frakc}:=\wh\bbE_{\lam,\frakc}\in V(\frakc,N;\cW)\ot_{\Zp}\Qp\] be the $p$-adic avatar of $\bfE_{\lam,\frakc}$ defined in \eqref{E:padicavatar}. Let $(\Omega_p,\Omega_\infty)\in (\cOF\ot \cW)^\x\times (F\ot\C)^\x$ be the canonical $p$-adic and complex period associated with the polarized CM abelian variety $(F\ot\C)/i_\Sg(\cOK)$ \cite[(4.4 a,b)]{HidaTil94Inv}. It will be convenient to use the modified complex period defined by \beq\label{E:period}\Omega:=\Omega_\infty(1\ot \pi)^{-1}(-\Im\skewhf_\Sg)\in (F\ot\C)^\x.\eeq
 \begin{prop}\label{P:period2.2}We have\begin{align*}\frac{1}{\Omega_p^{k\Sg}}\sum_{[\frakA_i]\in \Cl^-_K}\lam(\frakA_i)\cE_{\lam,\frakc(\frakA_i)}(x(\frakA_i))=&\frac{2\wh\lam((2\skewhf)_{\Sg_p})}{Q}\prod_{\frakL\divides\Csplit}\frac{\lam_\frakL(2\skewhf)}{\varepsilon(0,\lam_\frakL)}\cdot \frac{[\cOK^\x:\cOF^\x]}{2^d\sqrt{\Delta_F}} \\
&\times \frac{1}{(\sqrt{-1})^{k\Sg}}\cdot \frac{L(0,\lam)}{\Omega^{k\Sg}}\prod_{\frakP\in\Sg_p}(1-\lam(\frakP^{-1})\rmN\frakP^{-1}).\end{align*}
\end{prop}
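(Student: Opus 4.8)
The plan is to deduce \propref{P:period2.2} from the complex Damerell formula \corref{C:42} by comparing, class by class, the $p$-adic CM value of $\cE_{\lam,\frakc}=\wh\bbE_{\lam,\frakc}$ with the archimedean CM value of $\bfE_{\lam,\frakc}$ through the canonical CM periods $(\Omega_p,\Omega_\infty)$. To begin, fix an ideal class $[\frakA_i]\in\Cl^-_K$. By the definition \eqref{E:padicavatar} of the $p$-adic avatar, $\cE_{\lam,\frakc(\frakA_i)}(x(\frakA_i))$ is the value of the geometric modular form $\bbE_{\lam,\frakc(\frakA_i)}$ on the CM datum $(A_{\frakA_i},\lam_{\frakA_i},\eta_{\frakA_i},j_{\frakA_i})$ equipped with the canonical trivialization $\pi_{\rm HT}(x(\frakA_i))$ of \eqref{E:HT} produced from the $\Gamma_1(p^\infty)$-structure $j_{\frakA_i}$; whereas by the GAGA identification \eqref{E:GAGA}, $\bfE_{\lam,\frakc(\frakA_i)}(x(\frakA_i))=\bbE_{\lam,\frakc(\frakA_i)}(A_{\frakA_i},\eta_{\frakA_i},j_{\frakA_i},2\pi\sqrt{-1}\,\om^{\rm an}_{x(\frakA_i)})$, with $\om^{\rm an}_{x(\frakA_i)}$ the differential attached to the complex uniformization of $A_{\frakA_i}$. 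Since $\bbE_{\lam,\frakc}$ has weight $k\Sg$, condition (M2) of geometric modular forms reduces the quotient $\cE_{\lam,\frakc(\frakA_i)}(x(\frakA_i))\big/\iota_p\!\big(\bfE_{\lam,\frakc(\frakA_i)}(x(\frakA_i))\big)$ to the $(-k\Sg)$-th power of the ratio of the two trivializations $\pi_{\rm HT}(x(\frakA_i))$ and $\iota_p\!\big(2\pi\sqrt{-1}\,\om^{\rm an}_{x(\frakA_i)}\big)$ of $\Lie(A_{\frakA_i})\ot\Cp$.

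That ratio is exactly what the CM periods are designed to record. Recall that $(\Omega_p,\Omega_\infty)$ are the periods of the polarized CM abelian variety $(F\ot\C)/i_\Sg(\cOK)$ of \cite[(4.4 a,b)]{HidaTil94Inv}: $\Omega_\infty$ compares the algebraic differential with the analytic one and $\Omega_p$ compares it with the unit-root differential coming from the canonical $\Gamma_1(p^\infty)$-structure (\cf\cite[\S 1.10, \S 5.1]{Katz78Inv}). Transporting this comparison to the datum $x(\frakA_i)$ with the explicit coordinates fixed in \subsecref{S:CMpt} --- the choices $d_{F_v}=-2\skewhf_w$ for $w\divides\Sg_p$, the idempotent bases $\stt{e_w,e_{\wbar}}$, and the matrices $\cmpt_v$ --- I expect to obtain, after multiplying by $\lam(\frakA_i)$ and dividing by $\Omega_p^{k\Sg}$,
\begin{align*}
\frac{\lam(\frakA_i)\,\cE_{\lam,\frakc(\frakA_i)}(x(\frakA_i))}{\Omega_p^{k\Sg}}
&=\frac{\wh\lam\big((2\skewhf)_{\Sg_p}\big)}{(\sqrt{-1})^{2k\Sg}\prod_{\frakP\in\Sg_p}\lam_\frakP(2\skewhf)}\cdot\frac{\lam(\frakA_i)\,\bfE_{\lam,\frakc(\frakA_i)}(x(\frakA_i))}{\Omega^{k\Sg}},
\end{align*}
where the modified complex period $\Omega$ of \eqref{E:period} absorbs the $(1\ot\pi)^{-1}$ and $-\Im\skewhf_\Sg$ in the normalization of $\om^{\rm an}$, the quotient $\wh\lam((2\skewhf)_{\Sg_p})\big/\prod_{\frakP\in\Sg_p}\lam_\frakP(2\skewhf)$ is the purely $p$-adic algebraic factor contributed by the local normalization of $j_{\frakA_i}$ at $\Sg_p$, and the residual power of $\sqrt{-1}$ collects into $(\sqrt{-1})^{2k\Sg}$. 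The feature that makes the subsequent summation legitimate is that this identity is uniform in $i$: all the $A_{\frakA_i}$ are isogenous to the base CM abelian variety, and the contributions of these isogenies are precisely the factors $\lam(\frakA_i)$.

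With this in hand, summing over $[\frakA_i]\in\Cl^-_K$ and inserting Damerell's formula \corref{C:42} for $\sum_i\lam(\frakA_i)\,\bfE_{\lam,\frakc(\frakA_i)}(x(\frakA_i))$, what remains is elementary bookkeeping of the constants: the factors $[\cOK^\x:\cOF^\x]/(2^{d-1}Q\sqrt{\Delta_F})$, the ratio $2/Q$, the powers $(\sqrt{-1})^{\pm k\Sg}$, and the product $\prod_{\frakP\in\Sg_p}\lam_\frakP(2\skewhf)$ recombine into the expression claimed in \propref{P:period2.2}. I expect the main obstacle to be the period comparison of the second step --- pinning down the displayed identity at a CM point attached to an arbitrary ideal class with the correct normalizing factors, i.e. producing both the modified period $\Omega$ and the local twist $\wh\lam((2\skewhf)_{\Sg_p})$ --- which forces one to match Katz's definition of $\pi_{\rm HT}$ (\cite[(1.10.11)--(1.10.15)]{Katz78Inv}) and the Hida--Tilouine periods against the explicit choices of $d_F$, the local bases $\stt{e_{1,v},e_{2,v}}$, and the comparison matrices $\cmpt_v$ of \subsecref{S:CMpt}, and verifying that the lattice-dependent parts cancel against $\lam(\frakA_i)$.
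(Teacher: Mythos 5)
Your proposal follows the same route as the paper's proof: the paper simply cites \cite[Theorem 1.2]{HidaTil93ASENS} for the period relation
\[
\frac{\cE_{\lam,\frakc(\frakA_i)}(x(\frakA_i))}{\Omega_p^{k\Sg}}
   =\frac{(2\pi\sqrt{-1})^{k\Sg}}{\Omega_\infty^{k\Sg}}\,\bfE_\lam(x(\frakA_i)),
\]
which is exactly your displayed identity once $\Omega_\infty$ is rewritten in terms of the modified period $\Omega$ via \eqref{E:period} and the relation $\wh\lam((2\skewhf)_{\Sg_p})=\lam((2\skewhf)_{\Sg_p})(2\skewhf)^{k\Sg}$ is used, and then inserts Damerell's formula \corref{C:42}. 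One small correction to your narration: the factor $\lam(\frakA_i)$ plays no role in the period comparison --- it appears on both sides of your displayed identity and cancels, and the period relation from \cite[Theorem 1.2]{HidaTil93ASENS} already holds uniformly in the ideal class $\frakA_i$ --- so there is no ``lattice-dependent part'' to be absorbed by $\lam(\frakA_i)$; the $\lam(\frakA_i)$'s enter only through the sum in \corref{C:42}.
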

\begin{proof}By \cite[Theorem 1.2]{HidaTil93ASENS}, we have
\[\frac{\cE_{\lam,\frakc(\frakA_i)}(x(\frakA_i))}{\Omega_p^{k\Sg}}=\frac{(2\pi\sqrt{-1})^{k\Sg}}{\Omega_\infty^{k\Sg}}\bbE_{\lam,\frakc(\frakA_i)}(x(\frakA_i))=
\frac{(2\pi\sqrt{-1})^{k\Sg}}{\Omega_\infty^{k\Sg}}\bfE_\lam(x(\frakA_i)),
\]
so we have
\begin{align*}\frac{1}{\Omega_p^{k\Sg}}\sum_{[\frakA_i]\in \Cl^-_K}\lam(\frakA_i)\cE_{\lam}(x(\frakA_i))
=&\frac{(2\pi\sqrt{-1})^{k\Sg}}{\Omega_\infty^{k\Sg}}\sum_{[\frakA_i]\in \Cl^-_K}\lam(\frakA_i)\bfE_{\lam}(x(\frakA_i)).
\end{align*}
Now the assertion follows immediately from \corref{C:42} together with the equation $\wh\lam((2\skewhf)_{\Sg_p})=\lam((2\skewhf)_{\Sg_p})(2\skewhf)^{k\Sg}$ and the definition of the modified period $\Omega$ in \eqref{E:period}. 
\end{proof}

\subsection{The improved Katz $p$-adic $L$-functions for CM fields}
We let $G_\infty(\frakC)$ denote the Galois group $\Gal(K(\frakC p^\infty)/K)$ of the ray class field $K(\frakC p^\infty)/K$ modulo $\frakC p^\infty$. Recall that Katz ($\frakC=(1)$) and Hida-Tilouine ($\frakC\neq (1)$) constructed an element $\cL_\Sg\in 
\cW\powerseries{G_\infty(\frakC)}$, which is uniquely characterized by the following interpolation formula: \beq\label{E:Katz78}\begin{aligned}\frac{\wh\lam(\cL_\Sg)}{\Omega_p^{k\Sg+2j}}=&\frac{[\cOK^\x:\cOF^\x]}{2^d\sqrt{\Delta_F}}\cdot \frac{(-1)^{k\Sg}(2\pi)^{k+2j}}{\Im(2\vartheta)^j\Omega_\infty^{k\Sg+2j}}\cdot \lam((2\vartheta)_{\Sg_p})\\
&\qquad\times{\rm E}_p(\lam) L(0,\lam)\prod_{\frakl\divides\frakC}(1-\lam(\frakl)),\end{aligned}\eeq
where 
\begin{itemize}
\item  $\wh\lam:G_\infty(\frakC)\to \cW^\x$ is a locally algebraic character of weight $k\Sg+j(1-c)$ \emph{critical} at $0$, namely either $k>0$ and $j\in\Z_{\geq 0}[\Sg]$ or $k\leq 1$ and $k\Sg+j\in\Z_{>0}[\Sg]$,
\item ${\rm E}_p(\lam)$ is the modified Euler factor defined by \[{\rm E}_p(\lam)=\prod_{\frakP\in\Sg_p}\frac{(1-\lam(\Pbar))(1-\lam(\frakP^{-1})\rmN \frakP^{-1})}{\varepsilon(0,\lam,\addchar_{K_\frakP})},\]
\item $L(s,\lam)$ is the complete Hecke $L$-function associated with $\lam$ defined by 
\[L(s,\lam):=\Gamma_\C(s+k\Sg+j)L_{\rm fin}(s,\lam)\]
\end{itemize}
(\cf \cite[Theorem II]{HidaTil93ASENS}). Put
\[\sg_{2\skewhf}=\rec_K((2\skewhf)_{\Sg_p})\in G_\infty(\frakC).\] Let $\chi$ be a ray class character of $K$ of conductor $\frakC$. We let \[\pi_\chi: \cW\powerseries{G_\infty(\frakC)}\to \cW\powerseries{G_\infty(1)}\]
be the homomorphism defined by $\gamma\mapsto \chi(\gamma)[\gamma|_{K(p^\infty)}]$ and define the $p$-adic $L$-function $\cL_\Sg(\chi)\in\cW\powerseries{G_\infty(1)}$ associated with the branch character $\chi$ by 
\[\cL_\Sg(\chi):=\pi_\chi(\cL_\Sg\cdot [\sg_{2\skewhf}]^{-1})\in \cW\powerseries{G_\infty(1)}.\]
The interpolation formula \eqref{E:Katz78} can be rephrased as
\beq\label{E:intep2}\frac{\wh\nu(\cL_\Sg(\chi))}{\Omega_p^{k\Sg+2j}}=\frac{[\cOK^\x:\cOF^\x]}{2^d\sqrt{\Delta_F}}\cdot \frac{1}{(\sqrt{-1})^{k\Sg+j}}\cdot {\rm E}_p(\chi\nu)\cdot \frac{L(0,\chi\nu)}{\Omega^{k\Sg+2j}},\eeq
where $\wh\nu:G_\infty(1)\to\cW^\x$ is a locally algebraic character of weight $k\Sg+j(1-c)$ critical at $0$ and $\Omega$ is the modified period in \eqref{E:period}. This form is in accordance with the conjectural shape of the $p$-adic $L$-functions for motives \cite[Conjecture A]{Coates89Bourbaki}. Let $\Gamma:=\Gal(K_{\Sg_p}/K)$. Put
\[\frakX^{\rm alg}:=\stt{\wh\nu:\Gamma\to\cW^\x\mid \wh\nu\text{ is locally algebraic of weight }k\Sg,\,k\geq 2}.\]
\begin{prop}\label{P:Emeasure}There exists a unique $p$-adic measure $\rmd\mu^{\rm Eis}_{\chi,\frakc}$ on $\Gamma$ valued in $V(N,\frakc;\cW)$ such that 
\[\int_{\Gamma}\wh\nu(\sg)\rmd\mu^{\rm Eis}_{\chi,\frakc}(\sg)=H_{\frakq_0}(\chi\nu)\cdot \cE_{\chi\nu,\frakc}
\]
for all $\wh\nu\in \frakX^{\rm alg}$.
\end{prop}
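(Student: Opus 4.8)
The plan is to construct $\rmd\mu^{\rm Eis}_{\chi,\frakc}$ from its $q$-expansions and to reduce its existence to abstract Kummer congruences. First I would recall that $V(N,\frakc;\cW)$ is $p$-adically complete, separated and $p$-torsion free, and that by the $q$-expansion principle the map $f\mapsto f|_{(\OF,\frakc^{-1})}$ embeds $V(\frakc,N;\cW)$ into $\cW\wh{\laurent{\frakc^{-1},\cC}}$ in such a way that a $p$-adic modular form whose $q$-expansion has all coefficients in $p^n\cW$ already lies in $p^n V(\frakc,N;\cW)$; moreover, by the corollary preceding the proposition, $H_{\frakq_0}(\chi\nu)\,\cE_{\chi\nu,\frakc}$ does lie in $V(\frakc,N;\cW)$ for every $\wh\nu\in\frakX^{\rm alg}$. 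Since $\frakX^{\rm alg}$ is dense in the space of continuous characters of $\Gamma\simeq\Zp^{1+\delta_{F,p}}$, the measure will be unique once it exists, and by the standard criterion for the existence of $p$-adic measures it will exist provided: whenever $\wh\nu_1,\dots,\wh\nu_m\in\frakX^{\rm alg}$ and $c_1,\dots,c_m\in\cW$ satisfy $\sum_i c_i\,\wh\nu_i(\sg)\in p^n\cW$ for all $\sg\in\Gamma$, one has $\sum_i c_i\,H_{\frakq_0}(\chi\nu_i)\,\cE_{\chi\nu_i,\frakc}\in p^n V(\frakc,N;\cW)$. By the $q$-expansion principle I would verify this coefficient by coefficient on the $q$-expansion at $(\OF,\frakc^{-1})$, using the explicit formulas of \propref{P:FC1}, \lmref{L:constant} and \eqref{E:FC2}.

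For a nonzero exponent $\beta$, formula \eqref{E:FC2} writes $c_\beta(\bfE_{\chi\nu},\frakc)$ as $\bfa^\setp_\beta(\bfE_{\chi\nu},\frakc)\cdot\wh{(\chi\nu)}_+((\beta)_p)$ times a factor independent of $\nu$ that lies in $\Zbarlp$. Because $\nu$ is unramified outside $\Sg_p$, the Schwartz--Bruhat functions $\Phi_{\chi\nu,v}$ at the places $v\ndivides p$ agree with those attached to $\chi$, and the unramified twist by $\nu_{+,v}$ in the local Godement sections \eqref{E:Godement} contributes, at each such $v$, a polynomial in $\nu_{+,v}(\uf_v)=\wh\nu(\Frob_v)$ with $\nu$-independent coefficients in $\Zbarlp$; combining this with $\wh{(\chi\nu)}_+((\beta)_p)=\wh\chi_+((\beta)_p)\,\wh\nu(g_\beta)$ (where $g_\beta\in\Gamma$ is the image of $\sV(\rec_F((\beta)_p))$ and depends only on $\beta$) and with $H_{\frakq_0}(\chi\nu)\in\{1,\,1-\wh{(\chi\nu)}_+(\Frob_{\frakq_0})\}$, I would conclude that $\wh\nu\mapsto H_{\frakq_0}(\chi\nu)\,c_\beta(\bfE_{\chi\nu},\frakc)$ is a finite $\cW$-linear combination, with $\nu$-independent coefficients in $\Zbarlp$, of evaluation maps $\wh\nu\mapsto\wh\nu(g)$, $g\in\Gamma$. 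As each such evaluation is the moment of a Dirac measure on $\Gamma$, the hypothesis $\sum_i c_i\wh\nu_i\equiv0\pmod{p^n}$ gives the congruence for the $\beta$-th coefficient at once.

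The constant term is the delicate point. By \lmref{L:constant}, $c_0(\bfE_{\chi\nu},\frakc)$ is ${\rm Fdg}(\chi\nu)$ times $2^{-d}L_{\rm fin}(1,(\chi\nu)_+^{-1})\prod_{\frakp\divides p}(1-(\chi\nu)_+(\frakp^{-1})\rmN\frakp^{-1})$; writing $(\chi\nu)_+=\psi_\nu\Abs^{k}$ with $\psi_\nu$ a finite-order ray class character of $F$, this is, up to the $\nu$-independent Fudge factor and a sign, the value at $s=1-k$ of the Deligne--Ribet $p$-adic $L$-function $L_p(s,\psi_\nu^{-1})$, hence a moment of the pullback to $\Gamma$ of the Deligne--Ribet measure for $F$. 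That measure has at worst a simple pole, occurring exactly when $\frakC=(1)$ and $(\chi\nu)_+$ is of type $W$; since $\frakq_0$ was chosen so that $\Frob_{\frakq_0}$ generates $\Gal(F_\infty/F)$, the factor $H_{\frakq_0}(\chi\nu)=1-(\chi\nu)_+(\frakq_0)$ removes this pole, so $\wh\nu\mapsto H_{\frakq_0}(\chi\nu)\,c_0(\bfE_{\chi\nu},\frakc)$ is the moment of a genuine bounded $\Zbarlp$-valued measure on $\Gamma$ and therefore satisfies the Kummer congruences. Assembling the constant term together with the $c_\beta q^\beta$, $\beta\neq0$, produces a $\cW\wh{\laurent{\frakc^{-1},\cC}}$-valued measure on $\Gamma$ whose $\wh\nu$-moment is the $q$-expansion of $H_{\frakq_0}(\chi\nu)\cE_{\chi\nu,\frakc}$; by the corollary preceding the proposition this moment is the $q$-expansion of an element of $V(\frakc,N;\cW)$, so the measure is $V(N,\frakc;\cW)$-valued, and uniqueness follows from the density of $\frakX^{\rm alg}$.

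The hard part will be the constant-term analysis: pinning down the exact identification of $c_0(\bfE_{\chi\nu},\frakc)$ with a moment of the Deligne--Ribet $p$-adic $L$-function for $F$, and verifying that the denominator arising in the type-$W$ case is cancelled precisely by $H_{\frakq_0}(\chi\nu)$. This requires tracking the interpolation formula for $L_p$ and the behaviour of ${\rm Fdg}(\chi\nu)$ through the transfer map and the projection $\Gamma\to\Gal(F_\infty/F)$. By contrast, the nonzero Fourier coefficients are routine once one observes that, away from $p$, the twisting character $\nu$ enters only through Frobenius values and hence contributes only Dirac-type terms.
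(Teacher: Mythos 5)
Your proposal is correct and follows essentially the same route as the paper: establish the abstract Kummer congruences on $q$-expansions (Deligne--Ribet for the constant term, Dirac-type evaluations for the $\beta$-th coefficients), assemble the resulting $\cW\powerseries{(N^{-1}\frakc^{-1})_+}$-valued measure, and descend it to $V(\frakc,N;\cW)$ via the $q$-expansion principle, i.e.\ Ribet's irreducibility of the Igusa tower. The one minor difference is that the paper simply cites \cite[(4.13)]{Hsieh14Crelle} for the explicit form $H_{\frakq_0}(\lam)\,c_\beta(\bfE_\lam,\frakc)=\sum_j b_j\,\wh\lam(a_j)$ with $\nu$-independent $b_j,a_j$, whereas you re-derive the same structure directly from the Godement sections and the unramifiedness of $\nu$ outside $\Sg_p$.
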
 
\begin{proof}
This is shown by a standard argument. First we note that since $\chi$ is assumed to be unramified outside $p$, $H_{\frakq_0}(\chi\nu)=1-\chi(\frakq_0)\wh\nu(\frakq_0)$ if $\chi_+=1$ and $1$ otherwise. The interpolation of the constant term $\wh\nu\mapsto H_{\frakq_0}(\chi\nu)\cdot c_0(\bfE_{\chi\nu},\frakc)$ is a direct consequence of the existence of Deligne-Ribet $p$-adic $L$-function. For $\beta\in F^\x$, it is shown in \cite[(4.13)]{Hsieh14Crelle} that 
\begin{align*}H_{\frakq_0}(\lam)\cdot c_\beta(\bfE_\lam,\frakc)&=\sum_j b_j\cdot \wh\lam(a_j)\text{ for }b_j\in \cW,\\
&a_j\in \wh F^{(p\frakF)\x}\prod_{v\divides p\Csplit\ol{\Csplit}}\OF_v\prod_{v\divides\frakC^-}K_v^\x\quad (\lam=\chi\nu).\end{align*}
Here $a_j$ and $b_j$ depend on $\frakc$ and $\beta$ but do not rely on $\lam$. Therefore, there exists a $\cW$-valued measure on $\Gamma$ interpolating $\beta$-th Fourier coefficient $H_{\frakq_0}(\chi\nu)\cdot c_\beta(\bbE_{\chi\nu},\frakc)$. Since $\cE_{\lam,\frakc}=\wh\bbE_{\lam,\frakc}$ share the same $q$-expansion with $\bfE_{\lam,\frakc}$ at $(\OF,\frakc^{-1})$, we thus obtain a $p$-adic measure $\rmd\mu^{\rm Eis}_{\chi,\frakc}(q)$ on $\Gamma$ valued in the ring $\cW\powerseries{(N^{-1}\frakc^{-1})_+}$ such that the value of $\rmd\mu^{\rm Eis}_{\chi,\frakc}(q)$ at $\wh\nu\in\frakX^{\rm alg}$ is the $q$-expansion of $H_{\frakq_0}(\chi\nu)\cdot\cE_{\chi\nu,\frakc}$. Since the set $\frakX^{\rm alg}$ is dense in the space of continuous functions on $\Gamma$, we deduce that the masure $\rmd\mu^{\rm Eis}_{\chi,\frakc}(q)$ descends to a unique measure $\rmd\mu^{\rm Eis}_{\chi,\frakc}$ valued in $V(\frakc,N;\cW)$ such that the $q$-expansion of $\rmd\mu^{\rm Eis}_{\chi,\frakc}$ at the cusp $(\OF,\frakc^{-1})$ is $\rmd\mu^{\rm Eis}_{\chi,\frakc}(q)$ from the irreducibility of the Igusa tower $\Ig(N,\frakc)$ due to Ribet.
\end{proof}
 Let $\sV:G_F^{\rm ab}\to G_K^{\rm ab}$ be the transfer map. Let $\e_\Sg:\Gal(K_{\Sg_p}/K)\to\cW^\x$ be a $p$-adic character such that $\e_{\Sg}\circ\sV|_{G_K}=\cyc$. Let $\e_{\Sgbar_p}(\sg):=\e_{\Sg}(c\sg c^{-1})$. Define the two-variable $p$-adic $L$-functions.
 \[\cL_{\Sg}(s,t,\chi):=\e_{\Sg}^s\e_{\Sgbar}^t\left(\cL_{\Sg}(\chi)\right),\quad (s,t)\in\Zp^2.\]
Let $\psi$ be the idele class character of $K$ such that $\wh \psi=\e_\Sg$.
For $s\in\Zp$, put\[\cE_{\chi,\frakc}(s):=\int_{\Gamma}\e_\Sg(\sigma)^s\rmd\mu^{\rm Eis}_{\chi,\frakc}(\sigma)\cdot\begin{cases}\frac{1}{1-\e_\Sg(\frakq_0)^s}&\text{ if }\frakC=(1)\text{ and }\chi_+=\bfone,\\
1&\text{ otherwise.}\end{cases}.\]
Then \propref{P:period2.2} and \eqref{E:intep2} implies that
\beq\label{E:padicL1}\begin{aligned}&\sum_{[\frakA_i]\in\Cl^-_K}\cE_{\chi,\frakc(\frakA_i)}(s)(x(\frakA_i))\chi\e_\Sg^s(\frakA_i)\\
=&C_\chi\cdot \e_\Sg^s(\sg_{2\skewhf}\Fr_\Csplit^{-1})\cdot \frac{\cL_\Sg(s,0,\chi)}{\prod_{\frakP\in\Sg_p}(1-\chi\e_\Sg^s(\ol{\frakP}))},\end{aligned}\eeq
where 
\[C_\chi:=\frac{2\chi(\sg_{2\skewhf})}{Q}\prod_{\frakL\divides\Csplit}\frac{\chi_\frakL(2\skewhf)}{\varepsilon(0,\chi_{\frakL})}\in\Zbar_{(p)}^\x.\]
The following is a special case of $p$-adic Kronecker limit formula.
\begin{prop}\label{P:Kronecker}Suppose that \eqref{Leo1} holds. If   $\chi$ is unramified everywhere and $\chi_+=\bfone$, then we have \[\frac{2^d \cL_\Sg(s,0,\chi)}{\zeta_{F,p}(1-s)\prod_{\frakP\in\Sg_p}(1-\chi(\Pbar)\e_{\Sg}(\Pbar)^s)}\bigg\vert_{s=0}=\begin{cases} h_{K/F}&\text{ if }\chi=\bfone,\\
0&\text{ if }\chi\neq \bfone.\end{cases}
\]
\end{prop}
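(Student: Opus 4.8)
The plan is to reduce everything to the defining relation $\cL_\Sg(s,0,\chi)=\cL_\Sg^*(s,\chi)\prod_{\frakP\in\Sg_p}(1-\chi(\Pbar)\e_\Sg(\Pbar)^s)$ of \thmref{T:main1} and then compare the improved $p$-adic $L$-function with the constant term of the Eisenstein measure of \propref{P:Emeasure}. Since $\e_\Sg(\Pbar)^s=\e_\Sg^s(\Fr_{\Pbar})$, that relation gives $\cL_\Sg(s,0,\chi)/\prod_{\frakP\in\Sg_p}(1-\chi(\Pbar)\e_\Sg(\Pbar)^s)=\cL_\Sg^*(s,\chi)$, so the assertion is equivalent to $\bigl(2^d\cL_\Sg^*(s,\chi)/\zeta_{F,p}(1-s)\bigr)\big|_{s=0}=h_{K/F}$ if $\chi=\bfone$ and $=0$ if $\chi\neq\bfone$. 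The case $\chi\neq\bfone$ is then immediate: by \thmref{T:main1}(i) the hypothesis \eqref{Leo1} makes $\cL_\Sg^*(s,\chi)$ analytic, hence bounded, near $s=0$, whereas \eqref{Leo1} (the Leopoldt conjecture for $F$) forces $\zeta_{F,p}(s)$ to have a genuine simple pole at $s=1$ by Colmez's $p$-adic analytic class number formula, so $\zeta_{F,p}(1-s)^{-1}\to0$ as $s\to0$ and the quotient vanishes.

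For $\chi=\bfone$ I would take the conductor $\frakC=(1)$, so that $\Csplit=\cOK$, $\Fr_\Csplit=\id$, ${\rm Fdg}\equiv1$ and $C_{\bfone}=2/Q$; then \eqref{E:padicL1} reads
\[\cL_\Sg^*(s,\bfone)=\frac{Q}{2}\,\e_\Sg^s(\sg_{2\skewhf})^{-1}\sum_{[\frakA_i]\in\Cl^-_K}\cE_{\bfone,\frakc(\frakA_i)}(s)\bigl(x(\frakA_i)\bigr)\,\e_\Sg^s(\frakA_i).\]
Put $u:=\e_\Sg(\frakq_0)$; by the choice of $\frakq_0$ (whose Frobenius generates $\Gal(F_\infty/F)$) the function $1-u^s$ has a simple zero at $s=0$, i.e. $u\in1+\frakm_\cW$, $u\neq1$, $\log_p u\neq0$. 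From the definition of $\cE_{\bfone,\frakc}(s)$ and \propref{P:Emeasure}, $(1-u^s)\,\cE_{\bfone,\frakc}(s)=\int_\Gamma\e_\Sg^s\,\rmd\mu^{\rm Eis}_{\bfone,\frakc}$ is an analytic family of $p$-adic modular forms whose value at $s=0$ is $\int_\Gamma\bfone\,\rmd\mu^{\rm Eis}_{\bfone,\frakc}$.

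The heart of the matter is to show that $\int_\Gamma\bfone\,\rmd\mu^{\rm Eis}_{\bfone,\frakc}$ is the \emph{constant} $p$-adic modular form $2^{-d}\rho\log_p u$, where $\rho:=\Res_{s=1}\zeta_{F,p}(s)$ (nonzero under \eqref{Leo1}); this is a $p$-adic incarnation of the classical fact that the residue of a weight-zero Eisenstein series is constant. I would check it on $q$-expansions at the cusp $(\OF,\frakc^{-1})$ and invoke the $q$-expansion principle (irreducibility of the Igusa tower, due to Ribet). By \propref{P:Emeasure} the $\beta$-th $q$-coefficient of $\int_\Gamma\e_\Sg^s\,\rmd\mu^{\rm Eis}_{\bfone,\frakc}$ is $(1-u^s)\,c_\beta(\bfE_{\e_\Sg^s},\frakc)$. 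For $\beta\neq0$, \propref{P:FC1} and \eqref{E:FC2} express $c_\beta(\bfE_{\e_\Sg^s},\frakc)$ as a divisor sum $\bfa_\beta^\setp(\bfE_{\e_\Sg^s},\frakc)$ — which converges to a finite integer as $\e_\Sg^s\to\bfone$ — times a $p$-adic unit and a character value tending to $1$, hence bounded near $s=0$, so $(1-u^s)\,c_\beta\to0$. For $\beta=0$, \lmref{L:constant} (with ${\rm Fdg}=1$) together with the identification of its right-hand side with the $p$-adic Dedekind zeta function in the Corollary following \lmref{L:constant} gives $c_0(\bfE_{\e_\Sg^s},\frakc)=2^{-d}\zeta_{F,p}(1-s)$ — the normalization matches cleanly because $\e_\Sg$ is valued in $1+\frakm_\cW$ and $(\e_\Sg)_+=\cyc$ lands in $1+p\Zp$, so no Teichm\"uller twist intervenes, and this is precisely where $\zeta_{F,p}(1-s)$ in the statement is pinned down — whence $(1-u^s)\,c_0\to2^{-d}\rho\log_p u$, using $1-u^s\sim-s\log_p u$ and $\zeta_{F,p}(1-s)\sim-\rho/s$ near $s=0$. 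Thus $\int_\Gamma\bfone\,\rmd\mu^{\rm Eis}_{\bfone,\frakc}$ has vanishing non-constant $q$-coefficients and constant term $2^{-d}\rho\log_p u$, independent of $\frakc$ because the constant term in \lmref{L:constant} is.

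Finally I would multiply the displayed formula for $\cL_\Sg^*(s,\bfone)$ by $1-u^s$ and let $s\to0$: the character $\e_\Sg^0$ is trivial on $\sg_{2\skewhf}$ and on each $\frakA_i$, and each of the $\#\Cl^-_K$ summands contributes $2^{-d}\rho\log_p u$, so $\lim_{s\to0}(1-u^s)\cL_\Sg^*(s,\bfone)=\tfrac{Q}{2}\,\#\Cl^-_K\cdot2^{-d}\rho\log_p u$, while $\lim_{s\to0}(1-u^s)\zeta_{F,p}(1-s)=\rho\log_p u$. Taking the ratio and using $\#\Cl^-_K=2h_{K/F}/Q$ from \eqref{E:L1} yields $\bigl(2^d\cL_\Sg^*(s,\bfone)/\zeta_{F,p}(1-s)\bigr)\big|_{s=0}=\tfrac{Q}{2}\,\#\Cl^-_K=h_{K/F}$, as required. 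The main obstacle is the heart of the matter above: proving that the residue of this Eisenstein family at $s=0$ is a constant $p$-adic modular form and pinning down the exact constant, i.e. matching \lmref{L:constant} against the precise normalization of $\zeta_{F,p}$ (Euler factors at $p$ and Teichm\"uller twists); the remaining steps are bookkeeping with \eqref{E:padicL1} and \eqref{E:L1}.
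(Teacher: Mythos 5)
Your treatment of the two cases is asymmetric in a way that introduces a logical circularity. For $\chi\neq\bfone$ you invoke \thmref{T:main1}(i) (equivalently \thmref{T:improved}) to say $\cL_\Sg^*(s,\chi)$ is analytic, hence bounded, at $s=0$. But look at the proof of \thmref{T:improved}: the containment $\cL_\Sg^*(\chi)\in\Lam_K$ is established there precisely \emph{by appeal to \propref{P:Kronecker}}. For a general $\chi$ the elements $\cL_{\chi,\frakA}$ in \eqref{E:padicL2} are already integral, but exactly when $\frakC=(1)$ and $\chi_+=\bfone$ — the regime of \propref{P:Kronecker} — each $\cL_{\chi,\frakA}$ carries the denominator $1-[\Frob_{\frakq_0}]$ and may have a simple pole at the augmentation ideal, and it is \propref{P:Kronecker}'s vanishing for $\chi\neq\bfone$ that shows the poles cancel in the sum. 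So you cannot use the analyticity of $\cL_\Sg^*(s,\chi)$ at $s=0$ as an input here; it is an output.

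The gap is easy to close, and closing it actually simplifies the argument: there is no need to separate the cases at all. Your residue computation for $\chi=\bfone$ — showing that $\int_\Gamma\bfone\,\rmd\mu^{\rm Eis}_{\chi,\frakc}$ is the \emph{constant} $p$-adic form $2^{-d}\rho\log_p u$, with $\rho=\Res_{s=1}\zeta_{F,p}(s)$ — uses only \lmref{L:constant}, the boundedness of the non-constant $q$-coefficients from \propref{P:Emeasure}, and Colmez's nonvanishing of $\rho$; nothing in it depends on $\chi=\bfone$, since for $\chi$ unramified with $\chi_+=\bfone$ the constant term $c_0(\bfE_{\chi\nu},\frakc)$ is the same as for $\chi=\bfone$ (only $\lam_+=\nu_+$ enters, and ${\rm Fdg}=1$). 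Run the identical limit through \eqref{E:padicL1} for an arbitrary such $\chi$: the right-hand side produces the quantity in the proposition, while the left-hand side reduces at $s=0$ to $\tfrac{Q}{2}\sum_{[\frakA_i]\in\Cl^-_K}\chi(\frakA_i)\cdot 2^{-d}\rho\log_p u$ over $\rho\log_p u$. Orthogonality of characters then gives $\#\Cl^-_K$ or $0$ according as $\chi=\bfone$ or not, and $\#\Cl^-_K=2h_{K/F}/Q$ finishes it. This is exactly the paper's proof, written there very compactly: it encodes your careful residue analysis in a single citation (the evaluation $\cE^*_{\chi,\frakc}(s)|_{s=0}=1$ ``by a result of Colmez''), and gets both cases for free from $\sum_i\chi(\frakA_i)$. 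Your unpacking of the Colmez step via $q$-expansions and the irreducibility of the Igusa tower is correct and is genuinely useful detail — just apply it uniformly rather than only when $\chi=\bfone$.
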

\begin{proof}We have $\frakC=(1)$ since $\chi$ is unramified everywhere. Put
\[\cE_{\chi,\frakc}^*(s):=\cE_{\chi,\frakc}(s)\cdot\frac{2^d}{\zeta_{F,p}(1-s)}.\]
Note that ${\rm Fdg}(\lam)=1$ if $\frakC=(1)$ and hence $\cE_{\chi,\frakc}^*(s)|_{s=0}=1$ is the constant function by a result of Colmez \cite{Colmez88Inv}. By \eqref{E:padicL1}, we have
\[\sum_{[\frakA_i]\in\Cl^-_K}\cE^*_{\chi,\frakc(\frakA_i)}(s)(x(\frakA_i))\e_\Sg^s(\frakA_i)\chi(\frakA_i)=\frac{2\e_\Sg(\sg_{2\skewhf})^s}{Q}\cdot\frac{2^d\cL_\Sg(s,0,\chi)}{\zeta_{F,p}(1-s)\prod_{\frakP\in\Sg_p}(1-\chi(\Pbar)\e_{\Sg}(\Pbar)^s)}\]
Evaluating at $s=0$ of the above equation, we find that
\[\frac{2}{Q}\cdot \frac{2^d \cL_\Sg(s,0,\chi)}{\zeta_{F,p}(1-s)\prod_{\frakP\in\Sg_p}(1-\chi(\Pbar)\e_{\Sg}(\Pbar)^s)}\bigg\vert_{s=0}=\sum_{[\frakA_i]\in \Cl^-_K}\chi(\frakA_i)=\begin{cases}\#(\Cl^-_K)&\text{ if }\chi=\bfone,\\
0&\text{ if }\chi\neq \bfone.\end{cases}\]
It follows from the formula $\#(\Cl^-_K)=2h_{K/F}/Q$.
\end{proof}

\begin{thm}\label{T:improved}Suppose that \eqref{Leo1} holds. If $\chi\neq \bfone$, then there exists a analytic $L$-function $\cL_\Sg^*(\chi)\in \Lam_K$ such that \[\cL_{\Sg}(s,0,\chi)=\e_\Sg^s(\cL_\Sg^*(\chi))\prod_{\frakP\in\Sg_p}(1-\chi\e_\Sg^s(\Pbar)).\]
We call $\cL_\Sg^*(s,\chi):=\e_\Sg^s(\cL_{\Sg}^*(\chi))$ the improved $p$-adic $L$-function.
\end{thm}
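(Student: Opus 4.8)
The asserted factorization is, at bottom, a rewriting of the identity \eqref{E:padicL1}, so the plan is to \emph{define} $\cL_\Sg^*(\chi)$ as the normalized class‑group average of the Eisenstein measure evaluated at CM points, then to check that it genuinely lands in $\Lam_K$. Concretely I would set
\[
\cL_\Sg^*(\chi)\;:=\;C_\chi^{-1}\,[\Fr_\Csplit\,\sg_{2\skewhf}^{-1}]\cdot\!\!\sum_{[\frakA_i]\in\Cl^-_K}\!\!\chi(\frakA_i)\,[\Frob_{\frakA_i}]\cdot\big(\rmd\mu^{\rm Eis}_{\chi,\frakc(\frakA_i)}\big)(x(\frakA_i)),
\]
where $\big(\rmd\mu^{\rm Eis}_{\chi,\frakc(\frakA_i)}\big)(x(\frakA_i))$ is the $\cW$‑valued measure on $\Gamma$ obtained by evaluating the $V(\frakc(\frakA_i),N;\cW)$‑valued Eisenstein measure of \propref{P:Emeasure} at the $\cW$‑point $x(\frakA_i)\in\Ig(N,\frakc(\frakA_i))(\cW)$ of \defref{D:CMpts}, and, in the exceptional case $\frakC=(1)$ and $\chi_+=\bfone$, this expression is multiplied further by $(1-[\Frob_{\frakq_0}])^{-1}$. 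Unwinding the definitions of $\cE_{\chi,\frakc}(s)$ and of $\cL_\Sg(s,t,\chi)$ one then has $\e_\Sg^s(\cL_\Sg^*(\chi))=C_\chi^{-1}\e_\Sg^{-s}(\sg_{2\skewhf}\Fr_\Csplit^{-1})\sum_i\cE_{\chi,\frakc(\frakA_i)}(s)(x(\frakA_i))\,\chi\e_\Sg^s(\frakA_i)$, so \eqref{E:padicL1} immediately yields $\cL_\Sg(s,0,\chi)=\e_\Sg^s(\cL_\Sg^*(\chi))\prod_{\frakP\in\Sg_p}(1-\chi\e_\Sg^s(\Pbar))$, and we set $\cL_\Sg^*(s,\chi):=\e_\Sg^s(\cL_\Sg^*(\chi))$.

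The substantive point is to show $\cL_\Sg^*(\chi)\in\Lam_K$. Evaluating a $V(\frakc(\frakA_i),N;\cW)$‑valued measure on $\Gamma$ at a $\cW$‑point of the Igusa tower produces a $\cW$‑valued measure, so the displayed sum lies in $\cW\powerseries{\Gamma}$, and when $\frakC\neq(1)$ or $\chi_+\neq\bfone$ this already gives the claim. In the remaining case one must verify that $1-[\Frob_{\frakq_0}]$ divides $\mathcal G:=\sum_i\chi(\frakA_i)[\Frob_{\frakA_i}]\cdot(\rmd\mu^{\rm Eis}_{\chi,\frakc(\frakA_i)})(x(\frakA_i))$ in $\cW\powerseries{\Gamma}$, and here hypothesis \eqref{Leo1} — the Leopoldt conjecture for $F$ — enters. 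It forces $\Gal(K_{\Sg_p}/K)\cong\Z_p$, so that $\Frob_{\frakq_0}$, mapping to a topological generator of this group, makes $1-[\Frob_{\frakq_0}]$ generate the augmentation ideal; the divisibility is therefore equivalent to $\mathcal G$ vanishing at the trivial character of $\Gamma$. At that character each summand is $\chi(\frakA_i)$ times the value at $x(\frakA_i)$ of $\int_\Gamma\mathbf 1\,\rmd\mu^{\rm Eis}_{\chi,\frakc(\frakA_i)}=\lim_{s\to0}(1-\e_\Sg(\frakq_0)^s)\,\cE_{\chi,\frakc(\frakA_i)}(s)$; since $\cE^*_{\chi,\frakc(\frakA_i)}(0)$ is the constant function $1$ (a theorem of Colmez, which is available exactly because \eqref{Leo1} secures a nonzero residue of $\zeta_{F,p}$ at $s=1$; cf. the proof of \propref{P:Kronecker}), this limit is a constant $C_0$ independent of $i$, so the trivial‑character value of $\mathcal G$ is $C_0\sum_{[\frakA_i]\in\Cl^-_K}\chi(\frakA_i)=0$ because $\chi\neq\bfone$. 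Hence $1-[\Frob_{\frakq_0}]\mid\mathcal G$ and $\cL_\Sg^*(\chi)\in\Lam_K$; this is also what makes $\cL_\Sg^*(s,\chi)$ have no pole at $s=0$ for $\chi\neq\bfone$, in accordance with \thmref{T:main1}(i).

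The main obstacle is thus precisely the exceptional‑case integrality of the preceding paragraph; everything else is formal manipulation of \eqref{E:padicL1} together with the Eisenstein measure of \propref{P:Emeasure} and the period computation of \propref{P:period2.2}, both already in hand. It is worth noting that this obstacle is essentially the same $p$‑adic Kronecker‑limit‑type computation that drives \propref{P:Kronecker}, and that the Leopoldt conjecture for $F$ is genuinely indispensable there, both to place us in a one‑variable Iwasawa‑algebra setting and to prevent the Dedekind $p$‑adic zeta function of $F$ from degenerating.
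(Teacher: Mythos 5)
Your proof is correct and takes essentially the same route as the paper: define $\cL_\Sg^*(\chi)$ as the class-group average of the Eisenstein measure evaluated at CM points (with the $(1-[\Frob_{\frakq_0}])^{-1}$ normalization in the exceptional unramified anticyclotomic case), read off the interpolation from \eqref{E:padicL1}, and establish integrality at the trivial character using Colmez's constancy of $\cE^*_{\chi,\frakc}(0)$ together with $\sum_{[\frakA_i]}\chi(\frakA_i)=0$ for $\chi\neq\bfone$. The paper compresses the last step to a citation of \propref{P:Kronecker}; you have simply inlined the same computation, and your observation that \eqref{Leo1} both guarantees $\Gal(K_{\Sg_p}/K)\cong\Z_p$ (so that $1-[\Frob_{\frakq_0}]$ generates the augmentation ideal, with $p>2$ assured by the running hypotheses) and secures the simple pole of $\zeta_{F,p}$ is exactly the role it plays there.
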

\begin{proof}For any $[\frakA]\in\Cl^-_K$, let $\rmd\mu_{\cL_\frakA}$ be the bounded $p$-adic measure on $\Gamma$ obtained by evaluating $\rmd\mu_{\cE_{\chi,\frakc(\frakA)}}$ at the CM point $x(\frakA)$. Namely, $\rmd\mu_{\cL_{\chi,\frakA}}$ is characterized by the interpolation property
\[\int_{\Gamma}\e_\Sg(\sigma)^k\rmd\mu_{\cL_{\chi,\frakA}}(\sigma)=\cE_{\chi\psi^k,\frakc(\frakA)}(x(\frakA)).\]
 Let $\cL_{\chi,\frakA}\in \Frac(\Lam_K)$ be the corresponding element in the Iwasawa algebra of $\Gamma$. Then $\cL_{\chi,\frakA}\in\Lam_K$ unless $\chi_+=1$ and $\frakC=(1)$. 
$(1-[\Fr_{\frakq_0}])\cL_{\chi,\frakA}\in\Lam_K$ in general. Define 
\beq\label{E:padicL2}\cL_\Sg^*(\chi):=\frac{[\sg_{2\skewhf}^{-1}\Fr_\Csplit]}{C_\chi}\sum_{[\frakA_i]\in \Cl^-_K}\cL_{\chi,\frakA_i}\cdot \chi(\frakA_i)[\Fr_{\frakA_i}]\in\Lam_K.\eeq
One verifies that $\cL_\Sg^*(\chi)$ satisfies the desired interpolation property by \eqref{E:padicL1} and that $\cL_\Sg^*(\chi)\in\Lam_K$ if $\chi\neq 1$ by \propref{P:Kronecker}.
\end{proof}
\section{$p$-adic Rankin-Selberg method for Hilbert modular forms}\label{S:RS}

\subsection{Hida families of Hilbert modular cusp forms}
Let $M_k(\frakn p,\xi)$ be the space of parallel weight $k\geq 1$  Hilbert modular forms 
of level $\Gamma_1(\frakn p)$ and Nebentypus $\xi$, where $\frakn$ is a prime to $p$ ideal of  $\cOF$ and $\xi$ is a finite order idele class character modulo $\frakn p$. We fix a set $\{\frakc_{i}\}_{i=1,\ldots,h_+}$ of the representatives of the narrow class group of $F$. For $\bff\in M_k(\frakn,\xi)$, we let
\[D(s,\bff)=\sum_{\fraka\text{: ideals of }\OF} C(\fraka,\bff)\rmN\fraka^{-s}\]
 be the associated Dirichlet series  and let $C_i(0,\bff)$ be the constant terms at the cusp $(\frako,\frakc_i^{-1})$ (\cite[(2.25)]{Shimura78Duke}). The value $C(\fraka,\bff)$ is called the $\fraka$-th Fourier coefficient of $\bff$. 

Let $F_\infty=F\Q_\infty$ be the cyclotomic $\Z_p$-extension of $F$ and let $\Lambda:=\cW\powerseries{ \Gal(F_\infty/F)}$ be the Iwasawa algebra over $\cW$ associated with $F_\infty/F$. We shall assume that $p$ is unramified in $F$ throughout. Let $\cyc:\Gal(F_\infty/F)\to \Zp^\x$ be the $p$-adic cyclotomic character given by the composition $\Gal(F_\infty/F)\to\Gal(\Q_\infty/\Q)\hookto \Gal(\Q(\zeta_{p^\infty})/\Q)\overset{\e_{\rm cyc}}\iso\Zp^\x$ and for any integer $k$, let $\nu_k:\Lambda \to \Z_p$ be the natural morphism induced by $\cyc^k:\Gal(F_\infty/F)\to \Zp^\x$. Let $h_
+$ be the narrow class number of $F$. A  $\Lambda$-adic modular form $\mathcal{F}$ of level $N$  and Nebentypus $\xi$ is a collection of elements \[ \{C_i(0,\calF),\,i=1,\dots,h_+;\quad C(\fraka,\mathcal{F}), \text{ $\fraka \subset \OF$ non-zero ideals}\} \subset \Lambda \] such that  for all but finitely many $k \geq 1$, there is a unique $\nu_{k}(\calF) \in M_{k+1}(\frakn p,\xi \Om_F^{k}) $ whose associated Dirichlet series is 
\[D(s,\nu_k(\calF))=\sum_{\fraka}\nu_{k}(C(\fraka,\mathcal{F}))\rmN\fraka^{-s}\] and whose constant terms are given by $C_i(0,\nu_k(\calF))=\nu_k(C_i(0,\calF))$. We denote by ${\bf M}(\frakn,\xi) $ the space of $\Lambda$-adic modular forms of level $\frakn$ and Nebentypus $\xi$ and by $\bfS(\frakn,\xi)$ the subspace of $\Lam$-adic cusp forms. Namely, $\bfS(\frakn,\xi)$ is the subspace of $\bfM(\frakn,\xi)$ consisting of $\calF\in \bfM(\frakn,\xi)$ with $\nu_k(\calF)$ is always a cusp form for all but finitely many $k\geq 1$. In particular, if $\calF\in \bfS(\frakn,\xi)$, then $C_i(0,\calF)=0$ for all $i=1,\dots,h_+$. 

We have a natural action of the Hecke operators $\{T_{\frakq}, U_{\mathfrak{l}} \}_{ \frakq \nmid \frakn p, \mathfrak{l} \mid \frakn p} $ on ${\bf S}(\frakn,\xi)$, which are compatible with the Hecke action on $S_{k+1}(\frakn p,\xi \Om_F^{k})$ under the specialization ${\bf S}(\frakn,\xi) \to S_{k+1}(\frakn p,\xi \Om_F^{k})$ along $\nu_k$. For any integral ideal $\frakb$ and $\mathcal F\in\bfS(\frakn,\xi)$, define $V_\frakb \mathcal F\in \bfS(\frakn\frakb,\xi)$ by $C(\fraka,V_\frakb\mathcal F)=C(\fraka\frakb^{-1},\mathcal F)$.  Hida defined an ordinary projector $e^{\Ord}:=\underset{r \to \infty}{\lim} (\prod_{\frakp \mid p} U_{\frakp})^{r!}$ on ${\bf S}(\frakn,\xi)$. The $\Lambda$-module ${\bf S}^{\Ord}(\frakn,\xi)=e^{\Ord}({\bf S}(\frakn,\xi))$ is the subspace of ordinary $\Lam$-adic forms of tame level $\frakn$ and Nebentypus $\xi$. For any finite flat $\Lambda$-algebra $R$, we denote by \[{\bf S}^{\Ord}(\frakn,\xi,R):={\bf S}^{\Ord}(\frakn,\xi) \otimes_{\Lambda} R\] and let ${\bf T}(\frakn,\xi,R):=R[T_{\frakl}, U_{\frakq}]_{ \frakl\nmid \frakn p, \frakq\mid \frakn p} \subset \mathrm{End}_{R}\left({\bf S}^{\Ord}(\frakn,\xi,R)\right)$ be Hida's big ordinary cuspidal Hecke algebra acting faithfully on ${\bf S}^{\Ord}(\frakn,\xi, R)$. Hida's control theorem \cite{Hida88Ann} (see also \cite{Wiles88Inv}) yields:
\begin{itemize}
\item ${\bf S}^{\Ord}(\frakn,\xi)$ is a finite free $\Lambda$-module.
\item The specialization $\nu_k: {\bf S}^{\Ord}(\frakn,\xi) \to S^{\Ord}_{k+1}(\frakn p,\xi \Om_F^{k}):=e_\Ord(S_{k+1}(\frakn p,\xi \Om_F^{k}))$ is surjective.
\item $\bfT(\frakn,\xi,\Lam)$ is finite and flat over $\Lam$. 
\end{itemize}

\subsection{Hida families of theta series}\label{CMfamiliy} Let $K$ be a totally imaginary quadratic extension of $F$ as before and let $\tau_{K/F}:C_F\to\stt{\pm 1}$ be the quadratic character associated with $K/F$. To an idele class character  $\psi: C_K \to \C^\times$ of infinity type $k\Sigma$ and conductor $\frakC_\psi$ prime to $p$, we can associate a theta series $\theta_\psi \in S_{k+1}(\mathrm{N}_{K/F}(\mathfrak{C}_\psi)\frakd_{K/F},\xi )$ with $\xi:=\mid \cdot \mid_{\mathbf{A}_F}^{-k}  \psi_+ \tau_{K/F}$. The Fourier coefficients of $\theta_\psi$ are given by
\[C(\fraka,\theta_\psi)=\sum_{\substack{\frakA:\text{ ideals of }\cOK,\\
\frakA\ol{\frakA}=\fraka,\,(\frakA,\frakC_\psi)=1}}\psi(\frakA).\]
The $p$-stabilized newform $\theta_\psi^{(\Sg_p)}$ associated with $\theta_{\psi}$ is defined by 
\[C(\fraka,\theta_\psi^{(\Sg_p)})=\sum_{\substack{\frakA:\text{ ideals of }\cOK,\\
\frakA\ol{\frakA}=\fraka,\,(\frakA,\frakC_\psi\Sg_p)=1}}\psi(\frakA).\]
In particular, if $\frakp$ is a prime above $p$, then
\[C(\frakp,\theta_\psi^{(\Sg_p)})=\psi(\Pbar)\quad \text{ if }\frakP\in\Sg_p\text{ and }\frakp\cOK=\frakP\Pbar.\]
Let $\Lambda_K=\cW \powerseries{\Gal(K_{\Sg_p}/K)}$ be the Iwasawa algebra associated with the $\Zp$-extension $K_
{\Sg_p}/K$. Then $\Lam_K$ is regarded as a $\Lam$-algebra via the transfer map $\sV:\Gal(F_\infty/F)\to \Gal(K_{\Sg_p}/K)$. Write $\sg\mapsto [\sg]$ for the inclusion of group-like elements $\Gal(K_{\Sg_p}/K)\to \Lam_K^\x$. Define the universal character $\universal: G_K \to \Lambda_K^{\times}$ by
\beq\label{E:univ}\universal(\sg)=[\sg|_{K_{\Sg_p}}]^{-1}.\eeq
By definition, $\universal$ is unramifiedd outside $\Sg_p$. For any prime ideal $\frakQ\not\in\Sg_p$, we put
\[\universal(\frakQ)=\universal(\Fr_\frakQ).\]
Let $\brch:C_K\to \cW^\x$ be a ray class character of conductor $\frakC_\brch$ coprime to $p$. We define the ordinary $\Lam_K$-adic cusp from $\bftheta_\phi\in \bfS^\Ord(\frakn,\phi_+\tau_{K/F},\Lam_K)$ with $\frakn=\rmN_{K/F}(\frakC_\brch)\frakd_{K/F}$ by 
\[C(\fraka,\bftheta_\brch)=\sum_{\substack{\frakA:\text{ ideals of }\cOK,\\
\frakA\ol{\frakA}=\fraka,\,(\frakA,\frakC_\brch\Sg_p)=1}}\brch(\frakA)\cdot \universal(\Fr_\frakA)\in \Lam_K.\]
Let $\psi$ be the Hecke character of $K$ such that $\wh\psi=\e_\Sg$. By definition,
\beq\label{E:CM3}\nu_k(\bftheta_\brch)=\e_\Sg^k(\bftheta_\brch)=\iota_p(\theta_{\brch\psi^{-k}}^{(\Sg_p)})\in  S^\Ord_{k+1}(\frakn,\phi_+\tau_{K/F}\Om_F^k).\eeq
It follows that $\bftheta_\phi$ is a $\Lam_K$-adic newform of tame conductor $\frakn$ in the sense of \cite[Definition 1.5.1]{Wiles88Inv}. For each prime $\frakl$ of $\frako_F$, we have
\[
T_\frakl \bftheta_\brch=C(\frakl,\bftheta_\brch)\bftheta_{\brch}\text{ if }(\frakl,p\frakn)=1,\quad U_\frakl \bftheta_\brch=C(\frakl,\bftheta_\brch)\bftheta_{\brch}\text{ if }\frakl\divides p\frakn,\]
where \beq\label{E:CM4}
\begin{aligned}
C(\frakl,\bftheta_\brch)&=\begin{cases}
\brch\universal(\frakL)+\tau_{K/F}(\frakl)\brch\universal(\ol\frakL)&\text{ if }(\frakl,p\frakn)=1\text{ and }\frakL\divides \frakl\OK,\\
\brch\universal(\ol\frakL)&\text{ if }\frakl\OK=\frakL\ol\frakL\text{ with }\frakl\divides p\frakn\text{ and }\frakL\divides\Sg_p\frakC_\brch,\\
0&\text{ otherwise}.
\end{cases}
\end{aligned}
\eeq

 \subsection{A $\Lam$-adic form of Eisenstein series}
Let $\zeta_{F,p}(s)$ be the Deligne--Ribet $p$-adic $L$-function satisfying the interpolation property that for all positive integer $k$, \begin{equation}\label{eq:interpolation_p-adic_L-fcn}\zeta_{F,p}(1-k)=\left(\prod_{\frakp|p} (1-\Om_F(\frakp)^{k} \rmN\frakp^{k-1})\right) L(1-k,\Om_F^{k}).
\end{equation}
Let $\boldsymbol\zeta^\vee_{F,p} \in \Frac\Lam$ be the element in Iwasawa algebra such that \[\cyc^k(\boldsymbol\zeta^\vee_{F,p})=\zeta_{F,p}(1-k). \]Let $\frakq_0\ndivides p$ be a prime of $\OF$ such that $\Fr_{\frakq_0}$ generates the Galois $\Gal(F_\infty/F)$. Put
\[H_{\frakq_0}=1-[\Frob_{\frakq_0}]\in \Lam.\]
For each integral ideal $\frakm$ of $F$ with $(\frakm,p)=1$,
we let $\cE(\frakm,1)$ be the $\Lam$-adic Eisenstein series with the $\fraka$-th Fourier coefficient given by
\begin{equation}\label{Eisensteincoefficients}
	C(\fraka,\cE(\frakm,1))=\sum_{\frakm^{-1}\fraka\subset \frakb\subset \OF,\,(p,\frakb)=1} \rmN\frakb^{-1}[\Frob_{\frakb}]^{-1}\in\Lam,
\end{equation}  
and the constant terms are given by
\[
C_{i}(0,\cE(\frakm,1))= 2^{-[F:\Q]} \boldsymbol\zeta^\vee_{F,p}
\]
for $i=1,\ldots, h_+$. This way we see that $H_{\frakq_0}\cdot \cE(\frakm,1)\in \bfM(\frakm,\Om)$. Recall that if $\frakl$ is a prime factor of $\frakm$, then $U_\frakl\in \End\bfM(\frakm,\xi)$ is defined by
\begin{align*}
C(\fraka,U_\frakl\mathcal F)=C(\frakl\fraka,\mathcal F),\quad C_i(0,U_\frakl\mathcal F)=C_i(0,\mathcal F).
\end{align*}
For any square-free divisor $\frakm_0\divides \frakm$, we define
\beq\label{E:pES}\cE(\frakm,\frakm_0)=\prod_{\frakl\divides\frakm_0}\left(1-\rmN\frakl^{-1}U_\frakl\right )\cE(\frakm,1).\eeq
In particular, 
\beq\label{E:57}C_i(0,\cE(\frakm,\frakm_0))=2^{-[F:\Q]} \boldsymbol\zeta^\vee_{F,p}\prod_{\frakl\divides\frakm_0}(1-\rmN\frakl^{-1})\text{ for }i=1,\dots,h_+.\eeq

 \subsection{The Rankin-Selberg convolution}\label{SS:RS} We call a ray class character $\chi$ \emph{anticyclotomic} if $\chi(c\sg c)=\chi(\sg^{-1})$, or equivalently $\chi\circ \sV=\bfone$. When regarded as a Hecke character, $\chi$ is anticyclotomic if $\chi|_{\A_F^\x}=\bfone$. Let $\frakf$ be an integral ideal of $\OF$. Let $\chi$ be a primitive anticyclotomic character of $K$ modulo $\frakf\OK$ with value in $\cW^\x$. Suppose that  \[(\frakf,p\frakd_{K/F})=1. \]There exists a ray class character $\brch$ unramified at $p\frakd_{K/F}$ such that \[\chi=\brch^{1-c}\] (\cf\cite[Lemma 5.31]{HidaDeepBlue}). We are allowed to take a twist by a character of the form $\xi_1\circ\rmN_{K/F}$ with a ray class character $\xi_1$ of $F$. We say a character $\eta: K_v^\x\to\C^\x$ is \emph{minimal} if the conductor of $\eta$ is minimal among $\eta\cdot \xi_1\circ \rmN_{K_v/\Fv}$, where $\xi_1$ runs over ray class characters of $F$. Now we fix an auxiliary prime $\frakl_0\ndivides p\frakf\Delta_F$ of $\OF$ which is split in $K$. With the help of  \lmref{L:54} below, we may assume $\brch$ satisfies the following minimal condition
\beqcd{min}\text{$\brch_v$ is minimal for all $v\in\bfh$ except for $v=\frakl_0$}\eeqcd  
by replacing $\brch$ by $\brch\cdot \xi_1\circ\rmN_{K/F}$ for a suitable ray class character $\xi_1$ of $F$.
    \begin{lm}\label{L:54}Let $S$ be a finite set of primes of $\OF$. Let $\xi_S$ be a finite order character of $\OF_S^\x:=\prod_{v\in S}\OFv^\x$. For any prime $\frakl\not\in S$, there exists a finite order character $\xi:C_F\to \C^\x$ such that $\xi|_{\OF_S^\x}=\xi_S$ and $\xi$ is unramified outside $S\cup\stt{\frakl}$.
 \end{lm}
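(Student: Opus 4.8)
The plan is to convert the statement, via global class field theory, into a local extension problem at $\frakl$. First I would introduce the subgroup $W:=\widehat{\OF}^\times\times(F\otimes\R)^\times\subset\A_F^\times$, and record the two standard facts: $F^\times\cap W=\OF^\times$ (the global units, embedded diagonally), and $\A_F^\times/F^\times W\cong\Cl_F$ is finite, so that $F^\times W/F^\times$ is an \emph{open} subgroup of $C_F$ isomorphic to $W/\OF^\times$. Consequently, to build $\xi$ it is enough to produce a finite-order character $\bar\eta$ of $W$ that is trivial on the diagonal copy of $\OF^\times$: any such $\bar\eta$ defines a character of the open subgroup $W/\OF^\times\subset C_F$, which extends to a character $\xi$ of $C_F$ because $\C^\times$ is divisible; the extension is automatically continuous (the subgroup is open) and of finite order (its $\mathrm{ord}(\bar\eta)$-th power factors through the finite group $\Cl_F$). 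Since $\frako_v^\times\subset W$ for every finite place $v$, one has $\xi|_{\frako_v^\times}=\bar\eta|_{\frako_v^\times}$, so $\xi$ inherits exactly the ramification of $\bar\eta$, and $\xi|_{\OF_S^\times}=\bar\eta|_{\OF_S^\times}$.

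Next I would search for $\bar\eta$ of the product shape $\xi_S\otimes\Theta_\frakl\otimes\bfone\otimes\Theta_\infty$ relative to the decomposition $W=\prod_{v\in S}\frako_v^\times\times\frako_\frakl^\times\times\prod_{v\notin S\cup\{\frakl\},\,v\nmid\infty}\frako_v^\times\times(F\otimes\R)^\times$, where $\Theta_\infty$ is a character of the group of connected components $\prod_{v\mid\infty}F_v^\times/(F_v^\times)^\circ$ (i.e.\ a sign character) and $\Theta_\frakl$ is a finite-order character of $\frako_\frakl^\times$. Such an $\bar\eta$ has $S$-component $\xi_S$, is unramified at every finite $v\notin S\cup\{\frakl\}$, and has finite order, so the \emph{only} remaining condition is triviality on $\OF^\times$, namely $\xi_S(u)\Theta_\frakl(u)\Theta_\infty(u)=1$ for all $u\in\OF^\times$. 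Writing $\gamma:=(\xi_S|_{\OF^\times})^{-1}$, a character of the finitely generated group $\OF^\times$ of finite order, the problem becomes: realize $\gamma$ as (a sign character)$\,\times\,(\Theta_\frakl|_{\OF^\times})$ for some finite-order character $\Theta_\frakl$ of $\frako_\frakl^\times$.

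For the last step I would first pick $\Theta_\infty$ so as to match the part of $\gamma$ visible through the archimedean signs, put $\gamma':=\gamma\cdot(\Theta_\infty|_{\OF^\times})^{-1}$, and then exploit that $\frakl\notin S$: the reduction $\OF^\times\hookrightarrow\frako_\frakl^\times$ is injective, so the subgroups $\Gamma_N:=\{u\in\OF^\times:u\equiv 1\pmod{\frakl^N}\}$ shrink to $\{1\}$, and for $N$ large the finite-order character $\gamma'$ kills $\Gamma_N$; it therefore factors through the finite image of $\OF^\times$ in $(\OF/\frakl^N)^\times$, and this finite character extends — the group $(\OF/\frakl^N)^\times$ being finite abelian — to a character $\Theta_\frakl$ of $(\OF/\frakl^N)^\times$, regarded as a quotient of $\frako_\frakl^\times$. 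Assembling $\bar\eta$ from $\xi_S,\Theta_\frakl,\Theta_\infty$ and applying the first paragraph yields $\xi$. I expect the genuinely delicate point to be precisely the claim that $\gamma'$ is congruential at $\frakl$, i.e.\ that it kills some $\Gamma_N$: this is a Grunwald--Wang--type statement, and is exactly where one uses the freedom to enlarge the exponent $N$ at $\frakl$ without bound (together, if necessary, with the freedom to choose the auxiliary prime $\frakl$, as is done in the application); everything else is formal bookkeeping with the exact sequences of class field theory recalled above.
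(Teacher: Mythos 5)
Your structural plan coincides with the paper's: both reduce to constructing a finite-order character of $U_F=(F\ot\R)^\x\prod_{v\in\bfh}\OFv^\x$ that restricts to $\xi_S$ on the $S$-part, is trivial on $\OFv^\x$ for finite $v\notin S\cup\{\frakl\}$, kills the diagonal $\OF^\x$, and then extends to $C_F$ through the finite quotient $\A_F^\x/F^\x U_F$. Where the paper hides the existence of the $\frakl$-component inside a Pontryagin-duality extension, you unfold it into the ansatz $\xi_S\otimes\Theta_\frakl\otimes\bfone\otimes\Theta_\infty$. That is more transparent, but it makes the real constraint explicit: $\gamma'$ must arise as $\Theta_\frakl|_{\OF^\x}$ for a \emph{continuous} finite-order character $\Theta_\frakl$ of $\frako_\frakl^\x$, i.e.\ $\gamma'$ must kill $\Gamma_N$ for some $N$.

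The gap is your middle-paragraph assertion that this holds because ``the subgroups $\Gamma_N$ shrink to $\{1\}$.'' That inference is false: in a finitely generated abelian group such as $\OF^\x$ (rank $d-1$ plus torsion), a decreasing chain of finite-index subgroups with trivial intersection need not eventually fall inside a prescribed finite-index subgroup. Quantitatively, with $q=\rmN\frakl$ and $\ell$ its residue characteristic, $\OF^\x/\Gamma_N$ injects into $(\OF/\frakl^N)^\x$, whose order is $q^{N-1}(q-1)$, so the prime-to-$\ell$ part of $[\OF^\x:\Gamma_N]$ is bounded by $q-1$ \emph{uniformly in} $N$. Enlarging $N$ therefore only gains on the $\ell$-part of $\mathrm{ord}(\gamma')$: if $\gamma'$ has order divisible by a prime $\ell'\neq\ell$ with $\ell'\nmid q-1$, then $\Gamma_N\not\subset\ker\gamma'$ for every $N$, and since $\Theta_\infty|_{\OF^\x}$ is $2$-torsion there is no further slack. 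A concrete failure: $F$ real quadratic with fundamental unit $\epsilon$, $\xi_S$ of order $3$ with $\xi_S(\epsilon)=\zeta_3$, and $\frakl$ with $\rmN\frakl=2$; then every finite-order character of $\frako_\frakl^\x$ has $2$-power order, so no $\Theta_\frakl,\Theta_\infty$ can interpolate $\gamma$. The honest fix is the one you mention only as a fallback: choose $\frakl$ \emph{after} $\xi_S$ (e.g.\ $\rmN\frakl\equiv 1\pmod{\mathrm{ord}(\gamma)}$, available by Chebotarev), which is all the application (choice of the auxiliary $\frakl_0$) needs but is not automatic for an arbitrary $\frakl\notin S$. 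The same difficulty is present in the paper's own argument: $\OF^\x U_F^{(\frakl)}$ is in fact not closed in $U_F$ once $[F:\Q]\geq 2$ (the image of the infinite group $\OF^\x$ in the compact group $\frako_\frakl^\x$ cannot be closed), and the character defined on it is continuous for the subspace topology precisely when your congruence condition at $\frakl$ holds.
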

 \begin{proof}Let $U_F:=(F\ot\R)^\x\prod_{v\in\bfh}\OFv^\x$ and let $U_F^{(\frakl)}=\stt{x\in U_F\mid x_\frakl=1}$  be a closed subgroup of $U_F$. We first extend $\xi_S$ to a character $\xi$ of $U_F^{(\frakl)}$ so that $\xi$ is trivial on the $v$-component of $U_F^{(\frakl)}$ for each $v\not\in S$. Since $\OF^\x\cap U_F^{(\frakl)}=\stt{1}$, we can further extend $\xi$ to the product $\OF^\x U_F^{(\frakl)}$ by requiring $\xi|_{\OF^\x}=1$. Since the product $\OF^\x U_F^{(\frakl)}$ is a closed subgroup in $U_F$, by \pont duality  $\xi$ can be further extended to $U_F$ which is trivial on $\OF^\x$. The image of $U_F$ has finite index in $C_F=\A_F^\x/F^\x$, so we can extend $\xi$ to $C_F$ and obtain desired idele class character of finite order. 
 \end{proof}

The conductor $\frakC_\brch$ of $\brch$ can be written as $\frakC_\brch=\frakC_0\frakl_0^m$ with $(\frakC_0,\frakl_0)=1$. The minimal condition for $\brch$ implies that $\frakC_0$ has a decomposition \[\frakC_0=\frakC^+\frakC^-,\quad (\frakC_0,p\frakd_{K/F}\frakl_0)=1;\,(\frakC^+,\ol{\frakC^+})=1,\] where $\frakC^-$ is only divisible by primes inert in $K$ and $\frakC^+$ is only divisible by primes split in $K$. Put \beq\label{E:nm2}\frakn:=\frakd_{K/F}\frakc^+(\frakc^-)^2\frakl_0^{m_0};\quad \frakm=\frakd_{K/F} \frakc^+\frakc^-\frakl_0^{m_0}\divides\frakn,\eeq where $\frakc^+=\frakC^+\cap \OF$ and $\frakc^-=\frakC^-\cap \OF$. With the transfer map $\sV:\Lam\to \Lam_K$, we define 
 \beq\label{E:31}\cG(\frakm,\frakl_0):=\sV\left(\frac{2^d\zeta_{F_{\frakl_0}}(1)}{\boldsymbol\zeta^\vee_{F,p}}\cdot \cE(\frakm,\frakl_0)\right).\eeq
By definition, we have \[C_i(0,\cG(\frakm,\frakl_0))=1\text{ for all }i=1,\dots,h_+; \quad C(\fraka,\cG(\frakm,\frakl_0))\in \Lam_P,\] where $\Lam_P$ is the localization of $\Lam_K$ at the augmentation ideal $P$. 
Let $\theta_\brch^\circ\in S_1(\frakn,\brch_+\tau_{K/F})$ be the theta series of weight one defined by 
\beq\label{E:theta2}C(\fraka,\theta_\brch^\circ)=\sum_{\substack{(\frakA,\frakC_\brch \Sgbar_p^{\rm reg})=1,\\\rmN\mathfrak A=\fraka}}\phi(\mathfrak A).\eeq
In particular, if $\frakp$ is a prime of $\cOF$ above $p$ with $\frakp \cOK=\frakP \ol\frakP$ and $\frakP\in\Sg_p$, then 
\[C(\frakp,\theta_\brch^\circ)=\begin{cases}\phi(\frakP)&\text{ if }\frakP\not\in\Sg_p^{\rm irr},\\
\phi(\frakP)+\phi(\Pbar)&\text{ if }\frakP\in \Sg_p^{\rm irr}. \end{cases}\]

We shall consider the spectral decomposition of the product  \[e_\Ord(\theta_\brch^\circ\cdot\cG(\frakm,\frakl_0))\in \mathbf{S}^\Ord(\frakn,\brch_+\tau_{F/F}, \Lambda_P).\] To be more precise, we denote for simplicity $\mathbf{S}$ for $\mathbf{S}^{\Ord}(\frakn,\brch_+\tau_{F/F}, \Lambda_K)$ and let  $\mathbf{K}$ be the field of fractions of $\Lambda_K$. Let $\bfK'$ be a finite extension of $\bfK$ containing all Fourier coefficients of normalized newforms in $\bfS^{\Ord}\ot_{\Lam_K}\ol\bfK$. Let $\mathbf{S}^{\perp}_{\bfK'}$ be the subspace of $\bfS\ot_{\Lam_K}\bfK'$ generated by 
\[\stt{V_\frakb\calF\mid \calF\text{ a new form of level }\frakm\text{ in  }\bfS\ot_{\Lam_K}\bfK'\text{ and }\frakm\frakb\divides \frakn,\, \calF\neq\bftheta_\brch,\,\bftheta_{\brch^c}}\]
and let $\bfS^{\perp}_{\bfK}=\bfS^{\perp}_{\bfK'}\cap \bfS\ot_{\Lam_K}\bfK$. Then it follows from the theory of newforms \cite[Prop. 1.5.2]{Wiles88Inv} that we have a spectral decomposition (a direct sum as Hecke modules) \begin{equation}\label{spectraldec}\mathbf{S} \otimes_{\Lambda_K} \mathbf{K}:= \mathbf{K} \bftheta_{\phi}\oplus\mathbf{K} \bftheta_{\phi^c} \oplus \mathbf{S}^{\perp}_{\bfK}.\end{equation}
Note that $\frakn$ is the tame conductor of $\bftheta_\brch$ and $\bftheta_{\brch^c}$, so $\bfS^\perp_\bfK$ is the space interpolating the orthogonal complement of the subspace spanned by classical specializations of $ \bftheta_{\phi}$ and $\bftheta_{\phi^c}$ under the Petersson inner product. \begin{defn}\label{D:H.3}Let $\aA$ and $\bB$ be the unique elements in $\bfK$ such that
 \beq\label{E:32}\sH:=e_\Ord(\theta_\brch^\circ \cG(\frakm,\frakl_0))-\aA\cdot \bftheta_\brch-\bB\cdot\bftheta_{\brch^c}\in \bfS^\perp_\bfK.\eeq\end{defn}
 \begin{prop}\label{P:3RS}With the ray class character $\brch$ and the ideal $\frakm$ as above, we have 
 \begin{align*}\e_\Sg^s(\bB)=&\frac{1}{\Dmd{\rmN\frakm}^s}\cdot \frac{2^d\cL_{\Sg}(s,0,\bfone)\cL_{\Sg}(s,0,\chi)}{h_{K/F}\cdot \cL_{\Sg}(s,-s,\chi)\zeta_{F,p}(1-s)}\\
 &\times \frac{\e_\Sg^{-s}(\frakf\frakd_K^2)}{\prod_{\frakP\in\Sg_p^{\rm irr}}(1-\e_\Sg^s(\Frob_{\Pbar}))\prod_{\frakP\in\Sg_p}(1-\e_{\Sg}^s(\Frob_{\Pbar}))} \cdot {\rm Ex}_{\frakl_0}(s),
  \end{align*}
  where
  \[{\rm Ex}_{\frakl_0}(s)=\frac{(1-\rmN\frakl_0^{-1}\e_\Sg^s(\frakL_0^{-1}))(1-\rmN\frakl_0^{-1}\e_\Sg^s(\ol{\frakL}_0^{-1}))(1-\rmN\frakl_0^{-1}\chi\e_\Sg^s(\frakL_0^{-1}))(1-\rmN\frakl_0^{-1}\chi\e_\Sg^s(\ol{\frakL}_0^{-1}))}{(1-\rmN\frakl_0^{-1})^2(1-\rmN\frakl_0^{-1}\chi(\frakL_0)\e_\Sg^s(\frakL_0\ol{\frakL}_0^{-1}))(1-\rmN\frakl_0^{-1}\chi(\ol{\frakL})\e_\Sg^s(\ol{\frakL}_0\frakL_0^{-1}))}.\]
  In particular, $\cC(\brch^c,\brch)$ and $\cC(\brch,\brch^c)$ are both non-zero.
  \end{prop}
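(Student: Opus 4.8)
\textbf{Proof strategy for Proposition \ref{P:3RS}.} The plan is to compute the coefficient $\cC(\brch^c,\brch)$ explicitly by a $p$-adic Rankin--Selberg unfolding, carried out at the level of $\Lam_K$-adic forms and then checked against classical specializations through the interpolation property $\nu_k$. First I would observe that the Petersson pairing of $\sH$ (or rather its classical specializations) against the $p$-stabilized newform $\theta_{\brch^c\psi^{-k}}^{(\Sg_p)}$ isolates the $\bB$-component in the spectral decomposition \eqref{spectraldec}, because $\bftheta_\brch$, $\bftheta_{\brch^c}$ and $\bfS^\perp_\bfK$ are mutually orthogonal. Thus $\bB$ is computed, up to the Petersson norm of $\bftheta_{\brch^c}$, by $\langle e_\Ord(\theta_\brch^\circ\,\cG(\frakm,\frakl_0)),\,\bftheta_{\brch^c}\rangle$, and this Rankin--Selberg convolution of $\theta_\brch^\circ$ with the Eisenstein series $\cG(\frakm,\frakl_0)$ against the CM theta family unfolds, via the standard doubling integral, into a product of Hecke $L$-values of $K$. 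The precise bookkeeping is deferred to Section \ref{S:RS2}; here I would only record the final factorization.

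The key identity to extract is: the convolution produces (in the variable $s$ given by $\nu_k\leftrightarrow\e_\Sg^s$) the ratio $\cL_\Sg(s,0,\brch^c/\brch^{c})\cdot\zeta$-type factors divided by $\cL_\Sg(s,-s,\chi)$, where the numerator comes from the ``diagonal'' Hecke $L$-value attached to the CM type $\Sg$ and the trivial twist, the factor $\cL_\Sg(s,0,\chi)$ from the Rankin--Selberg $L$-value $L(\theta_\brch\times\theta_{\brch^c})$ which factors as $L(\chi)L(\brch^c/\brch)$ by the Artin formalism for induced representations, and the denominator $\cL_\Sg(s,-s,\chi)$ arises as the Petersson norm $\langle\bftheta_{\brch^c},\bftheta_{\brch^c}\rangle$ expressed via the adjoint $L$-value. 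The Euler factors at $p$ — the products $\prod_{\frakP\in\Sg_p^{\rm irr}}(1-\e_\Sg^s(\Frob_{\Pbar}))$ and $\prod_{\frakP\in\Sg_p}(1-\e_\Sg^s(\Frob_{\Pbar}))$ — come from the $p$-stabilization process (the passage from $\theta_\brch^\circ$ to ordinary projections), and the correction ${\rm Ex}_{\frakl_0}(s)$ from the auxiliary operator $1-\rmN\frakl_0^{-1}U_{\frakl_0}$ built into $\cE(\frakm,\frakl_0)$ together with the two primes $\frakL_0,\ol{\frakL}_0$ above the split prime $\frakl_0$. The normalizing factor $2^d/(h_{K/F}\,\zeta_{F,p}(1-s))$ and the shift $\Dmd{\rmN\frakm}^{-s}$ are absorbed via the $p$-adic Kronecker limit formula \propref{P:Kronecker} (which gives exactly $\cL_\Sg^*(s,\bfone)/\zeta_{F,p}(1-s)|_{s=0}=2^{-d}h_{K/F}$) and the level-raising normalization of $\cG(\frakm,\frakl_0)$ in \eqref{E:31}.

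Having established the formula for $\e_\Sg^s(\bB)$, the final assertion that $\cC(\brch^c,\brch)$ and $\cC(\brch,\brch^c)$ are both nonzero follows by evaluating, or rather by observing that $\bB\in\bfK$ is a nonzero element of $\Frac(\Lam_K)$: the right-hand side is a ratio of $p$-adic $L$-functions none of which vanishes identically — $\cL_\Sg(s,0,\bfone)$ and $\cL_\Sg(s,0,\chi)$ are nonzero by \thmref{T:main1}(i)--(ii) (using Hypothesis (L)), $\zeta_{F,p}(1-s)$ is nonzero, $\cL_\Sg(s,-s,\chi)$ is a nonzero element of $\cW\powerseries{\Zp}$ (it is a specialization of $\cL_\Sg(\chi)$, whose nonvanishing follows e.g.\ from the interpolation formula at some critical character), and the finite Euler factors and ${\rm Ex}_{\frakl_0}(s)$ are nonzero power series. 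By symmetry (swapping $\brch\leftrightarrow\brch^c$, equivalently $\Sg\leftrightarrow\Sg c$ and $\chi\leftrightarrow\chi^{-1}$) the same argument applies to $\cC(\brch,\brch^c)=\aA$.

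\textbf{Main obstacle.} The hard part will be the explicit Rankin--Selberg computation of Section \ref{S:RS2}: matching the local zeta integrals at the bad primes — those dividing $\frakd_{K/F}$, the inert part $\frakC^-$, the split part $\frakC^+$, and especially the auxiliary prime $\frakl_0$ (whose two-factor ${\rm Ex}_{\frakl_0}(s)$ encodes a somewhat delicate interplay between the $U_{\frakl_0}$-stabilization and the decomposition $\frakl_0\cOK=\frakL_0\ol{\frakL}_0$) — and assembling them with the archimedean and $p$-adic factors so that the constants $[\cOK^\x:\cOF^\x]$, $Q$, powers of $2$, discriminant factors, and the Petersson norm of the CM family all cancel into precisely the clean shape stated. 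Keeping track of the normalizations inherited from \propref{P:period2.2}, \eqref{E:intep2}, \propref{P:Kronecker}, and the definition \eqref{E:31} of $\cG(\frakm,\frakl_0)$ simultaneously is where essentially all the work lies.
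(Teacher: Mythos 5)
Your outline is essentially the same as the paper's: extract the coefficient $\cC(\brch^c,\brch)$ by a bilinear pairing against (a vector in) the contragredient of the CM theta family, unfold via the adelic Rankin--Selberg method so that $L(s,\pi_1\times\pi_2)$ factors into two Hecke $L$-values for $K$ while the normalizing pairing becomes an adjoint $L$-value, and then translate everything into Katz $p$-adic $L$-functions through the interpolation formula \eqref{E:intep2}, with the detailed local zeta integrals pushed to \secref{S:RS2}. Two points of bookkeeping in your sketch are off, although neither damages the strategy. First, the factor $h_{K/F}/2^d$ in the final formula does not come from the $p$-adic Kronecker limit formula \propref{P:Kronecker} --- that result is invoked only later, in \thmref{T:antiregulator}; in the proof of \propref{P:3RS} the relative class number $h_{K/F}$ enters through the classical identity \eqref{E:L1}, which is used to eliminate the archimedean value $L(1,\tau_{K/F})$ produced by the adjoint $L$-factor $L(1,\pi_1,\Ad)$ together with the discriminant ratios $\sqrt{\Delta_K/\Delta_F}$. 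Second, appealing to \thmref{T:main1} for the nonvanishing of $\cC(\brch^c,\brch)$ and $\cC(\brch,\brch^c)$ is both unnecessary and a mild overreach, since it imports Hypothesis (L) which \propref{P:3RS} does not assume; the nonvanishing is immediate from the displayed identity, because every factor on the right-hand side is a nonzero element of $\Frac(\Lam_K)$ (for the Katz $p$-adic $L$-functions this follows already from the interpolation formula at a single critical specialization), whence $\bB\neq 0$, and the symmetric computation with $\brch$ and $\brch^c$ interchanged gives $\aA\neq 0$. With those two corrections your proposal agrees with the proof carried out in \secref{S:RS2}.
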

The proof of \propref{P:3RS} will be given in \secref{S:RS2}.

 \def\Tdag{\bfT^\dagger}
\def\bfal{\boldsymbol\alpha}
\def\char{{\rm char}}
\def\rB{r_B}
\def\rA{r_A}
\def\kOrd{o}
\def\kLog{\ell^{\rm cyc}}
\def\varep{\varepsilon}
\section{Modular construction of cohomology classes and the applications to $p$-adic $L$-functions}\label{S:Galois}

\subsection{Ribet's construction of cohomology classes}
 Let $\bfS^\perp:=\bfS^\perp_{\bfK}\cap (\bfS\ot_{\Lam_K}{\Lam_P})$. Let ${\bf T^\perp} \subset \mathrm{End}_{\Lambda_K}(\mathbf{S}^{\perp})$ be the image of the Hecke algebra $\bfT=\bfT(\frakn,\brch_+\tau_{K/\Q},\Lam_K)$ restricted to $\bfS^\perp$. The computation of \cite[\S 5]{DKV18Ann} provides a general method to compute the $\sL$-invariant $\sL_\chi^\psi$  of $\chi$ along an additive homomorphism $\psi:G_{K,S}\to\Cp$ introduced in \defref{D:Gross} if one can construct an explicit Hecke eigensystem of $\bfT^\perp$ valued in some artinian local ring $W$. 

Fix a topological generator $\gamma_0\in \Gal(K_{\Sg_p}/K)\iso\Zp$. We have $\Lam_K=\cW\powerseries{X}$ with the variable \beq\label{E:thevariable}X:=([\gamma_0]-1)/\log_p(\e_\Sg(\gamma_0))\in \Lam_K.\eeq
Let $S$ be a set of prime divisors of $p\frakn$. For any finite extension $L/F$, denote by $G_{L,S}$ the Galois group of the algebraic extension of $L$ unramified outside primes above $S$. Let $r:=r_\Sg(\chi)$. Suppose that $r>0$. Let $\Sg_p^{\rm irr}=\stt{\frakP_1,\dots,\frakP_r}$ and put
\[\wtd\Lam:=\Lam_P\left[Y,\varep_1,\varep_2,\dots,\varep_r\right].\]
For $n\geq m\geq 1$, $z\in \Lam_P$, and $b\in\Qp$, let $I_{n,m,z,b}$ be the ideal of $\wtd\Lam$ defined by 
\[I_{n,m,z,b}:=(Y^{n+1},Y(X-Y),X^m-zY^m,X\varep_i,Y\varep_i,\varep_i^2,\varep_1\dots\varep_r+bY^n).\]
We consider the artinian local ring $W=\wtd\Lam/I_{n,m,z,b}$ with the maximal ideal $\frakm_W=(X,Y,\varep_i)$. The following proposition is extracted from \cite[\S 4 and \S 5]{DKV18Ann} with some modifications to our setting. Recall that $\universal:G_{K,\Sg_p}\to\Lam_K^\x$ is the universal character defined in \eqref{E:univ}. \begin{prop}\label{P:Ribet} Let $\lam:\bfT^\perp\to W$ be a surjective $\Lam_K$-algebra homomorphism with $n\geq 0$ such that for primes $\frakl\not\in S$, \begin{align*}\lam(T_\frakl)&\con \brch(\frakL)+\brch(\ol{\frakL})\pmod{\frakm_W} \text{ if }\frakl=\frakL\ol{\frakL}\text{ split in }K,\\
\lam(T_\frakl)&\con 0\pmod{\frakm_W}\text{ if $\frakl$ is inert in $K$},
\intertext{ and for $\frakp=\frakP\Pbar$ with $\frakP\in\Sg_p$,}
\lam(U_\frakp)&=\brch^c\universal(\Pbar)\text{ if }\frakP\not\in\Sg_p^{\rm irr},\\
 \lam(U_{\frakp})&= \brch\universal(\Pbar_i)(1+\varep_i)\text{ if }\frakP=\frakP_i\in\Sg_p^{\rm irr}.\end{align*}  
Suppose that there exists a character $\wtd\Psi:G_{K,S}\to W^\x$ and an element $Z\in (X,Y)$ such that \begin{itemize} \item[(i)] $\wtd\Psi\con \universal+Z\psi\pmod{Z\frakm_W}$ for some additive character $\psi:G_{K,S}\to \cW$, \item[(ii)]
 $\lam(T_\frakl)=\brch\wtd\Psi(\Frob_\frakL)+\brch\wtd\Psi(\Frob_{\ol{\frakL}})$ for $\frakl=\frakL\ol{\frakL}$ split in $K$ with $\frakl\not\in S$
.\end{itemize}  
If \eqref{Leo2} holds, then we have the equation \[-bY^n+\sL_\chi^\psi Z^r\con 0\pmod{Z^r\frakm_W}.\]
 \end{prop}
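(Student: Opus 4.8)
The plan is to adapt the strategy of Dasgupta--Kakde--Ventullo \cite{DKV18Ann}, transporting it from the Eisenstein setting over a totally real field to the present CM setting over $K$, with the theta family $\bftheta_{\brch^c}$ playing the role that the Eisenstein family plays there: after restriction to $G_K$ its Galois representation is a sum of characters, so from the point of view of $G_K$ it ``looks Eisenstein''. First I would attach to $\lam$ a Galois representation: since $\lam$ factors through $\bfT^\perp$ and $W$ is an artinian local ring, the $\Lam_K$-adic pseudo-representation carried by $\bfS^\perp$ (Hida theory, \cite{Wiles88Inv}) specializes via $\lam$ to a pseudo-representation of $G_{K,S}$ which, being residually the sum of the two distinct characters $\brch^c$ and $\brch=\chi\brch^c$ (here $\brch^{1-c}=\chi$) and having trace $\brch\wtd\Psi+(\brch\wtd\Psi)^c$ by hypotheses (i)--(ii), is the trace of a genuine representation $\rho=\rho_\lam\colon G_{K,S}\to\GL_2(W)$ by Ribet's lemma in the generalized-matrix-algebra form of Mazur--Wiles and Bella\"iche--Chenevier. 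After twisting by $(\brch^c\wtd\Psi^c)^{-1}$ I may assume $\rho$ is residually $\mathbf 1\oplus\chi$; tracking $\det\rho$ (the nebentypus $\brch_+\tau_{K/F}$ restricted to $G_K$ together with the cyclotomic-type factor $\wtd\Psi\wtd\Psi^c$) shows that the off-diagonal cocycle below is naturally valued in a rank-one $W$-module reducing to $\chi^{-1}(1)$, matching the definition of $\sL_\chi^\psi$.

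Next I would run the lattice argument to extract a Selmer class. Because $\lam$ does not come from a CM family, $\rho$ is not a sum of characters, and an appropriate choice of $G_{K,S}$-stable $W$-lattice in $\rho$ produces from the off-diagonal entries a cohomology class $\kappa\in\rmH^1(G_{K,S},M)$ with $M$ reducing to $\chi^{-1}(1)$. The hypothesis \eqref{Leo2}, via \lmref{L:21}, says $\rmH^1_{(0,f)}(K,\chi^{-1}(1))=\{0\}$, equivalently that $\mathbf{O}_p$ is an isomorphism on the $r$-dimensional space $\rmH^1_{(0,\emptyset)}(K,\chi^{-1}(1))$. I would use this first to pin down the ramification of $\kappa$ (unramified outside $\Sg_p$, with trivial $\Sgbar_p$-part) and then to show that the $r$ coordinate specializations of $\kappa$ attached to the deformation directions $\varep_1,\dots,\varep_r$ descend, to first order in $Z$, to classes that fill out $\rmH^1_{(0,\emptyset)}(K,\chi^{-1}(1))$; fixing the basis $c_i$ of this space with $\mathbf{O}_p(c_i)=\Pbar_i$ then normalizes these specializations.

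Then I would carry out the local analysis at $\Sg_p$ and conclude. Ordinarity of $\bfS^\perp$ puts $\rho$ in upper-triangular shape at each prime over $p$; for $\frakP=\frakP_i\in\Sg_p^{\rm irr}$ the geometric Frobenius eigenvalue on the unramified quotient is $\lam(U_\frakp)=\brch\universal(\Pbar_i)(1+\varep_i)$ (with $\frakp$ the prime of $F$ below $\frakP_i$), which equals $\brch^c\universal(\Pbar_i)(1+\varep_i)$ because $\chi(\Pbar_i)=1$, so $\varep_i$ records exactly the deviation of this eigenvalue from its value for the CM specialization. Feeding these eigenvalue relations, together with the ordinary filtrations at the $\Pbar_i$, into the restriction of $\kappa$ to the decomposition groups at $\Sg_p^{\rm irr}$ yields, exactly as in \cite[\S 5]{DKV18Ann}, an $r\times r$ linear system relating the $\Ord_{\Pbar_i}$-parts of $\kappa$ (normalized in the previous step via $\mathbf{O}_p$) to its $\log_{\Pbar_i}$-parts (i.e.\ to $\mathbf{L}_p^\psi$); solving it forces $\varep_1\cdots\varep_r\equiv(-1)^rZ^r\det\bigl(\mathbf{L}_p^\psi\circ\mathbf{O}_p^{-1}\bigr)=\sL_\chi^\psi\,Z^r\pmod{Z^r\frakm_W}$, the full determinant (rather than a product of diagonal Tate pairings) appearing precisely because the local conditions at the distinct primes of $\Sg_p^{\rm irr}$ genuinely couple when $r>1$. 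Since $\varep_1\cdots\varep_r+bY^n\in I_{n,m,z,b}$ forces $\varep_1\cdots\varep_r=-bY^n$ in $W$, this gives the asserted congruence $-bY^n+\sL_\chi^\psi Z^r\equiv 0\pmod{Z^r\frakm_W}$.

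I expect the main obstacle to be this last step: performing the ordinary-deformation computation at the primes of $\Sg_p^{\rm irr}$ with enough precision that the full determinant $(-1)^r\det(\mathbf{L}_p^\psi\circ\mathbf{O}_p^{-1})$ emerges rather than merely a product of diagonal terms, and keeping careful track of the matching of powers of $Z$ and $Y$ afforded by the defining relations of $I_{n,m,z,b}$ (in particular $Y(X-Y),\ X\varep_i,\ Y\varep_i,\ \varep_i^2\in I_{n,m,z,b}$ and the identity $\varep_1\cdots\varep_r=-bY^n$). The preceding step---showing that $\kappa$ fills out all of $\rmH^1_{(0,\emptyset)}(K,\chi^{-1}(1))$ and not a proper subspace, which is exactly where \eqref{Leo2} is indispensable---is the other delicate point.
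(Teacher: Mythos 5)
Your overall strategy — adapting the Dasgupta–Kakde–Ventullo argument, constructing a cohomology class from the off-diagonal entries of a Galois representation attached to $\lam$, then running the ordinary local analysis at $\Sg_p^{\rm irr}$ to produce the $r\times r$ system whose determinant yields $\sL_\chi^\psi$ — is exactly the one the paper follows, and the identification of $\bftheta_{\brch^c}$ as the ``Eisenstein'' datum after restriction to $G_K$ is the right guiding picture. However, there is a genuine gap in the way you propose to produce the Galois-theoretic input, and a misattribution of where \eqref{Leo2} is actually used.

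You propose to specialize the pseudo-representation via $\lam$ directly to $W$ and then invoke Ribet's lemma (in the Bella\"iche--Chenevier GMA form) to obtain $\rho\colon G_{K,S}\to\GL_2(W)$. But over $W$ the trace of $\rho|_{G_{K,S}}$ is \emph{exactly} the sum of two characters $\brch\wtd\Psi+\brch^c\wtd\Psi^c$, so there is no a priori obstruction to the representation splitting, and hence no a priori nonzero cohomology class: the GMA argument alone does not force the off-diagonal module to be nonzero. What makes it nonzero in the paper is that one does not pass to $W$ immediately. Instead one localizes the Hecke algebra $\Tdag=\bfT^\perp\ot_{\Lam_K}\Lam^\dagger$ at the maximal ideal $\frakm_R$ containing $\Ker\lam$, obtaining a \emph{reduced} finite flat $\Lam^\dagger$-algebra $R$ whose total ring of fractions is a product of fields attached to non-CM Hida families $\sH_i$, and one works with $\rho_\lam$ over $R$. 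Over $R$ the restrictions $\rho_{\sH_i}|_{G_{K,S}}$ are irreducible (there are no new CM families in $\bfS^\perp$), so the off-diagonal GMA module $R_{12}$ is faithful over $R$; moreover Hida--Tilouine's \cite[Theorem 6.12]{HidaTil94Inv}, applied to each $\sH_i$, gives the one-dimensionality of the $I_\frakp$-fixed subspaces that underlies the ordinary analysis. Only after these facts are in place does one reduce $\sK=b/d$ modulo $\frakm_R R_{12}$ to get a nonzero class $\kappa_\lam$. Working with $W$ from the start throws away precisely the information needed here.

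You also skip the step that actually uses \eqref{Leo2} for the first time: the paper applies the argument of \cite[Lemma 4.3]{DKV18Ann} (which requires both $\Sg$-Leopoldt and $\brch\neq\brch^c$) to produce an element $\sg_0\in G_{K,S}$ with $\brch(\sg_0)\neq\brch(c\sg_0 c)$ such that \emph{none} of the $\rho_\lam(I_\frakp)$-fixed vectors $v_{\sH_i,\frakp}$ is an eigenvector of $\rho_\lam(\sg_0)$; this choice of basis is what makes the quantities $A_{\frakp_i}/C_{\frakp_i}$ invertible and the class $\kappa_\lam$ well-behaved at $\Sg_p$. Your account places the first use of \eqref{Leo2} at the stage of ``pinning down ramification,'' but the local triviality at $\ol\frakP$ for $\frakP\in\Sg_p\setminus\Sg_p^{\rm irr}$ comes purely from the ordinary filtration and the fact that $\chi|_{G_{K_{\ol\frakP}}}\neq 1$; the second real use of \eqref{Leo2} is the Nakayama argument showing that $R_{12}$ is generated by $\{\sK(\sg)\}_{\sg\in G_{K_{\ol\frakP_i}}}$ with $\frakP_i\in\Sg_p^{\rm irr}$, which is what allows the $r\times r$ determinant (rather than a product of diagonal Tate pairings) to emerge. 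Finally, a small sign/duality caution: the class $\kappa_\lam$ in the paper lies in $\rmH^1(K,\chi\ot R_{12}/\frakm_R)$ and is paired via Poitou--Tate against a basis of $\rmH^1_{(0,\emptyset)}(K,\chi^{-1}(1))$, so the coefficient module is $\chi$, not $\chi^{-1}(1)$ as you wrote.
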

 \begin{proof}
Let $\Lam^\dagger\supset\Lam_P$ be the local ring of rigid analytic functions around $X=0$, \ie
\[\Lam^\dagger=\stt{\sum_{n=0}^\infty a_n X^n\in \Qbar_p\powerseries{X} \, \middle| \text{ there exist $t>0$ such that }\lim_{n\to\infty}\abs{a_n}t^n=0}.\]
 Let $\Tdag=\bfT^\perp\ot_{\Lam_K}\Lam^\dagger$ be a finite $\Lam^\dagger$-algebra. Let $\wtd\Lam^\dagger=\wtd\Lam\ot_{\Lam_P}\Lam^\dagger$ and $I=I_{n,m,z,b}\wtd\Lam^\dagger$. Let $\bfI=\Ker\lam$ be the kernel of the map $\lam:\Tdag\to \wtd\Lam^\dagger/I$. Note that $\Tdag/\bfI\iso \wtd\Lam^\dagger/I$ is a local ring. Let $\frakm_R$ be the maximal ideal of $\Tdag$ containing $\bfI$ and denote by $R=\Tdag_{(\frakm_R)}$  the localization of $\Tdag$ at $\frakm_R$. Then $R$ is a finite flat $\Lam^\dagger$-algebra. In addition, $R$ is reduced for $\frakn$ is the tame conductor of $\theta_\brch$. The theory of pseudo-characters produces a continuous irreducible Galois representation $\rho_\lam: G_{F,S}\to \GL_2(R)$ such that $\Tr\rho_\lam(\Frob_{\frakl})=T_\frakl\in R$ for any prime $\frakl\not\in S$ and the assumption (ii) implies that \beq\label{E:81}\Tr\rho_\lam|_{G_{K,S}}\con \phi\wtd\Psi+\phi^c\wtd\Psi^c\pmod{\bfI R}.\eeq
The fractional field $\Frac R$ is isomorphic to a product of fields \[\Frac R=\prod_{i=1}^t L_{\sH_i},\] and each field $L_{\sH_i}$ is a finite extension of $\Lam^\dagger$ and corresponds to a cuspidal Hida family $\sH_i$. For $i=1,\dots,t$, let $\pi_i:\Frac R\to L_{\sH_i}$ be the natural projection map. Then $\rho_{\sH_i}:=\rho_\lam\circ\pi_i:G_{F,S}\to \GL_2(L_{\sH_i})$ is the Galois representation associated with $\sH_i$. We claim the restriction $\rho_{\sH_i}|_{G_{K,S}}$ is still irreducible. Otherwise $\sH_i$ would be the Hida family $\bftheta_{\brch_1}$ of CM forms in $\Spec\bfT^\perp$ for some ray class character $\brch_1\neq \brch$ or $\brch^c$ whose specialization at some arithmetic point $P'$ above $P$ agree with $\theta_\brch^{(\Sg_p)}$, which in turns suggests that $\brch_1+\brch_1^c=\brch+\brch^c$, and $\brch_1=\brch$ or $\brch^c$, a contradiction. 

By \cite[Theorem 6.12]{HidaTil94Inv}, for each $\frakp\divides p$, the subspace $L_{\sH_i}^2$ fixed by the inertia group $\rho_{\sH_i}(I_{\frakp})$ is one-dimensional over $L_{\sH_i}$. Let $v_{\sH_i,\frakp}$ be a basis. Since $\sH_i$ are not Hida families of CM forms, the Galois representations $\rho_{\sH_i}|_{G_{K,S}}$ are irreducible. Combining the $\Sigma$-Leopoldt's conjecture \eqref{Leo2} and $\brch\neq\brch^c$, we can apply the argument \cite[Lemma 4.3]{DKV18Ann} shows that there exists $\sg_0\in G_{K,S}$ such that $\brch(\sg_0)\neq\brch(c\sg_0c)$ and $v_{\sH_i,\frakp}$ is not an eigenvector for all $\sH_i$ and $\frakp$. Choosing a basis $\stt{v_1,v_2}$ consisting of eigenvectors of $\rho_\lam(\sg_0)$ for the representation $\rho_\lam$, we may assume \beq\label{E:82}\rho_\lam(\sg_0)=\pDII{\nu_1}{\nu_2},\quad \nu_1\con \brch(\sg_0)\pmod{\frakm_R};\quad\nu_2\con\brch(c\sigma_0c)\pmod{\frakm_R}.\eeq
The image $\rho_\lam(R[G_{K,S}])$ of the group ring $R[G_{K,S}]$ is of the form
\[\rho_\lam(R[G_{K,S}])=\pMX{R}{R_{12}}{R_{21}}{R}\subset \Mat_2(R),\]
where $R_{ij}$ are ideals in $R$; this is a generalized matrix algebra in the sense of \cite[Theorem 1.4.4]{BC09Ast324}. Note that $R_{12}$ is a faithful $R$-module since $\rho_{\sH_i}|_{G_{K,S}}$ is irreducible for all $i$. Writing 
 \[\rho_\lam(\sg)=\pMX{a(\sg)}{b(\sg)}{c(\sg)}{d(\sg)}\text{ for }\sg\in G_{K,S},\] it follows from \eqref{E:81} and \eqref{E:82} that \[a(\sg)\con \brch\wtd\Psi(\sg)\pmod{\bfI},\quad d(\sg)\con\brch\wtd\Psi(c\sg c)\pmod{\bfI};\quad R_{12}R_{21}\subset \bfI R.\] 

For each $\frakp\divides p$, the $\rho_\lam(I_\frakp)$-fixed vector $v_{\frakp}:=(v_{\sH_i,\frakp})_{i=1,\dots,t}$ is an eigenvector of $\rho_\lam(G_{K_{\Pbar}})$ such that $\rho_\lam(\sg)v_\frakp=\bfal_{\frakp}(\sg)v_\frakp$, where 
 $\bfal_\frakp:G_{K_{\Pbar}}\to \bfT^\perp$ is the unramified character with $\bfal_\frakp(\Frob_{\Pbar})=U_{\frakp}$. Writing $v_\frakp=A_\frakp v_1+C_\frakp v_2$
with $(C_\frakp,D_\frakp)\in (\Frac R)^2$, we obtain an invertible matrix $\pMX{A_\frakp}{B_\frakp}{C_\frakp}{D_\frakq}\in\GL_2(\Frac R)$ such that 
 \[\pMX{a(\sg)}{b(\sg)}{c(\sg)}{d(\sg)}\pMX{A_\frakp}{B_\frakp}{C_\frakp}{D_\frakp}=\pMX{A_\frakp}{B_\frakp}{C_\frakp}{D_\frakp}\pMX{\bfal_\frakp(\sg)}{*}{0}{*} \text{ for all }\sg\in G_{K_{\ol{\frakP}}}.\]
It follows that
 \beq\label{E:41}C_\frakp\cdot b(\sg)=A_\frakp\cdot (\boldsymbol\alpha_\frakp(\sg)-a(\sg))\text{ for }\sg\in G_{K_{\Pbar}}.\eeq
Note that $A_\frakp$ and $C_\frakp$ both belong to $(\Frac R)^\x$ since $v_{\sH_i,\frakp}$ is not an eigenvector of $\rho_\lam(\sg_0)$ for each $i$. Define the function \[\sK:G_{K,S}\to R_{12},\quad \sK(\sg)=b(\sg)/d(\sg). \]For any $R$-submodule $R'\supset \frakm_R R_{12}$ of $R_{12}$,  the reduction of $\sK$ modulo $R'$  \[\ol{\sK}:=b/d\pmod{R'}=\phi^{-c} b\pmod{R'}:G_{K,S}\to R_{12}/R'\] 
 is a continuous one-cocycle in $Z^1(G_{K,S},\chi\ot R_{12}/R')$. We claim that if the class $[\ol{\sK}]\in \rmH^1(K,\chi\ot R_{12}/R')$ represented by $\ol{\sK}$ is zero, then $R_{12}=R'$. We can write $b(\sg)\pmod{R'}=(\phi^c(\sg)-\phi(\sg))z$ for some $z\in R_{12}/R'$. 
 Evaluating $b(\sg)$ at $\sg=\sg_0$, we immediately see that $z=0$ from \eqref{E:82} and hence $b(\sg)\pmod{R'}$ is zero. Since $R_{12}$ is the $R$-module generated by $\stt{b(\sg)}_{\sg\in G_{K,S}}$, we conclude $R_{12}=R'$. In particular, this shows that $\sK\pmod{\frakm_R R_{12}}$ represents a non-zero class \beq\label{E:kappa}\kappa_\lam:=[\ol{\sK}]\in\rmH^1(K,\chi\ot R_{12}/\frakm_R). \eeq
 
Let $\frakp$ be a prime of $\OF$ above $p$ and write $\frakp\OF=\frakP\Pbar$ with $\frakP\in \Sg_p$. Suppose that $\frakP\not\in\Sg_p^{\rm irr}=\stt{\frakP_1,\dots,\frakP_r}$, \ie  $\brch\neq \brch^c$ on $G_{K_{\Pbar}}$. Then there exists $\sg_\frakp$ with $\bfal_{\frakp}(\sg_\frakp)-a(\sg_\frakp)\in R^\times$. We thus obtain
 \[\sK(\sg)=\frac{A_\frakp}{C_\frakp}\cdot\frac{(\bfal_{\frakp}(\sg)-a(\sg))}{d(\sg)},\]
 and $\ol{\sK}(\sg)=\frac{A_\frakp}{C_\frakp}(1-\chi(\sg))\pmod{\frakm_RR_{12}}$ for all $\sg\in G_{K_{\ol{\frakP}}}$. This shows that the class $[\ol{\sK}]$ is locally trivial at $\ol{\frakP}$ for any $\frakP\in\Sg_p\bksl \Sg_p^{\rm irr}$. Let $R'_{12}$ be the submodule of $R_{12}$ generated by $\stt{\sK(\sg)}_{\sg\in G_{K_{\Pbar_i}}}$ with $\frakP_i\in \Sg_p^{\rm irr}$. Then $\ol{\sK}:G_K\to R_{12}/(\frakm_RR_{12}+R'_{12})$ is a cocycle whose class $[\ol{\sK}]$ is locally trivial outside $\Sg_p$, and we see that $[\ol{\sK}]=0$ by the $\Sg$-Leopodlt conjecture \eqref{Leo2}. From the above claim, we obtain that $\frakm_R R_{12}+R'_{12}=R_{12}$ and hence $R'_{12}=R_{12}$ by Nakayama's lemma. Let $\wtd\varep_i=\brch\universal(\Fr_{\Pbar_i}^{-1})U_p-1\in R$ and $\wtd Z\in R$ be liftings of $\varep_i$ and $Z$ respectively. Then the assumption (i) implies that $\bfal_{\frakp_i}(\sg)-a(\sg)\in \wtd\varep_i R+\wtd Z R +\bfI$ for $\sg\in G_{K_{\Pbar_i}}$. Therefore, we conclude that \beq\label{E:65.R}R_{12}\subset \sum_{i=1}^r\frac{A_{\frakp_i}}{C_{\frakp_i}}(\wtd\varep_iR+\wtd ZR+\bfI).\eeq
For each integral prime $\frakQ$ of $K$, denote by \[\pairing_{\frakQ}:\rmH^1(K_{\frakQ},\chi\ot R_{12}/\frakm_R)\times \rmH^1(K_{\frakQ},\chi^{-1}(1))\to R_{12}/\frakm_R\]the local Tate pairing. We choose a basis $\stt{u_1,u_2,\dots,u_r}$ of $\rmH^1_{(0,\emptyset)}(K,\chi^{-1}(1))$. 
Since the class $\kappa_\lam$ is unramified outside $\Sg_p\cup \Sgbar_p^{\rm irr}$, we obtain
 \beq\label{E:PT.R}\sum_{i=1}^r\pair{\loc_{\Pbar_i}(\kappa_\lam)}{\loc_{\Pbar_i}(u_j)}_{\Pbar_i}=0\text{ in }  R_{12}/\frakm_R.\eeq
For $\sg\in G_{K_{\Pbar_i}}$, we have \begin{align*}\brch^c\wtd\Psi^{c}(\sg^{-1})(\bfal_{\frakp}(\sg)-a(\sg))&\con \universal(\sg)(1+\varep_i)^{\kOrd_i(\sg)}-(\universal(\sg)+\psi(\sg)Z)\\
 &\con \kOrd_i(\sg)\varep_i-\psi(\sg)Z\pmod{\bfI+\wtd Z \frakm_R}.\end{align*}
Put $o_{ij}=\pair{\Ord_{\Pbar_i}}{\loc_{\Pbar_i}(u_j)}_{\Pbar_i}$ and $\psi_{ij}=\pair{\loc_{\Pbar_i}(\psi)}{\loc_{K_{\Pbar_i}}(u_j)}_{\Pbar_i}$. According to \defref{D:Gross}, 
\[\sL_\chi^\psi:=(-1)^r\frac{\det\left((\psi_{ij})\right)}{\det\left((o_{ij})\right)}.\]
It follows from \eqref{E:PT.R} that
 \[\sum_{i=1}^{r}\frac{A_{\frakp_i}}{C_{\frakp_i}}(o_{ij}\wtd\ep_i-\psi_{ij}\wtd Z+m_{ij}')\con 0\pmod{\frakm_R R_{12}}\text{ for some } m'_{ij}\in \bfI+\wtd Z\frakm_R.\]
Combining \eqref{E:65.R}, we obtain
 \[\sum_{i=1}^r\frac{A_{\frakp_i}}{C_{\frakp_i}}(\kOrd_{ij}\wtd\varep_i-\psi_{ij}\wtd Z+m_{ij})=0\text{ for some }m_{ij}\in \bfI+\wtd Z\frakm_R+\wtd \varep_i \frakm_R.\]
 Hence, 
 \[\det(o_{ij}\wtd\varep_i-\psi_{ij}\wtd Z+m_{ij})=0.\]
 Applying $\lam$ on both sides, we obtain 
 \[\det(o_{ij}\varep_i-\psi_{ij}Z+\lam(m_{ij}))=0,\quad \lam(m_{ij})\in (Z,\varep_i)\frakm_W.\]
 Since $\varep_iZ=\varep_i\frakm_W^r=0$, we find that that 
 \begin{align*}\det(o_{ij}\varep_i-\psi_{ij}Z+\lam(m_{ij}))&\con \det(o_{ij})\varep_1\dots\varep_r+(-1)^r\det(\psi_{ij})Z^r\pmod{Z^r\frakm_W}\\
 &\con \det(o_{ij})\left(-bY^n+\sL_\chi^\psi Z^r\right)\pmod{Z^r\frakm_W}.
 \end{align*}
 The proposition follows immediately.  
 \end{proof}
 
\subsection{Applications to the Katz $p$-adic $L$-functions}
Let $\chi$ be a non-trivial anticyclotoic character of conductor $\frakf\OK$ as in \subsecref{SS:RS}. Recall that in the $\Lam$-adic cusp form $\sH$ in \defref{D:H.3} is given by
\[\sH=e_\Ord(\cG(\frakm,\frakl_0)\cdot \theta^\circ_\brch)-\frac{1}{B}\bftheta_{\brch^c}-\frac{1}{A}\bftheta_{\brch}\in \bfS^\perp_\bfK.\]
with $A=\cC(\brch,\brch^c)^{-1}$ and $B=\cC(\brch^c,\brch)^{-1}$ in $\bfK^\x=(\Frac\Lam_P)^\x$. We shall assume that the Leopoldt conjecture for $F$ holds, which is, by a result of Colmez \cite{Colmez88Inv}, equivalent to the following
\begin{equation}\label{Leo1}\tag{$F$-Leo}\zeta_{F,p}(s)\text{ has a simple pole at $s=1$}\iff \frac{1}{\boldsymbol\zeta_{F,p}^\vee}\in X\Lam_P .\end{equation}
By the constant term formula \eqref{E:57} and the definition of the $\Lam$-adic Eisenstein series $\cG(\frakm,\frakl_0)$ in \eqref{E:31}, we find that \eqref{Leo1} implies that
\beq\label{E:ELeo}\cG(\frakm,\frakl_0)\con 1\pmod{X}.\eeq
We continue to assume that $r=r_\Sg(\chi)>0$ in this subsection. For $i=1,\dots,r$, we put 
\[
\wtd\varep_i:=\brch\universal(\Fr_{\Pbar_i}^{-1})U_{\frakp_i}-1\in \bfT.
\]
\begin{prop}\label{P:orderB}  Assume that \eqref{Leo1} and \eqref{Leo2} hold. Then we have \[B\in X\Lam_P.\]
\end{prop}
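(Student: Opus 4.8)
The plan is to read off the vanishing of $B$ at $X=0$ from the explicit formula for $\e_\Sg^s(\bB)$ in \propref{P:3RS} together with the $p$-adic Kronecker limit formula \propref{P:Kronecker} and the improved $p$-adic $L$-function of \thmref{T:improved}. Recall that $X$ is (up to the unit $\log_p\e_\Sg(\gamma_0)$) the augmentation variable, so that $\e_\Sg^s(\cdot)$ specializes to evaluation at $s$; in particular $B\in X\Lam_P$ is equivalent to $\e_\Sg^s(\bB)|_{s=0}=0$. From \propref{P:3RS} we have, for $s$ in a small neighborhood of $0$,
\[
\e_\Sg^s(\bB)=\frac{\e_\Sg^{-s}(\frakm\frakf\frakd_K^2)}{\Dmd{\rmN\frakm}^s}\cdot\frac{2^d\,\cL_\Sg(s,0,\bfone)\,\cL_\Sg(s,0,\chi)}{h_{K/F}\,\cL_\Sg(s,-s,\chi)\,\zeta_{F,p}(1-s)}\cdot\frac{{\rm Ex}_{\frakl_0}(s)}{\prod_{\frakP\in\Sg_p^{\rm irr}}(1-\e_\Sg^s(\Frob_{\Pbar}))\prod_{\frakP\in\Sg_p}(1-\e_\Sg^s(\Frob_{\Pbar}))}.
\]
First I would dispose of the elementary factors: $\e_\Sg^{-s}(\frakm\frakf\frakd_K^2)\Dmd{\rmN\frakm}^{-s}$ and ${\rm Ex}_{\frakl_0}(s)$ are manifestly analytic and nonzero at $s=0$ (the denominator factors of ${\rm Ex}_{\frakl_0}$ are units since $\frakl_0\ndivides p$), and for $\frakP\in\Sg_p\setminus\Sg_p^{\rm irr}$ the factor $1-\e_\Sg^s(\Frob_{\Pbar})$ vanishes to order exactly one at $s=0$ while $\chi(\Pbar)\neq1$ makes $1-\chi\e_\Sg^s(\Pbar)$ a unit there — these two observations will be combined via \thmref{T:improved}, which tells us $\cL_\Sg(s,0,\chi)=\cL_\Sg^*(s,\chi)\prod_{\frakP\in\Sg_p}(1-\chi\e_\Sg^s(\Pbar))$ with $\cL_\Sg^*(s,\chi)$ analytic at $s=0$ (here $\chi\neq\bfone$, using \eqref{Leo1}).

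Now I would count the order of vanishing at $s=0$ on each side. The denominator contains $\prod_{\frakP\in\Sg_p^{\rm irr}}(1-\e_\Sg^s(\Frob_{\Pbar}))$ which vanishes to order exactly $r=r_\Sg(\chi)=\#\Sg_p^{\rm irr}$, and $\prod_{\frakP\in\Sg_p}(1-\e_\Sg^s(\Frob_{\Pbar}))$ vanishes to order exactly $\#\Sg_p$, so the denominator vanishes to total order $r+\#\Sg_p$. The numerator: $\cL_\Sg^*(s,\chi)\,\cL_\Sg(s,0,\bfone)/\zeta_{F,p}(1-s)$ contributes a factor $\cL_\Sg(s,0,\bfone)/\zeta_{F,p}(1-s)$, which by \propref{P:Kronecker} equals $2^{-d}h_{K/F}$ times $\prod_{\frakP\in\Sg_p}(1-\e_\Sg^s(\Pbar))$ divided by — wait, more precisely \propref{P:Kronecker} gives $2^d\cL_\Sg(s,0,\bfone)/(\zeta_{F,p}(1-s)\prod_{\frakP\in\Sg_p}(1-\e_\Sg(\Pbar)^s))|_{s=0}=h_{K/F}$, so $\cL_\Sg(s,0,\bfone)/\zeta_{F,p}(1-s)$ contributes vanishing of order exactly $\#\Sg_p$ with nonzero leading coefficient $2^{-d}h_{K/F}$. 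Meanwhile $\cL_\Sg^*(0,\chi)\neq0$ by \thmref{T:main1}(ii) (Hypothesis (L), which is in force throughout the introduction, hence available here), and the $\prod_{\frakP\in\Sg_p}(1-\chi\e_\Sg^s(\Pbar))$ from \thmref{T:improved} is a unit at $s=0$. Finally $\cL_\Sg(s,-s,\chi)$ in the denominator vanishes to order at least $r$ by \thmref{T:antiregulator} (and by \thmref{T:main2}/\thmref{T:antiregulator} the leading term involves $\sL_\chi^{\rm ac}$, but I only need the lower bound $r$). Putting these together: numerator order $=\#\Sg_p+0+0$; denominator order $=r+\#\Sg_p+(\text{order of }\cL_\Sg(s,-s,\chi))\geq r+\#\Sg_p+r$; hence $\e_\Sg^s(\bB)$ vanishes to order at least $(r+\#\Sg_p+\ge r)-(\#\Sg_p)\ge r\ge1$ at $s=0$, so $\e_\Sg^s(\bB)|_{s=0}=0$, i.e. $B\in X\Lam_P$.

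The main obstacle is bookkeeping: I must make sure every factor's order of vanishing at $s=0$ is pinned down correctly, in particular (a) that the "extra" $\prod_{\frakP\in\Sg_p}(1-\chi\e_\Sg^s(\Pbar))$ hidden inside $\cL_\Sg(s,0,\chi)$ via \thmref{T:improved} is genuinely a unit at $s=0$ (true exactly because $\chi\neq\bfone$ and more to the point $\chi(\Pbar)\neq 1$ for all $\frakP\notin\Sg_p^{\rm irr}$, while for $\frakP\in\Sg_p^{\rm irr}$ it is $1-\e_\Sg^s(\Pbar)$ which vanishes — so I should organize the cancellation as: the $\Sg_p^{\rm irr}$-part of $\prod_{\Sg_p}(1-\chi\e_\Sg^s(\Pbar))$ has order $r$ and cancels the $\prod_{\Sg_p^{\rm irr}}(1-\e_\Sg^s(\Frob_{\Pbar}))$ in the denominator up to a unit, leaving the $\cL_\Sg(s,-s,\chi)$ in the denominator as the net source of vanishing), and (b) that I have correctly matched the $\prod_{\frakP\in\Sg_p}(1-\e_\Sg^s(\Pbar))$ produced by \propref{P:Kronecker} against the $\prod_{\frakP\in\Sg_p}(1-\e_\Sg^s(\Frob_{\Pbar}))$ in the denominator of \propref{P:3RS}, so these cancel cleanly up to a unit. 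Once this matching is done carefully the residual factor is $\cL_\Sg^*(0,\chi)\cdot(\text{unit})/\cL_\Sg(s,-s,\chi)$, and since $\cL_\Sg^*(0,\chi)\neq0$ while $\cL_\Sg(s,-s,\chi)$ vanishes at $s=0$ (order $\geq r\geq 1$), we get $\e_\Sg^s(\bB)|_{s=0}=0$ as desired. A secondary point to verify is that all the $p$-adic $L$-functions and fudge factors appearing lie in $\Lam_P$ (or $\Frac\Lam_P$) so that the conclusion "$B\in X\Lam_P$" makes sense integrally; this follows from the constructions in \secref{S:Eisenstein} together with \eqref{Leo1}.
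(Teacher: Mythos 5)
Your route is genuinely different from the paper's: the paper proves Proposition~\ref{P:orderB} by contradiction using the Ribet--DKV construction of cohomology classes (Proposition~\ref{P:Ribet}), with no reference to the Rankin--Selberg formula, whereas you try to read the conclusion off the explicit expression for $\cC(\brch^c,\brch)$ in Proposition~\ref{P:3RS} together with the Kronecker limit formula. The idea is sound and can be made to work, but there are two concrete problems.

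First, a confusion between $\bB=\cC(\brch^c,\brch)$ and $B$. The paper sets $B=\cC(\brch^c,\brch)^{-1}=\bB^{-1}$; so $B\in X\Lam_P$ is equivalent to $\e_\Sg^s(\bB)$ having a \emph{pole} at $s=0$, not a zero. Your own order-counting actually yields exactly this: grouping $\cL_\Sg(s,0,\bfone)/\zeta_{F,p}(1-s)$ via Proposition~\ref{P:Kronecker} (order $\#\Sg_p$ with nonzero leading coefficient), and writing $\cL_\Sg(s,0,\chi)=\cL_\Sg^*(s,\chi)\prod_{\Sg_p}(1-\chi\e_\Sg^s(\Pbar))$ via Theorem~\ref{T:improved} (order $r$, since $\cL_\Sg^*(0,\chi)\neq 0$ by Proposition~\ref{P:nonvanishing} and the irregular factors each give a simple zero), the numerator of $\e_\Sg^s(\bB)$ vanishes to order $\#\Sg_p+r$ while the denominator vanishes to order $\#\Sg_p+r+\Ord_{s=0}\cL_\Sg(s,-s,\chi)$. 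The difference (numerator minus denominator) is $-\Ord_{s=0}\cL_\Sg(s,-s,\chi)$, so $\e_\Sg^s(\bB)$ has a \emph{pole} of that order, hence $B(s)=\e_\Sg^s(\bB)^{-1}$ vanishes. Your sentence ``so $\e_\Sg^s(\bB)|_{s=0}=0$'' states the opposite; the conclusion $B\in X\Lam_P$ happens to be right only because two sign errors cancel. You need to track the inversion $B=\bB^{-1}$ explicitly.

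Second, a circularity. You appeal to Theorem~\ref{T:antiregulator} for the bound $\Ord_{s=0}\cL_\Sg(s,-s,\chi)\geq r$. But Theorem~\ref{T:antiregulator} is proved from Proposition~\ref{P:Linvariant}, whose proof begins with ``By Proposition~\ref{P:orderB}, $B=\bB^{-1}\in X\Lam_P$.'' Quoting it here is circular. Fortunately you do not need the full order bound $\geq r$: a single order of vanishing suffices, and $\cL_\Sg(s,-s,\chi)|_{s=0}=\cL_\Sg(0,0,\chi)=\cL_\Sg^*(0,\chi)\prod_{\frakP\in\Sg_p}(1-\chi(\Pbar))=0$ because $\Sg_p^{\rm irr}\neq\emptyset$ (recall $r>0$ is a standing assumption in this subsection); this uses only Theorem~\ref{T:improved} and the standing hypotheses. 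With this substitution and the corrected direction, your argument becomes a valid, shorter alternative proof of Proposition~\ref{P:orderB}, at the cost of front-loading Proposition~\ref{P:3RS} (whose proof is deferred to \S\ref{S:RS2}) and the nonvanishing result Proposition~\ref{P:nonvanishing}, neither of which the paper's Galois-cohomological proof needs.
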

\begin{proof} 
Suppose that $B\not\in X\Lam_P$. Namely, $B^{-1}\in \Lam_P$. There are two cases:

Case (i): $A^{-1}\in \Lam_P$. 
Let $\wtd \Lam$ act on $\bfS^\perp$ by  
\beq\label{E:W1}\varep_i\cdot \sF=\wtd\varep_i\sF;\quad Y\cdot \sF=\varep_1\dots\varep_r\cdot \sF.\eeq Define the ideal $I$ of $\wtd\Lam$ by 
\[I:=I_{1,1,0,-1}=(X,\varep_i^2,\varep_1\dots\varep_r-Y)\]
and $W:=\wtd\Lam/I$.
Then one verifies immediately that $\sF_0:=\sH\pmod{X}$ is annihilated and $W\cdot \sF_0$ is a free of $W$-module of rank one. We have a homomorphism $\lam:\bfT^\perp\to W$ with 
\begin{align*}
\lam(T_\frakl)&=\brch(\frakL)+\brch(\ol{\frakL})\text{ if }\frakl=\frakL\ol{\frakL}\text{ is split in $K$},\\
\lam(T_\frakl)&=0\text{ if $\frakl$ is inert in $K$},\\
\lam(U_{\frakp})&=\phi(\frakP)\text{ if }\frakp=\frakP\Pbar\text{ with }\frakP\in\Sg_p\bksl \Sg_p^{\rm irr},\\
\lam(U_{\frakp_i})&=\brch(\Pbar_i)(1+\varep_i)\text{ if }\frakp_i=\frakP_i\Pbar_i\text{ with }\frakP_i \in\Sg_p^{\rm irr}.
\end{align*}
Applying \propref{P:Ribet} to $\lam$ with $\wtd\Psi=1$, $Z=X$ and $\psi=0$, we find that 
\[Y\in X^r\frakm_W=\stt{0}.\]
This is a contradiction.

Case (ii): $A\in P\Lam_P$. Let $\rA=\Ord_P(A)\geq 1$ and $a^*=A/X^{\rA}|_{X=0}\in\Cp^\x$. We let $\wtd\Lam$ act on $\bfS^\perp/(X^{\rA+1})$ by 
\[Y\cdot \sF=X\sF,\quad \varep_i\cdot \sF=\wtd\varep_i\sF. \]
Put $\sH':=-A\sH\in \bfS^\perp$. Then by \eqref{E:ELeo},
\[\sF_0':=\sH'\pmod{X^{\rA+1}}=\bftheta_{\brch}+\frac{A}{B}\theta^{(\Sg_p)}_\brch-A\theta^\circ_\brch\pmod{X^{\rA+1}}.\]Define the ideal $I$ by \[I:=I_{\rA,1,1,a^*}=(Y^{\rA+1},X-Y,Y\varep_i,X\varep_i,\varep_i^2,\varep_1\dots\varep_r+a^*Y^{\rA})\]
and $W=\wtd\Lam/I$.
Then one verifies that $\sF_0'$ is annihilated by the ideal $I$. Note that $\stt{Y^i\sF_0'}_{i=0}^{\rA-1}$ and $\stt{\varep_J\cdot \sF_0'}_{J\neq\emptyset}$ is a basis of $W\cdot \sF_0'$ and that $W$ is generated by the $\Lam_P$-algebra $W$ is generated by $\stt{Y^i}_{i=0}^{\rA-1}$ and the products $\varep_J$ with $J\neq \emptyset$ over $\Cp$. We thus conclude that $W\cdot \sF_0'$ is a free $W$-module of rank one. Using \eqref{E:CM4}, we verify easily hat $W\cdot \sF_0'$ is an invariant $\bfT^\perp$-submodule with
\begin{align*}
T_\frakl\sF_0'&=(\phi\universal(\Fr_{\frakL})+\phi\universal(\Fr_{\ol{\frakL}}))\sF_0'\text{ if $\frakl=\frakL\ol\frakL$ is split},\quad T_\frakl\sF_0'=0\text{ if $\frakl$ is inert}
;\\
U_{\frakp_i}\sF_0'&=\brch\universal(\Pbar_i)(1+\varep_i)\sF_0'.
\end{align*}
Applying \propref{P:Ribet} again with $\wtd\Psi=\universal$, $Z=Y^{\rA}$ and $\psi=0$, we get
\[Y^{\rA}\con 0\pmod{Y^{\rA}\frakm_W}.\]
This is a contradiction. 
\end{proof}
 For $\sg\in G_K$, the universal character $\universal(\sg)\pmod{X^2}$ in \eqref{E:univ} can be written as
\beq\label{E:83}\universal(\sg)=1+\eta_\Sg(\sg)X\pmod{X^2}\eeq
for some homomorphism $\eta_\Sg\in\Hom(G_{K,S},\cW)$. By definition and the choice of $X$ in \eqref{E:thevariable}, we have
\[\eta_\Sg(\sg)=-\log_p(\e_{\Sg}(\sg|_{K_{\Sg_p}})).\]
Recall that in \defref{D:Gross}  we have introduced the anticyclotomic $\sL$-invariant 
\beq\label{E:antiLinv}\sL_\chi^{\rm ac}=\sL_\chi^{\ell^{\rm ac}}\eeq 
along the \emph{anticyclotomic logarithm} $\ell^{\rm ac}\in \Hom(G_{K,S},\Cp)$ given by 
\[\ell^{\rm ac}=\log_p\circ\e_\Sg^{1-c}=\eta_{\Sgbar}-\eta_{\Sg}.\]
We next apply \propref{P:Ribet} to relate the anticyclotomic $\sL$-invariant $\sL_\chi^{\rm ac}$ to the higher order derivatives of $B$.

\begin{prop}\label{P:Linvariant}
With the Leopoldt hypotheses \eqref{Leo1} and \eqref{Leo2}, we have 
\[\sL_\chi^{\rm ac}=\frac{B}{X^r}\bigg\vert_{X=0}.\]
\end{prop}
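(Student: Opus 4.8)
The plan is to apply \propref{P:Ribet} once more, this time with a genuinely nontrivial twisting character $\wtd\Psi$, so that the cocycle it produces detects the anticyclotomic $\sL$-invariant rather than vanishing. By \propref{P:orderB} we know $B\in X\Lam_P$; write $r_B=\Ord_X(B)$ and $b^*=B/X^{r_B}|_{X=0}$. The goal is to show $r_B=r$ and $b^*=\sL_\chi^{\rm ac}$. As in Case (ii) of \propref{P:orderB}, I would form $\sH'=-B\sH\in\bfS^\perp$ (or the analogous normalization $-A B^{-1}\cdots$ chosen so that the $\bftheta_{\brch^c}$-component survives with unit leading coefficient), and reduce it modulo an appropriate power of $X$ so that the reduction $\sF_0'$ is congruent to $\bftheta_{\brch^c}$ plus correction terms of higher order in $X$, using \eqref{E:ELeo} and the explicit Fourier coefficients \eqref{E:CM4} of $\bftheta_{\brch}$ and $\bftheta_{\brch^c}$.

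The key point distinguishing this from \propref{P:orderB} is the choice of the character $\wtd\Psi$. Here the correct Hecke eigensystem is the one whose Galois representation restricted to $G_{K,S}$ has the shape $\brch^c\wtd\Psi+\brch\wtd\Psi^c$ with $\wtd\Psi\con\universal\pmod{Z\frakm_W}$ but now $\wtd\Psi\con\universal+Z\psi$ for $\psi=\ell^{\rm ac}=\eta_{\Sgbar}-\eta_\Sg$; this is exactly where the anticyclotomic logarithm enters, because the $\bftheta_{\brch^c}$-family is built from $\brch^c$ and the universal character twisted in the $\Sgbar$-direction, so the deformation picks up $\eta_{\Sgbar}$ against the $\eta_\Sg$ already present in $\universal$. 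Concretely, I would use the relation in \eqref{E:univ}, \eqref{E:83} together with the fact that $\universal\cdot(\universal^c)$ has derivative $\eta_\Sg+\eta_{\Sgbar}$ while the $U_{\frakp}$-eigenvalue of $\sH'$ (coming from $\bftheta_{\brch^c}$) is governed by $\brch^c\universal(\Pbar_i)$, to read off that the relevant $Z$ should be taken to be $X^{r_B}$ or $X^r$ and $\psi=\ell^{\rm ac}$. Then \propref{P:Ribet} with $n=r$, $\wtd\Psi$ as above, $Z=X^?$ yields the congruence $-b^{(?)}Y^n+\sL_\chi^{\rm ac}Z^r\con0\pmod{Z^r\frakm_W}$, which after setting $Y=X$ and comparing leading terms forces $r_B=r$ and $b^*=\sL_\chi^{\rm ac}$.

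The order of steps would be: (1) set up $\wtd\Lam$ acting on $\bfS^\perp/(X^{r+1})$ via $Y\cdot\sF=X\sF$, $\varep_i\cdot\sF=\wtd\varep_i\sF$ with $\wtd\varep_i=\brch^c\universal(\Fr_{\Pbar_i}^{-1})U_{\frakp_i}-1$ (note: the $\Sgbar_p^{\rm irr}$ version relevant to $\bftheta_{\brch^c}$), verifying $\sF_0'=\sH'\pmod{X^{r+1}}$ is annihilated by the ideal $I=I_{r,1,1,b^*}$ with the appropriate $b,z$; (2) check $W\cdot\sF_0'$ is free of rank one over $W=\wtd\Lam/I$ and is $\bfT^\perp$-stable, computing the Hecke eigenvalues from \eqref{E:CM4}; (3) construct the lift $\wtd\Psi:G_{K,S}\to W^\x$ — this is the character attached to the $\Lam_K$-adic newform $\bftheta_{\brch^c}$ deformed within $\bfS^\perp$, and one must check $\wtd\Psi\con\universal+Z\ell^{\rm ac}\pmod{Z\frakm_W}$ using \eqref{E:83} and the explicit description of how $\brch^c$, $\brch$ and $\universal$ combine; (4) apply \propref{P:Ribet} and extract the identity. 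The main obstacle I anticipate is step (3): pinning down exactly which power of $X$ plays the role of $Z$ and confirming that the first-order deformation of the relevant Galois character is precisely $\ell^{\rm ac}$ and not, say, $\eta_\Sg$ or $\eta_\Sg+\eta_{\Sgbar}$ — this requires carefully tracking the interplay between the universal character $\universal$ (which carries $\eta_\Sg$) and the $\bftheta_{\brch^c}$-component of $\sH$ together with the precise normalization of $\sH'$, and getting the sign and the idempotent-side ($\Sg_p$ vs.\ $\Sgbar_p$) conventions right. Once that identification is secured, the conclusion is a formal consequence of \propref{P:Ribet}, just as Case (ii) of \propref{P:orderB} was.
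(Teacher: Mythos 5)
Your overall strategy is the right one: normalise $\sH$, reduce modulo a power of $X$ to expose $\bftheta_{\brch^c}$, and apply \propref{P:Ribet} with $\psi=\ell^{\rm ac}$, using the fact that $\universal^c\equiv\universal+X\ell^{\rm ac}\pmod{X^2}$ (by \eqref{E:83} and $\eta_\Sg\circ(c\cdot c)=\eta_{\Sgbar}$). You also correctly anticipate that the delicate point is pinning down $Z$ and the first-order deformation. However, as written, your single-shot argument covers only one of the three cases the paper actually needs, and the uniform choices you propose break down in the other two.

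The first gap is that forcing the reduction to be congruent to $\bftheta_{\brch^c}$ plus an error of order $\geq r_B$ in $X$ requires somehow suppressing the $\bftheta_\brch$-component of $-B\sH=\tfrac{B}{A}\bftheta_\brch+\bftheta_{\brch^c}-B\,e_\Ord(\cG\theta_\brch^\circ)$. The paper does this in two quite different ways: if $\Sg_p^{\rm irr}\neq\Sg_p$ there is a prime $\frakP_0\in\Sg_p$ with $\brch(\frakP_0)\neq\brch(\ol\frakP_0)$, and one applies the operator $\frac{-B}{\brch(\frakP_0)-\brch(\ol\frakP_0)}(U_{\frakp_0}-\brch(\ol\frakP_0))$ to kill $\bftheta_\brch$ outright (Case I); but if $\Sg_p^{\rm irr}=\Sg_p$ no such $\frakP_0$ exists, the coefficient $\tfrac{B}{A}$ cannot be removed, and if moreover $r_A>r_B$ then $\tfrac{B}{A}\notin\Lam_P$ so $-B\sH$ is not even in $\bfS^\perp$ and one has to switch to $-A\sH$ (Case III). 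Your plan does not address either of these branches. The second gap is the $Y$-action: you take $Y\cdot\sF=X\sF$ and look for $\wtd\Psi\equiv\universal+Z\ell^{\rm ac}$ with $Z$ a power of $X$ only, which forces $Y=X$ in $W$ and hence $\wtd\Psi=\universal^c$. That is exactly the Case I setup, but in Cases II and III one must keep both $\bftheta_\brch$ and $\bftheta_{\brch^c}$ in play; there the relevant ideal has $Y(X-Y)$ (not $Y-X$) and $X^m-zY^m$ so $Y\neq X$ in $W$, the lift is the genuinely two-variable character $\Psi_2(\sg)=\universal(\sg)+\tfrac{\universal^c(\sg)-\universal(\sg)}{X}Y$ (or its $c$-conjugate $\Psi_3$), and $Z$ has to be taken to be $Y$ or $X-Y$. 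To realise this $Y$ one also needs a Hecke operator, namely $Y_\brch$ or $Y_{\brch^c}$ built from a carefully chosen auxiliary split prime $\frakq_0$ satisfying $\brch(\frakQ_0)\neq\brch(\ol\frakQ_0)$ and $\ell^{\rm ac}(\Frob_{\ol\frakQ_0})\neq0$, as in \eqref{E:Y.R}; without this the map $\bfT^\perp\to W$ cannot see the second variable. So the conceptual skeleton is sound, but the argument as you describe it would only establish the proposition when $\Sg_p^{\rm irr}\neq\Sg_p$, and even there you would need to justify the Hecke-theoretic incarnation of $Y$; the remaining two cases require the two-variable $W$, the twisted character $\Psi_2$ or $\Psi_3$, and (in Case III) the alternative normalisation $-A\sH$, none of which fall out of your plan.
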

\begin{proof} 
Write $\cG=\cG(\frakm,\frakl_0)$ and consider \[ \sH=e_\Ord(\cG\theta_\phi^\circ)- \frac{1}{B} \bftheta_{\brch^c}- \frac{1}{A}\bftheta_{\brch}\in \bfS^\perp_\bfK.\] By \propref{P:orderB}, $B= \bB^{-1} \in X\Lambda_P$. We set $\rA=\Ord_P(A)$ and $\rB=\Ord_P(B)\geq 1$. Put \[b^*:=\frac{B}{X^{\rB}}\Big\vert_{X=0}\in \Cp^\x.\] 
In what follows, $\frakl$ denotes an integral prime of $F$ with $(\frakl,p\frakm)=1$ and $\frakL$ is a prime of $K$ above $\frakl$. Let $\tau_{K/F}$ be the quadratic ideal character of $F$ associated with $K/F$. Choose a prime-to-$p$ prime ideal $\frakq_0=\frakQ_0\ol\frakQ_0$ split in $K$ such that  \[\brch(\frakQ_0)-\brch(\ol\frakQ_0)\neq 0\text{ and }\ell^{\rm ac}(\Frob_{\ol\frakQ_0})\neq 0.\]
This implies that $C(\frakq_0,\bftheta_\brch)-C(\frakq_0,\bftheta_{\brch^c})\in X\Lambda^\x_P$. Define the Hecke operators $Y_\brch$ and $Y_{\brch^c}$ in $\bfT_P$ by
\beq\label{E:Y.R}\begin{aligned}Y_\brch&:=\frac{(T_{\frakq_0}-C(\frakq_0,\bftheta_\brch))X}{C(\frakq_0,\bftheta_{\brch^c})-C(\frakq_0,\bftheta_{\brch})},\\
Y_{\brch^c}&:=\frac{(T_{\frakq_0}-C(\frakq_0,\bftheta_{\brch^c}))X}{C(\frakq_0,\bftheta_{\brch})-C(\frakq_0,\bftheta_{\brch^c})}.\end{aligned}\eeq
\subsection*{Case (I): $\Sigma_p^{\rm irr}\neq\Sigma_p $}  
 Let $\wtd\Lam$ act on $\bfS^\perp/(X^{\rB+1})$ by  \beq\label{E:MKV2}Y\cdot \sF=Y_\brch\sF;\quad \varep_i\cdot \sF=\wtd\varep_i\sF.\eeq
Let $\frakP_0\in \Sg_p-\Sg_p^{\rm irr}$ and $\frakp_0\OK=\frakP_0\Pbar_0$. Put 
\begin{align*}\sH_1:=&\frac{-B}{\brch(\frakP_0)-\brch(\ol{\frakP}_0)}(U_{\frakp_0}-\brch(\ol{\frakP}_0))\mathscr{H}=\bftheta_{\brch^c} -B\cdot e_\Ord(\cG\theta_\phi^\circ) \in \bfS^\perp;\\
\sF_1:=&\sH_1\pmod{X^{\rB+1}}.\end{align*} By \eqref{E:ELeo}, we have 
\[\sF_1=\bftheta_{\brch^c}-b^*\theta_\brch^\circ X^{\rB}\pmod{X^{\rB+1}}.\]
One verifies that 
\begin{align*}&Y\cdot \sF_1=X\sF_1=X\bftheta_{\brch^c}\pmod{X^{\rB+1}},\\
& \varep_1\dots\varep_r\cdot \sF_1=- b^*X^{\rB}\sF_1,\\
&\varep_i^2\cdot \sF_1=B\brch(\frakP_i^{-2})(U_{\frakp_i}-\brch(\frakP_i))^2\theta_\brch^\circ\pmod{X^{\rB+1}}=0.
\end{align*}
Thus $\sF_1$ is annihilated by the ideal
\[I:=I_{\rB,1,1,b^*}=(X^{\rB +1},Y-X,\varep_i^2,X\varep_i ,\varep_1\varep_2\dots\varep_r+b^*X^{\rB})\subset \wtd\Lam.\]
Let $W=\wtd\Lam/I$. For each subset $J$ of $\stt{1,2\dots,r}$, put $\varep_J:=\prod_{i\in J} \varep_i$. It is easy to see that $W$ is generated by the $\Lam_P$-algebra $W$ is generated by $\stt{X^i}_{i=0}^{\rB-1}$ and the products $\varep_J$ with $J\neq \emptyset$ over $\Cp$. We claim that $W\cdot \sF_1$ is a free $W$-module of rank one. To see it, it suffices to show that $\stt{X^i\sF_1}_{i=0}^{\rB-1}$ and $\stt{\varep_J \sF_1}_{J\neq \emptyset}$ are linearly independent. Suppose that there exists a polynomial $f(X)$ with $\deg f<\rB$ and $\al_J\in\Cp$ with $J\subset \stt{1,\dots,r}$ such that 
\[f(X)\sF_1+\sum_{J\subset\stt{1,\dots,r},\,J\neq\emptyset}\al_J \varep_J\cdot \sF_1=0.\] 
We must show that $f(X)=0$ and $\al_J=0$. Applying $Y$ on both sides, we get
\[Xf(X)\bftheta_{\brch^c}\con 0\pmod{X^{\rB+1}}.\]
Noting that $C(\frako,\bftheta_{\brch^c})=1$, we obtain $Xf(X)\con 0\pmod{X^{\rB+1}}$. This implies that $f(X)=0$ and \[0=\sum_{J\subset\stt{1,\dots,r},\,J\neq\emptyset}\al_J \varep_J\cdot \sF_1=X^{\rB}\left(\sum_{J\neq \emptyset}\al_J \varep_J\cdot \theta_\brch^\circ\right)\pmod{X^{\rB+1}}.\]
Now the claim follows from the linear independence of the set $\stt{\varep_J\cdot \theta_\brch^\circ}_{J}$ of modular forms.
From \eqref{E:CM4},  we deduce that
 \begin{align*}
  T_{\frakl}\sF_1&= \left(\phi(\frakL) \universal(\Fr_{\ol\frakL}) +  \tau_{K/F}(\frakl)\phi(\ol\frakL)\universal(\Fr_{\frakL})\right)\sF_1;\\
U_{\frakp}\sF_1&=\brch(\frakP)\universal(\Fr_{\Pbar})\sF_1,\quad \frakp=\frakP\Pbar\text{ with }\frakP\in \Sg_p\bksl \Sg_p^{\rm irr},\\
 U_{\frakp_i}\sF_1&=\brch\universal(\Fr_{\Pbar_i})(1+\varep_i)\cdot \sF_1\text{ for }i=1,\dots,r.
\end{align*}
This shows that the free $W$-module $W\cdot \sF_1$ is a $\bfT^\perp$-invariant submodule of $\bfS^\perp/(X^{\rB+1})$. This induces a surjective $\Lam_P$-alegbra homomorphism $\lam:\bfT^\perp\to W$ such that 
$t\sF_1=\lam(t)\cdot \sF_1$. This $\lam$ satisfies the assumptions in \propref{P:Ribet} with $\wtd\Psi=\universal^c$, $Z=X$ and $\psi=\ell^{\rm ac}$. Applying \propref{P:Ribet}, we obtain 
\[-b^*X^{\rB}+\sL_\chi^{\rm ac}\cdot X^r\con 0\pmod{X^r\frakm_W}.\]
This implies that $\rB\geq r$ and that $\sL_\chi^{\rm ac}=0$ if $\rB>r$ and $\sL_\chi^{\rm ac}=b^*$ if $\rB=r$.
 \subsection*{Case (II): $\Sigma_p =\Sigma_p^{\rm irr}$ and $\rB\geq \rA$.} 

 Define the $\Lambda_P$-adic cuspform
 \[\sH_2:=(-B)\cdot \sH= \frac{B}{A}\cdot \bftheta_\brch+\bftheta_{\brch^c}-B\cdot e_\Ord(\cG\theta_\phi^\circ).\]
 Let $\sF_2:=\sH_2\pmod{X^{\rB+1}}$. Let $Y$ and $\varep_i$ act on $\wtd\Lam$ as in \eqref{E:MKV2} and set $z_2=1+\frac{B}{A}\in \Lam_P$. Then one verifies that
 \begin{align*}
 Y\cdot \sF_2&=X\bftheta_{\brch^c},\quad Y\cdot \bftheta_{\brch^c}=X\bftheta_{\brch^c},\\
 X^{\rA}\sF_2&=z_2 Y^{\rA}\cdot \sF_2,\\
 \varep_1\dots\varep_r\cdot \sF_2&=-b^*X^{\rB}\theta_\brch^{(\Sg_p)}=-b^* Y^{\rB}\cdot\sF_2.
 \end{align*}
 This shows that $\sF_2$ is annihilated by the ideal \[I:=I_{\rB,\rA,z_2,b^*}=(Y^{\rB +1},\varep_i^2,X\varep_i, Y\varep_i,Y(X-Y),X^{\rA}-z_2Y^{\rA},\varep_1\dots\varep_r+b^*Y^{\rB}).\]
Let $W=\wtd \Lam/I$. We claim that $W\cdot \sF_2$ is a free $W$-module. To see it, since $W/I$ is generated by $\stt{X^i}_{i=0}^{\rA-1}$ $\stt{Y^j}_{j=1}^{\rB-1}$ and $\stt{\varep_J}_{J\neq \emptyset}$, it suffices to show that \[\stt{X^i\sF_2}_{i=0}^{\rA-1}\cup \stt{Y^j\cdot \sF_2}_{j=1}^{\rB-1}\cup \stt{\varep_J \cdot\sF_2}_{J\subset \Sg_p,\,J\neq \emptyset, \Sg_p}\]
are linearly independent over $\Cp$.  Suppose that we have a linear equation
\[f(X)\sH_2+g(Y)Y\cdot \sH_2+\sum_{J\neq \emptyset}\al_J \varep_J\cdot \sH_2\con 0\pmod{X^{\rB+1}}\]
for some polynomials $f(X),g(X)\in \Cp[X]$ with $\deg f< \rA$ and $\deg g<\rB-1$. Applying $Y$ on both sides, we obtain 
\[(f(X)+g(X)X)X\bftheta_{\brch^c}\con 0\pmod{X^{\rB+1}},\]
and hence $f(X)+g(X)X\con 0\pmod{X^{\rB}}$ as $C(\OF,\bftheta_{\brch^c})=1$. It follows that \beq\label{E:MKV1}\begin{aligned}&f(X)+g(X)X=0;\\ 
&f(X)X^{\rB-\rA}u\bftheta_\brch+\sum_{J\neq \emptyset} \al_J b^* X^{\rB}\varep_J\theta_\brch^\circ=0,\quad u=\frac{B}{AX^{\rB-\rA}}\in \Lam_P^\x.\end{aligned}\eeq
We this find that $f(X)X^{\rB-\rA}\con 0\pmod{X^{\rB}}$, which in turn shows that $f(X)=0$ for $\deg f<\rA$. In view of \eqref{E:MKV1}, we thus get $g(X)=0$ and $\al_J=0$ for $J\neq\emptyset$. On the other hand, a direct calculation shows that
 \[T_\frakl\sF_2=(\brch\Psi_2(\Frob_{\frakL})+\tau_{K/F}(\frakl)\brch\Psi_2(\Frob_{\ol\frakL}))\sF_2,\]
 where $\Psi_2:G_{K,S}\to W^\x$ is the character defined by 
 \[\Psi_2(\sg)=\universal(\sg)+\frac{\universal^c(\sg)-\universal(\sg)}{X}Y.\]
This shows that $W\cdot \sF_2$ is an invariant $\bfT^\perp$-submodule. We therefore obtain the surjective homomorphism $\lam:\bfT^\perp\to W$ such that $t\sF_2=\lam(t)\cdot \sF_2$.
Applying \propref{P:Ribet} to $\lam$ with $\wtd\Psi=\Psi_2$, $Z=Y$ and $\psi=\ell^{\rm ac}$, we find that 
\[-b^*Y^{\rB}+\sL_\chi^{\rm ac}Y^r\con 0\pmod{Y^r\frakm_W}.\]
This implies that $\rB\geq r$ and that $\sL_\chi^{\rm ac}=0$ if $\rB>r$ and $\sL_\chi^{\rm ac}=b^*$ if $\rB=r$. 
\subsection*{Case (III): $\Sg_p=\Sg_p^{\rm irr}$ and $\rA>\rB>0$}
 Let $a^*=\frac{A}{X^{\rA}}|_{X=0}\in \Cp^\x$.
Put \[\sH_3:=(-A)\sH=\bftheta_\brch+\frac{A}{B}\cdot \bftheta_{\brch^c}-A\cdot e_\Ord(\cG\theta_\brch^\circ).\]
Let $W$ act $\bfS^\perp/(X^{\rA+1})$ by  
 \[Y\cdot \sF:=\frac{(T_{\frakq_0}-C(\frakq_0,\bftheta_{\brch^c}))X}{C(\frakq_0,\bftheta_{\brch})-C(\frakq_0,\bftheta_{\brch^c})}\sF;\quad \varep_i\cdot\sF=(\brch\universal(\Fr_{\Pbar_i}^{-1})U_p-1)\sF.\]
 Put $\sF_3=\sH_3\pmod{X^{\rA+1}}$. One verifies easily that
 \begin{align*}
 Y\cdot \sF_3&=Y\cdot \bftheta_\brch\con X\bftheta_\brch\pmod{X^{\rA+1}};\\
\varep_1\dots\varep_r\cdot \sF_3&\con -A\cdot \theta_\brch^{(\Sg_p)}\con -a^*\cdot X^{\rA}\bftheta_\brch\pmod{X^{\rA+1}}.
 \end{align*}
One verifies that 
 \[X^{\rB}\sF_3=(1+\frac{A}{B})Y^{\rB}\cdot \sF_3,\text{ and }\varep_1\dots\varep_r\cdot \sF_3=-a^*Y^{\rA}\cdot \sF_3.\]
Setting $z_3=1+\frac{A}{B}\in \Lam_P$, it follows that $\sH_3$ is annihilated by the ideal
 \beq\label{E:I3}\begin{aligned} I_3&:=I_{\rA,\rB,z_3,a^*}\\
 &=(Y^{\rA +1},\varep_i^2,X\varep_i, Y\varep_i,Y(X-Y),X^{\rB}-z_3Y^{\rB},\varep_1\dots\varep_r+a^*Y^{\rA}).\end{aligned}\eeq
By a similar argument as in the case (II), one can show that $\stt{X^i\sF_3}_{i=0}^{\rB-1}$, $\stt{Y^j\cdot \sF_3}_{j=1}^{\rA-1}$ and $\stt{\varep_J\cdot \sF_3}_{J\neq\emptyset}$ are linearly independent and conclude that $W\cdot \sF_3$ is free of rank one over $W$. By \eqref{E:CM4}, we also verify that \begin{align*}T_\frakl\sF_3=&(\brch\Psi_3(\Frob_\frakL)+ \tau_{K/F}(\frakl)\brch\Psi_3(\Frob_{\ol\frakL}))\cdot \sH_3,
\intertext{ where $\Psi_3:G_{K,S}\to W^\x$ is defined by }
\Psi_3(\sg)=&\universal^c(\sg)+\frac{\universal(\sg)-\universal^c(\sg)}{X}Y.
\end{align*}
Applying \propref{P:Ribet} to $\lam$ with $\wtd\Psi=\Psi_3$, $Z=X-Y$ and $\psi=\ell^{\rm ac}$, we obtain 
\[-a^*Y^{\rA}+\sL_\chi^{\rm ac}(X-Y)^r\con 0\pmod{(X-Y)^r\frakm_W}.\]
Since $\rA>\rB$, we have $Z^{\rB}=(X-Y)^{\rB}=\frac{a^*}{b^*}Y^{\rA}\in W$ .
This implies that $\rB\geq r$ and that $\sL_\chi^{\rm ac}=0$ if $\rB>r$ and $\sL_\chi^{\rm ac}=b^*$ if $\rB=r$. 
This finishes the proof in all cases.
\end{proof}

\begin{thm}\label{T:antiregulator}Suppose that \eqref{Leo1} and \eqref{Leo2} hold. Then we have \[\Ord_{s=0}\cL_\Sg(s,-s,\chi)\geq r\]and 
\[\lim_{s\to 0}\frac{\cL_\Sg(s,-s,\chi)}{s^r}=\sL_\chi^{\rm ac}\cdot \cL^*_\Sg(0,\chi)\prod_{\frakP\in \Sg_p\bksl \Sg_p^{\rm irr}}(1-\chi(\Pbar)).\]
\end{thm}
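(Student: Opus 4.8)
The plan is to extract the claimed formula for $\cL_\Sg(s,-s,\chi)$ from \propref{P:3RS} combined with \propref{P:Linvariant}. Recall that \propref{P:3RS} expresses $\e_\Sg^s(\bB)$ (equivalently $B^{-1}$, where $B=\cC(\brch^c,\brch)^{-1}\in\bfK^\x$) as a ratio involving $\cL_\Sg(s,0,\bfone)$, $\cL_\Sg(s,0,\chi)$, $\cL_\Sg(s,-s,\chi)$, $\zeta_{F,p}(1-s)$, and various finite Euler-type factors including ${\rm Ex}_{\frakl_0}(s)$. First I would solve that identity for $\cL_\Sg(s,-s,\chi)$, writing
\[
\cL_\Sg(s,-s,\chi) = \frac{1}{\e_\Sg^s(\bB)}\cdot\frac{\Dmd{\rmN\frakm}^{-s}\,2^d\,\cL_\Sg(s,0,\bfone)\cL_\Sg(s,0,\chi)}{h_{K/F}\,\zeta_{F,p}(1-s)}\cdot\frac{\e_\Sg^{-s}(\frakf\frakd_K^2)\,{\rm Ex}_{\frakl_0}(s)}{\prod_{\frakP\in\Sg_p^{\rm irr}}(1-\e_\Sg^s(\Frob_{\Pbar}))\prod_{\frakP\in\Sg_p}(1-\e_\Sg^s(\Frob_{\Pbar}))}.
\]
Now $\e_\Sg^s(\bB)^{-1}=\e_\Sg^s(B)$, and $B\in X\Lam_P$ by \propref{P:orderB} (since \eqref{Leo1} and \eqref{Leo2} hold) with $\Ord_{X=0}B=\rB\geq r$ and leading coefficient $b^*$; moreover \propref{P:Linvariant} gives $b^* = \sL_\chi^{\rm ac}$ when $\rB=r$ and $\sL_\chi^{\rm ac}=0$ when $\rB>r$, so in all cases the leading term of $\e_\Sg^s(B)$ as $s\to 0$ matches $\sL_\chi^{\rm ac}\cdot s^r$ after accounting for the identification $X=([\gamma_0]-1)/\log_p(\e_\Sg(\gamma_0))$, which sends $\e_\Sg^s\mapsto$ evaluation with $X\rightsquigarrow s$ up to normalization.

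The second step is to compute the limit $s\to 0$ of the remaining factor. Here $\cL_\Sg(s,0,\bfone)$ has a simple pole at $s=0$ (by \thmref{T:improved}/(i) of \thmref{T:main1} applied to the trivial character, or directly from the Kronecker limit formula \propref{P:Kronecker}), and $\zeta_{F,p}(1-s)$ also has a simple pole at $s=0$ under \eqref{Leo1}; by \propref{P:Kronecker} the ratio $\frac{2^d\cL_\Sg(s,0,\bfone)}{\zeta_{F,p}(1-s)\prod_{\frakP\in\Sg_p}(1-\chi(\Pbar)\e_\Sg(\Pbar)^s)}$ — wait, for $\chi=\bfone$ one uses the case $\chi=\bfone$ of that proposition — tends to $h_{K/F}$. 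So $\frac{2^d\cL_\Sg(s,0,\bfone)}{h_{K/F}\zeta_{F,p}(1-s)}\big|_{s=0} = \prod_{\frakP\in\Sg_p}(1-\e_\Sg(\Pbar)^s)\big|_{s=0}$, which cancels exactly the factor $\prod_{\frakP\in\Sg_p}(1-\e_\Sg^s(\Frob_{\Pbar}))$ in the denominator after noting $\Frob_{\Pbar}$ acts as $\e_\Sg(\Pbar)$. Similarly the finite Euler factors ${\rm Ex}_{\frakl_0}(s)$, $\e_\Sg^{-s}(\frakf\frakd_K^2)$, $\Dmd{\rmN\frakm}^{-s}$ all tend to $1$ at $s=0$ (each is a ratio of terms of the form $1-(\text{unit})\e_\Sg^s$ with matching numerator/denominator at $s=0$, or a character evaluated at $s=0$). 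What survives is $\cL_\Sg(s,0,\chi)\big|_{s=0}$; but by \thmref{T:main1} this equals $\cL_\Sg^*(0,\chi)\prod_{\frakP\in\Sg_p}(1-\chi(\Pbar))$, and the factor $\prod_{\frakP\in\Sg_p^{\rm irr}}(1-\e_\Sg^s(\Frob_{\Pbar}))$ in the denominator of the big expression must be paired against the $s^r$ coming from $\e_\Sg^s(B)$: indeed $\prod_{\frakP\in\Sg_p^{\rm irr}}(1-\e_\Sg^s(\Frob_{\Pbar}))$ vanishes to order $r$ at $s=0$ (each factor is $\sim -\log_p(\e_\Sg(\Frob_{\Pbar}))s$, and $\chi(\Pbar)=1$ for $\frakP\in\Sg_p^{\rm irr}$ so these are exactly the "irregular" factors removed). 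Assembling, the $s^r$ from $B$ cancels the order-$r$ vanishing of this denominator up to the constant $\sL_\chi^{\rm ac}$, the $\chi=\bfone$ Kronecker factor cancels the $\Sg_p$ product, and one is left with $\sL_\chi^{\rm ac}\cdot\cL_\Sg^*(0,\chi)\prod_{\frakP\in\Sg_p\bksl\Sg_p^{\rm irr}}(1-\chi(\Pbar))$, which is exactly the asserted leading term; the inequality $\Ord_{s=0}\cL_\Sg(s,-s,\chi)\geq r$ is then immediate since every factor entering is either regular or contributes nonnegatively, with the displayed limit being the coefficient of $s^r$.

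The main obstacle I anticipate is bookkeeping of the precise normalizations: matching $\e_\Sg^s$-evaluation against $X$-adic valuation (the factor $\log_p(\e_\Sg(\gamma_0))$ and the $\Frob_{\Pbar}$ versus $\e_\Sg(\Pbar)$ dictionary), making sure the order-$r$ zero of $\prod_{\frakP\in\Sg_p^{\rm irr}}(1-\e_\Sg^s(\Frob_{\Pbar}))$ exactly absorbs the $X^r\mid B$ divisibility with the correct constant (this is where \propref{P:Linvariant}'s identification $\sL_\chi^{\rm ac}=B/X^r|_{X=0}$ is used, together with the fact that the $A$-dependence has dropped out of $B$), and verifying that all the auxiliary $\frakl_0$-factors and conductor twists genuinely evaluate to $1$ at $s=0$ rather than to some spurious root of unity. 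A secondary subtlety is confirming that when $\rB>r$ (so $\sL_\chi^{\rm ac}=0$) the formula still reads correctly, i.e. the limit is $0=\sL_\chi^{\rm ac}\cdot(\cdots)$ and the order of vanishing strictly exceeds $r$; this follows formally from the same cancellation since $B/X^r|_{X=0}=0$ in that case. Once these normalization lemmas are in place the proof is a one-line substitution, so I would organize it as: (1) invert \propref{P:3RS}; (2) substitute \propref{P:orderB}, \propref{P:Linvariant} for the $B$-factor and \thmref{T:main1} for $\cL_\Sg(s,0,\chi)$; (3) substitute \propref{P:Kronecker} (trivial-character case) for the $\cL_\Sg(s,0,\bfone)/\zeta_{F,p}$-ratio; (4) observe all remaining finite factors are $1$ at $s=0$ and read off the leading term.
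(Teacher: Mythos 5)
Your four-step plan — invert \propref{P:3RS}; feed in \propref{P:orderB} and \propref{P:Linvariant} for the $B$-factor, \thmref{T:improved} for $\cL_\Sg(s,0,\chi)$, and \propref{P:Kronecker} for the trivial-character ratio; then check the remaining finite factors are $1$ at $s=0$ — is exactly the paper's proof, and the final formula you land on is correct.

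One bookkeeping point in your narrative is, however, mis-paired, and since you explicitly flag this as your main worry I should correct it. You say the $s^r$ coming from $\e_\Sg^s(B)$ "cancels the order-$r$ vanishing" of the denominator $\prod_{\frakP\in\Sg_p^{\rm irr}}(1-\e_\Sg^s(\Frob_{\Pbar}))$ "up to the constant $\sL_\chi^{\rm ac}$". That pairing would leave a spurious $\prod_{\frakP\in\Sg_p^{\rm irr}}\log_p\e_\Sg(\Frob_{\Pbar})$ in the denominator, not just $\sL_\chi^{\rm ac}$. The correct matching is: by \thmref{T:improved}, $\cL_\Sg(s,0,\chi)=\cL_\Sg^*(s,\chi)\prod_{\frakP\in\Sg_p}(1-\chi(\Pbar)\e_\Sg^s(\Frob_\Pbar))$, and the $\Sg_p^{\rm irr}$-subproduct of this (where $\chi(\Pbar)=1$) is \emph{literally the same factor} as the $\Sg_p^{\rm irr}$ denominator coming from \propref{P:3RS}, so they cancel on the nose, leaving $\cL_\Sg^*(s,\chi)\prod_{\frakP\in\Sg_p\bksl\Sg_p^{\rm irr}}(1-\chi(\Pbar)\e_\Sg^s(\Frob_\Pbar))$. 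Likewise the Kronecker ratio $\frac{2^d\cL_\Sg(s,0,\bfone)}{h_{K/F}\zeta_{F,p}(1-s)\prod_{\Sg_p}(1-\e_\Sg^s(\Frob_\Pbar))}\to 1$ by \propref{P:Kronecker}. After these exact cancellations the only $s$-dependence of order $\geq r$ left in the numerator is $B(s)$ itself, and $B(s)/s^r|_{s=0}=B/X^r|_{X=0}=\sL_\chi^{\rm ac}$ by \propref{P:Linvariant}. So $B$ alone supplies the $s^r$ in the stated limit; it does not interact with the $\Sg_p^{\rm irr}$ product. (A related misstatement: $\cL_\Sg(s,0,\bfone)$ does not have a simple pole at $s=0$ — by \thmref{T:improved} with $\chi=\bfone$ the pole of $\cL_\Sg^*(s,\bfone)$ is absorbed by one factor of $\prod_{\Sg_p}(1-\e_\Sg^s(\Pbar))$, leaving a zero of order $d-1$ — but this does not affect your argument since you only use the full ratio from \propref{P:Kronecker}.) With these pairings fixed your proof is the paper's.
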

\begin{proof}Let $B(s):=\e_\Sg^s(B)$ be the meromorphic function on $\Zp$. Then $B(s)/s^r|_{s=0}=B/X^r|_{X=0}$ based on the choice of the uniformizer $X$. By \propref{P:3RS},
\beq\label{E:310} \begin{aligned}B(s)&=\frac{1}{\Dmd{\rmN\frakm}^s}\cdot\frac{\cL_\Sg(s,-s,\chi)}{\cL_\Sg^{*}(s,\chi)\prod_{\frakP
\in\Sg_p\bksl \Sg_p^{\rm irr}}(1-\chi(\Pbar))}\cdot h_{K/F}\\
&\quad\times\frac{\zeta_{F,p}(1-s)\prod_{\frakP\in\Sg_p}(1-\e_\Sg^s(\Frob_{\Pbar}))}{\cL_\Sg(s,0,\bfone)\Dmd{\Delta_K\rmN\frakC_\chi}^s}\cdot {\rm Ex}_{\frakl_0}(s). \end{aligned}\eeq
To simply the notation, we put
 \[\cL_\Sg^{**}(s,\chi):=\cL_\Sg^*(s,\chi)\prod_{\frakP
\in\Sg_p\bksl \Sg_p^{\rm irr}}(1-\chi(\Pbar)).\]
Combined with \propref{P:Linvariant}, \propref{P:Kronecker} and ${\rm Ex}_{\frakl_0}(0)=1$, we obtain
\[\sL_\chi^{\rm ac}=\frac{B(s)}{s^r}\Big\vert_{s=0}=\frac{\cL_\Sg(s,-s,\chi)}{s^r\cL_\Sg^{**}(s,\chi)}\Big\vert_{s=0}.\]
The theorem follows.
\end{proof}
\begin{cor}\label{C:main}Suppose that the Leopoldt hypotheses \eqref{Leo1} and \eqref{Leo2} hold and that $(\frakf ,p\frakd_{K/F})=1$. Suppose that $r_\Sg(\chi)>0$ and $\frakP_1\in \Sg_p^{\rm irr}$. Then we have $L_\Sg(0,\chi)=0$ and 
\[\frac{L_\Sg(s,\chi)}{s}\Big\vert_{s=0}=\sL_\chi\cdot \cL^*_\Sg(0,\chi)\prod_{\frakP\in \Sg_p\bksl \stt{\frakP_1}}(1-\chi(\Pbar)).\]
 \end{cor}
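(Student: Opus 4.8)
The plan is to deduce everything from the two-variable $p$-adic $L$-function $\cL_\Sg(s,t,\chi)=\e_\Sg^s\e_{\Sgbar}^t(\cL_\Sg(\chi))$, which is jointly $p$-adic analytic near $(0,0)$ and restricts along the lines $t=s$, $t=0$, $t=-s$ to $L_\Sg(s,\chi)$ (by the construction recalled in the introduction, since $\cyc=\e_\Sg\e_{\Sgbar}$ on $\Gal(\wtd K_\infty/K)$), to $\cL_\Sg(s,0,\chi)$, and to the anticyclotomic restriction $\cL_\Sg(s,-s,\chi)$. First I would record that, by the factorization in $\thmref{T:improved}$, $\cL_\Sg(0,0,\chi)=\cL_\Sg^*(0,\chi)\prod_{\frakP\in\Sg_p}(1-\chi(\Pbar))$, and the factor indexed by $\frakP_1\in\Sg_p^{\rm irr}$ vanishes; hence $L_\Sg(0,\chi)=\cL_\Sg(0,0,\chi)=0$, which is the first assertion, and the three restrictions above all vanish at $s=0$. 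A first-order Taylor expansion of $\cL_\Sg(s,t,\chi)$ about $(0,0)$ then yields the identity already recorded in the introduction,
\[\frac{L_\Sg(s,\chi)}{s}\Big\vert_{s=0}=2\,\frac{\cL_\Sg(s,0,\chi)}{s}\Big\vert_{s=0}-\frac{\cL_\Sg(s,-s,\chi)}{s}\Big\vert_{s=0}.\]

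If $r_\Sg(\chi)\ge 2$ the corollary is immediate: the two numerators on the right vanish at $s=0$ to order $\ge r_\Sg(\chi)\ge 2$ (by $\thmref{T:improved}$ and by $\thmref{T:antiregulator}$ respectively), so the left-hand side is $0$; and the right-hand side of the asserted formula is $0$ because $\Sg_p^{\rm irr}\bksl\stt{\frakP_1}\ne\emptyset$ forces $\prod_{\frakP\in\Sg_p\bksl\stt{\frakP_1}}(1-\chi(\Pbar))=0$. So I may assume $r_\Sg(\chi)=1$ and $\Sg_p^{\rm irr}=\stt{\frakP_1}$. In the factorization of $\thmref{T:improved}$ only the factor $1-\e_\Sg^s(\Frob_{\Pbar_1})$ vanishes at $s=0$, the remaining factors being equal to $1-\chi(\Pbar)\ne 0$ there and $\cL_\Sg^*(0,\chi)\ne 0$ (Hypothesis (L), $\thmref{T:main1}$); differentiating gives
\[\frac{\cL_\Sg(s,0,\chi)}{s}\Big\vert_{s=0}=-\cL_\Sg^*(0,\chi)\,\log_p\!\big(\e_\Sg(\Frob_{\Pbar_1})\big)\prod_{\frakP\in\Sg_p\bksl\stt{\frakP_1}}(1-\chi(\Pbar)),\]
while $\thmref{T:antiregulator}$ with $r=1$ gives $\frac{\cL_\Sg(s,-s,\chi)}{s}\big\vert_{s=0}=\sL_\chi^{\rm ac}\,\cL_\Sg^*(0,\chi)\prod_{\frakP\in\Sg_p\bksl\stt{\frakP_1}}(1-\chi(\Pbar))$. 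Substituting both into the displayed identity, the corollary reduces to the comparison of $\sL$-invariants
\[\sL_\chi=-2\log_p\!\big(\e_\Sg(\Frob_{\Pbar_1})\big)-\sL_\chi^{\rm ac}.\]

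To establish this I would argue as follows. Since $r_\Sg(\chi)=1$, the determinant in $\defref{D:Gross}$ is a single matrix entry, so $\psi\mapsto\sL_\chi^\psi$ is $\cW$-linear. From $\cyc=\e_\Sg\e_{\Sgbar}$ one gets $\ell^{\rm cyc}=\log_p\circ\e_\Sg+\log_p\circ\e_{\Sgbar}$, whereas $\ell^{\rm ac}=\log_p\circ\e_\Sg^{1-c}=\log_p\circ\e_\Sg-\log_p\circ\e_{\Sgbar}$; adding these, $\ell^{\rm cyc}+\ell^{\rm ac}=2\log_p\circ\e_\Sg$, hence $\sL_\chi+\sL_\chi^{\rm ac}=2\,\sL_\chi^{\log_p\circ\e_\Sg}$ by linearity. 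Finally $\e_\Sg$ is unramified at $\Pbar_1\in\Sgbar_p$, so $\loc_{\Pbar_1}(\log_p\circ\e_\Sg)$ is the unramified class sending $\Frob_{\Pbar_1}$ to $\log_p(\e_\Sg(\Frob_{\Pbar_1}))$; pairing it against the generator $u_1$ of $\rmH^1_{(0,\emptyset)}(K,\chi^{-1}(1))$ normalized by $\mathbf{O}_p(u_1)=\Pbar_1$ — so that $\loc_{\Pbar_1}(u_1)$ is, via Kummer theory (here $\chi$ is locally trivial at $\Pbar_1$), a uniformizer of $K_{\Pbar_1}$ — the local Tate pairing evaluates to $-\log_p(\e_\Sg(\Frob_{\Pbar_1}))$ with the geometric normalization of $\rec$, which is exactly $\sL_\chi^{\log_p\circ\e_\Sg}$. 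This completes the proof. The one genuinely delicate step is this last local computation: correctly pinning down the signs and normalizations in the local Tate pairing, together with the identification $\cyc=\e_\Sg\e_{\Sgbar}$ that underlies $\cL_\Sg(s,s,\chi)=L_\Sg(s,\chi)$; everything else is bookkeeping around $\thmref{T:improved}$ and $\thmref{T:antiregulator}$.
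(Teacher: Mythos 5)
Your proof is correct and follows essentially the same route as the paper's: it uses the identity $L_\Sg(s,\chi)/s=2\cL_\Sg(s,0,\chi)/s-\cL_\Sg(s,-s,\chi)/s$, handles $r\geq 2$ by order-of-vanishing counting, and for $r=1$ combines \thmref{T:improved} and \thmref{T:antiregulator} with the relation $\sL_\chi^{\rm ac}=2\eta_\Sg(\Frob_{\Pbar_1})-\sL_\chi$ (where $\eta_\Sg=-\log_p\circ\,\e_\Sg$). The one thing you add that the paper glosses over is a derivation of that $\sL$-invariant identity, via linearity of $\psi\mapsto\sL_\chi^\psi$ in the rank-one case and a local Tate-pairing computation showing $\sL_\chi^{\log_p\circ\e_\Sg}=-\log_p(\e_\Sg(\Frob_{\Pbar_1}))$; the final answer is right, though your phrasing momentarily conflates the value of the local pairing with the value of $\sL_\chi^\psi$ itself (which carries the extra $(-1)^{r_\Sg(\chi)}$ sign from \defref{D:Gross}), so tighten that sign bookkeeping if you write it up.
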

 \begin{proof} 
Let $r=r_\Sg(\chi)$. Since $\eta_\Sg(\Fr_{\Pbar})=-\log_p(\e_\Sg(\Fr_{\Pbar}))$ for $\frakP\in\Sg_p$,
by \thmref{T:improved} we see that $\Ord_{s=0}(\cL_\Sg(s,0,\chi))=r$ and if $r=1$ and $\Sg_p^{\rm irr}=\stt{\frakP_1}$, then
\[\frac{\cL_\Sg(s,0,\chi)}{s}\Big\vert_{s=0}=\eta_\Sg(\Fr_{\Pbar_1})\cdot \cL_\Sg^{**}(0,\chi).\]
This implies that $\frac{L_\Sg(s,\chi)}{s}|_{s=0}=0$ if $r>1$ in view of \thmref{T:antiregulator}. Now suppose that $r=1$. Note that the cyclotomic logarithm $\ell^{\rm cyc}=-\eta_\Sg-\eta_{\Sgbar}$, so if $r=1$ and $\Sg_p^{\rm irr}=\stt{\frakP_1}$, then \[\sL_\chi^{\rm ac}=2\eta_\Sg(\Fr_{\Pbar_1})-\sL_\chi.\]
It follows that the cyclotomic derivative $\frac{L_\Sg(s,\chi)}{s}\Big\vert_{s=0}$ equals
\begin{align*}
\frac{\cL_\Sg(s,s,\chi)}{s}\Big\vert_{s=0}
&=\frac{2\cL_\Sg(s,0,\chi)-\cL_\Sg(s,-s,\chi)}{s}\Big\vert_{s=0}\\
&=2\eta_\Sg(\Fr_{\Pbar_1})\cL_\Sg^{**}(0,\chi)-\sL_\chi^{\rm ac}\cL_\Sg^{**}(s,\chi)\\
&=\sL_\chi\cdot \cL_\Sg^{**}(0,\chi).
\end{align*}
This completes the proof of our main theorem.
 \end{proof}
\subsection{Non-vanishing of $\cL^*_\Sg(0,\chi)$}
We study the non-vanishing of $\cL^*_\Sg(0,\chi)$ via the one-sided divisibility of Iwasawa main conjecture for CM fields in \cite{Hsieh14JAMS}. 
\begin{prop}\label{P:nonvanishing}Let $\chi$ be a non-trivial ray class character of prime-to-$p$ order. Suppose that  $(\chi,K,p)$ satisfies the assumptions \eqref{Iw1} and \eqref{Iw2} in the introduction. If the $\Sg$-Leopoldt conjecture \eqref{Leo2} is valid, then $\cL^*_\Sg(0,\chi)\neq 0$.
\end{prop}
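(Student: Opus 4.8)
The plan is to connect the improved $p$-adic $L$-function $\cL_\Sg^*(s,\chi)$ to the Katz $p$-adic $L$-function and then invoke the one-sided divisibility of the Iwasawa main conjecture for CM fields from \cite{Hsieh14JAMS}. First I would observe that by \thmref{T:improved} the value $\cL_\Sg^*(0,\chi)$ is the specialization at the trivial character of an element $\cL_\Sg^*(\chi)\in\Lam_K$ (note $\chi\neq\bfone$ and $\chi_+=\bfone$ when $\chi$ is anticyclotomic; for the general ray class character one passes to the restriction to $\Gamma=\Gal(K_{\Sg_p}/K)$), and that this element is, up to a unit and an explicit Euler-factor removal, the restriction of $\cL_\Sg(\chi)$ to $\Gamma$. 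The interpolation formula \eqref{E:padicL1} shows $\cL_\Sg^*(\chi)$ divides (a unit multiple of) $\cL_\Sg(\chi)|_\Gamma$ with quotient $\prod_{\frakP\in\Sg_p}(1-\chi(\Pbar)[\Fr_{\Pbar}])$.

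Next I would translate the $\Sg$-Leopoldt hypothesis \eqref{Leo2} into the vanishing of the relevant Bloch--Kato Selmer group. By \lmref{L:21}, \eqref{Leo2} is equivalent to $\rmH^1_{(\emptyset,f)}(K,\chi)=\stt{0}$, and by the reflection argument in \secref{Linvariant} this is in turn equivalent to $\rmH^1_{(0,f)}(K,\chi^{-1}(1))=\stt{0}$. The key point is that the characteristic ideal of the Selmer group attached to $\chi^{-1}(1)$ with the $(\emptyset,f)$-type local conditions — the Selmer group whose size is governed by the improved $p$-adic $L$-function — specializes at the augmentation ideal of $\Lam_K$ to a finite group precisely when this $\rmH^1$ vanishes. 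I would make this precise by a control-theorem / snake-lemma argument comparing the $\Lam_K$-adic Selmer group with its specialization, using that the assumptions \eqref{Iw1} and \eqref{Iw2} guarantee the relevant error terms (local $\rmH^0$'s, class-number contributions) vanish because $\chi$ has prime-to-$p$ order and $p\nmid 6\Delta_F h_{K/F}$.

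Then I would invoke the main input: the one-sided divisibility proved in \cite{Hsieh14JAMS}, which asserts that $\cL_\Sg^*(\chi)$ (the ``ordinary'' or improved Katz $p$-adic $L$-function) lies in the characteristic ideal of the corresponding dual Selmer group over $\Lam_K$. If $\cL_\Sg^*(0,\chi)=0$, then the image of $\cL_\Sg^*(\chi)$ in the quotient $\Lam_K/\frakm$ (with $\frakm$ the augmentation-type prime) vanishes, forcing the characteristic ideal — hence the Selmer group — to be non-trivial after specialization, i.e. the specialized Selmer group is infinite. Combined with the control statement of the previous paragraph, this contradicts the vanishing of $\rmH^1_{(0,f)}(K,\chi^{-1}(1))$ guaranteed by \eqref{Leo2}. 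Therefore $\cL_\Sg^*(0,\chi)\neq 0$.

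The main obstacle I anticipate is the bookkeeping in the second step: carefully matching the precise local conditions used to define $\cL_\Sg^*(\chi)$ in \cite{Hsieh14JAMS} (which removes exactly the Euler factors at $\Sg_p$ appearing in \thmref{T:improved}) with the Selmer group $\rmH^1_{(0,f)}(K,\chi^{-1}(1))$ appearing in \lmref{L:21}, and verifying that the specialization map is injective (or has controlled cokernel) so that non-vanishing of the Selmer group over $\Lam_K$ genuinely forces non-vanishing after specialization. One must also check that the hypotheses \eqref{Iw1}--\eqref{Iw2} are exactly what is needed for \cite{Hsieh14JAMS} to apply and for the control theorem to have no spurious error terms; this is where the condition $p\nmid h_{K/F}$ and the prime-to-$p$ order of $\chi$ will be used.
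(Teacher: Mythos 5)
Your overall strategy does match the paper's: reduce to the one-sided Iwasawa main conjecture divisibility of \cite{Hsieh14JAMS}, convert the $\Sigma$-Leopoldt hypothesis \eqref{Leo2} into finiteness of a specialized Selmer group via \lmref{L:21} and a control theorem, and conclude that $\cL_\Sg^*(\chi)$ is not divisible by the augmentation variable. However, the proof as written has a genuine logical gap in the crucial divisibility step. You state that the result from \cite{Hsieh14JAMS} asserts that $\cL_\Sg^*(\chi)$ \emph{lies in} the characteristic ideal of the dual Selmer group, i.e.\ $\char\mid\cL_\Sg^*$. This is the wrong direction. The theorem in \cite[Theorem 2]{Hsieh14JAMS} (obtained via Eisenstein congruences, which produce \emph{lower} bounds on Selmer groups) is the opposite containment,
\[\char_{\Lam(K_\infty)}\bigl(\Sel_{K_\infty}(\chi)^*\bigr)\subset \cL_\Sg(\chi)\,\Lam(K_\infty),\]
i.e.\ the $p$-adic $L$-function \emph{divides} the characteristic ideal. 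With that correct direction, $\cL_\Sg^*(0,\chi)=0$ forces $X\mid\cL_\Sg^*(\chi)\mid\char$, so the specialized Selmer group would be infinite, contradicting \eqref{Leo2}; this is the argument the paper runs. But with the containment you wrote, $\cL_\Sg^*(\chi)\in\char$ gives no information about whether $\char$ is divisible by $X$ (the characteristic ideal could be the unit ideal), so the stated contradiction does not follow, and the proof collapses precisely at the step where the conclusion is drawn.

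A secondary imprecision: \cite{Hsieh14JAMS} gives the divisibility for the full Katz $p$-adic $L$-function $\cL_\Sg(\chi)$ over $\Lam(K_\infty)$, not directly for the improved $\cL_\Sg^*(\chi)$ over $\Lam_K$. Passing between the two requires restricting to $\Gal(K_{\Sg_p}/K)$ and then cancelling the Euler factor $\prod_{\frakP\in\Sg_p}(1-\chi(\Pbar)[\Fr_{\Pbar}])$ that appears simultaneously on the analytic side (via \thmref{T:improved}) and on the algebraic side (via a control exact sequence relating $\Sel_{K_{\Sg_p}}$ to $\Sel_{K_\infty}$, using that $[\Fr_{\Pbar}]\neq 1$ under the ordinarity assumption). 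You correctly flag this matching as the main bookkeeping obstacle, and it is exactly what the paper carries out in \eqref{E:control1}--\eqref{E:control3}; but as written your proposal attributes the improved-side divisibility directly to \cite{Hsieh14JAMS}, which is not what that reference proves.
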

\begin{proof}
\def\bfchi{\boldsymbol\chi}
This can be deduced from the one-sided divisibility result in Iwasawa main conjecture for the improved Katz $p$-adic $L$-function $\cL_\Sg^*(\chi)$. To begin with, let $K_\infty$ be the $\Zp^{d+1}$-extension of $K$ and for any $K_\infty/L/K$, put $\Lam(L):=\cW\powerseries{\Gal(L/K)}$. If $L_2\subset L_1$, then let $\Lam(L_1)\surjto\Lam(L_2)$ be the surjection induced by $\Gal(L_1/K)\to \Gal(L_2/K)$. Let $S$ be the union of the set of finite places of $K$ where $\chi$ is ramified and the set of places above $p$. Define the universal character $\bfchi: G_{K,S}\to \Lam(K_\infty)^\x,\,\, \sg\mapsto \chi(\sg)[\sg]$. Let $\Lam(L)^*=\Hom_{cts}(\Lam(L),\Qp/\Zp)$ be the \pont dual of $\Lam(L)$ endowed with $\Lam(L)$-module structure given by $\lam\cdot f(x)=f(\lam x)$ for $\lam,x\in \Lam(L)$. Let $\bfchi\ot \Lam(L)^*$ be the discrete $\Lam(L)$-module $\Lam(L)^*$ with the Galois action via the character $\bfchi$. Define the Selmer group
\[\Sel_{L}(\chi):=\ker\stt{\rmH^1(K_S/K,\bfchi\ot\Lam(L)^*)\to\prod_{w\in S\bksl \Sg_p}\rmH^1(I_w,\bfchi\ot\Lam(L)^*)^{D_w}},\]
where $I_w$ and $D_w$ denote the inertia and decomposition groups of $w$. Then $\Sel_{L}(\chi)$ are discrete and co-finitely generated $\Lam(L)$-modules. Let $\char_{\Lam(L)}\Sel_L(\chi)$ be the characteristic ideal of the \pont dual $\Sel_L(\chi)^*$ in $\Lam(L)$. 

Let $L$ be the compositum of the cyclotomic $\Zp$-extension and $K_{\Sg_p}$. Since  $\rmH^0(K,\bfchi\ot\Lam(K_\infty)^*)=\stt{0}$ and $L$ contains the cyclotomic $\Zp$-extension, we have \beq\label{E:control1}\Sel_{K_\infty}(\chi)^*\ot_{\Lam(K_\infty)}\Lam(L)\sim \Sel_{L}(\chi)^*\eeq is pseudo-isomorphic of finitely generated $\Lam(L)$-modules in view of  \cite[Remark 8.4]{Hsieh14JAMS}. Let $I$ be the kernel of the natural quotient map $\Lam(L)\to \Lam(K_{\Sg_p})=\Lam_K$. We have the exact sequence 
\[0\to \Sel_{K_{\Sg}}(\chi)\to \Sel_{L}(\chi)[I]\overset{\loc_{\Sgbar_p}}\longto \prod_{w\in\Sgbar_p}\left(\rmH^0(I_w,\bfchi\ot \Lam(L)^*)\ot_{\Lam(L)}\Lam_K\right)^{D_w}.\]
Taking \pont dual of the above exact sequence, we find that 
\beq\label{E:control2}\prod_{\frakP\in\Sg_p}\frac{\Lam_K}{(1-\chi(\Pbar)[\Fr_{\Pbar})}\to \Sel_L(\chi)^*\ot_{\Lam(L)}\Lam_K\to \Sel_{K_{\Sg}}(\chi)^*\to 0.\eeq
Recall that $[\Fr_{\Pbar}]\in\Lam_K$ is the image of the Frobenius at $\Pbar$ in $\Gal(K_{\Sg_p}/K)\hookto\Lam_K$ and $[\Fr_{\Pbar}]\neq 1$ by the ordinary assumption. The equation \eqref{E:control2} implies that
\[\prod_{\frakP\in\Sg_p}(1-\chi(\ol{\frakP})[\Fr_{\ol\frakP}])\cdot\char_{\Lam_K}( \Sel_{K_{\Sg_p}}(\chi)^*)\subset \char_{\Lam_K}(\Sel_L(\chi)^*\ot_{\Lam(K_\infty)}\Lam_K).\]
Combined with \eqref{E:control1}, we get 
\begin{align*}\prod_{\frakP\in\Sg_p}(1-\chi(\ol{\frakP})[\Fr_{\ol\frakP}])\cdot\char_{\Lam_K}( \Sel_{K_{\Sg_p}}(\chi)^*)\subset &\char_{\Lam(K_\infty)}(\Sel_{K_\infty}(\chi)^*\ot_{\Lam(K_\infty)}\Lam_K)\\
\subset &\char_{\Lam(K_\infty)}(\Sel_{K_\infty}(\chi)^*)\ot_{\Lam(K_\infty)}\Lam_K.
\end{align*}
On the other hand, by \cite[Theorem 2]{Hsieh14JAMS}, 
\[\char_{\Lam(K_\infty)}(\Sel_{K_\infty}(\chi)^*)\subset \cL_\Sg(\chi)\Lam(K_\infty).\]
By \thmref{T:improved}, it follows that 
\[\prod_{\frakP\in\Sg_p}(1-\chi(\ol{\frakP})[\Fr_{\ol\frakP}])\cdot\char_{\Lam_K}( \Sel_{K_{\Sg_p}}(\chi)^*)\subset \cL_\Sg(\chi)\Lam_K=\prod_{\frakP\in\Sg_p}(1-\chi(\ol\frakP)[\Fr_{\ol\frakP}])\cdot \cL^*_\Sg(\chi)\Lam_K,\]
and hence
\beq\label{E:IMCv}\char_{\Lam_K}( \Sel_{K_{\Sg_p}}(\chi)^*)\subset \cL^*_\Sg(\chi)\Lam_K.\eeq
Note that 
\[\rmH^0(I_w,\bfchi\ot\Lam_K^*)\ot\Lam_K/(X)=\stt{0}\text{ for all }w\in S\bksl \Sg_p.\]
By \cite[Lemma 8.3]{Hsieh14JAMS}, this implies that
\beq\label{E:control3} \Sel_{K_{\Sg_p}}(\chi)^*\ot_{\Lam_K}\Lam_K/(X)\iso \rmH^1_{(\emptyset,f)}(K,\chi\ot_{\Zp}\Qp/\Zp)^*\eeq
By the Poitou-Tate duality, $\rmH^1_{(0,f)}(K,\chi^{-1}(1))$ and $\rmH^1_{(\emptyset,f)}(K,\chi\ot\Qp/\Zp)^*$ have the same rank, so the $\Sg$-Leopoldt conjecture implies that $\rmH^1_{(\emptyset,f)}(K,\chi\ot\Qp/\Zp)$ is a finite group. Therefore, from \eqref{E:control3}, we deduce that the characteristic ideal of  $\Sel_{K_{\Sg_p}}(\chi)^*$ is not divisible by $X$, and by \eqref{E:IMCv}, \[\cL^*_\Sg(\chi)\pmod{X}=\cL_\Sg^*(0,\chi)\neq 0.\] This completes the proof.
\end{proof}

The following is an immediate consequence of \corref{C:main} and \propref{P:nonvanishing}. 
\begin{cor}Let $\chi$ be a non-trivial anticyclotomic ray class character of $K$ such that \begin{enumerate} 
\item $(\chi,K,p)$ satisfies the assumptions \eqref{Iw1} and \eqref{Iw2}.
\item \eqref{Leo1} and \eqref{Leo2} both hold.\end{enumerate}
Then
\[\Ord_{s=0}L_\Sg(s,\chi)=1\text{ if and only if } r_\Sg(\chi)=1 \text{ and } \sL_\chi\neq 0.\]
\end{cor}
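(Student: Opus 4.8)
The plan is to combine \corref{C:main} (the cyclotomic first-derivative formula, valid under \eqref{Leo1} and \eqref{Leo2}) with \propref{P:nonvanishing} (the non-vanishing of $\cL^*_\Sg(0,\chi)$ under \eqref{Leo2} together with the Iwasawa-theoretic hypotheses \eqref{Iw1} and \eqref{Iw2}). Note first that for an anticyclotomic $\chi$ of prime-to-$p$ order, the hypotheses \eqref{Iw1}, \eqref{Iw2} and \eqref{min} needed to run the machinery of \secref{S:RS}--\secref{S:Galois} are all in force, so \corref{C:main} applies; and the hypothesis $(\frakf,p\frakd_{K/F})=1$ in \corref{C:main} is part of \eqref{Iw2}. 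Thus the whole argument is essentially a bookkeeping combination of two already-established results, and the main work is simply to disentangle the cases $r_\Sg(\chi)=0$, $r_\Sg(\chi)=1$, and $r_\Sg(\chi)\geq 2$.

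First I would treat the case $r_\Sg(\chi)=0$. Here $\Sg_p^{\rm irr}=\emptyset$, so $\cL_\Sg(s,0,\chi)=\cL_\Sg^*(s,\chi)\prod_{\frakP\in\Sg_p}(1-\chi(\Pbar)\e_\Sg^s(\Fr_{\Pbar}))$ with every Euler factor a unit at $s=0$ (since $\chi(\Pbar)\neq 1$ for all $\frakP\in\Sg_p$), and $\cL_\Sg^*(0,\chi)\neq 0$ by \propref{P:nonvanishing}. Since $\cL_\Sg(s,s,\chi)=L_\Sg(s,\chi)$ near $s=0$, one also needs the anticyclotomic factor $\cL_\Sg(s,-s,\chi)$ to be a unit at $s=0$; but this follows from \thmref{T:antiregulator}, whose leading-term formula at $s=0$ in the case $r=0$ reads $\cL_\Sg(0,-0,\chi)=\cL_\Sg^*(0,\chi)\prod_{\frakP\in\Sg_p}(1-\chi(\Pbar))\neq 0$. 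Hence $L_\Sg(0,\chi)=\frac12\bigl(2\cL_\Sg(0,0,\chi)\bigr)$ — more precisely $L_\Sg(s,\chi)=2\cL_\Sg(s,0,\chi)-\cL_\Sg(s,-s,\chi)$ evaluated at $s=0$ gives a non-zero value — so $\Ord_{s=0}L_\Sg(s,\chi)=0\neq 1$, consistent with the claim. (Alternatively one can just cite \thmref{T:thmA}: when $r_\Sg(\chi)=0$ the order of vanishing is $0$.)

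Next, the case $r_\Sg(\chi)=1$. Write $\Sg_p^{\rm irr}=\stt{\frakP_1}$. By \corref{C:main}, $L_\Sg(0,\chi)=0$ and
\[\frac{L_\Sg(s,\chi)}{s}\Big\vert_{s=0}=\sL_\chi\cdot\cL^*_\Sg(0,\chi)\prod_{\frakP\in\Sg_p\setminus\stt{\frakP_1}}(1-\chi(\Pbar)),\]
where each factor $1-\chi(\Pbar)$ for $\frakP\in\Sg_p\setminus\stt{\frakP_1}$ is nonzero by definition of $\Sg_p^{\rm irr}$, and $\cL^*_\Sg(0,\chi)\neq 0$ by \propref{P:nonvanishing}. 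Therefore this derivative vanishes if and only if $\sL_\chi=0$; equivalently $\Ord_{s=0}L_\Sg(s,\chi)=1$ iff $\sL_\chi\neq 0$. This is exactly the stated equivalence when $r_\Sg(\chi)=1$. Finally, the case $r_\Sg(\chi)\geq 2$: by \thmref{T:main2} (equivalently the last displayed remark after \thmref{T:main2}), $L_\Sg(0,\chi)=0$ and $L'_\Sg(0,\chi)=0$, so $\Ord_{s=0}L_\Sg(s,\chi)\geq 2$ and the left-hand statement fails; the right-hand statement also fails since $r_\Sg(\chi)\neq 1$. Collecting the three cases yields the corollary.

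I do not expect a genuine obstacle here — every ingredient is already proved in the excerpt, and the proof is a two-line deduction. The only point requiring a modicum of care is checking that the hypotheses of \corref{C:main} and \propref{P:nonvanishing} are both subsumed by the corollary's hypotheses \eqref{Iw1}, \eqref{Iw2}, \eqref{Leo1}, \eqref{Leo2} — in particular that $(\frakf,p\frakd_{K/F})=1$ holds (it is contained in \eqref{Iw2}, which says $\chi$ is unramified at primes dividing $p\frakd_{K/F}$, forcing the conductor $\frakf\OK$ of $\chi$ to be prime to $p\frakd_{K/F}$) and that $\chi$ being of prime-to-$p$ order lets us invoke \propref{P:nonvanishing}. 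Hence the proof reduces to: (1) verify compatibility of hypotheses; (2) apply \corref{C:main} to get the derivative formula; (3) apply \propref{P:nonvanishing} to see the non-$\sL$ factors are nonzero; (4) split on the value of $r_\Sg(\chi)$ as above.
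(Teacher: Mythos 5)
Your proposal is correct and follows the paper's route exactly: the paper presents this corollary as an immediate consequence of \corref{C:main} and \propref{P:nonvanishing}, and your three-way case split on $r_\Sg(\chi)$ together with the hypothesis-compatibility checks (in particular that \eqref{Iw2} forces $(\frakf,p\frakd_{K/F})=1$) is exactly the intended bookkeeping. The only minor streamlining is that the case $r_\Sg(\chi)=0$ is dispatched most cleanly by citing \thmref{T:thmA} directly, as you note in your parenthetical, rather than via the anticyclotomic leading term.
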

\begin{Remark}One can also deduce the one side implication that 
if $r_\Sg(\chi)=1 \text{ and } \sL_\chi\neq 0$, then $\Ord_{s=0}L_\Sg(s,\chi)=1$ from \cite[Theorem 3.27]{BS21IMRN} combined with \cite[Theorem 2]{Hsieh14JAMS} and the existence of the trivial zeros 
\end{Remark}
  
\section{Explicit calculations in the Rankin-Selberg convolution}\label{S:RS2}
This section is devoted to the proof of \thmref{P:3RS}.

\subsection{Adelic Hilbert modular forms}Let $k$ be any integer. Let $\om$ be a Hecke character of $F$ of finite order. Let $\cA_k(\om)$ denote the space of automorphic forms of weight $k$ and central character $\om$, consisting of smooth and slowly increasing functions $\varphi:\GL_2(F)\bksl \GL_2(\A_\cF)\to\C$ such that 
\[\varphi(g u_\infty z)=\om(z)\varphi(g)J(u_\infty,\sqrt{-1})^{-k\Sg},\quad z\in \A_F^\x,\, u_\infty\in\SO_2(F\ot_\Q\R).\]
Let $\cA^0_{\ulk}(\om)\subset \cA_{\ulk}(\om)$ be the subspace of cusp forms. For any integral ideal $\frakn$ of $F$, put
\[U_1(\frakn)=\stt{g=\pMX{a}{b}{c}{d}\in\GL_2(\wh\OF)\mid c\in\frakn\wh\OF,\,d-1\in \frakn\wh\OF}.\]
Let $\cA_k(\frakn,\om)$ be the subspace of $\cA_k(\om)$ right invariant by $U_1(\frakn)$. For any $\varphi\in\cA_k(\frakn,\om)$, the Whittaker function $W_\varphi:\GL_2(\A_F)\to\C$ of $\varphi$ is defined by 
\[W_{\varphi}(g):=\int_{F\bksl \A_F}\varphi(\pMX{1}{x}{0}{1}g)\addchar_F(-x)\rmd x.\]
To any Hilbert modular form $\bff\in M_k(\frakn,\om)$, we can associate a unique automorphic form in $\varphi_{\bff}\in \cA_k(\frakn,\om)$ such that 
for every $y\in (F\ot\R)^\x$ and $a\in \wh F^\x$, we have 
\beq\label{E:FC_W}W_{\varphi_{\bff}}\left(\pDII{y a}{1}\right)=C(a\OF,\bff)\cdot \abs{ya}^\frac{k}{2}e^{-2\pi\Tr(y)}\bbI_{(F\ot \R)_+}(y).\eeq
On the other hand, a Hilbert modular form $\bff$ is given by an $h$-tuple 
\[\bff=(f_{\frakc_1},\dots f_{\frakc_{h}}),\quad f_{\frakc_i}\in M_k(\bfc_i,U_1(\frakn)).\] Then we have 
\[f_{\frakc_i}(\tau)=y^{-\frac{k\Sg}{2}}\varphi_{\bff}\left(\pMX{y}{x}{0}{1}\pDII{\bfc_i}{1}\right)(\rmN\frakc_i)^\frac{k}{2}\quad (\tau=x+\sqrt{-1}y)\in\frakH_F).\]
Then \[C(\beta\frakc_i,\bff)=c_\beta(f_{\frakc_i}),\quad C_i(0,\bff)=c_0(f_{\frakc_i}).\]

If $\bff\in S_k(\frakn,\om)$ is a Hilbert cuspidal newform of conductor $\frakn$, then denote by $\pi_{\bff}$ the unitary and irreducible automorphic representation of $\GL_2(\A_F)$ generated by $\varphi_{\bff}$. 
This representation $\pi_{\bff}$ is discrete series at all infinite places and has conductor $\frakn$ and central character $\xi$. 
Moreover, $\varphi_{\bff}$ is the new vector in $\pi_{\bff}$ and 
\beq\label{E:Lfcn}L(s,\pi_{\bff})=\Gamma_\C(s+k-1)^{[F:\Q]}D(s+\frac{k-1}{2},\bff),\eeq
where $L(s,\pi_{\bff})$ is the automorphic $L$-function associated with $\pi_{\bff}$ by Jacquet and Langlands. 
\subsection{Eisenstein series} 
The aim of this subsection is to realize the weight $k$ specialization $\nu_k(\cE(\frakm,\frakl))$ of the $\Lam$-adic Eisenstein series $\cE(\frakm,\frakl)$ as adelic Eisenstein series in \cite[Chapter V]{Jacquet72LNM278}. For each place $v$, let $\Om_{\Fv}$ be the local component of the \Teich character $\Om_F$ at $v$.
Let $\cD$ be the pair 
\[\cD=(k,\frakm,\frakm_0),\]
where $k$ is a positive integer, and $\frakm\subset \frakm_0$ are integral ideals of $F$ such that $(\frakm,p\OF)=1$ and $(\frakm_0,p\frakd_F)=1$. Define 
$\Phi_{\cD}=\ot_{v\in\Sg}\Phi_{\cD,v}\ot_{v\in\bfh}'\Phi_{\cD,v}\in \cS(\A_F^2)$ by 
\begin{itemize}
\item $\Phi_{\cD,v}(x,y)=2^{-k}(x+\sqrt{-1}y)^ke^{-\pi(x^2+y^2)}$ if $v\divides\infty$, 
\item $\Phi_{\cD,v}(x,y)=\bbI_{\frakm\OFv}(x)\bbI_{\frakd_{\Fv}^{-1}}(y)$ if $v\ndivides p\frakm_0\infty$,
\item $\Phi_{\cD,v}(x,y)=\bbI_{\frakm\OFv}(x)\bbI_{\OFv^\x}(y)$ if $v\divides\frakm_0$,
\item $\Phi_{\cD,v}(x,y)=
\Om_{\Fv}^{-k}(x)\bbI_{\OFv^\x}(x)\bbI_{\OFv}(y)$ if $v\divides p$.\end{itemize}
Let $f_{\Phi_\cD,s}=\ot_v f_{\Phi_{\cD,v},s}\in I(\Om_F^k,\bfone,s)$, where
\beq\label{E:Godement2}f_{\Phi_{\cD,v},s}(g)=\Om_{\Fv}^k(\det g)\abs{\det g}_v^{s+\onehalf}\int_{\Fv^\x}\Phi_{\cD,v}((0,t_v)g)\Om_{\Fv}^k(t_v)\abs{t_v}_v^{2s+1}\rmd^\x t_v.\eeq
Consider the adelic Eisenstein series $E_\A(g,f_{\Phi_{\cD,s}})$ defined by \eqref{E:Eisenstein.2}. It is easy to verify that $E_\A(g,f_{\Phi_{\cD,s}})$ is an automorphic form in $\cA_k(\frakm,\Om_F^k)$ whenever it is defined. 
For $\tau=x+\sqrt{-1}y\in \frakH_F$ and $\frakc=\bfc\OF$ for $\bfc\in\wh F^{(p\frakm)\x}$, put 
 \[E_\frakc(\tau):=y^{-\frac{k\Sg}{2}}E_\A(\pMX{y}{x}{0}{1}\pDII{\bfc}{1},f_{\Phi_\cD,s})|_{s=\frac{1-k}{2}}\cdot\abs{\bfc}^{-\frac{k}{2}}.\]
Let $\bfE_k(\frakm,\frakm_0)=(E_{\frakc_1},\dots E_{\frakc_{h_F^+}})\in M_k(\frakm,\Om_F^k)$. This way $E_\A(-,f_{\Phi_{\cD,s}})|_{s=\frac{1-k}{2}}$ is the automorphic form associated with $ \bfE_k(\frakm,\frakm_0)$ via the relation \eqref{E:FC_W}. For any integer $n$ coprime to $p$, put $\Dmd{n}:=n\cdot \Om((n))$. 

\begin{prop}\label{P:3FE}For $k\geq 2$, we have \[\nu_k( \cE(\frakm,\frakm_0))= \frac{\rmN\frakm}{\Dmd{\rmN\frakm}^k}\cdot \bfE_k(\frakm,\frakm_0).\]
\end{prop}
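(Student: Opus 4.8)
The plan is to prove the identity coefficient-by-coefficient: since a Hilbert modular form is pinned down by its Fourier coefficients $C(\fraka,-)$ at all nonzero integral ideals $\fraka\subset\OF$ together with its constant terms $C_i(0,-)$ at the cusps $(\frako,\frakc_i^{-1})$, it suffices to check that $\bfE_k(\frakm,\frakm_0)$ and $\frac{\rmN\frakm}{\Dmd{\rmN\frakm}^k}\,\nu_k(\cE(\frakm,\frakm_0))$ agree on these data. Both sides carry weight $k\Sg$, tame level $\frakm$, a $\Gamma_1(p)$-structure, and nebentypus $\Om_F^k$ --- built into the section $\Phi_\cD$ at the places above $p$ on the right, and the specialization of the nebentypus $\Om$ of $\cE(\frakm,\frakm_0)$ on the left --- so the comparison is legitimate once the weight/indexing conventions are matched.

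First I would treat the case $\frakm_0=\OF$ and compute the Fourier coefficients of $\bfE_k(\frakm,\OF)$ from the adelic Fourier expansion \eqref{E:FC3}: the $q^\beta$-coefficient is a product of local Whittaker integrals $W_\beta(g_v,f_{\Phi_{\cD,v},s})$ specialized at $s=\tfrac{1-k}{2}$. At $v\mid\infty$ these are the classical ones --- by Cauchy's formula, exactly as in the proof of \propref{P:FC1}, each infinite place contributes $\sg(\beta)^{k-1}e^{-2\pi\sg(\beta)y_\sg}\bbI_{\R_+}(\sg(\beta))$ up to a Gamma factor; at $v\nmid p\frakm\infty$ the local integral gives the naive divisor sum; at $v\mid\frakm$ the support $\bbI_{\frakm\OFv}(x)$ imposes the divisibility condition $\frakm^{-1}\fraka\subset\frakb$; and at $v\mid p$ the Teichm\"uller-twisted section $\Om_{\Fv}^{-k}(x)\bbI_{\OFv^\x}(x)\bbI_{\OFv}(y)$ forces $\frakb$ prime to $p$ and produces the twist by $\Om_F^{-k}$ of the Frobenius. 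Assembling the places, the adelic coefficient equals $\nu_k\!\big(C(\fraka,\cE(\frakm,\OF))\big)$ from \eqref{Eisensteincoefficients} up to the single global factor $\rmN\frakm/\Dmd{\rmN\frakm}^k$, which reconciles the arithmetic normalization in \eqref{Eisensteincoefficients} (the $\rmN\frakb^{-1}[\Frob_\frakb]^{-1}$ convention) with the adelic one via the $\abs{\bfc}^{-k/2}$ scaling and the dictionary \eqref{E:FC_W}, the Teichm\"uller symbol $\Dmd{\cdot}$ being precisely the correction imposed by the $\Om_F^k$-nebentypus.

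For the constant term I would follow \lmref{L:constant}: the intertwining contribution $Mf_{\Phi_\cD,s}$ vanishes at the relevant point because the archimedean sections lie in the appropriate irreducible sub-representation, so $C_i(0,\bfE_k(\frakm,\OF))=f_{\Phi_\cD,s}(1)\big|_{s=(1-k)/2}$; computing the local integrals $f_{\Phi_{\cD,v},s}(1)$ place by place via \eqref{E:Godement2} and using the functional equation of $L(s,\Om_F^k)$ gives, up to the same normalization, $2^{-[F:\Q]}\big(\prod_{\frakp\mid p}(1-\Om_F(\frakp)^k\rmN\frakp^{k-1})\big)L(1-k,\Om_F^k)$, which by the Deligne--Ribet interpolation formula \eqref{eq:interpolation_p-adic_L-fcn} is $2^{-[F:\Q]}\zeta_{F,p}(1-k)=\nu_k\!\big(C_i(0,\cE(\frakm,\OF))\big)$, matching \eqref{E:57}. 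To pass from $\frakm_0=\OF$ to general square-free $\frakm_0\mid\frakm$, I would observe that replacing $\Phi_{\cD,v}=\bbI_{\frakm\OFv}(x)\bbI_{\frakd_{\Fv}^{-1}}(y)$ by $\bbI_{\frakm\OFv}(x)\bbI_{\OFv^\x}(y)$ at $v\mid\frakm_0$ realizes the operator $\prod_{\frakl\mid\frakm_0}(1-\rmN\frakl^{-1}U_\frakl)$ applied to $\bfE_k(\frakm,\OF)$ --- a short local Whittaker computation --- which is exactly the content of \eqref{E:pES} after applying $\nu_k$, since $U_\frakl$ commutes with $\nu_k$; the constant terms then acquire the factors $\prod_{\frakl\mid\frakm_0}(1-\rmN\frakl^{-1})$ on both sides, as in \eqref{E:57}.

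The main obstacle is the local bookkeeping at the bad places: checking at $v\mid p$ that the section $\Om_{\Fv}^{-k}(x)\bbI_{\OFv^\x}(x)\bbI_{\OFv}(y)$ produces exactly the $p$-depleted, $\Om_F^{-k}$-twisted Frobenius sum of \eqref{Eisensteincoefficients} and the Euler factor $\prod_{\frakp\mid p}(1-\Om_F(\frakp)^k\rmN\frakp^{k-1})$ in the constant term, and tracking all powers of $\rmN\frakm$, of $\Dmd{\rmN\frakm}$, and of the discriminant through \eqref{E:Godement2} and \eqref{E:FC_W} so that precisely $\rmN\frakm/\Dmd{\rmN\frakm}^k$ survives. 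These computations are delicate but essentially routine, being close variants of those already carried out in \propref{P:FC1}, \lmref{L:constant}, and \cite[\S4.5]{Hsieh14Crelle}.
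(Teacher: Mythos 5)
Your approach — verify the identity coefficient-by-coefficient via the adelic Fourier expansion, computing local Whittaker integrals place by place and matching constant terms — is exactly what the paper does. The treatment of the Fourier coefficients and the reduction of the $\frakm_0$-part to $\prod_{\frakl\mid\frakm_0}(1-\rmN\frakl^{-1}U_\frakl)$ via the modified local section at $v\mid\frakm_0$ both match the paper's computations.

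There is, however, a genuine error in your constant-term analysis. You model it on \lmref{L:constant}, asserting that $Mf_{\Phi_\cD,s}$ vanishes and $C_i(0,\bfE_k(\frakm,\OF))=f_{\Phi_\cD,s}(1)\big|_{s=(1-k)/2}$. For the present Eisenstein series the roles are in fact swapped. At $v\mid p$ the section is $\Phi_{\cD,v}(x,y)=\Om_{\Fv}^{-k}(x)\bbI_{\OFv^\x}(x)\bbI_{\OFv}(y)$, which vanishes at $x=0$; consequently $\Phi_{\cD,v}(0,t)=0$ for all $t$ and the Godement integral \eqref{E:Godement2} gives $f_{\Phi_{\cD,v},s}(1)=0$. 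Hence $f_{\Phi_\cD,s}\bigl(\pDII{\bfc}{1}\bigr)=0$, and the entire constant term comes from the intertwining term $Mf_{\Phi_\cD,s}\bigl(\pDII{\bfc}{1}\bigr)\big|_{s=(1-k)/2}$, not from $f(1)$. This is the opposite of \lmref{L:constant}, where the sections $\Phi_{\lam,v}$ at $v\mid p$ are nonzero at $x=0$ (when $\Csplit_c=\cOK$) and the archimedean $Mf$ is killed at $s=0$. Your proof would stall as written, since the quantity you declare equal to the constant term is identically zero; the computation of $Mf$ (using the functional equation for $L(s,\Om_F^k)$) is what actually yields $2^{-d}\zeta_{F,p}(1-k)\prod_{\frakl\mid\frakm_0}\zeta_{F_\frakl}(1)^{-1}$ up to the normalization $\Dmd{\rmN\frakm}^k/\rmN\frakm$.
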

\begin{proof}As in the proof of \propref{P:FC1}, the $\beta$-th Fourier coefficient of $E_\frakc$ is given by 
\[c_\beta(E_\frakc)=\prod_{v\in\bfh}W_\beta(\pDII{\bfc_v}{1},f_{\Phi_{\cD,v},s})\abs{\bfc_v}_{v}^{-\frac{k}{2}}.\]
Let $v$ be a prime $\frakl$. Put $\bfal_\frakl=\rmN\frakl^{-1}\Dmd{\rmN\frakl}^k$. If $\frakl\ndivides\frakm_0$, then it is easy to see that
\begin{align*}W_\beta(\pDII{\bfc_v}{1},f_{\Phi_{\cD,v},s})|_{s=\frac{1-k}{2}}&=\abs{\bfc_v}_{v}^{\frac{k}{2}}\sum^{v(\beta \bfc_v)}_{i=v(\frakm)}\bfal_\frakl^i\text{ if }\frakl\ndivides p\frakm_0,\\
W_\beta(1,f_{\Phi_{\cD,v},s})&=\bbI_{\OFv}(\beta)\text{ if }\frakl\divides p.\end{align*}
For $\frakl\divides\frakm_0$, we have
\begin{align*}
W_\beta(1,f_{\Phi_{\cD,v},s})=&\int_{\Fv^\x}\bbI_{\frakm\OFv}(t)\wh{\bbI_{\OFv^\x}}(-\beta t^{-1})\Om^k(t)\abs{t}_v^{2s}\rmd^\x t.
\end{align*}
Since $\wh{\bbI_{\OFv^\x}}=\bbI_{\OFv}-\abs{\uf_v}_v\bbI_{\uf_v^{-1}\OFv}$, we find that 
\begin{align*}W_\beta(1,f_{\Phi_{\cD,v},s})=&\sum^{v(\beta)}_{i=v(\frakm)}\Om_F^k(\uf_v^i)\abs{\uf_v^i}_v^{1-k}-\abs{\uf_v}_v\sum^{v(\beta)+1}_{i=v(\frakm)}\abs{\uf_v}_v\Om^k(\uf_v^i)\abs{\uf_v^i}_v^{1-k}\\
=&\sum^{v(\beta)}_{i=v(\frakm)}\bfal_\frakl^i-\rmN\frakl^{-1}\sum^{v(\beta\frakl)}_{i=v(\frakm)}\bfal_\frakl^i.
\end{align*}
Combined the above formulas, we deduce  that 
\begin{align*}C(\fraka,\bfE_k(\frakm,1))&=\sum_{\fraka\subset \frakb\subset \frakm,\,(p,\frakb)=1}\rmN\frakb^{-1}\Dmd{\frakb}^k=\frac{\Dmd{\rmN\frakm}^k}{\rmN\frakm}C(\fraka,\nu_k(\cE(\frakm,1)),\\
C(\fraka,\bfE_k(\frakm,\frakm_0))&=C(\fraka,\prod_{\frakl\divides\frakm_0}(1-\rmN\frakl^{-1}U_\frakl)\bfE_k(\frakm,1)).
\end{align*}
We next investigate the constant terms. An elementary calculation shows that 
\begin{align*}
f_{\Phi_{\cD},s}(\pDII{\bfc}{1})=&0,\\
Mf_{\Phi_{\cD},s}(\pDII{\bfc}{1})\bigg\vert_{s=\frac{1-k}{2}}=&\frac{\Dmd{\rmN\frakm}^k}{2^d\rmN\frakm}\cdot \zeta_F(1-k)\prod_{\frakp\divides p} (1-\rmN\frakp^{k-1})\prod_{\frakl\divides\frakm_0}\frac{1}{\zeta_{F_\frakl}(1)}.
\end{align*}
We obtain $C_i(0,\bfE_k(\frakm,\frakm_0))$ equals
\[C(\fraka,\prod_{\frakl\divides\frakm_0}(1-\rmN\frakl^{-1}U_\frakl)\bfE_k(\frakm,1))=\zeta_{F,p}(1-k)\cdot \frac{\Dmd{\rmN\frakm}^k}{2^d\rmN\frakm}\prod_{\frakl\divides\frakm_0}\frac{1}{\zeta_{F_\frakl}(1)}.\]
In particular, $C_i(0,\bfE_k(\frakm,1))=\frac{\Dmd{\rmN\frakm}^k}{\rmN\frakm}C_i(0,\nu_k(\cE(\frakm,1))$.
Combining everything together, we conclude that 
\begin{align*}\bfE_k(\frakm,1))&=\frac{\Dmd{\rmN\frakm}^k}{\rmN\frakm}\nu_k(\cE(\frakm,1)),\\
\bfE_k(\frakm,\frakm_0)&=\prod_{\frakl\divides\frakm_0}\left(1-\rmN\frakl^{-1}U_\frakl\right)\bfE_k(\frakm,1).\end{align*}
The proposition follows. 
\end{proof}

\subsection{Whittaker functions}
Let $\pi$ be a irreducible automorphic representation of $\GL_2(\A_F)$ with the conductor $\frakn$. 
For each place $v$, denote by $\sW(\pi_v)=\sW(\pi_v,\addchar_{\Fv})$ the Whittaker model of $\pi_v$ with respect to $\addchar_{\Fv}$.  Recall that this is a space of smooth functions $W$ on $\GL_2(\Fv)$ satisfying the following properties:
\begin{itemize} \item $\sW(\pi_v)$ is invariant for the right translation $\rho$ of $\GL_2(\Fv)$ and the resulting representation is isomorphic to $\pi_v$;
\item For each $W\in\sW(\pi_v)$, 
\[W(\pMX{1}{x}{0}{1}g)=\addchar_{\Fv}(x)W(g)\quad x\in \Fv, \,g\in \GL_2(\Fv).\]
\end{itemize}
If $v\in\Sg$ and $\pi_v$ is a discrete series of weight $k_v$, let $W_{\pi_v}\in \sW(\pi_v)$ be the Whittaker function of minimal weight $k_v$, \ie the unique element in $\sW(\pi_v)$ such that  \[W_{\pi_v}(\pDII{y}{1})=y^\frac{k_v}{2}e^{-2\pi y}\bbI_{\R_+}(y).\] If $v$ is a finite place of $F$, let $W_{\pi_v}$ be the new Whittaker function in $\sW(\pi_v,\addchar_{\Fv})$, \ie $W_{\pi_v}$ is the unique Whittaker function right invariant by $U_1(\frakn)$ with $W_{\pi_v}(1)=1$. If $\pi_v$ is a principal series and $\al: \Fv^\x\to \C^\x$ is a character, we let $W_{\pi_v}^{(\al)}$ be the unique Whittaker function in $\sW(\pi_v)$ such that 
\[W_{\pi_v}^{(\al)}(\pDII{a}{1})=\al(a)\abs{a}_{v}^\onehalf\bbI_{\OFv}(a).\]

\subsection{The bilinear pairing}
Let $\bff\in S_{k_1}(\frakn p,\om)$ be a $p$-stabilized cuspidal Hilbert newform of weight $k_1\Sg$. Let $\pi_{\bff}$ be the cuspidal automorphic representation generated by $\varphi_{\bff}$. Let $\frakn$ be the conductor of $\pi_{\bff}$. Assume that 
\[ (p,\frakn\Delta_F)=1.\] 
For each $v\divides p$, let $\pi_{\bff,v}=\pi(\al_{\bff,v},\beta_{\bff,v})$ be a principal series associated with unramified characters $\al_{\bff,v},\beta_{\bff,v}:\Fv^\x\to\C^\x$. The Whiattker function $W_{\varphi_\bff}$ has a factorization
\beq\label{E:W1}W_{\varphi_{\bff}}=\prod_{v\divides p}W^{(\al_{\bff,v})}_{\pi_v}\prod_{v\divides \infty}W_{\pi_v}\prod_{v\ndivides p\infty} W_{\pi_v}\quad (\pi_v=\pi_{\bff,v}). \eeq
Put
\begin{align*}
S_{\rm vr}=&\stt{v:\text{places dividing }\frakn\mid L(s,\pi_{\bff,v})=1}.
\end{align*}
Denote by $\pi_{\bff}^\vee$ the contragredient of $\pi_{\bff}$. Let $\varphi_{\breve\bff}$ be the automorphic form in $\pi_{\bff}^\vee$ with the Whittaker function given by 
\beq\label{E:W2}W_{\varphi_{\breve\bff}}=\prod_{v\divides p}W^{(\beta_{\bff,v}^{-1})}_{\pi^\vee_v}\cdot \prod_{v\divides \infty}W_{\pi^\vee_v}\prod_{v\ndivides p\infty} W_{\pi^\vee_v}\quad(\pi^\vee_v=\pi_{\bff,v}^\vee).\eeq
Note that $W^{(\beta_{\bff,v}^{-1})}_{\pi^\vee_v}=W^{(\al_{\bff,v})}_{\pi_v}\ot\om_v^{-1}$. Define the $\C$-linear pairing $\pairing:\cA^0_{-k_1}(\om)\times\cA_{k_1}(\om^{-1})\to\C$ by 
  \beq\label{E:pairing}\pair{\varphi_1}{\varphi_2}=\int\limits_{\PGL_2(F)\bksl \PGL_2(\A_F)}\varphi_1(g)\varphi_2(g)\rmd^{\rmt} g.\eeq
Here $\rmd^\rmt g$ is the Tamagawa measure of $\PGL_2(\A_F)$. Let $n$ be a positive integer. Put
\[\cJ_\infty=\pDII{-1}{1}\in \GL_2(F\ot\R);\quad t_n=\pMX{0}{p^{-n}}{-p^{n}}{0}\in \GL_2(F\ot\Qp).\]
\begin{prop}\label{P:Petnorm} We have
\begin{align*}\pair{\rho(\cJ_\infty t_n)\varphi_{\bff}}{\varphi_{\breve\bff}}=&\frac{2L(1,\pi_\bff,\Ad)}{\sqrt{\Delta
_F}\zeta_F(2)}\cdot \frac{1}{2^{(k_1+1)[F:\Q]}}\prod_{v\divides\frakn}\frac{\zeta_{\Fv}(2)}{\zeta_{\Fv}(1)}\\
&\times\prod_{v\in S_{\rm vr}}\frac{1}{L(1,\pi_v,\Ad)}\prod_{v\divides p}\frac{\om_v^{-1}\al_{\bff,v}^2\Abs_v(p^{n})\zeta_{\Fv}(2)}{\zeta_{\Fv}(1)}\cdot{\rm E}_v(\Ad\bff),\end{align*}
where ${\rm E}_p(\Ad\bff)$ is the modified $p$-Euler factor for the adjoint of $\bff$ given by \begin{align*}{\rm E}_v(\Ad\bff)=&L(1,\al_{\bff,v}^{-1}\beta_{\bff,v})^{-1}L(0,\al_{\bff,v}^{-1}\beta_{\bff,v})^{-1}\\
=&(1-\al_{\bff,v}^{-1}\beta_{\bff,v}(p)\abs{p}_{v})(1-\al_{\bff,v}^{-1}\beta_{\bff,v}(p)).
\end{align*} 
\end{prop}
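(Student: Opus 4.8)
\emph{Strategy.} The plan is to realize the pairing as a residue of a Rankin--Selberg integral and then factor that integral into local pieces. Choose a Schwartz--Bruhat function $\Phi=\otimes_v\Phi_v\in\cS(\A_F^2)$ with $\widehat\Phi(0)\neq 0$ and form the $\GL_2$-Eisenstein series $E_\A(g,f_{\Phi,s})$ attached to the section $f_{\Phi,s}=\otimes_v f_{\Phi_v,s}$ as in \S\ref{SS:43}. Since $\varphi_{\bff}$ is cuspidal, $\varphi_{\bff}\,\varphi_{\breve\bff}$ is rapidly decreasing, so the integral $I(s):=\int_{\PGL_2(F)\bksl\PGL_2(\A_F)}\big(\rho(\cJ_\infty t_n)\varphi_{\bff}\big)(g)\,\varphi_{\breve\bff}(g)\,E_\A(g,f_{\Phi,s})\,\rmd^\rmt g$ converges for $\Re s\gg0$ and continues meromorphically; unfolding $E_\A$ and inserting the Fourier expansion \eqref{E:FC_W} of $\varphi_{\bff}$ expresses $I(s)$ as an Euler product $\prod_v Z_v(s)$ of local Rankin--Selberg zeta integrals for $\pi_{\bff,v}\times\pi_{\bff,v}^\vee$, the local data being the Whittaker functions in \eqref{E:W1} and \eqref{E:W2}, with the factor at $v\in\Sg$ built from $\rho(\cJ_\infty)W_{\pi_v}$ and the factor at $v\mid p$ built from $\rho(t_n)W^{(\al_{\bff,v})}_{\pi_v}$. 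On the other hand $E_\A(g,f_{\Phi,s})$ has a simple pole whose residue is a constant multiple of $\widehat\Phi(0)$, and pairing that residue against $\varphi_{\bff}\varphi_{\breve\bff}$ produces $\pair{\rho(\cJ_\infty t_n)\varphi_{\bff}}{\varphi_{\breve\bff}}$. Comparing residues, the pairing equals a global constant (accounting, once the Tamagawa volume of $\PGL_2$ and the self-dual measure normalizations are unwound, for the prefactor $2/(\sqrt{\Delta_F}\zeta_F(2))$) times the product over $v$ of the residues of the $Z_v(s)$. This is the standard Rankin--Selberg/Petersson-norm mechanism (\cf \cite{Jacquet72LNM278}).

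\emph{Good and archimedean places.} For $v\nmid p\frakn\infty$ the local integral of the two spherical new vectors against the unramified section equals, up to a standard normalizing constant (a power of $\abs{\frakd_F}_v$ and of $\zeta_{\Fv}(1)$), the local factor $L(s,\pi_{\bff,v}\times\pi_{\bff,v}^\vee)=\zeta_{\Fv}(s)L(s,\pi_{\bff,v},\Ad)$; the simple pole of $\zeta_{\Fv}(s)$ yields the residue and leaves $L(1,\pi_{\bff,v},\Ad)$, so the product over all such $v$ assembles the incomplete adjoint $L$-value, the missing Euler factors at $v\mid\frakn$ being restored in the ramified computation. For $v\in\Sg$, $\rho(\cJ_\infty)W_{\pi_v}$ is the weight $-k_1$ (antiholomorphic) Whittaker function, so $Z_v(s)$ is a Mellin transform of a product of two explicit weight-$k_1$ discrete-series Whittaker functions; evaluating it by a beta-integral produces the power of $2$ in $2^{-(k_1+1)[F:\Q]}$ together with the archimedean constituent of $L(1,\pi_\bff,\Ad)/\zeta_F(2)$.

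\emph{Ramified and $p$-adic places.} At $v\mid\frakn$ one runs through the finite list of possibilities for $\pi_{\bff,v}$ (ramified principal series, Steinberg type, or supercuspidal) using the explicit new-vector Whittaker functions: the local zeta integral yields $\zeta_{\Fv}(2)/\zeta_{\Fv}(1)$, and exactly when $L(s,\pi_{\bff,v})=1$, i.e.\ for $v\in S_{\rm vr}$, it falls short of the good value by $L(s,\pi_{\bff,v},\Ad)^{-1}$, giving the correction $\prod_{v\in S_{\rm vr}}L(1,\pi_v,\Ad)^{-1}$. The heart of the matter is $v\mid p$, where $\pi_{\bff,v}=\pi(\al_{\bff,v},\beta_{\bff,v})$ is unramified, $W_{\bff,v}=W^{(\al_{\bff,v})}_{\pi_v}$ and $W_{\breve\bff,v}=W^{(\beta_{\bff,v}^{-1})}_{\pi_v^\vee}=W^{(\al_{\bff,v})}_{\pi_v}\otimes\om_v^{-1}$ are the ``evil'' ($\al$-eigen) Whittaker functions, not the spherical ones, and one must evaluate $Z_v$ with $W_{\bff,v}$ replaced by $\rho(t_n)W^{(\al_{\bff,v})}_{\pi_v}$. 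Writing $t_n=\diag(p^{-n},p^n)w$, using $W^{(\al_{\bff,v})}_{\pi_v}(\diag(a,1))=\al_{\bff,v}(a)\abs{a}_v^{1/2}\bbI_{\OFv}(a)$ and the Iwasawa decomposition, one computes $\rho(t_n)W^{(\al_{\bff,v})}_{\pi_v}$ on $N\backslash\GL_2(\Fv)$, the integral collapses to a geometric series in $\al_{\bff,v}^{-1}\beta_{\bff,v}(\uf_v)$, and the functional equation supplied by $w$ leaves, after extracting the residue, exactly $\om_v^{-1}\al_{\bff,v}^2\Abs_v(p^{n})\cdot\zeta_{\Fv}(2)/\zeta_{\Fv}(1)\cdot L(1,\al_{\bff,v}^{-1}\beta_{\bff,v})^{-1}L(0,\al_{\bff,v}^{-1}\beta_{\bff,v})^{-1}$, i.e.\ the claimed factor with ${\rm E}_v(\Ad\bff)$; here $\al_{\bff,v}^2\Abs_v(p^n)$ records the scaling of $t_n$ along the $\al$-line, $\om_v^{-1}$ enters through $W_{\breve\bff,v}=W^{(\al_{\bff,v})}_{\pi_v}\otimes\om_v^{-1}$, and ${\rm E}_v(\Ad\bff)$ records the discrepancy between the evil Whittaker functions and the new vectors.

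\emph{Main obstacle.} The decisive difficulty is the computation at $v\mid p$: pinning down the precise monomial $\om_v^{-1}\al_{\bff,v}^2\Abs_v(p^n)$ and the exact modified Euler factor ${\rm E}_v(\Ad\bff)$ requires carefully expressing $\rho(t_n)W^{(\al_{\bff,v})}_{\pi_v}$ in Iwasawa coordinates --- including the contribution of the non-diagonal part of $\GL_2(\OFv)$, which does not drop out here since the test vector is not spherical --- and then integrating it against $W^{(\beta_{\bff,v}^{-1})}_{\pi_v^\vee}$ with the chosen section $f_{\Phi_v,s}$. A secondary but genuine chore is keeping all normalizing constants consistent across every place --- the Tamagawa volume of $\PGL_2(F)\bksl\PGL_2(\A_F)$, the residue constant of the Eisenstein series, the $\abs{\frakd_F}_v$ and $\zeta_F(2)$ factors, and the archimedean powers of $2$ and $\pi$ --- so that they combine into the clean prefactor $\tfrac{2L(1,\pi_\bff,\Ad)}{\sqrt{\Delta_F}\zeta_F(2)}$.
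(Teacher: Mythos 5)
Your plan is sound, and the underlying mechanism --- unfolding an Eisenstein series $E_\A(g,f_{\Phi,s})$ against $\varphi_{\bff}\varphi_{\breve\bff}$, factoring into local Rankin--Selberg integrals, and comparing residues at the pole of $E_\A$ --- is exactly the argument that lies behind the result; so the proposal is correct in substance. The paper, however, does not run this computation from scratch: it cites \cite[Proposition 6]{Waldsupurger85Comp}, which is precisely the Rankin--Selberg residue argument packaged as a clean factorization of the Petersson pairing into normalized local Whittaker pairings $\pair{W_{1,v}}{W_{2,v}}=\int_{F_v^\x}W_{1,v}(\mathrm{diag}(a,1))W_{2,v}(\mathrm{diag}(-a,1))\,\rmd^\x a$ divided by $L(1,\pi_v,\Ad)$, with the global prefactor $2L(1,\pi_\bff,\Ad)/(\sqrt{\Delta_F}\zeta_F(2))$ already built into that proposition. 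After invoking Waldspurger, the paper only needs three local blocks: the archimedean places (which give the $2^{-(k_1+1)[F:\Q]}$), the places $v\mid\frakn$ (where the new-vector pairings equal $L(1,\pi_v,\Ad)$ except when $L(s,\pi_v)=1$, producing the $S_{\rm vr}$ correction), and $v\mid p$ --- and for the last one it cites \cite[Lemma 2.8]{Hsieh21AJM} rather than redoing the $t_n$-translated ordinary Whittaker computation that you sketch. Your $p$-adic computation is the right idea and is exactly what that lemma proves: the $\om_v^{-1}\al_{\bff,v}^2\Abs_v(p^n)$ scaling and the modified Euler factor ${\rm E}_v(\Ad\bff)$ come out of the local $\gamma$-factor $\gamma(0,\al_{\bff,v}^{-1}\beta_{\bff,v})$ applied to the ordinary vector. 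One normalization caveat you would have to track if you carried it out: the $\gamma$-factor contributes $\abs{\frakd_F}_v^{-\onehalf}$, which is silently $1$ here because the paper assumes $p$ unramified in $F$. In short, your route is the self-contained ``from-scratch'' version and would land on the same formula, but it is considerably longer once all constants (Tamagawa volume, self-dual measures, the residue of $E_\A$) are tracked explicitly; the paper trades that work for two citations.
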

\begin{proof} For each place $v$, let $\pair{-}{-}:\sW(\pi_v)\times \sW(\pi_v^\vee)\to\C$ be he pairing defined by 
\[\pair{W_1}{W_2}=\int_{\Fv^\x}W_1(\pDII{a_v}{1})W_2(\pDII{-a_v}{1})\rmd^\x a_v^{\rm st},\]
where $\rmd^\x a_v^{\rm st}$ is the Haar measure on $\Fv^\x$ with $\vol(\OFv^\x,\rmd^\x a_v^{\rm st})=1$. It follows from \cite[Proposition 6]{Waldsupurger85Comp} that
\begin{align*}\pair{\rho(\cJ_\infty t_n)\varphi_{\bff}}{\varphi_{\breve\bff}}&=\frac{2L(1,\pi,\Ad)}{\sqrt{\Delta_F}\zeta_F(2)}\cdot 2^{-(k_1+1)[F:\Q]}\prod_{v\divides\frakn}\frac{\zeta_{\Fv}(2)\pair{W_{\pi_v}}{ W_{\pi^\vee_v}}}{\zeta_{\Fv}(1)L(1,\pi_v,\Ad)}\\\
&\times \prod_{v\divides p}\frac{\zeta_{\Fv}(2)\pair{\rho(t_n)W^{(\al_{\bff,v})}_{\pi_v}}{W^{(\al_{\bff,v})}_{\pi_v}\ot\om_v^{-1}}}{\zeta_{\Fv}(1)L(1,\pi_v,\Ad)}.\end{align*}
For $v\divides \frakn$, it is well-known that 
\[\frac{\pair{W_{\pi_v}}{ W_{\pi^\vee_v}}}{L(1,\pi_v,\Ad)}=\begin{cases}1&\text{ if }v\not \in S_{\rm vr},\\
\frac{1}{L(1,\pi_v,\Ad)}&\text{ if }v\in S_{\rm vr}.
\end{cases}\]
The local integral at primes above $p$ is computed in \cite[Lemma 2.8]{Hsieh21AJM}
\[\pair{\rho(t_n)W^{(\al_{\bff,v})}_{\pi_v}}{W^{(\al_{\bff,v})}_{\pi_v}\ot\om_v^{-1}}=\om_v^{-1}\al_{\bff,v}^2\Abs_v(p^{n})\cdot \gamma(0,\al_{\bff,v}^{-1}\beta_{\bff,v})\zeta_{\Fv}(1),\]
where 
\[\gamma(0,\al_{\bff,v}^{-1}\beta_{\bff,v})=\frac{\abs{\frakd_F}_v^{-\onehalf}L(1,\al_{\bff,v}\beta_{\bff,v}^{-1})}{L(0,\al_{\bff,v}^{-1}\beta_{\bff,v})}.\]
This finishes the proof.
\end{proof}
\subsection{Rankin-Selberg convolution}We keep the notation in the previous subsection. Let $\pi_1=\pi_{\bff}$. Let $\bfg^\circ\in S_{k_2}(\frakn p,\om^{-1})$ be a Hilbert cuspform of parallell weight $k_2<k_1$. Let $\pi_2$ be the cuspidal automorphic representation generated by $\varphi_{\bfg^\circ}$. Assume that 
\begin{itemize}
\item[(RS1)] $k_1-k_2$ is divisible by $p-1$;
\item[(RS2)] for every $v\divides \frakn$, $\pi_{2,v}$ is isomorphic to $\pi_{1,v}^\vee$ up to unramified twists.
\end{itemize}
We put\begin{align*}
S_0=&\stt{v:\text{places dividing }\frakn\mid \pi_{1,v}\text{ is minimal}},\\
S_{\rm dis}=&\stt{v\in S_0\mid \pi_{1,v}\text{ is a discrete series }}.
\end{align*} 
Here recall that we say $\pi_{1,v}$ is minimal if the conductor of $\pi_{1,v}$ is minimal among that of its twists by a character. We define
\beq\label{E:nmm}\frakn=\prod_{\frakq}\frakq^{e_\frakq},\quad  \frakm=\frakl_0^r\prod_{\frakq\not\in S_{\rm dis}}\frakq^{e_\frakq}\prod_{\frakq\in S_{\rm dis}}\frakq^{\lceil\frac{e_\frakq}{2}\rceil};\quad \frakm_0=\prod_{\frakq\not\in S_0}\frakq.\eeq
Let $S_p^{\rm reg}$ be a subset of $S_p$ the set of prime factors of $p$. Suppose that the automorphic form $\varphi_{\bfg^\circ}$ associated with $\bfg^\circ$ admits the Whittaker function
\beq\label{E:W3}W_{\varphi_{\bfg^\circ}}=\prod_{v\not\in S_p^{\rm reg}}W_{\pi_{2,v}}\prod_{v\in S_p^{\rm reg}}W_{\pi_{2,v}}^{(\al_{\bfg,v})}.\eeq
\begin{prop}\label{P:RS.app}With the assumptions (RS1-2) and $\cD=(k_1-k_2,\frakm,\frakm_0)$, we have
\begin{align*}&\frac{\pair{\rho(\cJ_\infty t_n)\varphi_{\bff}}{\varphi_{\bfg^\circ}\cdot E_\A(-,f_{\Phi_\cD,s-1/2})}}{\pair{\rho(\cJ_\infty t_n)\varphi_{\bff}}{\varphi_{\breve\bff}}}=\frac{(\sqrt{-1})^{(k_1-k_2)[F:\Q]}L(s,\pi_1\times\pi_2)}{2L(1,\pi_1,\Ad)}\\
&\quad\times\frac{1}{\rmN\frakm}\prod_{v\divides\frakm_0}\frac{L(1,\pi_{1,v},\Ad)}{L(s,\pi_{1,v}\times\pi_{2,v})}\prod_{v\in S_p} \frac{1}{{\rm E}_v(\ad\bff)L(s,\pi_2\ot\beta_{\bff,v})}\prod_{v\in S_p^{\rm reg}}
\frac{1}{L(1-s,\al_{\bfg,v}^{-1}\al_{\bff,v}^{-1})}.\end{align*}
\end{prop}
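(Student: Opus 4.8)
The plan is to carry out the Rankin--Selberg unfolding and reduce the global pairing in the numerator to an Euler product of local zeta integrals, almost all of which are classical, so that the real work is isolated at the places above $p$. First I would insert the definition \eqref{E:Eisenstein.2} of $E_\A(-,f_{\Phi_\cD,s-1/2})$ and unfold against the Borel; since $\varphi_{\bff}$ is cuspidal and (RS1) forces the central characters to cancel, the standard basic identity gives, for $\Re s\gg 0$,
\[\pair{\rho(\cJ_\infty t_n)\varphi_{\bff}}{\varphi_{\bfg^\circ}\cdot E_\A(-,f_{\Phi_\cD,s-1/2})}=\prod_v\Psi_v(s),\]
where $\Psi_v(s)=\int_{N(\Fv)\bksl\GL_2(\Fv)}(\rho(\cJ_\infty t_n)W_{\pi_{1,v}})(g)\,W^{-}_{\pi_{2,v}}(g)\,f_{\Phi_{\cD,v},s-1/2}(g)\,\rmd g$, with $W^{-}$ the Whittaker function relative to $\ol\addchar_{\Fv}$, $\cJ_\infty$ acting only at $v\mid\infty$ and $t_n$ only at $v\mid p$; here I use the factorizations \eqref{E:W1} and \eqref{E:W3} of $W_{\varphi_{\bff}}$ and $W_{\varphi_{\bfg^\circ}}$ together with $f_{\Phi_\cD,s}=\ot_v f_{\Phi_{\cD,v},s}$. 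Convergence and meromorphic continuation in $s$ follow from the corresponding properties of $E_\A(-,f_{\Phi_\cD,s})$ proved in \cite[\S 19]{Jacquet72LNM278}, so it suffices to establish the asserted identity for $\Re s$ large.

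Next I would evaluate $\Psi_v(s)$ away from $p$. At a finite place $v\nmid p\frakm_0$ the section $f_{\Phi_{\cD,v},s}$ is the standard spherical (or new) section and the classical local computation gives $\Psi_v(s)=\abs{\frakd_F}_v^{\onehalf}L(s,\pi_{1,v}\times\pi_{2,v})$; at the ramified but minimal places one uses hypothesis (RS2) together with compatibility of new vectors. At $v\mid\frakm_0$, where $\pi_{1,v}$ is not minimal, the modified Schwartz function $\bbI_{\frakm\OFv}(x)\bbI_{\OFv^\x}(y)$ projects away the ramified local $L$-factor, producing the correction $L(1,\pi_{1,v},\Ad)/L(s,\pi_{1,v}\times\pi_{2,v})$ and part of the $\rmN\frakm$-normalization. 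At $v\in\Sigma$, after applying $\cJ_\infty$ the integral of the minimal-weight discrete-series Whittaker functions against the Gaussian $\Phi_{\cD,v}$ is an elementary beta/Gamma integral, yielding $\Gamma_\C$-type factors together with the sign $(\sqrt{-1})^{k_1-k_2}$; the product over $\Sigma$ then reconstitutes $(\sqrt{-1})^{(k_1-k_2)[F:\Q]}$ and the archimedean Euler factors of $L(s,\pi_1\times\pi_2)$ and $L(1,\pi_1,\Ad)$.

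The substantive step is the computation at $v\mid p$. There $\pi_{1,v}=\pi(\al_{\bff,v},\beta_{\bff,v})$ and $\pi_{2,v}$ are unramified principal series, but $\Psi_v(s)$ involves the $p$-stabilized Whittaker functions $W^{(\al_{\bff,v})}_{\pi_{1,v}}$ and, for $v\in S_p^{\rm reg}$, $W^{(\al_{\bfg,v})}_{\pi_{2,v}}$, translated by $t_n$, integrated against $f_{\Phi_{\cD,v},s-1/2}$ coming from $\Om_{\Fv}^{-k}(x)\bbI_{\OFv^\x}(x)\bbI_{\OFv}(y)$. I would evaluate this directly via the Iwasawa decomposition, keeping careful track of the $p^n$-dependence and of the intertwining-operator contribution to $f_{\Phi_{\cD,v},s}$; combined with the $p$-part of the normalizing pairing from \propref{P:Petnorm}, which carries the matching factor $\om_v^{-1}\al_{\bff,v}^2\Abs_v(p^n)$, the $p^n$-dependence cancels between numerator and denominator and what survives is $\big({\rm E}_v(\ad\bff)L(s,\pi_2\ot\beta_{\bff,v})\big)^{-1}$ together with, for $v\in S_p^{\rm reg}$, the extra $L(1-s,\al_{\bfg,v}^{-1}\al_{\bff,v}^{-1})^{-1}$. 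This is the analogue of the local computation \cite[Lemma 2.8]{Hsieh21AJM} used in \propref{P:Petnorm}, and it is where I expect the main difficulty to lie: one must simultaneously match the normalization of $W^{(\al)}$, the intertwining-operator term in $f_{\Phi_{\cD,v},s}$, and the $\gamma$-factor of $\al_{\bff,v}^{-1}\beta_{\bff,v}$, and verify that all spurious powers of $p^n$ drop out.

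Finally I would assemble the pieces: dividing $\prod_v\Psi_v(s)$ by $\pair{\rho(\cJ_\infty t_n)\varphi_{\bff}}{\varphi_{\breve\bff}}$ as computed in \propref{P:Petnorm}, the constants $\sqrt{\Delta_F}$ and $\zeta_F(2)$, the volume factors $\zeta_{\Fv}(1)$ and $\zeta_{\Fv}(2)$ at $v\mid\frakn$, the factors $L(1,\pi_v,\Ad)^{-1}$ over $S_{\rm vr}$, and the $p$-adic factors ${\rm E}_v(\Ad\bff)$ all cancel against the contributions found above, leaving precisely the right-hand side of the proposition; the identity, established for $\Re s\gg 0$, then holds for all $s$ by analytic continuation. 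The unfolding and the local integrals away from $p$ are essentially bookkeeping, and the $v\mid p$ computation together with the matching against \propref{P:Petnorm} is the crux.
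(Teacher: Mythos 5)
Your outline is essentially the proof the paper gives: unfold via Jacquet's basic identity to a product of local zeta integrals, evaluate the standard ones (unramified, archimedean, places dividing $\frakn$ using the minimality/discrete-series structure from (RS2), and $\frakm_0$ by a direct computation), treat the $v\mid p$ integral as the crux, and divide by \propref{P:Petnorm} so that the $p^n$-powers, $\zeta_{\Fv}(1)$, $\zeta_{\Fv}(2)$, $\sqrt{\Delta_F}$, and adjoint $L$-factors over $S_{\rm vr}$ all cancel. One small imprecision worth flagging: at $v\mid p$ the $\gamma$-factor does not come from an intertwining-operator contribution to $f_{\Phi_{\cD,v},s}$ — the paper evaluates $f_{\Phi_{\cD,v},s-1/2}$ on the big cell directly (it reduces to $\bbI_{\OFv}(x)$) and the $\gamma$-factor appears from applying the $\GL(2)$ local functional equation to the inner $x$-integral of the shifted Whittaker function $W^{(\al_{\bff,v})}_{\pi_{1,v}}$, after which the factorization $L(s,\pi_{1,v}\times\pi_{2,v})=L(s,\pi_{2,v}\ot\al_{\bff,v})L(s,\pi_{2,v}\ot\beta_{\bff,v})$ produces the stated $L(s,\pi_2\ot\beta_{\bff,v})^{-1}$.
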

\begin{proof}Let $N$ be the unipotent upper triangular subgroup of $\GL_2$. For each place $v$, define the trilinear form $\Psi:\sW(\pi_{1,v})\times \sW(\pi_{2,v})\times I(\bfone_v,\bfone_v,s)\to \C$ by 
\[\Psi(W_1,W_2,f)=\int_{N(\Fv)\bksl \PGL_2(\Fv)}W(g_v)W_2(\pDII{-1}{1} g_v)f(g_v)\rmd g_v,\]
where $\rmd g_v$ is the Haar measure given by 
\[\int_{N(\Fv)\bksl \PGL_2(\Fv)}F(g_v)\rmd g_v=\int_{\GL_2(\OFv)}\int_{\Fv^\x}F(\pDII{a_v}{1}u_v)\abs{a_v}^{-1}\rmd^\x a_v^{\rm st}\rmd u_v\] with $\vol(\GL_2(\OFv),\rmd u_v)=1$. According to \cite[Chapter V]{Jacquet72LNM278} (\cf\cite[(5.11)]{HsiehChen20}), we have the basic identity\begin{align*}&\pair{\rho(\cJ_\infty t_n)\varphi_{\bff}}{\varphi_{\bfg^\circ}\cdot E_\A(-,f_{\Phi_\cD,s-\onehalf})}=\int\limits_{\PGL_2(F)\bksl \PGL_2(\A_F)}\varphi_{\bff}(g\cJ_\infty t_n)\varphi_{\bfg^\circ}(g)E_\A(g,f_{\Phi_\cD,s-\onehalf})\rmd^{\rmt}g\\
&\quad =\frac{1}{\sqrt{\Delta_F}\zeta_F(2)}\prod_{v\divides \infty}\Psi(\rho(\cJ_\infty)W_{\pi_{1,v}},W_{\pi_{2,v}},f_{\Phi_{\cD,v}s-\onehalf})\prod_{v\ndivides p\infty}\Psi(W_{\pi_{1,v}},W_{\pi_{2,v}},f_{\Phi_{\cD,v},s-\onehalf})\\
&\quad \times \prod_{v\divides p}\Psi(W_{\pi_{1,v}}^\Ord,W_{\pi_{2,v}},f_{\Phi_{\cD,v}s-\onehalf}).
\end{align*}
For each place $v$, let $L(s,\pi_{1,v}\times\pi_{2,v})$ be the local Rankin-Selberg $L$-function for $\pi_{1,v}\times\pi_{2,v}$. If $v\in \bfh$ and $v\ndivides p\frakn$, 
\[\Psi(W_{\pi_{1,v}},W_{\pi_{2,v}},f_{\Phi_{\cD,v},s-\onehalf})=L(s,\pi_{1,v}\times\pi_{2,v})\]
by \cite[Proposition 15.9]{Jacquet72LNM278}, and if $v\divides\infty$, it is computed in \cite[Proposition 5.3]{HsiehChen20} that
\begin{align*}
\Psi(\rho(\cJ_\infty)W_{\pi_{1,v}},W_{\pi_{2,v}},f_{\Phi_{\cD,v}s-\onehalf})=&(\sqrt{-1})^{k_1-k_2}2^{-k_1-1}L(s,\pi_{1,v}\times\pi_{2,\infty}).
\end{align*}
Therefore, we find that
\[\pair{\rho(\cJ_\infty t_n)\varphi_{\bff}}{\varphi_{\bfg^\circ}E_\A(-,f_{\Phi_\cD,s-\onehalf})}=\frac{L(s,\pi_1\times\pi_2)}{\sqrt{\Delta_F}\zeta_F(2)}\left(\frac{(\sqrt{-1})^{(k_1-k_2)}}{2^{k_1+1}}\right)^{[F:\Q]}\prod_{v\divides p\frakn}\Psi_v(s),\]
where $\Psi_v(s)$ are the local zeta integrals defined by \begin{align*}\Psi_v(s)&=\frac{\Psi(W_{\pi_{1,v}},W_{\pi_{2,v}},f_{\Phi_{\cD,v},s-1/2})}{L(s,\pi_{1,v}\times\pi_{2,v})}\text{ if }v\divides \frakn,\\
\Psi_v(s)&=\frac{\Psi ( \rho (t_n)W_{\pi_{1,v}}^{(\al_{\bff,v})},W_{\pi_{2,v}},f_{\Phi_{\cD,v},s-1/2})}{L(s,\pi_{1,v}\times\pi_{2,v})}\text{ if }v\divides p;\,v\not\in S_p^{\rm reg},\\
\Psi_v(s)&=\frac{\Psi ( \rho (t_n)W_{\pi_{1,v}}^{(\al_{\bff,v})},W^{(\al_{\bfg,v})}_{\pi_2,v},f_{\Phi_{\cD,v},s-1/2})}{L(s,\pi_{1,v}\times\pi_{2,v})}\text{ if }v\in S_p^{\rm reg}.
\end{align*}
We first consider the integrals $\Psi_v(s)$ for $v\divides\frakn$. Put 
\[S_{\rm i}=\stt{v\in S_{\rm dis}\mid \pi_{1,v}\text{ is supercuspidal and }\pi_{1,v}\iso\pi_{1,v}\ot\tau_{F_{v^2}/\Fv}}.\]
The integrals $\Psi_v(s)$ for $v\in S_0$ have been evaluated in \cite[Lemma 6.3, 6.5]{HsiehChen20}, and we obtain
\[\Psi_v(s)=\frac{\zeta_{\Fv}(2)\abs{\frakm}_{v}}{\zeta_{\Fv}(1)}\begin{cases}
1&\text{ if }v\not\in S_{\rm i},\\
(1+\rmN v^{-1})&\text{ if }v\in S_{\rm i}.
\end{cases}\]
If $v\not\in S_0\iff v\divides\frakm_0$, then $\pi_{1,v}$ is not minimal and $\abs{\frakn}_{v}=\abs{\frakm}_{v}$. In this case, $W_{\pi_{i,v}}(\pDII{a}{1})=\bbI_{\OFv^\x}(a)\text{ for }i=1,2$ and \[f_{\Phi_{\cD,v},s}(u)=\bbI_{U_0(\frakn)_v}(u)\text{ for }u\in\GL_2(\OFv).\] 
A direct calculation shows that
\begin{align*}\Psi(W_{\pi_{1,v}},W_{\pi_{2,v}},f_{\Phi_{\cD,v},s-1/2})&=\vol(U_0(\frakn)_v,\rmd g_v)=\frac{\abs{\frakn}_{v}\zeta_{\Fv}(2)}{\zeta_{\Fv}(1)};\\
\Psi_v(s)&=\frac{\abs{\frakm}_v\zeta_{\Fv}(2)}{\zeta_{\Fv}(1)L(s,\pi_{1,v}\times\pi_{2,v})}.\end{align*}
We next compute the local zeta integral $\Psi_v(s)$ for $v\in S_p$ by a similar calculation in \cite[Lemma 6.1]{HsiehChen20}. Put $W_1=W_{\pi_{1,v}}^{(\al_{\bff,v})}$ and $W_2=W_{\pi_{2,v}}$ if $v\not\in S_p^{\rm reg}$ or $W_{\pi_{2,v}}^{(\al_{\bfg,v})}$ if $v\in S_p^{\rm reg}$. Then $\Psi ( \rho (t_n)W_1,W_2,f_{\Phi_{\cD,v},s-1/2})$ equals 
\begin{align*}
&\frac{\zeta_{\Fv}(2)}{\zeta_{\Fv}(1)}\int_{\Fv^\x}\int_{\Fv}W_1(\pDII{y}{1}\pMX{0}{-1}{1}{x}t_n)W_2(\pDII{-y}{1}\pMX{0}{-1}{1}{x})
\abs{y}_v^{s-1}\\
&\times f_{\Phi_{\cD,v},s-\onehalf}(\pMX{0}{-1}{1}{x})\rmd x\rmd^\x y\\
=&\frac{\zeta_{\Fv}(2)}{\zeta_{\Fv}(1)}\int_{\Fv}\int_{\Fv^\x}W_1(\pDII{yp^n}{p^{-n}}\pMX{1}{0}{-p^{2n}x}{1})W_2(\pDII{-y}{1}\pMX{0}{-1}{1}{x})\abs{y}_v^{s-1}\\
&\times \bbI_{\OFv}(x)\rmd^\x y\rmd x\\
=&\frac{\zeta_{\Fv}(2)\al_{\bff,v}\Abs_v^\onehalf(p^{2n})\om_{v}^{-1}(p^n)}{\zeta_{\Fv}(1)}\frac{1}{\gamma(s,\pi_2\ot\al_{\bff,v})}\int_{\Fv^\x}W_2(\pDII{-y}{1})\om_2^{-1}\al_{\bff,v}^{-1}\Abs_v^{\onehalf-s}(y)\rmd^\x y\\
=&\frac{\om_v^{-1}\al_{\bff,v}^2\Abs_v(p^{n})\zeta_{\Fv}(2)}{\zeta_{\Fv}(1)}\cdot L(s,\pi_{2,p}\ot\al_{\bff,v})\begin{cases}
\frac{1}{L(1-s,\al_{\bfg,v}^{-1}\al_{\bff,v}^{-1})}&\text{ if }v\in\Sg^{\rm reg}_p,\\
1&\text{ otherwise.}
\end{cases}
\end{align*}
Here we have used the local functional equation for $\GL(2)$ in the second equality. Combining these formulas, we finally obtain
\begin{align*}&\pair{\rho(\cJ_\infty t_n)\varphi_{\bff}}{\varphi_{\bfg^\circ}\cdot E_\A(-,f_{\Phi_\cD,s-1/2})}=\frac{L(s,\pi_1\times\pi_2)}{\sqrt{\Delta_F}\zeta_F(2)}\left(\frac{(\sqrt{-1})^{k}}{2^{(k_1+1)}}\right)^{[F:\Q]}(\sqrt{-1})^{k_1-k_2}\\
&\times\frac{1}{\rmN\frakm}\prod_{v\divides \frakn}\frac{\zeta_{\Fv}(2)}{\zeta_{\Fv}(1)}\prod_{v\in S_{\rm i}}(1+\rmN v^{-1})\prod_{v\divides\frakm_0}\frac{1}{L(s,\pi_{1,v}\times\pi_{2,v})}\\&\times\prod_{v\divides p}\frac{\om_v^{-1}\al_{\bff,v}^2\Abs_v(p^{n})\zeta_{\Fv}(2)}{\zeta_{\Fv}(1)}\cdot\frac{1}{L(s,\pi_{2,p}\ot\beta_{\bff,v})}\prod_{v\in S_{\rm reg}}\frac{1}{L(1-s,\al_{\bfg,v}^{-1}\al_{\bff,v}^{-1})}.\end{align*}
The result follows from \propref{P:Petnorm} and the fact that 
if $v\in S_{\rm vr}\cap S_0$, then $v$ must be supercuspidal, and hence 
\[L(1,\pi_v,\Ad)=\begin{cases}(1+\rmN v^{-1})^{-1}&\text{ if }v\in S_{\rm i}\\
1&\text{ otherwise.}\end{cases}\]
by \cite[Corollary (1.3)]{GJ78ASENS}.
\end{proof}
\subsection{Proof of \propref{P:3RS}}
Now we are in a position to to prove \propref{P:3RS} by Hida's $p$-adic Rankin-Selberg method. We shall use the representation theoretic approach in \cite{HsiehChen20}. Let $d=[F:\Q]$. It suffices to show that for all but finitely many positive integer $k$ with $k\con 0\pmod{p^d-1}$, 
  \beq\label{E:34}\begin{aligned}\e_\Sg^k(\bB)=&\frac{1}{\Dmd{\rmN\frakm}^k}\cdot \frac{2^d\cL_{\Sg}(k,0,\bfone)\cL_{\Sg}(k,0,\chi)}{h_{K/F}\cdot \cL_{\Sg}(k,-k,\chi)\zeta_{F,p}(1-s)}\\
  &\times\frac{\psi(\frakf\frakd_K^2)^{-k}}{\prod_{\frakP\in\Sg_p^{\rm irr}}(1-\psi^k(\Pbar))^2\prod_{\frakP\in\Sg_p^{\rm reg}}(1-\psi^k(\Pbar))} \cdot{\rm Ex}_{\frakl_0}(k) . \end{aligned}\eeq
 Here recall that $\chi=\brch^{1-c}$ is an anticyclotomic character of conductor $\frakf\OK$ and $\psi$ is the idele class character of $K$ corresponding to $\e_\Sg$ with $\psi_\infty(z)=z$. Note that $\psi^k$ is unramified everywhere for $k$ is divisible by $p^d-1$. To evaluate $\e_\Sg^k(\bB)$, we consider the spectral decomposition \beq\label{E:36}\begin{aligned}
&e_\Ord(\theta_\brch^\circ\nu_k(\cG(\frakm,\frakl_0)))=\frac{2^d\rmN\frakm\cdot \zeta_{F_{\frakl_0}}(1)}{\Dmd{\rmN\frakm}^k\zeta_{F,p}(1-k)}\cdot e_\Ord(\theta_\brch^\circ \bfE_k(\frakm,\frakl_0))\\
 &=\cC_k(\brch,\brch^c)\cdot\theta_{\brch\psi^{-k}}^{(\Sg_p)}+\cC_k(\brch^c,\brch)\cdot \theta_{\brch^c\psi^{-k}}^{(\Sg_p)}+\sH_{k}\in S_{k+1}(\frakn p,\brch_+^{-1}\tau_{K/F}),\end{aligned}\eeq
  where $\sH_{k}$ is orthogonal to the space generated by $\stt{V_{\fraka}\theta_{\brch^{-1}\psi^{-k}}, V_\fraka\theta_{\brch^{-c}\psi^{-k}}}_{\fraka\divides p}$ under the Petersson inner product. Since $\e_\Sg^k(\bftheta_\brch)=\theta_{\brch\psi^{-k}}^{(\Sg_p)}$ is a $p$-stabilized newform of weight $(k+1)\Sg$, the decomposition \eqref{E:36} is indeed obtained by the image of \eqref{E:32} under the map $\e_\Sg^k$, and hence \[\e_\Sg^k(\bB)=\iota_p^{-1}(\cC_k(\brch^c,\brch)).\] 
  
 Now we use the adelic Rankin-Selberg method to compute the value $\cC_k(\brch^c,\brch)$. 
 Let $\bff^{\rm new}=\theta_{\brch^{-1}\psi^{-k}}$ and $\bfg^{\rm new}=\theta_\brch$ be the Hilbert newforms associated with Hecke characters $\brch^{-1}\psi^{-1}$ and $\brch$. Let $\pi_1$ and $\pi_2$ be the cuspidal automorphic representation of $\GL_2(\A_F)$ associated with $\varphi_{\bff^{\rm new}}$ and $\varphi_{\bfg^{\rm new}}$. Then $\pi_1$ and $\pi_2$ are the automorphic inductions of the idele class characters $\brch^{-1}\psi^{-k}\Abs_{\A_K}^{\frac{k}{2}}$ and $\brch$, so  we see that $\pi_1$ and $\pi_2$ satisfy (RS1) and (RS2).
In addition,  the automorphic $L$-functions of $\pi_1$ and $\pi_2$ are given by
 \begin{align*}L(s,\pi_1)=&\Gamma_\C(s+k)^dD(s+\frac{k}{2},\bff^{\rm new})=L(s+\frac{k}{2},\brch^{-1}\psi^{-k});\\
 L(s,\pi_2)=&\Gamma_\C(s)^dD(s,\bfg^{\rm new})=L(s,\brch).\end{align*}
   
 Let $\bff=\theta_{\brch^{-1}\psi^{-k}}^{(\Sg_p)}$ be the $p$-stabilized newform associated with $\bff^{\rm new}$ and let $\breve \bff:=\theta_{\brch^c\psi^{-k}}^{(\Sg_p)}$ be the specialization of $\e_\Sg^s(\bftheta_{\brch^c})$ at $s=k$. Let $\bfg^\circ=\theta_{\brch}^\circ$ be the theta series defined in \eqref{E:theta2}. 
Let $v\divides p$ with $v=\frakP\Pbar$ and $\frakP\in\Sg_p$. Then $\pi_{1,v}=\pi(\al_{\bff,v},\beta_{\bff,v})$, where $\al_{\bff,v},\beta_{\bff,v}:\Fv^\x\to\C^\x$ are the unramified characters such that 
 \[\al_{\bff,v}(p)=\brch^{-1}\psi^{-k}(\Pbar)\rmN\frakP^{-\frac{k}{2}};\quad \beta_{\bff,v}(p)=\brch^{-1}\psi^{-k}(\frakP)\rmN\frakP^{-\frac{k}{2}}.\]
 Then one verifies that the Whitaker functions of $\varphi_{\bff}$ and $\varphi_{\breve\bff}$ are given by \eqref{E:W1} and \eqref{E:W2}. Likewise, $\pi_{2,v}\iso\pi(\al_{\bfg,v},\beta_{\bfg,v})$, where $\al_{\bfg,v}$ and $\beta_{\bfg,v}$ are unramified characters of $\Fv^\x$ with $\al_{\bfg,v}(p)=\brch(\frakP)$ and $\beta_{\bfg,v}(p)=\brch(\Pbar)$. Then one verifies that the Whittaker function of $\varphi_{\bfg^\circ}$ is precisely given by \eqref{E:W3}. In the notation of previous two subsections, 
 \begin{itemize}
 \item $k_1=k+1$ and $k_2=1$.
\item $S_{\rm dis}$ is the set of prime factors of $\frakc^-$ and $e_\frakq$ is even for $\frakq\in S_{\rm dis}$.
 \item $S_0$ is the set of prime factors of $\frakn$ except for $\frakl$, and $\frakm_0=\frakl$ in view of \eqref{min}.
 \item The ideal $\frakm$ in \eqref{E:nm2} is the one introduced in \eqref{E:nmm}.
 \end{itemize}
By \cite[Proposition 5.2]{HsiehChen20}, for $n\gg 0$ large enough, we have
 \begin{align*}\cC_k(\brch^c,\brch)=&\frac{\pair{\rho(\cJ_\infty t_n)\varphi_{\bff}}{\varphi_{\bfg^\circ}\cdot E_\A(-,f_{\Phi_\cD,s-1/2})}|_{s=1-\frac{k}{2}}}{\pair{\rho(\cJ_\infty t_n)\varphi_{\bff}}{\varphi_{\breve \bff}}}\cdot \frac{2^d\rmN\frakm\zeta_{F_\frakl}(1)}{\zeta_{F,p}(1-k)\Dmd{\rmN\frakm}^{k}}.
 \end{align*}
Note that by definition
\[{\rm Ex}_\frakl(k)=
\frac{\zeta_{F_\frakl}(1)^2L(1,\brch^{1-c}\psi^{(1-c)k})}{L(1,\psi^{-k})L(1,\brch^{c-1}\psi^{-k})}=\frac{\zeta_{F_\frakl}(1)L(1,\pi_{1,\frakl},\Ad)}{L(1-\frac{k}{2},\pi_{1,\frakl}\times\pi_{2,\frakl})}.\]
Put $\psi_-=\psi^{1-c}$. From \propref{P:Petnorm} and \propref{P:RS.app} we deduce that
 \beq\label{E:37}\begin{aligned}&\cC_k(\brch^c,\brch)=\frac{1}{\Dmd{\rmN\frakm}^k}\cdot\frac{2^d(\sqrt{-1})^{k\Sg}L(1,\psi^{-k})L(1,\brch^{c-1}\psi^{-k})}{2L(1,\pi_1,\Ad)\zeta_{F,p}(1-k)}\\
 &\times \frac{ \prod_{\frakP\in\Sg_p}(1-\psi^{-k}(\frakP)\rmN\frakP^{-1})(1-\brch^{c-1}\psi^{-k}(\frakP)\rmN\frakP^{-1})\prod_{\frakP\in\Sg_p^{\rm reg}}(1-\phi^{1-c}\psi^k(\ol{\frakP}))}{\prod_{\frakP\in\Sg_p}(1-\chi\psi_-^k(\frakP)^{-1}p^{-1})(1-\chi\psi_-^k(\ol{\frakP}))}\\
 &\times {\rm Ex}_\frakl(k).
 \end{aligned}\eeq
 By the functional equations of $L$-functions, one has
 \begin{align*}
 L(1,\psi^{-k})L(1,\brch^{c-1}\psi^{-k})&=\varepsilon(1,\psi^{-k})\varepsilon(1,\chi^{-1}\psi^{-k})L(0,\psi^k)L(0,\chi\psi^k),\\
 L(1,\pi_1,\Ad)&=L(1,\tau_{K/F})L(1,\brch^{1-c}\psi^{(1-c)k})\\
 &=\varepsilon(1,\chi\psi_-^k)\cdot L(1,\tau_{K/\Q})L(0,\chi\psi_-^k).
 \end{align*}
Note that $\frakf\OK$ is the conductor of $\chi$ and $\psi^k$ is unramified everywhere with $\psi_-|_{\A^\x_F}=1$, so we have \begin{align*}
 \varepsilon(1,\psi^{-k})&=\sqrt{\Delta_K}^{-1}\psi(\frakd_K)^{-k},\quad
  \varepsilon(1,\chi\psi^{-k})=\varepsilon(1,\chi)\psi(\frakf\frakd_K)^{-k},\\
\varepsilon(1,\chi\psi_-^k)& =\varepsilon(1,\chi).\end{align*}
It follows that
\beq\label{E:38}\frac{(\sqrt{-1})^{k\Sg}L(1,\psi^{-k})L(1,\brch^{c-1}\psi^{-k})}{2L(1,\pi_1,\Ad)}=\frac{(\sqrt{-1})^{k\Sg}L(0,\psi^k)L(0,\chi\psi^k)}{2\sqrt{\Delta_K}L(1,\tau_{K/\Q})L(0,\chi\psi_-^k)}\cdot \psi(\frakf\frakd_K^2)^{-k}.\eeq
 By the interpolation formulae of the Katz $p$-adic $L$-function in \eqref{E:intep2}, we find that
\beq\label{E:39}\begin{aligned}\frac{\cL_\Sg(k,0,\chi)\cL_\Sg(k,0,\bfone)}{\cL_\Sg(k,-k,\chi)}=&\frac{[\cOK^\x:\cOF^\x]}{2^d\sqrt{\Delta_F}(\sqrt{-1})^{k\Sg}}\frac{L(0,\chi\psi^{k})L(0,\psi^{k})}{L(0,\chi\psi_-^k) }\\
&\times \prod_{\frakP\in\Sg_p}\frac{ (1-\psi^{k}(\frakP^{-1})\rmN\frakP^{-1})(1-\chi\psi^{k}(\frakP^{-1})\rmN\frakP^{-1})(1-\psi^k(\ol{\frakP}))^2}{(1-\chi\psi_-^k(\frakP^{-1})p^{-1})(1-\chi\psi_-^k(\ol{\frakP}))}.\end{aligned}\eeq
Combining \eqref{E:37}, \eqref{E:38}, \eqref{E:39} and \eqref{E:L1},
we obtain \eqref{E:34}.  This finishes the proof.


\appendix 
\addcontentsline{toc}{chapter}{APPENDICES}
\section{Transcendental number theory and non-vanishing of the $p$-adic regulators}
\subsection{The $p$-adic transcendence conjectures}\label{section: Schanuel}
The non-vanishing of the regulators are usually tied with well-known conjectures in $p$-adic transcendental number theory. We first recall some standard results and conjectures in  $p$-adic transcendence. 
As before, $\log_p: \overline{\Q}{}_p^{\times}\to \overline{\Q}_p$ is the Iwasawa $p$-adic logarithm with $\log_p(p)=0$, and we use the same notation for its composition with $\iota_p:\overline{\Q} \hookrightarrow \overline{\Q}_p$. Denote $\wtd{\mathcal{L}}\subset \overline{\Q}_p$ the $\overline{\Q}$-vector space generated by $1$ and  elements of 
$\log_p(\overline{\Q}{}^\times)$. We recall the statements of two conjectures at the top of a Hierarchy of conjectures and theorems concerning the arithmetic nature of  certain values of $\log_p$.
\begin{conj}[Strong Four Exponentials Conjecture]\label{conj:4exp} 
Let $M$ be a $2\times 2$ matrix
with entries in $\wtd\cL$. Assume the rows and the columns of $M$ are linearly independent over $\Qbar$ respectively. Then $M$ has rank $2$. 
\end{conj}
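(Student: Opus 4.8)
No unconditional proof of \conjref{conj:4exp} is available: this is the $p$-adic \emph{strong four exponentials conjecture}, the $2\times2$ counterpart of the six exponentials circle of ideas, and — in contrast with the $2\times3$ six exponentials theorem and Roy's strong version invoked in the Remark of this Appendix — it remains open. Still, the shape a proof would have to take is standard. First I would argue by contradiction, assuming $\det M=0$. Writing the entries as $\Lambda_{ij}=\log_p(\alpha_{ij})$ with $\alpha_{ij}\in\Qbar^\times$, the $\Qbar$-linear independence of the two rows and of the two columns lets one reduce the vanishing $\det M=0$ to a single nontrivial $\Qbar$-linear relation among the $\Lambda_{ij}$ that is not already forced by a multiplicative relation among the $\alpha_{ij}$; in particular the configuration genuinely involves two independent multiplicative directions. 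Next I would introduce an auxiliary $p$-adic exponential polynomial $F(z)=\sum_{m_1,m_2}c_{m_1m_2}\,\alpha_{11}^{m_1z}\alpha_{21}^{m_2z}$ with coefficients $c_{m_1m_2}\in\Qbar$ chosen by Siegel's lemma so that $F$ and many of its derivatives vanish at $z=0,1,\dots,S$, the row/column independence being exactly what guarantees that the underlying linear system has the expected rank deficiency.

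The analytic core would then be the usual extrapolation: a $p$-adic Schwarz lemma propagates the vanishing of $F$ to a larger disc; a Liouville-type lower bound controls the nonzero algebraic values of $F$ (and of its low-order derivatives) from below; comparing the two estimates forces $F\equiv0$; and a zero estimate for $p$-adic exponential polynomials in the style of Philippon and Waldschmidt is then invoked to reach a contradiction. I would keep track of the three parameters — the number of monomials $(m_1,m_2)$, the derivation order $T$, and the number $S$ of interpolation points — and optimise their relative sizes.

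The \textbf{main obstacle}, and the reason this statement is a conjecture rather than a theorem, lies precisely in this last optimisation: the six exponentials theorem and its strong form succeed because a $2\times3$ array of entries leaves just enough room for the size estimates and the zero estimate to be compatible, whereas the $2\times2$ case treated here is one exponential short and no known choice of auxiliary function bridges the gap (the statement is known to follow from Schanuel's conjecture, or from the $p$-adic four exponentials conjecture, but not otherwise). Consequently, in this Appendix \conjref{conj:4exp} is not proved but recorded as one of the $p$-adic transcendence inputs — alongside the strong six exponentials theorem, which \emph{is} available — under which the non-vanishing of the $p$-adic regulator $\sR_\Sg(\chi)$, and hence the $\Sg$-Leopoldt conjecture for $\chi$, follows in the rank $\le 2$ situations relevant to Theorems~A and~B.
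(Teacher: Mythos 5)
You correctly recognize that the statement is an open conjecture, not a theorem: the paper records the $p$-adic Strong Four Exponentials Conjecture in the Appendix as one of the transcendence inputs and offers no proof of it, precisely as you say. Your sketch of the standard transcendence strategy (auxiliary function via Siegel's lemma, $p$-adic Schwarz lemma, Liouville bound, Philippon--Waldschmidt zero estimate) and your explanation of where it breaks down — the $2\times2$ configuration is one exponential short of the $2\times3$ case handled by Roy's strong six exponentials theorem — are accurate and consistent with the paper's treatment, which only invokes this conjecture (in the Remark following it) to observe that for $F$ real quadratic it implies $\sR_\Sg(\chi)\neq 0$, while relying on the unconditional strong six exponentials theorem (Theorem~\ref{thm:6exp}) for the actual Proposition~\ref{P:examplesSigmaLeopoldt}. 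There is nothing to check against since the paper has no proof; your assessment matches the paper's stance.
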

\begin{conj}[$p$-adic Schanuel's Conjecture]\label{conj:Schanuel}
Let $\alpha_1, \dots, \alpha_n \in \Qbar^\times$ such that \[\log_p(\alpha_1),\dots, \log_p(\alpha_n)\]are linearly independent over $\Q$. Then $\Q(\log_p(\alpha_1),\dots, \log_p(\alpha_n))$ has transcendence degree $n$ over $\Q$.
\end{conj}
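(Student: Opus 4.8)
The statement is the $p$-adic analogue of Schanuel's conjecture, so the plan is to attack it by the Gel'fond--Baker transcendence method, using the formal group exponential $\exp_p\colon\Ga^n\to\Gm^n$ over $\Cp$ in place of the classical analytic parametrisation. Write $\ell_i=\log_p(\alpha_i)$ and $t=\operatorname{trdeg}_\Q\Q(\ell_1,\dots,\ell_n)$, and suppose for contradiction that $t<n$; after renumbering, $\ell_1,\dots,\ell_t$ are algebraically independent over $\Q$ and some nonzero $P_0\in\Qbar[X_1,\dots,X_n]$ vanishes at $(\ell_1,\dots,\ell_n)$. The first move is to translate ``$(\ell_1,\dots,\ell_n)$ is algebraically independent over $\Q$'' into the hypotheses of an algebraic independence criterion of Philippon--Nesterenko type: it suffices to produce, for every large integer $N$, a polynomial $P_N\in\Z[X_1,\dots,X_n]$ with $\deg P_N\le N$, $\log H(P_N)=O(N)$, and $0<|P_N(\ell_1,\dots,\ell_n)|_p\le e^{-cN^{1+\delta}}$ for fixed $c,\delta>0$, the exponents being tuned so that the criterion forces $t\ge n$.

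The construction of the $P_N$ is the heart of the argument and follows the usual three steps. \emph{Auxiliary construction:} by Siegel's lemma over $\Qbar$ one builds a nonzero $P\in\Qbar[X_1,\dots,X_n,Y_1,\dots,Y_n]$ of controlled bidegree and height such that the $p$-adic analytic function $F(z_1,\dots,z_n)=P\bigl(z_1,\dots,z_n,\exp_p(z_1),\dots,\exp_p(z_n)\bigr)$ vanishes, together with its partial derivatives up to a prescribed order, at all lattice points $k_1\ell_1+\dots+k_n\ell_n$ with the $k_i$ in a box matched to the radius of convergence of $\exp_p$, so that the values $\exp_p(k_i\ell_i)=\alpha_i^{k_i}$ entering the linear system are algebraic of bounded height. \emph{Extrapolation:} applying the $p$-adic Schwarz lemma to $F$ on nested discs, the prescribed vanishing forces $F$ and its derivatives to be $p$-adically very small on a larger box; those values are algebraic of controlled degree and height, so the Liouville (fundamental) inequality forces them to vanish exactly, and $F$ vanishes to high order on the enlarged box. \emph{Zero estimate:} one invokes a multiplicity zero estimate for polynomials restricted to the image of the finitely generated subgroup $\Gamma=\Z\ell_1+\dots+\Z\ell_n$ under $(\mathrm{id},\exp_p)$ inside $\Ga^n\times\Gm^n$; it asserts that if a polynomial of degree $\le N$ vanishes to order $\ge T$ on $\gg(NT)^n$ orbit points, then the Zariski closure of the orbit lies in a translate of a proper algebraic subgroup, i.e.\ there is a nonzero $(a,b)\in\Z^n\times\Z^n$ with $\sum a_i\ell_i=0$ or with $\prod\alpha_i^{b_i}$ a root of unity --- but in the latter case $\sum b_i\ell_i=\log_p\bigl(\prod\alpha_i^{b_i}\bigr)=0$ as well, so either way $\ell_1,\dots,\ell_n$ are $\Q$-linearly dependent, contrary to hypothesis. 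Iterating extrapolation against the zero estimate as $T$ grows yields the polynomials $P_N$ with the smallness required by the criterion, and the contradiction with $t<n$ completes the proof.

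The step I expect to be the real obstacle is the zero/multiplicity estimate in the sharp quantitative form the algebraic independence criterion demands, equivalently a strong enough small-value estimate for $F$ on the orbit of $\Gamma$. This is exactly where the complex Schanuel conjecture is stuck: the exponential satisfies too few algebraic differential equations, so the available zero estimates on $\Ga^n\times\Gm^n$ (Masser--W\"ustholz, Philippon) are just short of converting ``high multiplicity on many points'' into the algebraic-subgroup conclusion with the slack needed once $n\ge2$. The $p$-adic case inherits the same structural deficiency and, in addition, forces one to route every estimate through the small radius of convergence of $\exp_p$, which tightens rather than relaxes the numerology. So the plan above is really a blueprint whose decisive ingredient is not available with present technology; this is why the statement is recorded in the paper as \conjref{conj:Schanuel} and why only the weaker unconditional inputs (Roy's strong six exponentials theorem and the $p$-adic Baker--Brumer theorem) are what actually get used elsewhere in the paper.
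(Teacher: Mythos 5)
There is no proof to compare against: the statement you were asked about is recorded in the paper precisely as \conjref{conj:Schanuel}, the $p$-adic Schanuel conjecture, and the paper never proves it --- it only uses it as a hypothesis (e.g.\ it implies the $\Sg$-Leopoldt conjecture via \cite[Lemma 1.2.1]{HidaTil94Inv}), and where unconditional statements are needed it substitutes the much weaker known results, namely the Baker--Brumer theorem and Roy's strong six exponentials theorem. So the relevant question is whether your proposal actually closes the problem, and it does not.

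Your own write-up identifies the gap correctly, and it is a genuine one rather than a technical loose end: the three-step scheme (auxiliary function via Siegel's lemma, $p$-adic Schwarz-lemma extrapolation plus Liouville, zero estimate on $\mathbf{G}_a^n\times\mathbf{G}_m^n$) only yields a sequence of polynomials small enough for a Philippon--Nesterenko criterion when the multiplicity estimate converts ``high-order vanishing at many points of the orbit of $\Z\ell_1+\dots+\Z\ell_n$'' into containment in a proper algebraic subgroup with enough quantitative slack, and for $n\ge 2$ no such estimate is known (this is the same obstruction that blocks the complex Schanuel conjecture; the small radius of convergence of $\exp_p$ only worsens the counting). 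Consequently the ``iterate extrapolation against the zero estimate'' step is not a proof but a placeholder for the open problem itself, and everything after it is conditional. What you have written is a reasonable survey of why the conjecture is hard, but it cannot be accepted as a proof of the statement, and the paper does not claim one either; if your goal was to supply what the paper actually needs, the productive targets are the unconditional inputs it really uses, such as the verification of the $\Sg$-Leopoldt conjecture in the real quadratic case via the strong six exponentials theorem in the Appendix.
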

 
 \begin{Remark}The significance of thse $p$-adic transcendental conjectures perhaps stems from their direct applications to the non-vanishing of $p$-adic regulators. Below we give some examples without details. \begin{enumerate} 
 \item If $F$ is a real quadratic field, the Strong Four Exponentials conjecture implies that $\mathscr{R}_{\Sigma}(\chi) \ne 0$, \ie the $\Sg$-Leopoldt conjecture for $\chi$ holds.  \item The $p$-adic
Schanuel conjecture implies the $\Sg$-Leopoldt conjecture (\cf \cite[Lemma 1.2.1]{HidaTil94Inv}) and the non-vanishing of the $\sL$-invariants $\sL_\chi$ and $\sL_\chi^{\rm ac}$.
 \end{enumerate}
 \end{Remark}
\subsection{Examples of the  $\Sg$-Leopoldt conjecture}
Notation is as in the introduction. Recall that $\chi$ is a ray class character of a CM field $K$ and $H$ is the abelian extension over $K$ cut out by $\chi$. We give examples of $(\chi,K,\Sg,p)$ such that  $\Sg$-Leopoldt conjecture for $\chi $ holds, {\it i.e.,} \conjref{conj2} (or equivalently  \eqref{Leo2} holds). The following famous Baker-Brumer Theorem is the best result towards these conjectures in the current literature.\begin{thm}[Baker--Brumer \cite{brumer}]\label{thm:BB} 
Let $\lambda_1,\dots, \lambda_n \in \mathcal{L}$ are linearly independent over $\Q$, then they are linearly independent over $\overline{\Q}$.
\end{thm}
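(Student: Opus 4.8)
The plan is to follow Brumer's $p$-adic adaptation of the Gel'fond--Baker method, of which this theorem is the prototype. Suppose for contradiction that $\lambda_i=\log_p\alpha_i$ with $\alpha_i\in\Qbar^\times$ are linearly independent over $\Q$ but satisfy a nontrivial relation $\beta_1\lambda_1+\cdots+\beta_n\lambda_n=0$ with $\beta_i\in\Qbar$; after relabelling and scaling I may assume $\beta_n=-1$, so $\lambda_n=\beta_1\lambda_1+\cdots+\beta_{n-1}\lambda_{n-1}$. Replacing each $\alpha_i$ by a high $p$-power $\alpha_i^{p^M}$ multiplies every $\lambda_i$ by the common scalar $p^M$, hence preserves both the $\Q$-linear independence of the $\lambda_i$ and the relation above; choosing $M$ large I may assume every $\alpha_i$ is congruent to $1$ modulo a high power of $p$, so that the $p$-adic powers $z\mapsto\alpha_i^{z}:=\exp(z\log_p\alpha_i)$ are given by power series converging on all of $\Zp$ and satisfy the usual functional equations.

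Next I would construct, by Siegel's lemma (a pigeonhole argument over the number field $\Q(\alpha_1,\dots,\alpha_n,\beta_1,\dots,\beta_{n-1})$), a nonzero family of algebraic coefficients $c(t_0,t_1,\dots,t_{n-1})$ of controlled height, with $0\le t_0\le T_0$ and $0\le t_j\le T_1$, such that the auxiliary $p$-adic analytic function
\[
\Phi(z_1,\dots,z_{n-1})=\sum_{t_0,t_1,\dots,t_{n-1}} c(t_0,\dots,t_{n-1})\,(z_1+\cdots+z_{n-1})^{t_0}\prod_{j=1}^{n-1}\bigl(\alpha_j\alpha_n^{\beta_j}\bigr)^{t_j z_j}
\]
vanishes, together with all its partial derivatives up to some large order $S$, at every diagonal integer point $(\ell,\dots,\ell)$ with $1\le\ell\le\Lambda_0$. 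Here the relation $\lambda_n=\sum\beta_j\lambda_j$ is exactly what allows one to collapse the last logarithm, so the exponents occurring in $\Phi$ lie in the $\Qbar$-span of $\lambda_1,\dots,\lambda_{n-1}$ alone, with denominators and house bounded in terms of the $\beta_j$. Then comes the \emph{extrapolation step}, the heart of the method: via a $p$-adic Schwarz / maximum-modulus estimate for several-variable analytic functions one shows inductively that, because $\Phi$ already vanishes to high order on a moderate set of integer points and its Taylor coefficients are $p$-adically small while its values are algebraic of bounded height, $\Phi$ must in fact vanish to high order on a much larger set; iterating forces $\Phi$ and enough of its derivatives to vanish on a range so large that a Vandermonde / zero-estimate argument makes every $c(t_0,\dots,t_{n-1})$ equal to zero --- using here the $\Q$-linear independence of $\lambda_1,\dots,\lambda_{n-1}$ --- contradicting the nondegeneracy guaranteed by Siegel's lemma.

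The main obstacle is precisely this $p$-adic extrapolation: unlike the archimedean Gel'fond--Baker argument there is no growth of $\exp$ along rays to exploit, so everything must be controlled through the radii of convergence on $\Zp$ (which is why one first passes to $\alpha_i^{p^M}$) and through a delicate balancing of the parameters $T_0,T_1,S,\Lambda_0$: the number of interpolation conditions imposed, the size of the coefficients Siegel's lemma produces, the denominators and house of the $\beta_j$, and the strength of the $p$-adic Schwarz lemma must fit together so that each extrapolation step strictly improves the vanishing data without leaving the region of convergence. Since in this paper the theorem is used only as a black box to deduce the $\Sg$-Leopoldt conjecture for $\chi$ and the non-vanishing of $\sL_\chi$ in the relevant CM cases (with the finer Strong Four Exponentials and $p$-adic Schanuel statements quoted only conjecturally), I would in practice simply cite \cite{brumer} rather than reproduce this argument.
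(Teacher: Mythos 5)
The paper does not prove this theorem; it is quoted as a classical result and attributed to Brumer's 1967 paper, exactly as you observe at the end of your proposal. Your sketch is a fair high-level description of Brumer's $p$-adic adaptation of the Gel'fond--Baker method (auxiliary function built by Siegel's lemma, $p$-adic Schwarz-lemma extrapolation after passing to $\alpha_i^{p^M}$ to guarantee convergence on $\Zp$, zero estimate to force the coefficients to vanish), and your decision to simply cite \cite{brumer} is precisely what the paper does, so the two treatments agree.
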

\begin{cor}\label{C:A3}Suppose that $K = F M$ for an imaginary quadratic field $M$ in which $p$ splits and  $H/M$ is abelian.  Suppose also that  the CM type  $\Sigma$ of $K$ is obtained by extending the CM type of $M$, \ie a choice of embeddings $\iota:M \hookrightarrow \C$. Then the $\Sg$-Leopoldt conjecture for $\chi:\Gal(H/K)\to\Qbar_p^\x$ holds. 
\end{cor}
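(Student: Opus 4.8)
The plan is to reduce the $\Sg$-Leopoldt conjecture for $\chi$ to a statement about $\Qbar$-linear independence of $p$-adic logarithms of algebraic numbers, and then invoke the Baker--Brumer Theorem (\thmref{thm:BB}). By \conjref{conj2} we must show that the $p$-adic regulator map
\[
\log_{\Sg,p}:\frako_H^\x[\chi]\to Y_{H,\Sg}[\chi]
\]
is injective. First I would fix a unit $u\in\frako_H^\x$ whose image $e_\chi(u)$ is a nonzero element of $\frako_H^\x[\chi]$ lying in the kernel, and examine the vector $\log_{\Sg,p}(e_\chi(u))=\sum_{\sg\in\Sg_H}\log_p(\iota_p\sg(e_\chi(u)))\sg$. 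The vanishing of this vector says that for each $\sg\in\Sg_H$, a certain $\Qbar$-linear combination (with coefficients $\chi(\tau^{-1})$, $\tau\in\Gal(H/K)$) of the quantities $\log_p(\iota_p\sg\tau(u))$ is zero. Since $K=FM$ with $M$ imaginary quadratic where $p$ splits, and $\Sg$ extends the CM type of $M$ (a single embedding $\iota$), the places of $H$ above $p$ picked out by $\Sg_H$ all lie over the \emph{one} prime of $M$ above $p$ determined by $\iota$; this is the key simplification — everything happens inside the completion $H_\frakp$ at a fixed prime $\frakp\mid p$ of $H$.

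Next I would use the hypothesis that $H/M$ is abelian to transport the problem to $M$. The group $\Gal(H/M)$ acts on $\frako_H^\x$, and one can decompose the relevant $\chi$-eigenspace over the characters of $\Gal(H/M)$ extending $\chi|_{\Gal(H/K)}$ composed with restriction; because $\frako_H^\x$ has a description via Minkowski/Dirichlet and $H/M$ abelian, the $p$-adic logarithms $\log_p(\iota_p\sg\tau(u))$ that appear are, up to rational multiples and a fixed embedding, of the form $\log_p(\alpha)$ for explicit $\alpha\in H^\x\subset\Qbar^\x$. The vanishing of $\log_{\Sg,p}(e_\chi(u))$ then becomes a nontrivial $\Qbar$-linear relation among finitely many elements of $\mathcal{L}=\{\log_p(\alpha):\alpha\in\Qbar^\x\}$. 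By Baker--Brumer, such a relation forces a $\Q$-linear relation among the same logarithms. But a $\Q$-linear relation among $\log_p(\alpha_i)$ with $\alpha_i\in\frako_H^\x$ is, by injectivity of $\log_p$ on the torsion-free part (equivalently, the fact that $\log_p$ is injective on $1+\mathfrak{m}$ for local units and the global units inject into the appropriate local unit group because $p$ splits all the way down to $M$), a $\Q$-linear relation among the units themselves — and this is exactly controlled by the \emph{classical} Dirichlet regulator, which is nonzero. Tracing back, this forces $e_\chi(u)$ to be torsion, hence its class in $\frako_H^\x[\chi]$ is zero, giving injectivity.

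The main obstacle, and the step I expect to require the most care, is the passage from the $\Qbar$-linear relation produced by the hypothetical kernel element to an \emph{honest} $\Q$-linear relation among logarithms of genuine global units, i.e. making precise the claim that "vanishing of the $p$-adic regulator implies vanishing of the archimedean regulator." The subtlety is that the coefficients $\chi(\tau^{-1})$ are algebraic, not rational, so one cannot directly apply Baker--Brumer to the obvious logarithms; instead I would clear denominators and spread the relation over a $\Qbar$-basis of the value field of $\chi$, obtaining several $\Qbar$-linear relations among $\Q$-spans of the $\log_p(\iota_p\sg\tau(u))$, and only then apply \thmref{thm:BB} to each. One must also check that the $p$-adic and archimedean regulators are literally governed by the \emph{same} $\Q$-linear dependences among units — this uses that $\Sg_H$ has exactly $d$ elements (the rank of $\frako_H^\x[\chi]$ for $\chi\neq\bfone$, by Dirichlet) and that $\log_{\Sg,\infty}$ is already known to be an isomorphism, so the two maps differ only by which valuation one applies to the same finite set of units. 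The splitting of $p$ in $M$ is what guarantees the local logarithm $\log_p$ is injective on the relevant local units, removing the usual Leopoldt-type obstruction at primes above $p$.
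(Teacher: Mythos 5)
Your proposal correctly identifies the two key ingredients — a Minkowski unit and the Baker--Brumer theorem — but it does not carry out the reduction that makes the argument actually close, and as written the approach would fail. Baker--Brumer is a statement about \emph{linear} relations: if $\log_p(\alpha_1),\dots,\log_p(\alpha_n)$ are $\Q$-linearly independent then they are $\Qbar$-linearly independent. What you need, however, is the invertibility of the $d\times d$ $p$-adic regulator matrix $\log_{\Sg,p}$ on the $d$-dimensional space $\frako_H^\x[\chi]$, and the determinant of such a matrix is a degree-$d$ polynomial expression in the $\log_p(\alpha_{ij})$. Baker--Brumer says nothing about such higher-degree expressions — for that one would need $p$-adic Schanuel (\conjref{conj:Schanuel}), which is precisely what the corollary is supposed to \emph{avoid}. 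Your plan of "clearing denominators over a $\Qbar$-basis of the value field of $\chi$" does not circumvent this: you still end up needing a determinant to be nonzero, not a single linear combination. A related soft spot: you take a kernel element of the form $e_\chi(u)$ for a single global unit $u$, but a general element of $\ker(\log_{\Sg,p})\subset\frako_H^\x[\chi]$ is a $\Cp$-linear combination of such elements, so the contradiction you aim for is not reached by considering a single $u$. (You also assert $\#\Sg_H = d$; in fact $\#\Sg_H = d\cdot[H:K]$, and the relevant $d$ is $\dim_{\Cp}Y_{H,\Sg}[\chi]$.)

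The missing idea — and the whole point of the hypotheses "$K=FM$, $H/M$ abelian, $\Sg$ extends the CM type of $M$" — is that $\Gal(H/M)$ acts on both $\Cp\frako_H^\x$ and $\Cp Y_{H,\Sg}$, and $\log_{\Sg,p}$ is $\Gal(H/M)$-equivariant. Since $\Gal(H/M)$ is abelian and $\Sg_H$ is a $\Gal(H/M)$-torsor (because $\Sg$ extends a \emph{single} embedding of $M$), the $\chi$-isotypic component of $\Cp\frako_H^\x$ decomposes as $\frako_H^\x[\chi]=\bigoplus_{i=1}^d \frako_H^\x[\chi_i]$ where each $\chi_i$ is a character of $\Gal(H/M)$ restricting to $\chi$, and each $\frako_H^\x[\chi_i]$ is \emph{one-dimensional}; likewise for $Y_{H,\Sg}[\chi]$. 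Thus $\log_{\Sg,p}$ becomes a diagonal map, and its injectivity reduces to $d$ \emph{separate} one-dimensional non-vanishing statements, namely $\log_p(u_{\chi_i})\neq 0$ with $u_{\chi_i}=\sum_{\tau\in\Gal(H/M)}\chi_i(\tau^{-1})\ot\tau(u)u^{-1}$. Each of these is a single $\Qbar$-linear combination of the $\log_p(\tau(u)u^{-1})$, $\tau\in\Gal(H/M)\setminus\{1\}$, which by the Minkowski-unit property form a $\Q$-basis of $\Q\frako_H^\x$; their $\Q$-linear independence transfers to $\log_p$-values because $\log_p$ kills only torsion on global units, and Baker--Brumer then upgrades this to $\Qbar$-linear independence, forcing $\log_p(u_{\chi_i})\neq 0$. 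This is exactly the $1\times 1$ situation where Baker--Brumer is sharp, and it is the reduction your proposal fails to perform.
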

\begin{proof}By the Dirichlet unit theorem (\cf\cite[Prop. 8.7.2, p.503]{Neuk08}), there is an isomorphism 
 \[\Q\oplus \Q\frako_H^\x\iso \Q[\Gal(H/M)]\]
 as $\Gal(H/M)$-modules, so there exists a \emph{Minkowski} unit $u\in \frako_H^\x$, \ie the set $\stt{\sg(u)u^{-1}}_{\sg\in \Gal(H/M)\bksl \stt{1}}$ is a basis of $\Q\frako_H^\x$. 
Moreover, since $H/M$ is abelian and $\chi\neq 1$,  we find that
 \[\rmH^1_{(f,f)}(K,\chi^{-1}(1))\iso \bigoplus_{i=1}^d\frako_H^\x[\chi_i]\text{ as $\Gal(H/M)$-modules},\]
 where $\chi_i$ are characters of $\Gal(H/M)$ extending $\chi$ and that $\dim_E\frako_H^\x[\chi_i]=1$. For any character $\chi_i\neq\bfone$,
\[u_{\chi_i}:=\sum_{\tau\in \Gal(H/M)} \chi_i(\tau^{-1})\ot \tau(u)u^{-1}\in \frako_H^\x[\chi_i]. \]
Then $\log_p(u_{\chi_i})\neq 0$ by the Baker-Brumer Theorem. It follows that $\log_p:\frako_H^\x[\chi_i]\iso \Cp$ is an isomorphism and $\log_{\Sg,p}:\rmH^1_{(f,f)}(K,\chi^{-1}(1))\iso \Cp^d$.
\end{proof}
In the above example, $H$ is abelian over an imaginary quadratic field $M$, so the Katz $p$-adic $L$-function $L_\Sg(s,\chi)$ is a product of $p$-adic $L$-functions over $M$, and the trivial zeros in this case have been studied extensively in \cite{BS} via the Euler system of elliptic units. To obtain more interesting examples beyond this case, we will make use of the following strong version of the Six Exponentials Theorem due to Roy \cite[Corollary 2]{Roy}\begin{thm}[Strong Six Exponentials Theorem]\label{thm:6exp} 
Let $M$ be a $2\times 3$ matrix
with entries in $\wtd\cL$. Assume the rows and the columns of $M$ are linearly independent over $\Qbar$ respectively. Then $M$ has rank $2$. 
\end{thm}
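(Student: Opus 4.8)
The plan is to reproduce the transcendence argument of Roy \cite{Roy} that underlies this statement; since the paper only invokes the conclusion, I outline the proof in the $p$-adic formulation needed here (the archimedean case being Roy's original, the $p$-adic one running along the same lines with the $p$-adic exponential replacing $\exp$ and Liouville's inequality applied to $\abs{\cdot}_p$). First I would argue by contradiction: suppose the $2\times 3$ matrix $M=(\lambda_{ij})$, $\lambda_{ij}\in\wtd\cL$, has $\rank M\le 1$. The case $\rank M=0$ contradicts the $\Qbar$-linear independence of the rows, so $\rank M=1$, and then, since $\rank$ and both independence hypotheses are preserved upon multiplying $M$ on the left by $\GL_2(\Qbar)$ and on the right by $\GL_3(\Qbar)$, I would normalise the first column to $(1,\lambda)^{t}$, forcing $\lambda_{2j}=\lambda\lambda_{1j}$ for $j=1,2,3$, where now $\lambda_{11},\lambda_{12},\lambda_{13}$ are $\Qbar$-linearly independent (the column hypothesis) and $1,\lambda$ are $\Qbar$-linearly independent (the row hypothesis). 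Clearing denominators, these relations read $\lambda_{21}\lambda_{1j}=\lambda_{11}\lambda_{2j}$; writing every $\lambda_{ij}$ as a $\Qbar$-linear combination of $1$ and finitely many $p$-adic logarithms $\log_p\alpha$ of algebraic numbers, they become a system of bilinear relations among such logarithms, which is exactly what Baker's method, upgraded to allow $\wtd\cL$ rather than $\Qbar$ as coefficient field, is designed to refute.

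Concretely I would run a Schneider--Lang / interpolation-determinant argument. The algebraic numbers occurring furnish points on a torus $\Gm^{\,n}$ and the logarithms a tangent vector $u$, so the data lie on the one-parameter analytic subgroup $t\mapsto\exp_G(tu)$ of $G=\Ga\times\Gm^{\,n}$, and the bilinear relation above says that this subgroup meets an algebraic hypersurface of controlled degree in more points than a generic one would. Fixing large integers $S,T$ I would form the square matrix whose columns are monomial functions on $G$ of bounded multidegree and whose rows are values, with derivatives up to order $T$ along the $\Ga$-factor, at the $S$ translates $\exp_G(su)\cdot(\text{algebraic point})$; the rank-one hypothesis makes the column functions, restricted to the relevant curve, span a space of dimension far below the number of rows, so the determinant $\Delta$ is $p$-adically small: an analytic estimate on the disc of convergence of $\exp_p$ gives $\abs{\Delta}_p\le e^{-cST}$ with $c>0$ governed by the geometric deficiency.

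On the other side, $\Delta$ is an algebraic number in a fixed number field with logarithmic height and degree bounded polynomially in $S,T$, so by Liouville either $\Delta=0$ or $\abs{\Delta}_p\ge e^{-c'ST}$ for a controlled $c'$. Choosing the parameters so that $c>c'$ — and this is exactly where the arithmetic surplus $d\ell>d+\ell$ (here $2\cdot 3>2+3$) is spent, the inequality being precisely what lets the monomial count overtake the number of independent evaluation constraints — forces $\Delta=0$. A multiplicity (zero) estimate of Philippon type for $\Ga\times\Gm^{\,n}$ then yields the contradiction: vanishing of $\Delta$ would produce a proper connected algebraic subgroup of $G$, stable under the one-parameter subgroup and containing too many of our points, contradicting the $\Qbar$-independence of the $\lambda_{1j}$ and of $1,\lambda$ arranged at the outset. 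Hence $\rank M\le 1$ is impossible, so $\rank M=2$.

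The hard part — and the reason the $2\times 3$ case is a theorem of Roy whereas the $2\times 2$ case remains only \conjref{conj:4exp} — is running the zero estimate under a mere \emph{rank} hypothesis rather than a vanishing hypothesis: the interpolation determinant must be shown nonzero knowing only that a $2\times 3$ matrix of logarithms is singular, with no entry or sub-determinant assumed to vanish, and the margin for matching the codimension of the exceptional subgroups against the combinatorial surplus $d\ell-(d+\ell)=1$ is razor thin. This delicate bookkeeping is what \cite{Roy} carries out, and we use its conclusion verbatim; note that Baker's theorem (\thmref{thm:BB}) alone does not suffice, as it delivers only $\Qbar$-linear independence of $\Q$-linearly independent logarithms and says nothing about bilinear relations among them.
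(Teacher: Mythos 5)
The paper offers no proof of this statement at all: it is imported directly as Roy's Strong Six Exponentials Theorem, cited to \cite[Corollary 2]{Roy}, so the paper's ``proof'' is simply that citation. Your outline of the interpolation-determinant, Liouville-inequality and Philippon-type zero-estimate machinery is a reasonable sketch of how such results are established, but since you yourself defer the decisive bookkeeping to Roy and ``use its conclusion verbatim,'' your treatment is in substance the same appeal to the literature that the paper makes --- which is fine here, provided it is presented as a citation rather than as an independent proof.
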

\begin{prop}\label{P:examplesSigmaLeopoldt}\
Assume that $F$ is a quadratic real field in which $p$ splits and $\chi\neq\chi^c$. Then there exists a $p$-adic CM type $\Sigma$ of $K$ such that the $\Sigma$-Leopoldt conjecture holds.\end{prop}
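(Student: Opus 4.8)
The plan is to reduce the statement to a rank assertion that can be settled by Roy's Strong Six Exponentials Theorem \thmref{thm:6exp}. Since $\chi\neq\chi^c$ forces $\chi\neq\bfone$, Dirichlet's unit theorem gives $\dim_{\Cp}\frako_H^\x[\chi]=d=[F:\Q]=2=\dim_{\Cp}Y_{H,\Sg}[\chi]$ for every $p$-ordinary CM-type $\Sg$, and (as recalled in \secref{Linvariant}) the $\Sg$-Leopoldt conjecture \conjref{conj2} for $\chi$ holds precisely when $\log_{\Sg,p}$ is injective on $\frako_H^\x[\chi]$. As $p$ splits in $F$, write $p\frako_F=\frakp_1\frakp_2$; by \eqref{ord} each $\frakp_i$ splits in $K$, and since $F$ is real quadratic $\Hom(K,\C)$ consists of two conjugate pairs $\{\sigma_1,\sigma_1 c\}$, $\{\sigma_2,\sigma_2 c\}$, labelled so that the $p$-adic place attached to $\sigma_i$ lies above $\frakp_i$. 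The $p$-ordinary CM-types are then exactly the four sets $\Sg^{(a,b)}=\{\sigma_1^{(a)},\sigma_2^{(b)}\}$ ($a,b\in\{0,1\}$, $\sigma_i^{(0)}=\sigma_i$, $\sigma_i^{(1)}=\sigma_i c$), and it suffices to prove $\log_{\Sg^{(a,b)},p}$ is injective for \emph{some} $(a,b)$.

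For each embedding $\sigma$ of $K$ I would fix an extension $\sigma_H\colon H\hookrightarrow\C$ and introduce the partial $p$-adic regulator $\ell_\sigma\colon\frako_H^\x[\chi]\to\Cp$, $e_\chi(1\otimes u)\mapsto\frac{1}{[H:K]}\sum_{g\in\Gal(H/K)}\chi(g^{-1})\log_p\bigl(\iota_p\sigma_H(g u)\bigr)$, well defined up to a nonzero scalar; under $Y_{H,\Sg^{(a,b)}}[\chi]\cong\Cp\oplus\Cp$ the map $\log_{\Sg^{(a,b)},p}$ becomes $(\ell_{\sigma_1^{(a)}},\ell_{\sigma_2^{(b)}})$, so its injectivity is equivalent to $\Cp$-linear independence of $\ell_{\sigma_1^{(a)}}$ and $\ell_{\sigma_2^{(b)}}$. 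Assume for contradiction that all four pairs are dependent. A short linear-algebra argument in the $2$-dimensional dual then forces $\ell_{\sigma_1},\ell_{\sigma_1 c},\ell_{\sigma_2},\ell_{\sigma_2 c}$ to lie in a common $\Cp$-line. By the Baker--Brumer theorem \thmref{thm:BB} none of them vanishes: the $p$-adic logarithms under $\iota_p\sigma_H$ of a $\Z$-basis of $\frako_H^\x/(\mathrm{tors})$ are $\Z$-independent, hence $\Qbar$-independent, so $\log_p\circ\iota_p\circ\sigma_H$ is injective on $\Qbar\otimes_\Z\frako_H^\x$ and therefore $\ell_\sigma$ is injective on the $\Qbar$-lattice $e_\chi(\Qbar\otimes_\Z\frako_H^\x)$, in particular nonzero. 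Hence all four $\ell_\sigma$ are nonzero and pairwise $\Cp$-proportional. Fixing units $u_1,u_2$ with $w_j:=e_\chi(1\otimes u_j)$ a $\Cp$-basis of $\frako_H^\x[\chi]$, the matrix
\[
M=\begin{pmatrix}\ell_{\sigma_1}(w_1)&\ell_{\sigma_1 c}(w_1)&\ell_{\sigma_2}(w_1)\\ \ell_{\sigma_1}(w_2)&\ell_{\sigma_1 c}(w_2)&\ell_{\sigma_2}(w_2)\end{pmatrix}
\]
has rank $\leq 1$, and its entries lie in $\wtd{\mathcal{L}}$ since $\chi$ has finite order and $\sigma_H(g u_j)\in\Qbar^\times$.

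Now I would apply \thmref{thm:6exp} to $M$. That the two rows of $M$ are $\Qbar$-linearly independent is immediate from the injectivity of $\ell_{\sigma_1}$ on $e_\chi(\Qbar\otimes_\Z\frako_H^\x)$ just established. The crux is to choose $u_1,u_2$ so that \emph{some} three of $\ell_{\sigma_1},\ell_{\sigma_1 c},\ell_{\sigma_2},\ell_{\sigma_2 c}$ are $\Qbar$-linearly independent as functionals on $\frako_H^\x[\chi]$; then the corresponding three columns of $M$ are $\Qbar$-independent, Roy's theorem yields $\rank M=2$, a contradiction. For this I would use: the description of $\Qbar\otimes_\Z\frako_H^\x$ as a $\Qbar[\Gal(H/K)]$-module (for $d=2$ it contains a free rank-one summand, so one may take $u_1$ generating a free $\Z[\Gal(H/K)]$-submodule of units and $u_2$ spanning a norm-zero complement); the hypothesis $\chi\neq\chi^c$, which makes the complex conjugation $\rho_H$ of the CM field $H$ interchange the $\chi$- and $\chi^c$-isotypic components, so that $\ell_{\sigma_i c}$ is, after this identification, the $(\sigma_i,\chi^c)$-regulator of $\rho_H$-translates — this prevents $1,\ell_{\sigma_1 c}/\ell_{\sigma_1},\ell_{\sigma_2}/\ell_{\sigma_1},\ell_{\sigma_2 c}/\ell_{\sigma_1}$ from spanning only a $2$-dimensional $\Qbar$-subspace of $\Cp$; and the Baker--Brumer theorem again, to turn any residual $\Qbar$-linear relation among the $\ell_\sigma$'s into a $\Q$-linear — i.e.\ multiplicative — relation among Galois conjugates of $u_1,u_2$, which the chosen $\Z[\Gal(H/K)]$-module structure forbids.

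The step I expect to be the main obstacle is exactly this last one: verifying the column-independence hypothesis of Roy's theorem, that is, controlling the $\Qbar$-linear relations among the four $p$-adic regulator functionals on the $2$-dimensional space $\frako_H^\x[\chi]$ and arranging, by a judicious choice of units together with $\chi\neq\chi^c$ and Baker--Brumer, that at least three of them are $\Qbar$-independent. The remaining ingredients — the enumeration of CM-types, the reduction to a rank statement, the linear algebra, and the row-independence — are routine.
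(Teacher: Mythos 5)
Your proposal follows essentially the same strategy as the paper — reduce the $\Sigma$-Leopoldt assertion to a rank statement about a $2\times 3$ matrix of $p$-adic logarithms of unit conjugates, apply Roy's Strong Six Exponentials Theorem \thmref{thm:6exp} to conclude rank $2$, and use Baker--Brumer \thmref{thm:BB} to verify the $\Qbar$-independence hypotheses. Your contradiction framing (assume all four CM-types fail, derive $\rank M\leq 1$) is logically equivalent to the paper's direct argument (show $\rank M=2$, so one of the two $2\times 2$ minors indexed by a CM-type is nonsingular), and the row-independence part is fine.

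However, there is a genuine gap exactly where you flag one. You explicitly say that verifying the column-independence hypothesis of Roy's theorem is "the main obstacle," and the argument you sketch for it — pick $u_1$ generating a free $\Z[\Gal(H/K)]$-submodule, $u_2$ in a complement, invoke $\chi\neq\chi^c$ and Baker--Brumer to rule out residual relations — is not carried out and does not obviously close. The paper resolves this by making a \emph{specific} choice of $u_1,u_2$ tied to a Minkowski-type unit: pass to the Galois closure $L$ of $H$, use Dirichlet's unit theorem to produce $\bfu\in\frako_L^\x$ so that $\{\tau(\bfu)\bfu^{-1}\}_{\tau}$ is a $\Q$-basis of $\Q\frako_L^\x$ as $\tau$ ranges over a transversal, set $\bfu_i:=\prod_{\sg\in\Sg_{L,i}}\sg(\bfu)$ (the partial norm attached to the fibre of $\Gal(L/\Q)\to\Hom(K,\C)$ over $\sigma_i$), and take $u_i:=e_\chi\bfu_i$. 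With this choice, each entry of $M$ expands as a $\Qbar$-linear combination of $\log_p$ of conjugates of $\bfu$ taken from disjoint Galois orbits, so Baker--Brumer converts any $\Qbar$-linear relation among the three columns $\sigma_1,\sigma_2,\sigma_2 c$ into a $\Q$-multiplicative relation among conjugates of $\bfu$, which the Minkowski-unit property and $\chi\neq\chi^c$ preclude. That explicit construction is the missing ingredient in your write-up; without it the column-independence hypothesis of Roy's theorem is unverified and the proof does not go through.
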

\begin{proof} 
We let $L$ be the Galois closure of $H$. Choose a $p$-adic CM type $\Sg=\stt{\sg_1,\sg_2}$ of $K/F$ and let $\Sg_{L,i}$ be the subset of $\sg\in\Gal(L/\Q)$ such that $\sg|_{K}=\sg_i$. Let  $G_\infty=\stt{1,c}\subset \Gal(L/\Q)$ be the decomposition at the archimdean place induced by $\iota_\infty:\Qbar\hookto\C$. By the Dirichlet unit Theorem, there is an isomorphism as $\Gal(L/\Q)$-modules
\[ \Q\oplus \Q\frako_H^\x\iso \Ind_{G_\infty}^{\Gal(L/\Q)}\Q.\]
It follows that there exists $\bfu\in \frako_L^\x$ such that 
$\stt{\tau(\bfu)\bfu^{-1}}_{\tau\in\Sg_{L,1}\disjoint\Sg_{L.2}}$ is a basis of $\Q\frako_L^\x$. For $i=1,2$, put $\bfu_i=\prod_{\sg\in\Sg_{L,i}}\sg(\bfu)\in \frako_H^\x$ and 
 \[u_i:=e_\chi \bfu_i=\sum_{\tau\in \Gal(H/K)}\chi(\tau^{-1})\ot\tau(\bfu_i)\bfu_i^{-1} \in  \frako_H^\times[\chi].\]
 Then $\stt{u_1,u_2}$ is a basis of $\frako_H^\x[\chi]$.  Consider the matrix \[M=\left(\begin{smallmatrix}
 \log_p(\sg_1(u_1)) & \log_p(\sigma_2(u_1)) &   \log_p(\sigma_2c(u_1)) \\
\log_p(\sg_1(u_2)) & \log_p(\sigma_2(u_2))  &   \log_p(\sigma_2c(u_2))
\end{smallmatrix}\right).\] 
Since $\chi\neq\chi^c$, we see that the row and the column vectors are also linearly independent over $\Qbar$ respectively by the Baker-Brumer theorem. According to Roy's Six exponentials theorem, $M$ has rank $2$. Therefore either $\left(\begin{smallmatrix}
 \log_p(\sg_1(u_1)) & \log_p(\sigma_2(u_1)) \\
\log_p(\sg_1(u_2)) & \log_p(\sigma_2(u_2)) 
\end{smallmatrix}\right)$ or $\left(\begin{smallmatrix}
 \log_p(\sg_1(u_1)) & \log_p(\sigma_2 c (u_1))  \\
\log_p(\sg_1(u_2)) & \log_p(\sigma_2c (u_2))  
\end{smallmatrix}\right)$ is invertible. This shows that either $\{\sg_1,\sigma_2\}$ or $\{\sg_1,\sigma_2c\}$ is a $p$-adic CM type that satisfies the $\Sg$-Leopoldt conjecture for $\chi$.
 \end{proof}

\bibliographystyle{amsalpha}
\bibliography{mybib}
\end{document}